\theoremstyle{plain}
\newtheorem{lemma}{Lemma}[subsection]{\bf}{\it}
\newtheorem{theorem}[lemma]{Theorem}{\bf}{\it}
\newtheorem{proposition}[lemma]{Proposition}{\bf}{\it}
\newtheorem{corollary}[lemma]{Corollary}{\bf}{\it}
\newtheorem*{theorem*}{Theorem}  
\theoremstyle{definition}
\newtheorem{definition}[lemma]{Definition}{\bf}{\rm}
\newtheorem{remark}[lemma]{Remark}{\bf}{\rm}
{\bf}{\rm}
{\bf}{\rm}
{\bf}{\rm}
\newcommand{\G}{\mathbb{G}}
\newcommand{\PP}{\mathrm{P}}
\newcommand{\HH}{\mathrm{H}}
\newcommand{\Z}{{\mathbb Z}}
\newcommand{\Q}{{\mathbb Q}}
\newcommand\OO{\mathcal{O}}
\newcommand{\QQ}{\mathcal{Q}}
\newcommand{\R}{\mathrm{R}}
\newcommand{\Gr}{{\rm Gr}}
\newcommand{\GGr}{{\bf Gr}}
\newcommand{\spec}{{\rm Spec}}
\newcommand{\Pic}{{\rm Pic}}
\newcommand{\Sch}{\mathbf{Sch}}
\newcommand{\ab}{\mathrm{ab}}
\newcommand{\id}{\mathrm{id}}
\newcommand{\ft}{{\rm ft}}
\newcommand{\tor}{{\rm tor}}
\newcommand{\fr}{{\rm fr}}
\newcommand{\fl}{{\rm fl}}
\begin{document}
\input xy
\xyoption{all}

\title{On torsors under elliptic curves and Serre's pro-algebraic structures}
\author{Alessandra Bertapelle \thanks{Dipartimento di Matematica, via Trieste 63, I-35121 Padova, Italy, Email: alessandra.bertapelle@unipd.it, Tel.: ~+39 0498271412, Fax: +39 0498271479}
\and Jilong Tong \thanks{Institut de Math\'ematiques de Bordeaux UMR 5251, Universit\'e de Bordeaux 1, 351, cours de la Lib\'eration - F 33405 Talence Cedex, France, Email: jilong.tong@math.u-bordeaux1.fr}
}
\date{\today}

\maketitle

\begin{abstract} Let $\OO_K$ be a complete discrete valuation ring with algebraically closed residue field of positive characteristic and field of fractions $K$. Let $X_K$ be a torsor under an elliptic curve $A_K$ over $K$, $X$ the proper minimal regular model of $X_K$ over $S:=\spec(\OO_K)$, and $J$ the identity component of the N\'eron model of $\Pic_{X_K/K}^{0}$. We study the canonical morphism $q\colon \mathrm{Pic}^{0}_{X/S}\to J$ which extends the natural isomorphism on generic fibres. We show that $q$ is pro-algebraic in nature with a construction that recalls Serre's work on local class field theory. Furthermore, we interpret our results in relation to Shafarevich's duality theory for torsors under abelian varieties.
\end{abstract}
\medskip

\emph{Keywords}: Elliptic fibrations, models of curves, Shafarevich pairing, abelian varieties, Picard functor, pro-algebraic groups.

\emph{MSC[2010]}: 14K15, 14K30.

\section*{Introduction}
This paper concerns some local studies of torsors under an elliptic curve, or more generally, under an abelian variety.
In the following, let $\OO_K$ be a complete discrete valuation ring with field of fractions $K$ and algebraically closed residue field $k$ of positive characteristic $p>0$. Let $\pi\in \OO_K$ be a uniformizer of $\OO_K$.
Let $S:=\spec(\OO_K)$, denote by $s$ its closed point and by $i\colon \spec(k)\to S$ the usual closed immersion.
Let $A_K$ be an elliptic curve over $K$ and $X_{K}$ a torsor under $A_{K}$ of order $d$. Then $X_K$ is a smooth projective $K$-curve whose jacobian is canonically isomorphic to $A_{K}'$, the dual of $A_K$ (see Lemma~\ref{J_K et A_K} (i)). Let $X$ be the proper minimal regular model of $X_K$ over $S$. In general $X$ is not cohomologically flat in dimension $0$ over $S$ (\emph{i.e.}, the canonical morphism $k\rightarrow \HH^{0}(X_{s},\mathcal{O}_{X_s})$ is not an isomorphism), and the relative Picard functor $\Pic^{0}_{X/S}$ is not representable, not even by an algebraic space.
Nevertheless, the functor $\mathrm{Pic}^{0}_{X/S}$ is not very far from being representable. For example, let $J$ denote the identity component of the N\'eron model of $J_K:=\Pic_{X_K/K}^{0}$ over $S$. In \cite{Raynaud}, 2.3.1, (see \S~\ref{Rappels Pic} for a brief summary) it is shown that there exists an epimorphism of fppf-sheaves
\begin{eqnarray*}
q\colon \Pic^{0}_{X/S}\longrightarrow J
\end{eqnarray*}
that extends the natural isomorphism on generic fibres.
This morphism plays a very important role in a recent work of Liu, Lorenzini and Raynaud where, by considering the induced map $\mathrm{Lie}(q)$ between the Lie algebras of $\mathrm{Pic}^{0}_{X/S}$ and $J$, together with a result of T.~Saito, the authors prove a beautiful result about the geometry of the scheme $X$, namely that the Kodaira type of the special fibre $X_s$ of $X$ is exactly $d$ times the Kodaira type of the special fibre of the minimal regular $S$-model of the elliptic curve $J_K$ (\cite{LLR}, Theorem 6.6).

One of the aims of this paper is to study the morphism $q$ in order to reveal other interesting properties.
More precisely, consider the surjective map induced by $q$ on the $S$-sections (see the end of \S~\ref{Rappels Pic} for the surjectivity of $q$):
\begin{eqnarray}\label{eq.morpfonsec}
q=q(S)\colon \mathrm{Pic}^{0}(X)=\mathrm{Pic}_{X/S}^{0}(S)\longrightarrow J(S).
\end{eqnarray}
Since the gcd of the multiplicities of the irreducible components of $X_s$ is $d$ (see Lemma \ref{d1=d2=d}), one finds that $D:=\frac{1}{d}X_s$ is a well-defined effective divisor of $X$, whose sheaf of ideals $\mathcal{I}:=\mathcal{O}_{X}(-D)\subset \mathcal{O}_{X}$ is invertible of order $d$, and generates the kernel of $q$.
With the help of Greenberg realization functors, one can show that the morphism $q$ is in fact \emph{pro-algebraic} in nature, and we get a short exact sequence of \emph{pro-algebraic groups} over $k$ (see \S~\ref{sec.proalg-gree} for a review on pro-algebraic groups and Greenberg realizations):
\begin{equation}\label{eq.ZPicJ}
\xymatrix{0\ar[r] & \Z/d\Z\ar[r] & \bm{\Pic^0(X)}\ar[r]^{\bm q} & \bm{ J(S)}\ar[r] & 0},
\end{equation}
where the second map is given by sending $\bar{1}\in \mathbb{Z}/d\mathbb{Z}$ to $\mathcal{O}_X(D)\in \bm{\Pic^{0}(X)}(k)=\Pic^{0}(X)$ (see Corollary~\ref{cor.finalresult} and \eqref{eq.d0}).

One of the main results of this paper shows that the morphism $q$ in \eqref{eq.morpfonsec} can be thought as an analogue of the norm map studied by Serre in his work on local class field theory \cite{SerreCFT}.
Let us first briefly review Serre's results.
Let $L/K$ be a finite Galois extension of $K$ with Galois group $\Gamma_{L/K}$, and let $U_K$ (respectively $U_L$) be the group of units of the valuation ring $\OO_K$ of $K$ (respectively $\OO_L$ of $L$).
Since the Brauer group $\mathrm{Br}(K)$ is trivial (\cite{BLR}, 8.1, p.~203), the usual norm map
\begin{equation}\label{eq.Norm}
\xymatrix{N_{L/K}\colon U_L\ar@{->>}[r] & U_K}
\end{equation}
 is surjective.
By using the Greenberg realization functors, one can show that the morphism $N_{L/K}$
 is pro-algebraic in nature. This means that $N_{L/K}$ is the morphism on $k$-rational points induced by a morphism of pro-algebraic groups:
\begin{equation}\label{eq.NormPro}
\xymatrix{\bm N_{L/K}\colon \bm{U_L}\ar@{->>}[r] & \bm{U_K}}.
\end{equation}
On the other hand, the two pro-algebraic groups in \eqref{eq.NormPro} are naturally filtered: for each $n\geq 1$, one can define a pro-algebraic subgroup $\bm{U_{K}^{n}}$ of $\bm{U_K}$ whose group of $k$-rational points is given by the group $U_{K}^{n}$ of $n$-units in $K$:
\[
\bm{U_{K}^{n}}(k)=U_{K}^{n}:=\ker\left(U_K\longrightarrow \left(\mathcal{O}_K/\pi^{n}\mathcal{O}_K\right)^{\times}\right).
\]
Hence $\bm{U_K}$ has the following filtration by pro-algebraic subgroups:
\[
\ldots \subset \bm{U_{K}^{n+1}}\subset \bm{U_{K}^{n}}\subset \ldots \subset \bm{U_{K}^{1}}\subset \bm{U_{K}^{0}}:=\bm{U_K}.
\]
Similarly, $\bm{U_L}$ has the following filtration
\[
\ldots \subset \bm{U_{L}^{n+1}}\subset \bm{U_{L}^{n}}\subset \ldots \subset \bm{U_{L}^{1}}\subset \bm{U_{L}^{0}} :=\bm{U_L}.
\]
In \cite{SerreCFT}, {3.4}, Serre proved that these two filtrations are in fact compatible with respect to the norm map \eqref{eq.NormPro}.
More precisely, for each $n\in \mathbb{Z}_{\geq 0}$, the map $\bm N_{L/K}$ in \eqref{eq.NormPro} sends $\bm{U_{L}^{\psi(n)}}$ onto $\bm{U_{K}^{n}}$, where $\psi=\psi_{L/K}\colon \mathbb{Z}_{\geq 0}\rightarrow \mathbb{Z}_{\geq 0}$ is the \emph{Herbrand function} attached to the extension $L/K$ and used to define the upper numbering of the ramification filtration of the Galois group $\Gamma_{L/K}$.

In the situation of the present article, the two pro-algebraic groups $\bm{\Pic^0(X)}$ and $\bm{J(S)}$ are also naturally filtered: for each $n\geq 1$, we define in \S~\ref{fil} a pro-algebraic subgroup $\bm{\PP^{[n]}(S)}$ of $\bm{\Pic^{0}(X)}$ (respectively $\bm{J^{[n]}(S)}$ of $\bm{J(S)}$) whose group of $k$-rational points is given by
\[
\bm{\PP^{[n]}(S)}(k)\cong \ker\left(\Pic^0(X)\longrightarrow \Pic^0(X_n)\right), \quad \left(\text{resp}.\ \  \bm{J^{[n]}(S)}(k)\cong \ker\left(J(S)\longrightarrow J(S_n)\right)\right),
\]
where $X_n$ is the closed subscheme of $X$ defined by the ideal sheaf $\mathcal{I}^{n}$, and $S_n:=\mathrm{Spec}(\OO_K/\pi^n\OO_K)$. 
The family of pro-algebraic subgroups $\{\bm{\PP^{[n]}(S)}:n\geq 1\}$ (respectively $\{\bm{J^{[n]}(S)}:n\geq 1\}$) gives a decreasing filtration of $\bm{\Pic^0(X)}$ (respectively of $\bm{J(S)}$). In order to compare these two filtrations by means of the morphism $\bm{q}$ in \eqref{eq.ZPicJ}, we define at the beginning of \S~\ref{def de psi} a numerical function $\psi$, whose value at a positive integer $n$ is the smallest integer $m\geq 1$ such that the $\OO_K$-module $\HH^{1}(X_m,\OO_{X_m})$ is of length $n$. The function $\psi$ appears to be an analogue of Serre's Herbrand function mentioned above and was introduced in \cite{Raynaud4}. With this notation, the first main result of this paper can be stated as follows
\begin{theorem*}[See \ref{resultat final} and \ref{cor.finalresult}] The morphism $\bm{q}$ in $\eqref{eq.ZPicJ}$ maps $\bm{\PP^{[\psi(n)]}(S)}$ onto $\bm{J^{[n]}(S)}$.
\end{theorem*}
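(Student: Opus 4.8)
The plan is to deduce the ``onto'' statement from two facts: the inclusion $\bm q(\bm{\PP^{[\psi(n)]}(S)})\subseteq \bm{J^{[n]}(S)}$, and an equality of codimensions that upgrades this inclusion to an equality. Write $H_n:=\bm q(\bm{\PP^{[\psi(n)]}(S)})$. By the exact sequence \eqref{eq.ZPicJ}, $\bm q$ is an epimorphism of pro-algebraic groups with finite kernel $\Z/d\Z$; since a finite kernel changes no dimension, the codimension of $H_n$ in $\bm{J(S)}$ equals the codimension of $\bm{\PP^{[\psi(n)]}(S)}$ in $\bm{\Pic^0(X)}$. Hence, once the inclusion $H_n\subseteq \bm{J^{[n]}(S)}$ is known, it will suffice to check that $\bm{\PP^{[\psi(n)]}(S)}$ and $\bm{J^{[n]}(S)}$ have the same (finite) codimension in their ambient groups and that the relevant subgroups are connected, in order to conclude $H_n=\bm{J^{[n]}(S)}$.

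I would first carry out the codimension count, as this is where the function $\psi$ does its work. On the $J$-side, the definition of the filtration gives $\bm{J(S)}/\bm{J^{[n]}(S)}\cong \mathrm{Gr}_n(J)$, the Greenberg realization of $J$ over $S_n$; since $J$ is smooth of relative dimension $1$ over $S$, this has dimension $n$, so $\bm{J^{[n]}(S)}$ has codimension $n$. On the $\Pic$-side, $\bm{\Pic^0(X)}/\bm{\PP^{[\psi(n)]}(S)}$ is identified with the image of $\Pic^0(X)$ in $\Pic^0(X_{\psi(n)})$; computing its dimension from the graded pieces of the filtration $\{\bm{\PP^{[j]}(S)}\}$ (the $\HH^0$-contributions telescoping against the Euler characteristics of the $\mathcal{I}^{j}/\mathcal{I}^{j+1}$) yields $\dim_k \HH^1(X_{\psi(n)},\OO_{X_{\psi(n)}})$. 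Here I would invoke the elementary fact that for a finite-length $\OO_K$-module $M$ one has $\dim_k M=\mathrm{length}_{\OO_K}(M)$, since $k$ is the residue field of $\OO_K$; hence this dimension is $\mathrm{length}_{\OO_K}\HH^1(X_{\psi(n)},\OO_{X_{\psi(n)}})$, which equals $n$ by the very definition of $\psi$ (see \S~\ref{def de psi}). The two codimensions therefore agree: this numerical coincidence is exactly the content the Herbrand-type function $\psi$ is designed to encode, in parallel with Serre's $\psi_{L/K}$.

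The main obstacle is the inclusion $H_n\subseteq \bm{J^{[n]}(S)}$, that is, the compatibility of the two filtrations under $\bm q$. Concretely, recalling that $q$ is given by restriction to the generic fibre followed by the N\'eron property (so $q(L)$ is the unique section of $J$ extending $[L_K]\in J_K(K)$), I must show that if $L\in \Pic^0(X)$ satisfies $L|_{X_{\psi(n)}}\cong \OO_{X_{\psi(n)}}$, then $q(L)$ reduces to the identity in $J(S_n)$. The strategy I would follow is to identify the reduction of $\bm q$ modulo $\pi^n$ with a morphism induced by restriction of line bundles to a thickening of $X_s$, and to prove that this reduction factors through $\Pic^0(X_{\psi(n)})$, so that triviality of $L$ on $X_{\psi(n)}$ immediately forces $q(L)\in \bm{J^{[n]}(S)}$. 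This factorization rests on the deformation-theoretic comparison between the infinitesimal structure of $\Pic^0(X_m)$ and that of $J(S_n)$, which I would control through the map $\mathrm{Lie}(q)$ analysed in \cite{LLR}, together with the identifications of the graded pieces $\bm{\PP^{[m]}(S)}/\bm{\PP^{[m+1]}(S)}$ with subquotients of $\HH^1(X_s,\mathcal{I}^m/\mathcal{I}^{m+1})$ and of $\bm{J^{[n]}(S)}/\bm{J^{[n+1]}(S)}$ with $\mathrm{Lie}(J_s)\cong \G_a$. Showing that these graded pieces correspond under $\bm q$ precisely along the jumps recorded by $\psi$ is, I expect, the technical heart of the proof.

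Finally I would assemble the argument. Granting the inclusion, the finiteness of $\ker(\bm q)=\Z/d\Z$ shows that $H_n$ has codimension $n$ in $\bm{J(S)}$, matching the codimension of $\bm{J^{[n]}(S)}$ from the previous paragraph. Both $\bm{J^{[n]}(S)}$ (a congruence subgroup, hence pro-unipotent and connected) and $H_n$ (the image of the connected group $\bm{\PP^{[\psi(n)]}(S)}$, hence connected) are connected, so a connected subgroup of the correct dimension contained in $\bm{J^{[n]}(S)}$ must equal it. Therefore $H_n=\bm{J^{[n]}(S)}$, i.e.\ $\bm q$ maps $\bm{\PP^{[\psi(n)]}(S)}$ onto $\bm{J^{[n]}(S)}$, as claimed. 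The surjectivity of $\bm q$ recorded in \eqref{eq.ZPicJ} is used throughout to guarantee that no dimension is lost, and the finite kernel $\Z/d\Z$, generated by $\OO_X(D)$, affects none of the codimension comparisons.
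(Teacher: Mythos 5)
Your reduction of the theorem to (a) the inclusion $\bm q(\bm{\PP^{[\psi(n)]}(S)})\subseteq \bm{J^{[n]}(S)}$ plus (b) a dimension count and a connectedness argument is sound, and parts (b) are essentially correct: the identification $\bm{J(S)}/\bm{J^{[n]}(S)}\cong \GGr_n(J)$ of dimension $n$, the identification $\bm{\Pic^0(X)}/\bm{\PP^{[\psi(n)]}(S)}\cong \GGr(\PP_{[\psi(n)]})$ of dimension $\phi(\psi(n))=n$ (Lipman's theorem \ref{Lipman} plus the definition of $\psi$), and the final step ``a quotient of the connected group $\bm{J^{[n]}(S)}$ of dimension $0$ is trivial'' reproduce, in slightly different clothing, exactly what the paper does in Corollary \ref{cor.finalresult} (where finiteness of the quotient comes from the snake lemma and the isogeny $\bm{q_n}$ rather than from a dimension count). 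Minor caveat: you quote \eqref{eq.ZPicJ} for surjectivity of $\bm q$ and finiteness of its kernel, but the pro-algebraic exactness of \eqref{eq.ZPicJ} is itself part of Corollary \ref{cor.finalresult}; to avoid circularity you should instead invoke the $k$-point statements \eqref{eq.qP} and Raynaud's description of $\ker(q)$, and note that $\bm q$ is a limit of surjections of quasi-algebraic groups, so its kernel is a pro-algebraic subgroup with $d$ points, hence finite. This is repairable.

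The genuine gap is step (a), which is not a lemma on the way to the theorem — it \emph{is} the theorem. You explicitly defer it, and the strategy you sketch (controlling the factorization through $\Pic^0(X_{\psi(n)})$ by a ``deformation-theoretic comparison'' via $\mathrm{Lie}(q)$ and the graded pieces of the two filtrations) would not succeed. The compatibility of the filtrations is governed by torsion data invisible at the level of Lie algebras or graded pieces: the jumps of $\phi$, hence the values of $\psi$, are dictated by the orders $d'p^i$ of the invertible sheaves $\mathcal{I}|_{X_m}$ and by when $\omega_m\cong\OO_{X_m}$ (Lemma \ref{ki}), and $\Pic^0_{X/S}$ is not even representable, so there is no infinitesimal theory of the morphism $q$ to lean on. What the paper actually does is quite different: it works with the rigidified Picard scheme $G=(\Pic_{X/S},Y)^0$ and the torsion module $\mathcal{M}=\R^1 p_{G,*}\mathfrak{L}$ of the Poincar\'e sheaf, proves a delicate intersection-theoretic lower bound for lengths of pullbacks $\varepsilon^*\mathcal{M}$ (Proposition \ref{intersection}, Lemma \ref{longueur 0}, Lemma \ref{pf rec}), and combines this with the key cohomological input Corollary \ref{cle} — if $\HH^1(X,\mathcal{L})$ has length $\geq n$ then $\mathcal{L}|_{X_{\psi(n)}}\cong\mathcal{I}^i|_{X_{\psi(n)}}$ — to build, by an inductive sequence of dilatations $G^{[n]}$, $G^{[n]'}$, the maps $q_n\colon \Gr(\PP_{[\psi(n)]})\to \Gr_n(J)$ whose existence is precisely the inclusion you need. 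Without an argument of this kind (or a genuine substitute that sees the orders of $\mathcal{I}|_{X_m}$), the proposal establishes only the easy half of the theorem.
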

The proof of the above theorem requires a careful analysis of the length of the torsion part of the group $\HH^{1}(X,\OO_{X})$. This length is completely determined in \ref{lemma.omega} and \ref{prop.expression}.
\medskip

A second goal of this paper is to study the short exact sequence (\ref{eq.ZPicJ}) in the framework of the duality theorems for abelian varieties. By using the short exact sequence (\ref{eq.ZPicJ}), we get, for each torsor $X_K$ of order $d$, an element of the group $\mathrm{Ext}^{1}(\bm{J(S)},\mathbb{Z}/d\mathbb{Z})$ of extensions in the category of Serre pro-algebraic groups.
More generally, considering torsors of order dividing $d$ we can actually define a natural map of \emph{sets} (see \S~\ref{sec.comparison}, first paragraph, for details):
\begin{equation}\label{eq.phid}
\Phi_d\colon {}_d\mathrm{H}^{1}_{\mathrm{fl}}(K, J_K)\longrightarrow \mathrm{Ext}^{1}(\bm{J(S)},\mathbb{Z}/d\mathbb{Z}).
\end{equation}
This construction is analogous to the one used in \cite{SerreCFT} to relate the Galois group $\Gamma_K^\ab$ of the maximal abelian extension of
the field $K$ with the fundamental group of the pro-algebraic group $\bm{U_K}$: namely, let $L/K$ be a finite \emph{abelian} extension with Galois group $\Gamma_{L/K}$, and let $\bm{V_{L}}$ be the kernel of the norm map $\bm N_{L/K}$ in \eqref{eq.NormPro}.
One then has the following short exact sequence of pro-algebraic groups
\begin{eqnarray}\label{eq.Serre}
\xymatrix{0\ar[r] & \bm{V_{L}}\ar[r]& \bm{U_{L}} \ar[r]^{\bm N_{L/K}} & \bm{U_K} \ar[r] & 0}.
\end{eqnarray}
The above sequence provides a homomorphism $\pi_1(\bm{U_K})\to \pi_0( \bm{V_{L}})$ between the fundamental group of the pro-algebraic group $\bm{U_K}$ and the group of connected components of $ \bm{V_{L}}$.
Moreover, there is a canonical isomorphism $\tau\colon \pi_0( \bm{V_{L}})\rightarrow \Gamma_{L/K}$ (cf. \cite{SerreCFT}, 2.3). Now, the push-out of the sequence (\ref{eq.Serre}) via the composition of $\tau$ with the canonical homomorphism $ \bm{V_{L}}\to \pi_0( \bm{V_{L}})$ provides an element of $\mathrm{Ext}^{1}(\bm{U_K},\Gamma_{L/K})$, hence a homomorphism $\pi_1(\bm{U_K})\to\Gamma_{L/K}$, where $\Gamma_{L/K}$ coincides with its component group because it is a finite group.
 By passing to the limit on $L$, Serre obtained a homomorphism $\theta\colon \pi_1(\bm{U_K})\to\Gamma_{K}^\ab$.
There exists then a homomorphism
\begin{equation}\label{thetastar}
\theta^{\ast}\colon \mathrm{H}^{1}(K,\mathbb{Z}/d\mathbb{Z})
{\stackrel{\sim}{\longrightarrow} }
 \mathrm{Hom}(\Gamma_K^\ab,\mathbb{Z}/d\mathbb{Z})\longrightarrow{\mathrm{Hom}(\pi_1(\bm{U_K}),\mathbb{Z}/d\mathbb{Z})}
{\stackrel{\sim}{\longleftarrow} }
 \mathrm{Ext}^{1}(\bm{U_K},\mathbb{Z}/d\mathbb{Z}),
\end{equation}
which is in fact an isomorphism (cf. \cite{SerreCFT}, 4.1). From this fact Serre deduced the main result of \cite{SerreCFT}, namely that $\theta$ is an isomorphism, and thus provided a ``geometric'' characterization of $\Gamma_{K}^\ab$.

Now, our construction of the morphism $\Phi_d$ is an analogue of Serre's construction. Hence it makes sense to ask if the map $\Phi_d$ is an isomorphism too. To answer this question, we then come to the second main result of this paper, which gives a new construction of Shafarevich's pairing using the relative Picard functor. Since this discussion also holds for abelian varieties of higher dimension, let us, more generally, consider an abelian $K$-variety $A_K$, and a torsor $X_K$ under $A_K$ of order $d$. Let $A^{\prime 0}$ be the identity component of the N\'eron model of the dual abelian variety and $\GGr(A^{\prime 0})$ its perfect Greenberg realization (see \S~\ref{sec.proalg-gree}). 
Assume $K$ of mixed characteristic. It is known (cf. \cite{Beg}, 8.2.3) that it is still possible to associate with $X_K$ an extension of the pro-algebraic group $\GGr(A^{\prime 0})$ by $\Z/d\Z$  hence, by push-out, an extension of $\GGr(A^{\prime 0})$ by $\Q/\Z$. Now, using the canonical isomorphisms ${\rm Ext}^1(\GGr(A^{\prime 0}), \Q/\Z) \stackrel{\sim}{\leftarrow}{\rm Ext}^1(\GGr(A^{\prime }), \Q/\Z)
\stackrel{\sim}{\to} {\rm Hom}(\pi_1(\GGr(A^{\prime})), \Q/\Z)$ (cf. \cite{SerrePro}, 5.4),  the above association provides an isomorphism
\begin{equation}\label{eq.sha}
 {\rm H}^1_\fl(K,A_K)\stackrel{\sim}{\longrightarrow} {\rm Hom}(\pi_1(\GGr(A^{\prime})), \Q/\Z) \quad \text{\it (Shafarevich~duality)}
\end{equation}
(cf. \cite{Beg}, 8.3.6). In general, the isomorphism in \eqref{eq.sha} was proved by Shafarevich for the prime-to-$p$ parts (\cite{Sh}, p. 96), and by Bester and the first author, respectively in \cite{Bes}, 7.1, and \cite{Ber}, Theorem 3, for $K$ of equal positive characteristic; there are also related results of Vvedenski\v\i\ on elliptic curves (\cite{V}). In the fourth section of the paper, after recalling the construction of Shafarevich's duality \eqref{eq.sha} in the case of mixed characteristic (see \S \ref{sec.beg}), we slightly modify B\'egueri's construction using rigidificators to make it work in any characteristic. We get in this way a morphism $\Xi\colon {\rm H}^1_\fl(K,A_K)\to{\rm Ext}^1(\GGr(A^{\prime 0}),\Q/\Z)$ (see Proposition~\ref{pro.sha0}). Then we construct a morphism $\Xi'$ via the relative Picard functor (see Theorem~\ref{thm.main}) and we show that $\Xi'$ always coincides with the modified B\'egueri construction $\Xi$ and thus with Shafarevich's duality \eqref{eq.sha} for $K$ of characteristic $0$. In the characteristic $p$ case, the morphism $\Xi'$ coincides with \eqref{eq.sha} on the
prime-to-$p$ parts (see Proposition~\ref{pro.shap}). The analogous result for the $p$-parts, although expected, is still open.

In the fifth section of the paper, using the canonical isomorphism $A^{\prime 0}\stackrel{\sim}{\rightarrow} J$ induced by the one in Lemma~\ref{J_K et A_K} (i), as a direct corollary of the previous study of Shafarevich's pairing, we show that the map $\Phi_d$ in \eqref{eq.phid}, only defined for $A_K$ an elliptic curve, is an injective morphism of groups (see Corollary~\ref{cor.phin}). Furthermore if $d$ is prime to $p$, or with no restriction on $d$ in the mixed characteristic case, $\Phi_d$ can be identified with the restriction of \eqref{eq.sha} to the $d$-torsion subgroups via the isomorphism ${\rm Ext}^1(\GGr(A^{\prime 0}), \Z/d\Z)\stackrel{\sim}{\to} {\rm Hom}(\pi_1(\GGr(A^{\prime})), \Z/d\Z)$. In particular $\Phi_d$ is an isomorphism. This is done by showing that, in the case of elliptic curves, the homomorphism $\pi_1(\GGr(A^{\prime}))\to \Z/d\Z$ corresponding to the short exact sequence (\ref{eq.ZPicJ}) coincides with the homomorphism associated with $X_K$ via the morphism $\Xi'$.

This paper arises from the confluence and the comparison of the results contained in the preprints \cite{Ber2} and \cite{To}. Section \ref{Herbrand} presents detailed proofs of results contained in the unpublished manuscript \cite{Raynaud4}.

\section{The Picard functor and the Greenberg functor}
In this section we recall well-known results on Picard functors and Greenberg functors. Let $S:=\mathrm{Spec}(\OO_{K})$ be the spectrum of a discrete valuation ring $\OO_{K}$, $K$ the fraction field of $\OO_{K}$, and $k$ the residue field. 
Let $\Sch/S$ denote the category of $S$-schemes and $\mathfrak{Ab}$ the category of abelian groups. By a \emph{curve over $S$} we will mean an $S$-scheme $X\rightarrow S$ whose geometric fibres are pure of dimension $1$.

\subsection{The Picard functor}\label{Rappels Pic}
Let $f\colon X\rightarrow S$ be a proper morphism of schemes. Let
\[
\mathrm{Pic}_{X/S}\colon \Sch/S\longrightarrow \mathfrak{Ab}
\]
denote the \emph{relative Picard functor of $X$ over $S$}, \emph{i.e.}, the fppf sheaf associated with the presheaf $S'\mapsto \mathrm{Pic}(X\times_S S')$. It is also the sheaf for the \'etale topology associated with the presheaf $S'\mapsto \Pic(X\times_S S')$ (\cite{Raynaud}, 1.2).

Suppose furthermore that $f$ is proper and flat.
In general the functor $\mathrm{Pic}_{X/S}$ is not representable. It is representable by an algebraic $S$-space if and only if $X/S$ is \emph{cohomologically flat in dimension $0$}, \emph{i.e.}, if the formation of the direct image $f_{\ast}\mathcal{\mathcal{O}}_X$ commutes with base change.
Even when $\mathrm{Pic}_{X/S}$ is not representable, it has a nice presentation by algebraic $S$-spaces. To see this fact, we recall the notion of rigidificator. Given a morphism of schemes $S'\rightarrow S$ and a morphism of $S$-schemes $i\colon Y\rightarrow X$, $i'\colon Y'\rightarrow X'$ will denote the base change of $i$ along $S'\rightarrow S$.

\begin{definition}[\cite{BLR} 8.1/5] \label{rigidificateur} Let
$i\colon Y\hookrightarrow X$ be a closed immersion of $S$-schemes with $Y$ finite and flat over $S$. One says that $(Y,i)$ is a \emph{rigidificator} of
$\mathrm{Pic}_{X/S}$ if the following condition holds: for any $S$-scheme $S'$, the map
 \[
\Gamma(i')\colon\Gamma(X',\mathcal{O}_{X'})\longrightarrow
\Gamma(Y',\mathcal{O}_{Y'})
\]
is injective.
\end{definition}

 In the sequel let $f\colon X\to S$ be proper and flat, and let $(Y,i)$ be a rigidificator of $\mathrm{Pic}_{X/S}$; it exists by the hypothesis on $f$ (\cite{Raynaud}, proposition 2.2.3 (c)).
For any scheme $S'$ over $S$, an \emph{invertible sheaf on $X'$ rigidified along $Y'$}, is a pair $(\mathcal{L}, \alpha)$,
where $\mathcal{\mathcal{L}}$ is an invertible sheaf on $X'$ and $\alpha\colon \mathcal{O}_{Y'}\cong i'^{\ast}\mathcal{L}$ is an
isomorphism (\emph{i.e.}, $\alpha$ is a trivialization of $i'^{\ast}\mathcal{L}$). An isomorphism between two rigidified invertible sheaves $(\mathcal{L},\alpha)$, $(\mathcal{M},\beta)$, on $X'$ is an isomorphism of $\mathcal{O}_{X'}$-modules $u\colon \mathcal{L}\rightarrow \mathcal{M}$ such that the following diagram commutes:
\[
\xymatrix{i'^{\ast}\mathcal{L}\ar[rr]^{i'^{\ast}u} & &
i'^{\ast}\mathcal{M} \\ &
\mathcal{O}_{Y'}\ar[lu]^{\alpha}\ar[ru]_{\beta} & }.
\]
Let $(\mathrm{Pic}_{X/S},Y)(S')$ denote the set of isomorphism classes of invertible sheaves on $X'$ rigidified along $Y'$.
As $S'$ varies in the category of $S$-schemes $(\mathbf{Sch}/S)$, the association $S'\mapsto
(\mathrm{Pic}_{X/S},Y)(S')$ defines a functor of abelian groups $(\mathrm{Pic}_{X/S},Y)$, called \emph{the rigidified Picard functor of $X/S$ relative to the rigidificator $Y$}.
Concerning its representability we have:
\begin{theorem}[\cite{Raynaud}, 2.3.1 \& 2.3.2] Let $f\colon X\rightarrow S$ be a proper flat morphism. Then the functor $(\mathrm{Pic}_{X/S},Y)$ is representable by an algebraic space over $S$, locally of finite presentation. Furthermore, if $dim(X/S)\leq 1$, the algebraic space $(\Pic_{X/S},Y)$ is formally smooth over $S$.
\end{theorem}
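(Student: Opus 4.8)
The plan is to verify M.\ Artin's representability criterion for the fppf sheaf $(\mathrm{Pic}_{X/S},Y)$. The decisive feature supplied by the rigidificator is that it \emph{rigidifies} the moduli problem: any automorphism of a rigidified invertible sheaf $(\mathcal{L},\alpha)$ on $X'$ is multiplication by a unit $\lambda\in\Gamma(X',\mathcal{O}_{X'})^{\times}$ compatible with $\alpha$, hence satisfying $\Gamma(i')(\lambda)=1$; the injectivity in Definition~\ref{rigidificateur} forces $\lambda=1$. Thus the groupoid of rigidified line bundles is discrete, $(\mathrm{Pic}_{X/S},Y)$ is a genuine sheaf of sets (descent for line bundles together with this rigidity gives effective fppf descent), and Artin's criterion for algebraic \emph{spaces} — rather than stacks — is the appropriate tool. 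That the functor is locally of finite presentation follows from the standard limit formalism for coherent sheaves on the proper morphism $f$ (EGA~IV, \S~8): a line bundle rigidified along the finite flat $Y$ is data of finite presentation, so the functor commutes with filtered colimits of $\mathcal{O}_S$-algebras.

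First I would set up the deformation and obstruction theory. The relevant object is the two-term complex $C^{\bullet}:=[\mathcal{O}_{X}\xrightarrow{\,i^{\#}\,}i_{\ast}\mathcal{O}_{Y}]$ placed in degrees $0$ and $1$, whose multiplicative analogue computes rigidified line bundles. For a square-zero extension $A'\to A$ with kernel $I$, the deformations of a rigidified bundle over $X_{A}$ form a torsor under $\mathbb{H}^{1}(X_{0},C^{\bullet})\otimes I$ and the obstruction to lifting lies in $\mathbb{H}^{2}(X_{0},C^{\bullet})\otimes I$. Since $Y$ is finite over $S$ one has $\HH^{1}(Y',\mathcal{O}_{Y'})=0$, so the hypercohomology spectral sequence gives a short exact sequence
\begin{equation*}
0\longrightarrow \mathrm{coker}\bigl(\HH^{0}(X_{0},\mathcal{O}_{X_{0}})\to \HH^{0}(Y_{0},\mathcal{O}_{Y_{0}})\bigr)\longrightarrow \mathbb{H}^{1}(X_{0},C^{\bullet})\longrightarrow \HH^{1}(X_{0},\mathcal{O}_{X_{0}})\longrightarrow 0
\end{equation*}
and an isomorphism $\mathbb{H}^{2}(X_{0},C^{\bullet})\cong \HH^{2}(X_{0},\mathcal{O}_{X_{0}})$. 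These are finitely generated modules whose formation is governed by the perfect complex $Rf_{\ast}\mathcal{O}_{X}$, so the tangent--obstruction theory is coherent and Schlessinger's conditions hold.

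The remaining inputs of Artin's criterion are then handled as follows. Effectivity of formal deformations is Grothendieck's existence theorem (formal GAGA for the proper $f$): a compatible system of rigidified bundles over the infinitesimal thickenings of a complete local ring algebraizes uniquely. The main obstacle is the \emph{openness of versality}: one must show that the obstruction theory commutes with base change in families, which I would deduce from the fact that $\mathbb{H}^{\bullet}(X_{0},C^{\bullet})$ is computed by a fixed perfect complex (the cone of $Rf_{\ast}\mathcal{O}_{X}\to Rf_{\ast}i_{\ast}\mathcal{O}_{Y}$) whose cohomology base-changes correctly, so that Artin's numerical estimate on versal deformations is satisfied. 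Combining these verifications yields representability of $(\mathrm{Pic}_{X/S},Y)$ by an algebraic space locally of finite presentation over $S$.

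Finally, for the smoothness assertion when $\dim(X/S)\leq 1$, the geometric fibres of $f$ are curves, so $\HH^{2}(X_{0},\mathcal{O}_{X_{0}})=0$ and hence the obstruction group $\mathbb{H}^{2}(X_{0},C^{\bullet})$ vanishes. Every square-zero lifting problem is therefore unobstructed, which is exactly the infinitesimal lifting criterion for formal smoothness of $(\mathrm{Pic}_{X/S},Y)$ over $S$.
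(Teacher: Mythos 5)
The paper offers no proof of this statement at all --- it is quoted directly from \cite{Raynaud}, 2.3.1 \& 2.3.2 --- and your outline reconstructs essentially the argument of that cited source: Artin's representability criterion applied to the rigidified functor (whose automorphisms are killed by the injectivity condition in the definition of rigidificator), with deformation and obstruction theory governed by the two-term complex $[\mathcal{O}_{X}\to i_{*}\mathcal{O}_{Y}]$, and formal smoothness in relative dimension $\leq 1$ obtained from the vanishing of the $\mathrm{H}^{2}$ obstruction group. Your proposal is therefore correct in approach and substance, and there is no discrepancy with the (cited) proof to report.
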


One has a canonical morphism of sheaves of groups $r\colon (\mathrm{Pic}_{X/S},Y)\rightarrow \mathrm{Pic}_{X/S}$, that forgets the rigidification. \'Etale locally on $S'$, any element in $\mathrm{Pic}_{X/S}(S')$ is represented by an invertible sheaf on $X'$ such that its pull-back to $Y'$ is trivial (this is possible since $Y$ is finite).
Hence $r$ is an epimorphism for the \'etale topology.
To study the kernel of $r$, let $V_{X}^{\ast}$ (respectively $V_{Y}^{\ast}$) denote the fppf sheaf on $\mathbf{Sch}/S$, given by $S'\mapsto \Gamma(X', \OO_{X'})^{\ast}$ (respectively by $S'\mapsto \Gamma(Y',\OO_{Y'})^{\ast})$.
These sheaves are representable by $S$-schemes (cf. \cite{Raynaud}, 2.4.0).
By definition of rigidificator the natural map
$V_{X}^{\ast}\rightarrow V_{Y}^{\ast}$ is injective.
Let $u\colon V_{Y}^{\ast}\rightarrow (\Pic_{X/S},Y)$ be the map defined as follows on $S'$-sections, $S'$ an $S$-scheme:
\[
 a\in V_{Y}^{\ast}(S')=\Gamma(Y_{S'},\OO_{Y_{S'}}^{\ast})\longmapsto
(\OO_{X_{S'}},\alpha_a)\in (\Pic_{X/S},Y)(S')
\]
where $\alpha_{a}\colon \OO_{Y_{S'}}\rightarrow
\OO_{Y_{S'}}=\OO_{X_{S'}|_{Y_{S'}}}$ is the multiplication by
$a$. Clearly $\mathrm{im}(u)\subset \ker(r)$ and thus one obtains a complex of fppf sheaves over $S$:
\begin{eqnarray}\label{eq.picrig}
0\longrightarrow V_{X}^{\ast}\longrightarrow V_{Y}^{\ast}\stackrel{u}{\longrightarrow}
\left(\mathrm{Pic}_{X/S},Y\right)\stackrel{r}{\longrightarrow}
\mathrm{Pic}_{X/S}\longrightarrow 0,
\end{eqnarray}
which is exact for the \emph{\'etale} topology (\cite{Raynaud}, 2.1.2(b),
2.4.1).

We assume for the remainder of the section that the discrete valuation ring $\mathcal{O}_{K}$ is strictly henselian \emph{with algebraically closed residue field}. In particular, the Brauer group $\mathrm{Br}(K)$ is trivial.
 Let $f\colon X\rightarrow S$ be a proper and flat curve with geometrically connected fibres.
Denote by $\mathrm{P}$ (respectively by $(\mathrm{P},Y)$) the open subfunctor of $\mathrm{Pic}_{X/S}$ (respectively of $(\mathrm{Pic}_{X/S},Y)$) consisting of invertible sheaves of total degree $0$ (respectively of invertible sheaves of total degree $0$ rigidified along $Y$).
Then $(\mathrm{P},Y)$ is an open algebraic subspace of $(\mathrm{Pic}_{X/S},Y)$, and $\mathrm{P}$ is the schematic closure of $\left(\mathrm{Pic}_{X/S}\right)_{K}^{ 0}$ in $\mathrm{Pic}_{X/S}$, while $(\mathrm{P},Y)$ is the schematic closure of $(\mathrm{Pic}_{X/S},Y)_{K}^{0}$ in $ (\mathrm{Pic}_{X/S},Y)$.
Denote by $E$ the schematic closure of the unit section of $\mathrm{P}_{K}$ in $\mathrm{P}$, and define $\mathcal{Q}$ as the fppf quotient of $\mathrm{P}$ by $E$. It is the biggest separated quotient of $\mathrm{P}$.
It is representable by a separated group scheme over $S$ (\cite{Raynaud}, 3.3.1). Denote by $q$ the canonical map
\begin{equation}\label{eq.defdeq}
q\colon \PP\longrightarrow \mathcal{Q};
\end{equation}
it is surjective for the fppf topology, and it induces an isomorphism on generic fibres since $E_K$ is the unit section of $\PP_K = \Pic_{X_K/K}^{0}$.

\begin{theorem}[\cite{LLR}, 3.7]\label{Neron} Let $f\colon X\to S$ be a proper and flat curve with $X$ regular and $f_{\ast}\mathcal{O}_{X}=\mathcal{O}_S$. Then the group scheme $\mathcal{Q}/S$ is the N\'eron model of
$\mathrm{P}_{K}=\mathrm{Pic}^{ 0}_{X_{K}/K}$.
\end{theorem}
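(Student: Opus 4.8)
The plan is to show that $\mathcal{Q}$ is a smooth, separated $S$-group scheme of finite type whose generic fibre is $\mathrm{Pic}^{0}_{X_K/K}$, and then to verify the Néron mapping property directly. Three of these properties come almost for free. The identification of the generic fibre is already recorded, since $q$ induces an isomorphism $\mathcal{Q}_K\xrightarrow{\sim}\mathrm{P}_K=\mathrm{Pic}^{0}_{X_K/K}$ because $E_K$ is the unit section; separatedness is part of the representability result (\cite{Raynaud}, 3.3.1). For smoothness I would use that $(\mathrm{Pic}_{X/S},Y)$ is formally smooth over $S$ (the representability theorem above, valid as $\dim(X/S)\le 1$): the \'etale-exact sequence \eqref{eq.picrig} exhibits $\mathrm{P}$ as the quotient of the formally smooth $(\mathrm{P},Y)$ by the smooth $S$-group $V_Y^{\ast}/V_X^{\ast}$, so $\mathrm{P}$ is formally smooth, and since $E$ is $S$-flat (a schematic closure over a discrete valuation ring), the fppf quotient $\mathcal{Q}=\mathrm{P}/E$ is again formally smooth; being locally of finite presentation over $S$ it is smooth.

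For the mapping property, let $T\to S$ be smooth and let $u_K\colon T_K\to\mathcal{Q}_K$ be a $K$-morphism. Uniqueness of an extension $u\colon T\to\mathcal{Q}$ is immediate: $\mathcal{Q}$ is separated and $T_K$ is schematically dense in $T$ (as $T\to S$ is flat), so any two extensions agreeing on $T_K$ coincide. Because extensions are unique they glue, so I may argue \'etale-locally on $T$. Via the isomorphism $\mathcal{Q}_K\cong\mathrm{P}_K$, the morphism $u_K$ is a $T_K$-point of $\mathrm{Pic}^{0}_{X_K/K}$, and after replacing $T$ by an \'etale cover I may assume it is represented by an honest line bundle $L_K$ on $X_K\times_K T_K$ of total relative degree $0$; this is possible since $\mathrm{Pic}_{X/S}$ is the \'etale sheafification of $S'\mapsto\mathrm{Pic}(X_{S'})$.

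The heart of the argument, and essentially the only place the regularity of $X$ enters, is the extension of $L_K$ across the special fibre. Writing $X_T:=X\times_S T$, the projection $X_T\to X$ is smooth, so $X_T$ is regular, hence locally factorial, and $\mathrm{Pic}(X_T)=\mathrm{Cl}(X_T)$. The complement of the open subscheme $X_K\times_K T_K$ in $X_T$ is the special fibre, a divisor, so the restriction $\mathrm{Pic}(X_T)\to\mathrm{Pic}(X_K\times_K T_K)$ is surjective: one takes the Weil-divisor class of $L_K$, forms its Zariski closure, and reinterprets it as a Cartier divisor on the regular scheme $X_T$. Let $L$ be a resulting extension. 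Its total degree on the fibres of $X_T\to T$ is locally constant and vanishes over the dense open $T_K$, hence vanishes identically, so $L$ defines a $T$-point of the total-degree-$0$ subfunctor $\mathrm{P}=\mathrm{Pic}^{0}_{X/S}$. Composing with $q\colon\mathrm{P}\to\mathcal{Q}$ yields an extension of $u_K$ over the \'etale cover, and I would then descend it: over the fibre products any two restrictions of this section agree on their dense generic fibres and $\mathcal{Q}$ is separated, so the descent datum is automatic and produces the required $u\colon T\to\mathcal{Q}$.

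The main obstacle is precisely this extension step, together with controlling the choices it involves. The extension $L$ is far from unique (it may be altered by divisors supported on $X_s\times_k T_s$), and different \'etale-local lifts of $u_K$ differ by line bundles pulled back from the base; what saves the construction is the passage to the separated quotient. Any two such alternatives give $T$-points of $\mathrm{P}$ that are trivial on the flat $T_K$, hence factor through the schematic closure $E=\ker(q)$ of the unit, so they become equal in $\mathcal{Q}$. This is exactly what makes both the well-definedness and the gluing work, and it is the structural reason the \emph{separated} quotient, rather than $\mathrm{P}$ itself, is the correct object. A final technical point is finite type of $\mathcal{Q}$: having established smoothness, separatedness, local finiteness of presentation (inherited from $\mathrm{P}$) and the mapping property, $\mathcal{Q}$ is the N\'eron lft-model of the abelian variety $\mathrm{Pic}^{0}_{X_K/K}$, and such a model is automatically of finite type (its component group is finite), so $\mathcal{Q}$ is the N\'eron model in the strict sense.
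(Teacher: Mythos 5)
The paper itself gives no proof of this statement: it is quoted verbatim from \cite{LLR}, Theorem 3.7 (which in turn extends Raynaud's earlier theorem, proved under a cohomological flatness hypothesis). So your attempt must be measured against the standard argument. Your preliminary reductions are fine: the identification $\mathcal{Q}_K\cong \mathrm{P}_K$, separatedness, uniqueness of extensions via schematic density of $T_K$ in a flat $T$, the smoothness sketch through the rigidified functor, and the final descent step are all essentially correct.

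The genuine gap is the sentence ``after replacing $T$ by an \'etale cover I may assume it is represented by an honest line bundle $L_K$''. The sheaf property of $\Pic_{X_K/K}$ produces an \'etale cover of $T_K$ trivializing the obstruction, \emph{not} an \'etale cover of $T$; and covers of the form $T'_K$, with $T'\to T$ \'etale surjective, are far from cofinal among \'etale covers of $T_K$ (they cannot introduce any ramification along the special fibre of $T$). Concretely, the obstruction to representing $u_K$ by a line bundle is a class in $\mathrm{Br}(T_K)$, and while $\mathrm{Br}(K)=0$, the group $\mathrm{Br}(T_K)$ is in general nonzero for smooth $T/S$: for instance, if $T$ is an elliptic scheme over $S$ with generic fibre $T_K$, then $\mathrm{Br}(T_K)\cong \HH^{1}(K,T_K(\bar{K}))$, a Weil--Ch\^atelet group which (by the very duality results of this paper) is enormous; such classes need not die on any $T'_K$ with $T'\to T$ \'etale. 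This is exactly why no known proof verifies the mapping property directly for arbitrary smooth $T$. The standard route (Raynaud 8.1.4, \cite{BLR} 9.5/4, \cite{LLR} 3.7) proves only the surjectivity of $\mathcal{Q}(S)\to \mathcal{Q}_K(K)$ --- over $S$ itself the triviality of $\mathrm{Br}(K)$ makes your line-bundle-plus-regularity argument work verbatim --- and then invokes the Weil extension theorem criterion (\cite{BLR}, 7.1/1): a smooth separated group scheme of \emph{finite type} over a strictly henselian discrete valuation ring is a N\'eron model of its generic fibre as soon as its $S$-sections surject onto the $K$-sections. That criterion is the genuinely hard ingredient; it is what extends the map across the codimension-one points of $T$ lying in $T_s$, where residue fields are no longer algebraically closed and the Brauer-vanishing argument is unavailable. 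Your proof supplies no substitute for it. A secondary issue: finite-typeness of $\mathcal{Q}$ is an \emph{input} to that criterion, not something to be deduced afterwards, and your derivation of it (an lft N\'eron model of an abelian variety is of finite type) presupposes that $\Pic^0_{X_K/K}$ is an abelian variety, i.e.\ that $X_K$ is smooth --- which regularity of $X$ does not guarantee when $K$ is imperfect; in \cite{LLR} it is obtained instead by exhibiting $\mathcal{Q}$ as a quotient of the finite-type group scheme $(\Pic_{X/S},Y)^0$.
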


Suppose then $X$ regular. Denote by $J:=\mathcal{Q}^{0}$ the identity component of $\QQ$, and by $\Pic^{0}_{X/S}$ the subsheaf of $\PP\subset \Pic_{X/S}$ of invertible sheaves of degree $0$ on each irreducible component of $X$ (for the definition of $F^{0}$ when $F$ is a sheaf on $S$, see \cite{Raynaud}, 3.2 (d)).
 Since the functor $\Pic_{X/S}^{0}$ has connected fibres, the restriction to $\Pic_{X/S}^{0}$ of $q$ in \eqref{eq.defdeq} factors through the identity component $J$ of $\mathcal{Q}$.
By abuse of notation, the same letter $q$ will denote the induced map:
\begin{equation}\label{eq.SurPiczero}
q\colon \Pic_{X/S}^{0}\longrightarrow J .
\end{equation}
Finally, since $\mathrm{Br}(K)=0$, the following map
\begin{equation*}
\xymatrix{(\PP,Y)^{0}(S)=(\Pic_{X/S},Y)^{0}(S)\ar@{->>}[r] & J(S)},
\end{equation*}
induced by the morphism $r$ in \eqref{eq.picrig} is surjective (see the proof of 9.6/1 of \cite{BLR}); hence so too is the morphism (again denoted by $q$) induced by \eqref{eq.SurPiczero}:
\begin{equation}\label{eq.qP}
q=q(S)\colon \xymatrix {\PP^{0}(S)=\Pic^{0}_{X/S}(S)\ar@{->>}[r]& J(S)}.
\end{equation}

\subsection{Pro-algebraic groups and Greenberg functor}\label{sec.proalg-gree}
From now on, we suppose that the discrete valuation ring $\mathcal{O}_{K}$ is complete with algebraically closed residue field $k$.
In the following, \emph{a pro-algebraic group over $k$} is a pro-object in the category of $k$-group schemes of finite type (see \cite{Oo}, I.4). This notion does not coincide with the notion of pro-algebraic groups introduced by Serre in \cite{SerrePro} (\S~2.1 D\'efinition~1), where the author considers the category of $k$-group schemes, but up to purely inseparable morphisms. Since we use both categories, we call the objects of the latter abelian category (see \cite{SerrePro}, \S 2.4, Proposition~7) Serre pro-algebraic groups and denote them with bold letters.

Let $G$ be a smooth group scheme of finite type over $S$.
The Greenberg functors allow us to construct a pro-algebraic group over $k$, associated with $G$, whose group of $k$-points is $G(S)$.
Let us recall the construction.
Denote by $W$ the ring of Witt vectors of $k$ and by $\mathbb{W}$ the Witt functor on the category of $k$-algebras $\mathbf{Alg}/k$.
Let $n\in {\mathbb Z}_{\geq 1}$ and $\OO_{K,n}:=\OO_{K}/\pi^{n}\OO_K$. Then $\OO_{K,n}$ is canonically a $W$-module of finite length.
Let $\mathbb{R}_n$ be its associated Greenberg algebra (see \cite{Lipman}, Appendix~A), which is by definition the fpqc sheaf on $\mathbf{Alg}/k$ associated with the pre-sheaf: ${\mathsf A}\mapsto \mathcal{O}_{K,n}\otimes_W \mathbb{W}({\mathsf A})$.
One defines then $\mathrm{Gr}_n(G)$ as the sheaf on $\mathbf{Alg}/k$ given by ${\mathsf A}\mapsto G(\mathbb{R}_{n}({\mathsf A}))$.
It is representable by a smooth $k$-group scheme of finite type (\cite{Gree}, Proposition~7, and \cite{Gree2}, Corollary~1).
For any $n\geq 2$, the canonical map $\OO_{K,n}\rightarrow \OO_{K,n-1}$ induces a smooth morphism of $k$-group schemes $\alpha_{n}\colon \Gr_{n}(G)\rightarrow \Gr_{n-1}(G)$, whose kernel is a connected unipotent $k$-group scheme.
 Furthermore the canonical map $G(\OO_{K,n})\rightarrow \mathrm{Gr}_n(G)(k)$ is an isomorphism. Thanks to this identification the morphism $\alpha_n(k)\colon \Gr_{n}(G)(k)\rightarrow \Gr_{n-1}(G)(k)$ is identified with the canonical morphism $G(\OO_{K,n})\rightarrow G(\OO_{K,n-1})$.
The algebraic $k$-groups $\Gr_n(G)$ form then a projective system $\{(\Gr_n(G),\alpha_n)\}_{n\geq 1}$ of algebraic $k$-groups with smooth
transition maps having connected kernels.
We will denote the latter projective system by $\Gr(G)$ and call it the \emph{Greenberg realization} of $G$.

{As} explained in \cite{Mi}, III \S~4, the perfect group schemes $\GGr_n(G)$ associated with the $k$-group schemes $\Gr_n(G)$ are quasi-algebraic groups in the sense of \cite{SerrePro}, 1.2, and hence the projective system $\GGr(G):= \{(\GGr_n(G),\alpha_n)\}_{n\geq 1}$ determines a
pro-algebraic group in the sense of Serre; we will call it the \emph{perfect Greenberg realization} of $G$. The group of $k$-rational points of $\GGr(G)$ is $G(S)$. For this reason sometimes $\GGr(G)$ is denoted by $\bm{G(S)}$.

Observe that $\GGr(G)$ is defined for any smooth $S$-scheme, but it may not be a Serre pro-algebraic group, since its component group may not be profinite: for example, consider as $G$ the lft N\'eron model of $\G_{m,K}$ (\cite{BLR}, 10.1/5); then $\GGr(G)$ is the projective limit of perfect $k$-schemes having component group $\Z$. 
The functor $\GGr$ is exact on smooth $S$-group schemes (see \cite{Bes}, Lemma 1.1).

{Recall that the identity component $\bm G^0$ of a Serre pro-algebraic group $\bm G$ is defined as the smallest closed subgroup $\bm G'$ of $\bm G$ such that $\bm G/\bm G'$ has dimension zero (\cite{SerrePro}, 5.1/1)}. In the category of Serre pro-algebraic groups, the component group functor $\pi_0$, which maps $\bm G$ to $\bm G/\bm G^0$, admits a left derived functor $\pi_1$ which is left exact.
We list below some well-known facts used in this paper. By simply assuming them, the reader unfamiliar with the theory of Serre pro-algebraic groups will be able to follow the proofs.

\begin{itemize}
\item[(i)] If $Y$ is a smooth $S$-group scheme of finite type, then $\GGr(Y^0)\cong \GGr(Y)^0$, $\pi_0(\GGr(Y))$ is isomorphic to the component group of the special fibre $Y_s$, and $\pi_1(\GGr(Y))$ is a profinite group.
\item[(ii)] If $\bm P$ is a Serre pro-algebraic group and $\bm P^0$ is its identity component, then $\pi_1(\bm P)\cong \pi_1(\bm P^0)$.
\item [(iii)] A complex of smooth $S$-group schemes of finite type $0\to Y_1\to Y_2\to Y_3\to 0$ which is exact on $S$-sections provides a long exact
sequence of profinite groups (cf. \cite{SerrePro}, 10.2/1)
\begin{multline*}0\to \pi_1(\GGr( Y_1))\to \pi_1(\GGr(Y_2))\to \pi_1(\GGr( Y_3))\to \\
\to \pi_0(\GGr (Y_1))\to \pi_0(\GGr (Y_2))\to \pi_0(\GGr( Y_3))\to 0.
\end{multline*}
\end{itemize}

Let $f\colon Z\rightarrow S$ be a proper morphism of schemes such that $Z$ is of dimension $1$ with $f(Z)=\{s\}$. Note that the morphism $f$ then factors through some $S_n:=\spec(\OO_{K,n})\hookrightarrow S$. For instance, when $X/S$ is a proper flat curve, one can take $Z=X\times_S S_n$ viewed as an $S$-scheme. For $S'$ an $S$-scheme, let $\Pic^0(Z \times_{S}S')$ denote the subgroup of $\Pic(Z \times_{S}S')$ consisting of line bundles $\mathcal{L}$ on $Z\times_S S'$ such that, for any point $s'\in S'$ above $s\in S$, the restriction $\mathcal{L}|_{Z\times_{S}\{s'\}}$ is of degree $0$ on each irreducible component of $Z\times_{S} \{s'\}$. Then we define $\Pic_{Z/S}^{0}$ to be the associated fppf sheaf on $S$ of the following functor
\[
\Sch/S\longrightarrow \mathfrak{Ab},\qquad S'\mapsto \Pic^0(Z\times_S S').
\]
As the map $f$ is never flat, the Picard functors $\Pic_{Z/S}$ and $\Pic^{0}_{Z/S}$ are not representable. However, as shown by Lipman in \cite{Lipman}, the Greenberg realization of the sheaf $\Pic_{Z/S}$ (respectively of $\Pic^{0}_{Z/S}$) is represented by a smooth $k$-group scheme. More precisely, assuming that the morphism $f$ factors through $i_n\colon S_n=\mathrm{Spec}(\OO_{K,n})\hookrightarrow S$, we define $\Gr(\Pic_{Z/S})$ (respectively $\Gr(\Pic^{ 0}_{Z/S})$) to be the fppf sheaf associated with the presheaf (\cite{Lipman}, 1.8)
\[
\mathbf{Alg}/k\longrightarrow \mathfrak{Ab}, \qquad {\mathsf A} \mapsto
\Pic_{Z/S}(\mathbb{R}_n(\mathsf A)) \quad \left(\text{resp.} \quad {\mathsf A}\mapsto
\Pic^{0}_{Z/S}(\mathbb{R}_n(\mathsf A))\right)
\]
where, as above, $\mathbb{R}_n$ denotes the Greenberg algebra associated with $\mathcal{O}_{K,n}$.
 Note that the definition of $\mathrm{Gr}(\Pic_{Z/S})$ does not depend on the choice of the integer $n$ since $\mathrm{Gr}(\Pic_{Z/S})$ is isomorphic to the fpqc associated associated with the presheaf ${\mathsf A}\mapsto \Pic(Z\otimes_W \mathbb{W}(\mathsf A))$ (\cite{Lipman}~1.1 \& 1.8); the same proof shows that $\Gr(\Pic^{0}_{Z/S})$) is also independent of $n$.

\begin{theorem}[\cite{Lipman}]\label{Lipman} Let $f\colon Z\rightarrow S$ be as above. Then the functor $\Gr(\Pic_{Z/S})$
(respectively $\Gr(\Pic^{ 0}_{Z/S})$) is representable by a smooth $k$-group scheme (respectively by a smooth and connected $k$-group scheme),
whose dimension is equal to the length of the $W$-module $\HH^{1}(Z,\OO_{Z})$. Furthermore, the canonical morphisms
\[
\Pic(Z)\longrightarrow \Gr(\Pic_{Z/S})(k),\quad \quad
\Pic^{ 0}(Z)\longrightarrow \Gr(\Pic^{ 0}_{Z/S})(k)
\]
are isomorphisms.
\end{theorem}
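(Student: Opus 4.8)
The plan is to exploit the vanishing of $\HH^{2}$ on the one–dimensional scheme $Z$, together with the rigidificator presentation \eqref{eq.picrig}, and to compute everything by dévissage along the $\pi$–adic filtration of $Z$. Throughout I identify $\Gr(\Pic_{Z/S})$ with the fpqc sheaf associated with $\mathsf A\mapsto \Pic(Z\otimes_{W}\mathbb{W}(\mathsf A))$ and write $Z_{\mathsf A}:=Z\otimes_{\OO_{K,n}}\mathbb{R}_n(\mathsf A)$; since $f$ factors through $S_n$ one has $Z=Z\otimes_{\OO_{K,n}}\OO_{K,n}$, so that $\Gr(\Pic_{Z/S})(k)=\Pic_{Z/S}(\OO_{K,n})$ and $\Gr(\Pic_{Z/S})(\mathsf A)=\Pic(Z_{\mathsf A})$ up to sheafification.

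For representability and smoothness I would argue by induction on $n$. Put $Z_m:=Z\otimes_{\OO_{K,n}}\OO_{K,m}$ for $1\le m\le n$; each closed immersion $Z_m\hookrightarrow Z_{m+1}$ is a square–zero thickening whose ideal $\mathcal{J}_m=\pi^{m}\OO_{Z_{m+1}}$ is a coherent sheaf supported on $Z_1$. In the base case $m=1$ the scheme $Z_1$ is a proper curve over the field $k$, hence flat over $k$, so the rigidificator theorem of Raynaud (\cite{Raynaud}, 2.3.1 \& 2.3.2) applies and shows that $(\Pic_{Z_1/k},Y)$ is representable by a formally smooth algebraic space; since Greenberg realization carries the étale–exact sequence \eqref{eq.picrig} to an exact sequence and preserves smoothness of group schemes (\cite{Bes}, Lemma~1.1; \cite{Gree}), it realizes $\Gr(\Pic_{Z_1/S})$ as the fppf quotient $\Gr((\Pic_{Z_1/k},Y))/\Gr(V_{Y}^{\ast})$, a smooth $k$–group scheme. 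For the inductive step I apply the exponential sequence
\[
0\longrightarrow \mathcal{J}_m\stackrel{1+(\cdot)}{\longrightarrow}\OO_{Z_{m+1}}^{\ast}\longrightarrow \OO_{Z_m}^{\ast}\longrightarrow 0
\]
in families over $\mathbb{R}_n(\mathsf A)$. Because $Z$ has dimension $1$ one has $\HH^{2}(Z_{\mathsf A},-)=0$, so the long exact cohomology sequence yields, after Greenberg realization, a short exact sequence of $k$–group schemes
\[
0\longrightarrow \Gr(\mathcal{H}_m)\longrightarrow \Gr(\Pic_{Z_{m+1}/S})\longrightarrow \Gr(\Pic_{Z_m/S})\longrightarrow 0 ,
\]
where $\mathcal{H}_m$ is the cokernel of the unit contribution inside $\HH^{1}(Z_1,\mathcal{J}_m)$. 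The Greenberg realization of such a coherent cohomology group is a smooth connected unipotent $k$–group, so as an extension $\Gr(\Pic_{Z_{m+1}/S})$ is again smooth, closing the induction. The same dévissage, started from the connected group $\Gr(\Pic^{0}_{Z_1/k})$ and using that $\Pic^{0}_{Z/S}$ has connected fibres, shows that $\Gr(\Pic^{0}_{Z/S})$ is smooth and connected.

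For the dimension I would pass to Lie algebras. As $\Gr(\Pic_{Z/S})$ is smooth, its dimension equals $\dim_k$ of its tangent space at the origin; differentiating the exponential sequence (and using again $\HH^{2}=0$, so that deformations of the trivial bundle are unobstructed) identifies this tangent space with that of the Greenberg realization of the finite–length $W$–module $\HH^{1}(Z,\OO_{Z})$. Finally the Greenberg realization of the additive group of a $W$–module $M$ of finite length is a smooth connected unipotent $k$–group of dimension $\mathrm{length}_{W}(M)$, as one checks on the composition factors $M\cong k$, whose realization $\mathsf A\mapsto \mathbb{W}(\mathsf A)/p\,\mathbb{W}(\mathsf A)$ is one–dimensional. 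Hence $\dim\Gr(\Pic_{Z/S})=\mathrm{length}_{W}\HH^{1}(Z,\OO_{Z})$, and the same value holds for the connected group $\Gr(\Pic^{0}_{Z/S})$.

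It remains to identify the $k$–points. One has $\Gr(\Pic_{Z/S})(k)=\Pic_{Z/S}(\OO_{K,n})$, and $\OO_{K,n}$ is Artinian local with algebraically closed, hence separably closed, residue field $k$, so it is strictly henselian; therefore $\Pic(\OO_{K,n})=0$ and $\Br(\OO_{K,n})=\Br(k)=0$. The comparison sequence relating $\Pic(Z)$, the sheafified value $\Pic_{Z/S}(\OO_{K,n})$ and $\Br(\OO_{K,n})$ then gives $\Pic(Z)\stackrel{\sim}{\to}\Pic_{Z/S}(\OO_{K,n})=\Gr(\Pic_{Z/S})(k)$, and restricting to fibrewise degree–$0$ classes yields $\Pic^{0}(Z)\stackrel{\sim}{\to}\Gr(\Pic^{0}_{Z/S})(k)$. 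The main obstacle is the representability in the inductive step: one must know that the Greenberg realizations of the coherent cohomology functors $\mathsf A\mapsto \HH^{1}(Z_{\mathsf A},\mathcal{J}_{m,\mathsf A})$ are genuinely representable by smooth $k$–group schemes in this non–flat, Artinian setting, and that the rigidified Picard functor deforms without obstruction along the square–zero steps. This is precisely the technical core of Lipman's analysis; controlling the Witt–twisted cohomology there—rather than the dimension count or the description of the $k$–points, which are formal once representability is granted—is where the real work lies.
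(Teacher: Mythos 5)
Your overall plan --- defer the core representability statements to Lipman and treat the dimension count and the $k$-points as consequences --- matches what the paper itself does for the functor $\Gr(\Pic_{Z/S})$: its proof of that half is literally the citation of Theorem 1.2, (1.8), Corollary 8.6(a) and Theorem 9.1 of \cite{Lipman}. The gap is that your reduction ``modulo Lipman'' is not a reduction to those quotable end-results. Each step of your $\pi$-adic d\'evissage needs (a) exactness of the exponential sequence after the non-flat, Witt-twisted base change $-\otimes_W\mathbb{W}(\mathsf A)$, and (b) representability of the fppf sheaf $\mathsf A\mapsto \HH^1(Z_{\mathsf A},\mathcal{J}_{m,\mathsf A})$ by a smooth connected unipotent $k$-group; likewise your tangent-space computation needs Lipman's description of the Lie algebra (his Theorem 8.1) and his Appendix A analysis of Greenberg algebras of finite-length $W$-modules (the sheaf $\mathsf A\mapsto \mathbb{W}(\mathsf A)/p\mathbb{W}(\mathsf A)$ being ``one-dimensional'' is itself a sheafification statement, not an inspection). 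None of this follows from the representability of $\Gr(\Pic_{Z/S})$ --- Lipman proves that theorem precisely by this d\'evissage --- so invoking ``Lipman's analysis'' at the inductive step is invoking the interior of the proof you are supposed to supply, and, as you concede yourself, the argument never closes.

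This matters because the only portion of Theorem \ref{Lipman} that the paper proves rather than cites is the passage from $\Pic$ to $\Pic^0$, and that is exactly where your sketch is thinnest: ``the same d\'evissage \dots\ shows smooth and connected'' inherits all of the unproven machinery, and ``restricting to fibrewise degree-$0$ classes'' is not an argument for $\Pic^0(Z)\stackrel{\sim}{\to}\Gr(\Pic^0_{Z/S})(k)$. The paper's device avoids any new Greenberg theory: it passes to $Z'=Z_{\red}$, which, being reduced, is a genuine $k$-scheme, so that $\Gr(\Pic^0_{Z'/S})\cong \Pic^0_{Z'/k}$ needs no realization theory; it uses the cartesian description $\Pic^{0}_{Z/S}\cong \Pic_{Z/S}\times_{\Pic_{Z'/S}}\Pic^{0}_{Z'/S}$ and its realized form \eqref{eq.cartforGr}; and it proves (Proposition \ref{pro.GrePicKer}) that $u\colon \Gr(\Pic_{Z/S})\to\Gr(\Pic_{Z'/S})$ is an epimorphism with smooth connected unipotent kernel. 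Representability, smoothness, connectedness, the identification of $\Gr(\Pic^0_{Z/S})$ with $\Gr(\Pic_{Z/S})^{0}$, and bijectivity on $k$-points then all fall out of the resulting diagram. Separately, your $k$-points step contains a fixable slip: the comparison sequence involving $\mathrm{Br}(\OO_{K,n})$ presupposes $f_*\G_m=\G_m$, i.e.\ cohomological flatness, which is precisely what fails for $Z/S$; the correct argument is that $S_n$ is strictly henselian, so all higher \'etale cohomology of every sheaf (in particular of $f_*\G_m$, whatever it is) vanishes, and the Leray spectral sequence gives $\Pic(Z)\stackrel{\sim}{\to}\Pic_{Z/S}(S_n)$ directly; one must also note that the outer fppf sheafification over $k$ does not alter sections over the algebraically closed field $k$.
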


\begin{proof} The statements for $\Gr(\Pic_{Z/S})$ are obtained by combining Theorem 1.2, (1.8), Corollary~8.6~(a) and Theorem~9.1 of \cite{Lipman}. To deduce the corresponding statements for $\Gr(\Pic^{0}_{Z/S})$, let $Z':=Z_{\mathrm{red}}$ be the maximal reduced closed subscheme of $Z$. The canonical map $Z'\hookrightarrow Z$ induces a morphism between the Picard functors $\Pic_{Z/S}\rightarrow \Pic_{Z'/S}$, hence also a morphism $u\colon \Gr(\Pic_{Z/S})\rightarrow \Gr(\Pic_{Z'/S})$ between the Greenberg realizations of Picard functors. By Proposition~\ref{pro.GrePicKer} $u$ is an epimorphism of smooth $k$-group schemes with $\ker(u)$ a smooth connected unipotent group. On the other hand, by definition, the canonical morphism of sheaves
\begin{equation*}\label{eq.pic3}
\Pic^{0}_{Z/S}\longrightarrow \Pic_{Z/S}\times_{\Pic_{Z'/S}}\Pic^{0}_{Z'/S}
\end{equation*}
is an isomorphism, from whence we deduce a similar isomorphism between the Greenberg realizations:
\begin{equation}\label{eq.cartforGr}
\Gr(\Pic^{0}_{Z/S}) \stackrel{\sim}{\longrightarrow}\Gr(\Pic_{Z/S})\times_{\Gr(\Pic_{Z'/S})}\Gr(\Pic^{0}_{Z'/S}).
\end{equation}
Consequently, the canonical morphism $u^0\colon \Gr(\Pic^{0}_{Z/S})\rightarrow \Gr(\Pic^{0}_{Z'/S})$ is an epimorphism of fppf sheaves on $\mathrm{Spec}(k)$, with kernel isomorphic to $\ker(u)$. 
Now, since $Z'$ is reduced, the canonically map $Z'\to S$ factors through $i_1\colon \mathrm{Spec}(k)\hookrightarrow S$, and $\Gr(\Pic_{Z'/S}^{0})\cong \Pic^{0}_{Z'/k}$ (cf. \cite{Lipman}, p.~29, last paragraph) is representable by a smooth connected $k$-group scheme. Therefore, the sheaf $\Gr(\Pic^{0}_{Z/S})$ is representable and, furthermore, it is isomorphic to $\mathrm{Gr}(\Pic_{Z/S})^{0}$ because $u^0$ is an epimorphism, with $\ker(u^0)\cong \ker(u)$ a connected affine $k$-group scheme. Finally, consider the following commutative diagram
\[
\xymatrix{\Pic^0(Z)\ar[r]\ar[d] & \Gr(\Pic_{Z/S}^{0})(k)\ar[d] \\ \Pic(Z)\times_{\Pic(Z')}\Pic^0(Z')\ar[r] & \Gr(\Pic_{Z/S})(k)\times_{\Gr(\Pic_{Z'/S})(k)}\Gr(\Pic^{0}_{Z'/S})(k) }.
\]
By the definition of $\Pic^0$ and \eqref{eq.cartforGr}, both vertical morphisms are bijective. Moreover, by what we have shown at the beginning of the proof and the fact that $\Gr(\Pic_{Z'/S}^{0})\cong \Pic_{Z'/k}^{0}$, the lower horizontal morphism is also bijective. Therefore, the upper horizontal morphism is bijective. This finishes the proof.
\end{proof}

\begin{proposition}\label{pro.GrePicKer} Let $\iota\colon Z'\hookrightarrow Z$ be a closed subscheme defined by a nilpotent ideal $\mathcal{J}\subset \OO_{Z}$. Then the canonical morphism of smooth $k$-group schemes
\[
u\colon \Gr(\Pic_{Z/S})\longrightarrow \Gr(\Pic_{Z'/S})
\]
is an epimorphism. Moreover, $\ker(u)$ is a smooth connected unipotent group.
\end{proposition}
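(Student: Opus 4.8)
The plan is to reduce to a square-zero thickening and then to extract both the surjectivity of $u$ and the structure of $\ker(u)$ from the long exact cohomology sequence attached to the exponential sequence, after passing through the Greenberg realization.

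First I would reduce to the case $\mathcal J^2=0$. Since $\mathcal J$ is nilpotent, say $\mathcal J^m=0$, the closed subschemes $Z_i\hookrightarrow Z$ defined by the ideals $\mathcal J^i$ (for $1\le i\le m$) interpolate between $Z'=Z_1$ and $Z=Z_m$, and each $Z_i\hookrightarrow Z_{i+1}$ is defined by the square-zero ideal $\mathcal J^i/\mathcal J^{i+1}\subset \OO_{Z_{i+1}}$. By functoriality of $\Gr(\Pic_{(-)/S})$ the map $u$ factors as the composition of the maps $\Gr(\Pic_{Z_{i+1}/S})\to \Gr(\Pic_{Z_i/S})$. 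A composition of epimorphisms is an epimorphism, and an extension of a smooth connected unipotent group by a smooth connected unipotent group is again smooth, connected and unipotent (the middle term is a torsor under the first kernel over the second, hence smooth); so it suffices to treat each factor, i.e.\ the square-zero case.

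Assume then $\mathcal J^2=0$, so that $\mathcal J$ is a coherent $\OO_{Z'}$-module and $Z$, $Z'$ share their underlying space. The assignment $x\mapsto 1+x$ gives, on each base change $Z_{\mathsf A}:=Z\otimes_W\mathbb W(\mathsf A)$ with ideal $\mathcal J_{\mathsf A}$ of $Z'_{\mathsf A}$, a short exact sequence of sheaves of abelian groups
\[
0\longrightarrow \mathcal J_{\mathsf A}\longrightarrow \OO_{Z_{\mathsf A}}^{\ast}\longrightarrow \OO_{Z'_{\mathsf A}}^{\ast}\longrightarrow 0.
\]
Taking the associated long exact cohomology sequences, which are functorial in $\mathsf A\in\mathbf{Alg}/k$, and sheafifying over $\mathbf{Alg}/k$ (sheafification being exact), I obtain an exact sequence of fppf sheaves
\[
\mathcal V_{Z'}^{\ast}\longrightarrow \mathcal H^1(\mathcal J)\longrightarrow \Gr(\Pic_{Z/S})\stackrel{u}{\longrightarrow}\Gr(\Pic_{Z'/S})\longrightarrow \mathcal H^2(\mathcal J),
\]
where $\mathcal H^i(\mathcal J)$ is the sheaf attached to $\mathsf A\mapsto \HH^i(Z_{\mathsf A},\mathcal J_{\mathsf A})$ and $\mathcal V_{Z'}^{\ast}$ to $\mathsf A\mapsto \Gamma(Z'_{\mathsf A},\OO^{\ast})$; here I use that $\Gr(\Pic_{Z/S})$ represents the sheafification of $\mathsf A\mapsto \Pic(Z_{\mathsf A})$. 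Since $f$ has fibres of dimension $\le 1$, a property preserved by the base changes, relative Grothendieck vanishing gives $\HH^2(Z_{\mathsf A},\mathcal J_{\mathsf A})=0$ for all $\mathsf A$; hence $\mathcal H^2(\mathcal J)=0$ and $u$ is an epimorphism. The same sequence exhibits $\ker(u)$ as the quotient of $\mathcal H^1(\mathcal J)$ by the image of $\mathcal V_{Z'}^{\ast}$. By cohomology and base change, which is legitimate here because $\HH^2$ vanishes, the functor $\mathcal H^1(\mathcal J)$ is (up to sheafification) $\mathsf A\mapsto M\otimes_W\mathbb W(\mathsf A)$ with $M:=\HH^1(Z,\mathcal J)$ a finite-length $\OO_{K,n}$-module; this is precisely the Greenberg realization of $M$, representable by a smooth, connected, commutative unipotent $k$-group scheme (a finite product of truncated Witt-vector groups). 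A quotient of such a group by a subgroup sheaf remains smooth, connected and unipotent over the perfect field $k$, so $\ker(u)$ has the asserted properties.

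The hard part will be the last step: making the identification of $\mathcal H^1(\mathcal J)$ with the Greenberg realization of $M$ rigorous requires controlling cohomology and base change over the non-flat Greenberg bases $\mathbb R_n(\mathsf A)$, which is exactly the delicate analysis carried out in \cite{Lipman} and underlies Theorem~\ref{Lipman}; and one must verify that the relevant quotient of a smooth unipotent group stays smooth, for which the perfectness of $k$ is what I would lean on.
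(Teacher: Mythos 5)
Your strategy is sound and genuinely different from the paper's. The paper splits the two assertions and leans on Lipman's structural results throughout: for the epimorphism it uses the smoothness of $\Gr(\Pic_{Z/S})$ and $\Gr(\Pic_{Z'/S})$ (already available from Theorem~\ref{Lipman}) to reduce to surjectivity on $k$-points, \emph{i.e.}, to the classical surjectivity of $\Pic(Z)\to\Pic(Z')$, which is proved with the multiplicative sequence $0\to 1+\mathcal{J}\to\OO_{Z}^{\ast}\to\iota_{\ast}\OO_{Z'}^{\ast}\to 0$ and Grothendieck vanishing on the one-dimensional noetherian scheme $Z$ alone; no square-zero reduction and no auxiliary base rings enter this half. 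For the kernel, the paper quotes Lipman for representability, connectedness and unipotence, reduces smoothness to the case $\mathcal{J}\mathcal{N}=0$ (with $\mathcal{N}$ the nilradical, a filtration different from your powers $\mathcal{J}^{i}$), and then checks smoothness by a Lie-algebra computation using Lipman's graded description of $\mathrm{Lie}$. You instead run a single cohomological argument uniformly over all Greenberg base rings: square-zero d\'evissage, exponential sequence on each $Z_{\mathsf A}$, vanishing of $\HH^{2}$, and a base-change identification of the $\HH^{1}$-sheaf with the Greenberg realization of the finite-length module $M=\HH^{1}(Z,\mathcal{J})$. Your route avoids Lipman's finer results (his 6.4 and the Lie-algebra grading), at the price of having to control cohomology over the non-noetherian rings $\mathbb{R}_n({\mathsf A})$.

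Three points need repair; all are repairable. (1) Do not invoke ``relative Grothendieck vanishing'' over the non-noetherian base $\spec(\mathbb{R}_n({\mathsf A}))$: it is cleaner and safer to observe that the projection $g\colon Z_{\mathsf A}\to Z$ is affine, so $\HH^{i}(Z_{\mathsf A},\mathcal{F})\cong\HH^{i}(Z,g_{\ast}\mathcal{F})$ for quasi-coherent $\mathcal{F}$, and then apply Grothendieck vanishing on the noetherian one-dimensional scheme $Z$ itself. (2) The identification of $\mathcal{H}^{1}(\mathcal{J})$ with the Greenberg realization $\mathbb{M}$ of $M$ is overstated: the ideal $\mathcal{J}_{\mathsf A}$ of $Z'_{\mathsf A}$ in $Z_{\mathsf A}$ is the \emph{image} of $\mathcal{J}\otimes_{\OO_{Z}}\OO_{Z_{\mathsf A}}$, not that tensor product (nothing in sight is flat), so what the $\HH^{2}$-vanishing plus the Watts-type argument (right exactness of top-degree cohomology together with commutation with direct sums) actually yield is a natural surjection $M\otimes_{\OO_{K,n}}\mathbb{R}_n({\mathsf A})\twoheadrightarrow\HH^{1}(Z_{\mathsf A},\mathcal{J}_{\mathsf A})$, not an isomorphism. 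A surjection suffices, however: after sheafification, $\ker(u)$ receives an fppf epimorphism from $\mathbb{M}$. (3) ``Quotient by a subgroup sheaf'' is not meaningful in that generality, since such a quotient need not be representable; what saves you is that the quotient in question equals $\ker(u)$, which is representable a priori as a closed subgroup scheme of $\Gr(\Pic_{Z/S})$ (Lipman's representability, which you already use). So conclude in this order: $\ker(u)$ is a $k$-group scheme of finite type; the exact sequence of sheaves gives an fppf epimorphism $\mathbb{M}\twoheadrightarrow\ker(u)$; its kernel is a closed subgroup scheme; hence $\ker(u)$ is faithfully flat under the smooth group $\mathbb{M}$, so it is reduced, hence smooth over the perfect field $k$, and it is connected and unipotent as a quotient of the connected unipotent group $\mathbb{M}$. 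With these adjustments your proof is complete.
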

\begin{proof} By Theorem~\ref{Lipman}, $u$ is represented by a morphism of smooth $k$-group schemes. Then $u$ is an epimorphism as soon as it is surjective on $k$-rational points. By the previous theorem, we are reduced to showing that the canonical map $\Pic(Z)\rightarrow \Pic(Z')$ is surjective. 
Consider then the following short exact sequence of abelian sheaves
\[
0\longrightarrow 1+\mathcal{J}\longrightarrow \OO_{Z}^{\ast}\longrightarrow \iota_{\ast}\OO_{Z'}^{\ast}\longrightarrow 0.
\]
It provides an exact sequence of cohomology groups
\[
\HH^{1}(Z,1+\mathcal{J})\longrightarrow \Pic(Z)  \longrightarrow \Pic(Z') \longrightarrow \HH^{2}(Z,1+\mathcal{J}).
\]
Since $\mathrm{dim}(Z)=1$, the $\HH^2$ group on the right vanishes and hence the map $\Pic(Z)\rightarrow \Pic(Z')$ is surjective.

By the proof of Proposition 2.5 in \cite{Lipman} (in particular the last paragraph of page 31), $\ker(u)$ is representable by a connected unipotent group scheme over $k$. It remains to show that $\ker(u)$ is smooth. 
Let $\mathcal{N}\subset \OO_{Z}$ be the nilpotent radical of $\OO_Z$, and $Z_n\hookrightarrow Z$ the closed subscheme defined by $\mathcal{J}\mathcal{N}^{n}\subset \OO_{Z}$ ($n\geq 0$).
Then we have $Z_0=Z'$ and $Z_m=Z$ for $m$ an integer sufficiently large. In this way, we obtain an ascending sequence of closed subschemes $\{Z_n\}_{n\geq 0}$ of $Z$, and hence a sequence of surjective morphisms of smooth $k$-group schemes $u_n\colon \Gr(\Pic_{Z_{n+1}/S})\rightarrow \Gr(\Pic_{Z_{n}/S})$ ($n\geq 0$). 
Therefore, in order to prove that $\ker(u)$ is smooth, we only need to show that $\ker(u_n)$ is smooth for all $n$. Hence, up to replacing $Z'\hookrightarrow Z$ with $Z_{n}\hookrightarrow Z_{n+1}$, we may assume furthermore that $\mathcal{J} \mathcal{N}=0$.
 Now, the proof of 6.4 in \cite{Lipman} shows that $\ker(u)$ is isomorphic to the kernel of the canonical map $v\colon \mathbb{H}\rightarrow \mathbb{H}'$, where $\mathbb{H}$ (respectively $\mathbb{H}'$) denotes the associated fppf sheaf of the functor
\[
\mathbf{Alg}/k\longrightarrow \mathfrak{Ab}, \qquad {\mathsf A} \mapsto
\HH^1(Z_{\mathsf A}, \OO_{Z_{\mathsf{A}}}), \quad (\text{respectively} \quad{\mathsf A} \mapsto
\HH^1(Z_{\mathsf A}', \OO_{Z_{\mathsf{A}}'}) )
\]
with $Z_{\mathsf A}:=Z\otimes_{\OO_K}\mathbb{R}_n(\mathsf{A})$ and $Z_{\mathsf{A}}':=Z'\otimes_{\OO_K}\mathbb{R}_n(\mathsf{A})$. Both $\mathbb{H}$ and $\mathbb{H}'$ are representable by smooth $k$-group schemes (\cite{Lipman}, Theorem~1.4 and Corollary~8.4), and $v$ is a surjective morphism of $k$-group schemes (\cite{Lipman}, Corollary~4.4). To finish the proof, it remains to show that the morphism $v$ is smooth, or equivalently, that the induced map between the Lie algebras $\mathrm{Lie}(v)\colon \mathrm{Lie}(\mathbb{H})\to \mathrm{Lie}(\mathbb{H}')$ is surjective (\cite{LLR} Prop.~1.1 (e)).
 By \cite{Lipman}, Theorem~8.1, $\mathrm{Lie}(\mathbb{H})$ has a natural grading by $k$-vector subspaces:
\[
\mathrm{Lie}(\mathbb{H})=\bigoplus_{i\geq 0}\mathrm{Lie}^{p^i}(\mathbb{H}), \qquad \text{with} \quad \mathrm{Lie}^{p^i}(\mathbb{H})\stackrel{\sim}{\longrightarrow}\mathrm{im}(\mu_i)^{(-i)},
\]
where $\mu_i=\mu_{i,Z}$ is the canonical map $\HH^1(Z,p^i\OO_Z/p^{i+1}\OO_Z) \rightarrow \HH^1(Z,\OO_Z/p^{i+1}\OO_Z)$, and for any $k$-vector space $V$, $V^{(-i)}$ denotes the $k$-vector space with the same underlying abelian group as $V$ and the scalar multiplication $(a,x)\mapsto a^{p^{-i}}x$ for any $a\in k, x\in V$. 
Since the grading is functorial (\cite{Lipman}, p.~77), in order to showing that $\mathrm{Lie}(v)$ is surjective, we have to prove that the map $\mathrm{im}(\mu_{i,Z})\rightarrow \mathrm{im}(\mu_{i,Z'})$ is surjective. Now, since $\mathrm{dim(Z)}=1$, the surjective morphism of coherent sheaves $p^{i}\OO_Z/p^{i+1}\OO_{Z} \rightarrow \iota_{\ast} \left(p^i\OO_{Z'}/p^{i+1}\OO_{Z'}\right)$ induces a surjective map
\[
\HH^1(Z,p^i\OO_{Z}/p^{i+1}\OO_{Z})\longrightarrow \HH^1(Z',p^i\OO_{Z'}/p^{i+1}\OO_{Z'}), 
\]
and this implies the surjectivity of the map $\mathrm{im}(\mu_{i,Z})\rightarrow \mathrm{im}(\mu_{i,Z})$.
\end{proof}

\section{Herbrand functions}\label{Herbrand}
Unless explicitly mentioned, the results of this section are directly taken from the unpublished work \cite{Raynaud4} of Raynaud. Some of them have already appeared in \cite{KatUen}.
In this section and the next, let $K$ be a complete discrete valued field with algebraically closed residue field $k$ and ring of integers $\mathcal{O}_{K}$.
 Let $A_K$ be an elliptic curve, $X_K$ a torsor under $A_K$ over $K$, and $f\colon X\rightarrow S$ the proper regular minimal model of $X_K/K$ over $S$. Let $X_s=\sum_{i=1}^{r}n_iC_i$ be the decomposition of the special fibre $X_s$ into
the sum of its reduced irreducible components, and denote by $d$ the gcd of the integers $n_i$.
Moreover, let $D$ be the divisor of $X$ given by
\begin{equation}\label{eq.d}
D:=\frac{1}{d}X_s=\sum_{i=1}^{r}\frac{n_i}{d} C_i,
\end{equation}
and let $\mathcal{I}$ be the (locally principal) ideal sheaf of $D$. The special fibre $X_s$ of $X$ is then defined by the ideal sheaf $\mathcal{I}^{d}=\pi\OO_{X}\subset\OO_X$. For each $n\in {\mathbb Z}_{\geq 1}$, let ${i_n\colon}X_n\hookrightarrow X$ be the closed immersion defined by the ideal $\mathcal{I}^{n}\subset \OO_X$.
For $\mathcal{L}$ an invertible sheaf (respectively for $\Sigma$ a divisor) on $X$ and $Z$ a divisor on $X$, let $\mathcal{L}\cdot Z$ (respectively $\Sigma\cdot Z$) denote the \emph{intersection number} of $\mathcal{L}$ and $Z$ (respectively of $\Sigma$ and $Z$). Finally, let $\phi(n)$ be the length of the $\OO_K$-module $\HH^1(X_n,\OO_{X_n})$.
In this section, we will first study the function $n\mapsto \phi(n)$ by means of some numerical invariants attached to $X/S$ (Lemma~\ref{ki}). Then we will define from $\phi$ two piecewise linear functions $\varphi,\psi$ ({see \S~\ref{def de psi} and the figure therein}), which are the analogue of the Herbrand functions in Serre's description of local class field theory (\cite{SerreCFT}, \S~3) and will be used in the next section.

\setcounter{lemma}{0}
We recall the following useful result.
\begin{lemma}[\cite{Mumford}, p.~332]\label{lemme cohomologique} Let $Y\rightarrow S$ be a proper flat curve such that $Y$ is regular and such that $Y_K/K$ is geometrically irreducible. Let $Y_s=\sum_{i=1}^{r}n_iE_i$ be the decomposition of the special fibre $Y_s$ into the sum of its reduced irreducible components, and $n$ the gcd of the integers $n_i$. Assume moreover that $\omega_{Y/S}\cdot E_i=0$ for all $i$, where $\omega_{Y/S}$ denotes the dualizing sheaf of $Y/S$. Let finally $Y_1$ be the closed subscheme of $Y$ defined by the ideal sheaf $\OO_Y(-\sum_{i=1}^{r}\frac{n_i}{n}E_i)$ and $L$ an invertible sheaf over $Y_1$,
 which is of degree $0$ on $E_i$ for all $i$. Then, if $\HH^{0}(Y_1,L)\neq 0$, we have $L\cong \OO_{Y_1}$ and $\HH^{0}(Y_1,\OO_{Y_1})\cong k$.
\end{lemma}

The curve $Y_1$ on $Y$ is indecomposable of canonical type in the sense of \cite{Mumford}, p.~330. The lemma in \cite{Mumford} (p.~332) is stated for an indecomposable curve of canonical type on a smooth proper algebraic surface defined over an algebraically closed field. However, one can check that the proof there works in our setting too.

\subsection{Study of the dualizing sheaf}\label{etude de dualisant}
We begin with two classical results. Recall that, given a morphism of $K$-group schemes $\alpha\colon G\rightarrow H$ and a morphism of $K$-schemes $f\colon Y\rightarrow Z$ , with $G$ (respectively $H$) acting on $Y$ (respectively $Z$), the morphism $f$ is said to be equivariant with respect to $\alpha$ if $f(g\cdot y)=\alpha(g)\cdot f(y)$ for any $g\in G$ and any $y\in Y$.

\begin{lemma}\label{J_K et A_K} Let $A_K$ be an elliptic curve over $K$, and $X_K/K$ a torsor under $A_K$. Let $J_K:=\Pic_{X_K/K}^{0}$ be the jacobian of $X_K/K$.
\begin{itemize}
\item[(i)] There is a canonical isomorphism of elliptic curves $\iota\colon A_{K}'\xrightarrow{\sim} J_K$, where $A_{K}'$ denotes the dual of $A_K$.
\item[(ii)] Let $\sigma \colon A_K\rightarrow J_K$ be the isomorphism obtained by composing the isomorphism $\alpha\colon A_K\xrightarrow{\sim} A_{K}'$,
sending $a\in A_K(\overline{K})$ to $\mathcal{O}_{A_{\overline{K}}}(a-o)\in A_{K}'(\overline{K})$ (here $o\in A_K{(\bar K)}$ is the neutral element), with the isomorphism $\iota$ in (i). Then the
canonical map $\psi\colon X_K\rightarrow \mathrm{Pic}^{1}_{X_K/K}$ mapping $x$ to $\OO_{X_K}(x)$ is equivariant with respect to the
isomorphism $\sigma$, where $\Pic_{X_K/K}^{1}$ is the Picard scheme which classifies the line bundles of degree $1$ on $X_K$.
In particular, under the identification given by $\sigma$, the (class of the) torsor $X_K$ in $\HH^{1}_{\fl}(K,A_K)$ corresponds to the (class of the) torsor $\Pic_{X_K/K}^{1}$ in $\HH^{1}_{\fl}(K,J_K)$.
\end{itemize}
\end{lemma}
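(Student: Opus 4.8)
The plan is to establish the two assertions in sequence, building (ii) on top of the identification produced in (i).

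For part (i), the key input is that $X_K$ is a smooth projective curve of genus $1$, being a torsor under the elliptic curve $A_K$. I would first recall that over the algebraic closure $\overline{K}$ the torsor acquires a rational point, hence $X_{\overline{K}}\cong A_{\overline{K}}$ as $A_{\overline{K}}$-torsors (this is the standard fact that torsors under an abelian variety are trivialized over the algebraic closure, using that $\mathrm{H}^1_{\mathrm{fl}}(\overline{K},A_K)=0$). Consequently the jacobian $J_{\overline{K}}=\mathrm{Pic}^0_{X_{\overline{K}}/\overline{K}}$ is canonically isomorphic to $\mathrm{Pic}^0_{A_{\overline{K}}/\overline{K}}=A'_{\overline{K}}$, the dual of $A_{\overline{K}}$. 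The point is then to check that this identification descends to $K$ without any choice: the construction $J_K=\mathrm{Pic}^0_{X_K/K}$ is already defined over $K$, and the canonical isomorphism with $A'_K$ comes from the functoriality of the Picard functor together with the intrinsic action of $A_K$ on $X_K$. Concretely, the action $A_K\times X_K\to X_K$ induces, by pullback on line bundles of degree $0$, a homomorphism $A_K\to \underline{\mathrm{Aut}}(J_K)$ which is trivial (since $J_K$ is a group and translations act trivially on degree-$0$ Picard groups), and this rigidity is exactly what produces a \emph{canonical}, translation-invariant isomorphism $\iota\colon A'_K\xrightarrow{\sim} J_K$ defined over the base field $K$. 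This is the assertion of (i).

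For part (ii), the strategy is to reduce everything to a statement over $\overline{K}$ where one can argue pointwise, and then invoke the canonicity of all the maps involved to conclude over $K$. After the isomorphisms $\alpha$ and $\iota$ are in place, one has the composite $\sigma=\iota\circ\alpha\colon A_K\xrightarrow{\sim}J_K$. The map $\psi\colon X_K\to \mathrm{Pic}^1_{X_K/K}$, $x\mapsto \mathcal{O}_{X_K}(x)$, is a morphism of $K$-schemes, and both source and target are torsors (under $A_K$ and under $J_K$ respectively). To prove equivariance $\psi(g\cdot x)=\sigma(g)\cdot\psi(x)$, I would pass to $\overline{K}$-points and compute directly: for $a\in A_K(\overline{K})$ and $x\in X_K(\overline{K})$, the action sends $x$ to $a\cdot x$, and
\begin{equation*}
\mathcal{O}_{X_{\overline{K}}}(a\cdot x)\otimes \mathcal{O}_{X_{\overline{K}}}(x)^{-1}\cong \iota\bigl(\mathcal{O}_{A_{\overline{K}}}(a-o)\bigr),
\end{equation*}
which is precisely $\sigma(a)$ under the identification $\iota$. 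Verifying this isomorphism of line bundles amounts to a translation computation on the genus-$1$ curve, using that the difference of the two divisor classes depends only on $a$ and is additive in $a$; the additivity and the normalization at $a=o$ pin it down to $\mathcal{O}_{A_{\overline{K}}}(a-o)$, i.e. to $\alpha(a)$, followed by $\iota$. Since both sides are morphisms of $K$-schemes agreeing on $\overline{K}$-points, they agree over $K$.

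Finally, the statement about torsor classes follows formally: an equivariant isomorphism $\psi$ between an $A_K$-torsor and a $J_K$-torsor, compatible with the group isomorphism $\sigma$, identifies their classes in $\mathrm{H}^1_{\mathrm{fl}}(K,A_K)$ and $\mathrm{H}^1_{\mathrm{fl}}(K,J_K)$ under the bijection induced by $\sigma$ on cohomology. The main obstacle I anticipate is the careful bookkeeping in (i) to verify that the isomorphism $\iota$ is genuinely \emph{canonical} and defined over $K$ rather than merely over $\overline{K}$ after a choice of base point; the rigidity argument (translations acting trivially on $\mathrm{Pic}^0$) is what resolves this, and it is the heart of the proof. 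The divisor-class computation in (ii) is essentially the classical autoduality of elliptic curves transported along the torsor, and should be routine once (i) is secured.
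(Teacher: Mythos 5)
Your proposal is correct in substance and, for part (ii), follows essentially the same route as the paper: reduce by descent to statements over $\overline{K}$, then verify the equivariance as an identity of divisor classes, namely $\iota(\alpha(a))=[\mathcal{O}_{X_{\overline{K}}}(a\cdot x - x)]$. The genuine difference is in part (i): the paper simply cites Raynaud (\cite{Raynaud3}, XIII, 1.1), whereas you sketch a direct proof — trivialize the torsor over the (separable) closure, observe that translations act trivially on $\mathrm{Pic}^0$, deduce that the isomorphism $(t_x^{*})^{-1}\colon A'_{\overline{K}}\to J_{\overline{K}}$ induced by a trivialization $t_x\colon b\mapsto b\cdot x$ is independent of $x$, hence Galois-equivariant, hence descends to $K$. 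This is a legitimate, more self-contained alternative, and it supplies exactly the ingredient the paper extracts from the canonicity of $\iota$: base-point independence, which is the engine of part (ii) in both arguments.

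One caveat on your part (ii): the closing justification, that additivity in $a$ together with the normalization at $a=o$ ``pins down'' the class $[\mathcal{O}_{X_{\overline{K}}}(a\cdot x - x)]$ to $\iota(\alpha(a))$, is not a valid deduction on its own — every group homomorphism $A_{\overline{K}}\to J_{\overline{K}}$ is additive and normalized (for instance the composite of $\iota\circ\alpha$ with multiplication by $2$ on $A_{\overline{K}}$), so these properties cannot single out $\sigma$. What actually closes the argument — and is precisely the paper's argument — is that $\iota$ equals $(t_x^{*})^{-1}$ for \emph{every} choice of $x$ (the base-point independence you established in (i)), combined with the elementary pullback computation: $t_x$ carries the divisor $a - o$ on $A_{\overline{K}}$ to $a\cdot x - x$ on $X_{\overline{K}}$, whence $[\mathcal{O}_{X_{\overline{K}}}(a\cdot x - x)]=(t_x^{*})^{-1}(\alpha(a))=\iota(\alpha(a))$ with no appeal to additivity at all. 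Since all these ingredients are already present in your write-up, this is a flaw of phrasing rather than a missing idea, but as literally stated that step would not stand.
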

\begin{proof}
The first fact is proved in \cite{Raynaud3}, XIII, 1.1.
For the second fact, in order to verify that the morphism $\psi$ is equivariant with respect to the morphism $\sigma$, by descent, we need only prove the corresponding statement over a separable closure $\bar{K}$ of $K$.
Over $\bar{K}$, the isomorphism $\sigma\colon A_{\bar{K}}\rightarrow J_{\bar{K}}$ can be explicitly described by mapping $a\in A_{\bar{K}}$ to the class of $\OO_{X_{\bar{K}}}(a\cdot x_0-x_0)$ with $x_0\in X_K(\bar{K})$. Hence to show the desired property of (ii), we are reduced to proving the following isomorphism between line bundles on $X_{\bar{K}}$: for any $x\in X_{\bar{K}}(\bar{K})$, and any
$a\in A_{\bar{K}}(\bar{K})$ we have $\OO_{X_{\bar{K}}}(a\cdot x)\cong \OO_{X_{\bar{K}}}(a\cdot x_0-x_0)\otimes \OO_{X_{\bar{K}}}(x)$,
or equivalently, $\OO_{X_{\bar{K}}}(a\cdot x_0-x_0)\cong \OO_{X_{\bar{K}}}(a\cdot x-x)$. Indeed, since the
isomorphism $\iota_{\bar{K}}$ is independent of the choice of $x_0$, we have
$\sigma(a)=\iota_{\bar{K},x_0}(\alpha(a))=[\OO_{X_{\bar{K}}}(a\cdot x_0-x_0)]=\iota_{\bar{K},x}(\alpha(a))=[\OO_{X_{\bar{K}}}(a\cdot x-x)]$.
\end{proof}

\begin{remark}\label{rem.J_K et A_K toute dim} In \cite{Raynaud3}, XIII, 1.1, the conclusion in \ref{J_K et A_K} (i) is proved for any abelian variety $A_K$.
\end{remark}

\begin{lemma}\label{d1=d2=d} Let $d_1$ be the order of (the class of) the torsor $X_{K}$ in the group $\HH^{1}_\fl(K,J_{K})$, $d_2$ the minimal degree of extensions $K'$ of $K$ such that $X_K(K')\neq \emptyset$, and $d_3$ the minimum of the multiplicities of the irreducible components of $X_s$. Then $d_1=d_2=d_3=d$, with $d$ as in \eqref{eq.d}.
\end{lemma}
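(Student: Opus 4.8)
The plan is to prove that all three quantities $d_1,d_2,d_3$ equal the \emph{index} $I:=\min\{n>0:\ X_K\text{ carries a line bundle of degree }n\}$ and then to identify $I$ with $d$. I use freely that $\Br(K)=0$ and that, since $k$ is algebraically closed, every finite extension $K'/K$ is totally ramified of degree $[K':K]$.

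First I would identify $d_1$ and $d_2$ with $I$. By Lemma~\ref{J_K et A_K}(ii) the torsor $X_K$ corresponds to $\Pic^{1}_{X_K/K}$, and since $\Pic^{n}_{X_K/K}$ represents $n[X_K]$ in $\HH^{1}_{\fl}(K,J_K)$, one has $d_1=\min\{n>0:\Pic^{n}_{X_K/K}(K)\neq\emptyset\}$. The exact sequence $0\to\Pic(X_K)\to\Pic_{X_K/K}(K)\to\Br(K)=0$ identifies $\Pic^{n}_{X_K/K}(K)$ with the set of degree-$n$ line bundles on $X_K$, so $d_1=I$. For $d_2=I$: a line bundle of degree $I$ on the genus-one curve $X_K$ has a nonzero section by Riemann--Roch, hence an effective divisor of degree $I$, and any closed point in its support gives a splitting field of degree $\leq I$, so $d_2\leq I$; conversely a degree-$d_2$ splitting field yields a closed point of degree dividing $d_2$, and $I$ divides every closed-point degree, so $I\leq d_2$. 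Thus $d_1=d_2=I$.

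The key geometric input is $I=d$. The easy divisibility $d\mid I$ follows because $X$ regular forces $\Pic(X)\twoheadrightarrow\Pic(X_K)$, and for $\mathcal L$ on $X$ one has $\deg(\mathcal L_K)=\sum_i n_i(\mathcal L\cdot C_i)\in d\Z$, so every degree on $X_K$ lies in $d\Z$. For $I\mid d$ I would extract a closed point of small degree from each component: fixing a regular point $x$ of $C_i$ lying on no other component, the local equation of $X_s$ at $x$ reads $\pi=u^{n_i}\cdot(\text{unit})$, with $u$ a local equation of $C_i$; solving this relation produces, by Hensel's lemma, a point of $X$ with values in a totally ramified extension $K'/K$ of degree $n_i$, hence a closed point of $X_K$ of degree dividing $n_i$. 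Therefore $I\mid n_i$ for all $i$ and $I\mid\gcd_i n_i=d$, whence $I=d$ and $d_1=d_2=d$. This step, i.e. the precise dictionary \emph{multiplicity}$\leftrightarrow$\emph{ramification} along a component, is the main obstacle and the place that requires genuine care.

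It remains to show $d_3=d$. The inequality $d\leq d_3$ is clear since $d=\gcd_i n_i$ divides $\min_i n_i=d_3$. For the reverse I would use $d_2=d$ to choose $K'/K$ of degree $d$ with $x'\in X_K(K')$, and let $\Gamma\subset X$ be the schematic closure of the corresponding closed point: it is horizontal and finite over $S$ of degree $\delta\mid d$. Since $\Gamma$ meets $X_s$, some $\Gamma\cdot C_{i_0}\geq 1$, and from $\delta=\Gamma\cdot X_s=\sum_i n_i(\Gamma\cdot C_i)$ one gets $d_3\leq n_{i_0}\leq\delta\leq d$, hence $d_3\leq d$ and finally $d_3=d$. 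This last estimate uses the same intersection-theoretic dictionary as the previous paragraph but in the opposite direction, and is comparatively routine once that dictionary is established.
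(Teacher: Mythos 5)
Your proof is correct and, in substance, it runs on the same ingredients as the paper's: $\Br(K)=0$ together with Lemma~\ref{J_K et A_K}~(ii) to identify $d_1$ with the minimal positive degree of a divisor on $X_K$; the intersection identity $\deg(\Sigma_K)=\Sigma\cdot X_s=d\,(\Sigma\cdot D)$ on the regular model for the divisibility by $d$; Riemann--Roch on the genus-one curve to produce effective representatives; and the existence of points over totally ramified extensions of degree $n_i$ through each component $C_i$. The differences lie in the packaging, and two are worth recording. First, for the step you rightly single out as the crux ($I\mid n_i$), the paper gives no argument at all: it simply cites \cite{BLR}, 9.1/10, for the existence of a positive divisor of degree $n_i$ on $X/S$; your sketch supplies a proof of that fact and is correct in outline, except that ``Hensel's lemma'' is not quite the right engine. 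The clean version: pick a closed point $x$ lying on $C_i$ alone, with $C_i$ regular at $x$, choose a regular system of parameters $(u,t)$ of $\OO_{X,x}$ with $C_i=V(u)$, so that $\pi=(\mathrm{unit})\cdot u^{n_i}$; then $B:=\OO_{X,x}/(t)$ is a one-dimensional regular local ring whose completion $\widehat{B}$ is a complete discrete valuation ring, finite, flat and totally ramified of degree $n_i$ over $\OO_K$ (finiteness comes from completeness, since $\widehat{B}/\pi\widehat{B}$ has $k$-dimension $n_i$), and $\spec(\widehat{B})\to X$ gives the desired point --- so it is completeness, not Hensel, that makes this work. Second, your inequality $d_3\le d$ (via the closure of a closed point of degree $\delta\le d$ and the bound $\min_i n_i\le n_{i_0}\le\delta$) is slightly leaner than the paper's, which instead uses Riemann--Roch to move a degree-$d$ divisor to an effective one and shows that its closure $\Delta$ satisfies $\Delta\cdot D=1$, thereby directly exhibiting a component of multiplicity exactly $d$ met transversally by $\Delta$; a computation of the same flavour reappears later in Corollary~\ref{cor.exofrig}. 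In short: your index-centered organization buys a self-contained argument (no external citation for the key existence statement), while the paper's cyclic chain $d\le d_1\le d_2\le d_3\le d$ is shorter at the price of invoking \cite{BLR}, 9.1/10.
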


\begin{proof} We will prove this Lemma by showing that $d\leq d_1\leq d_2\leq d_3\leq d$.
Let $n\in {\mathbb Z}_{>0}$. By Lemma~\ref{J_K et A_K} (ii), the class $n[X_K]\in \HH^1(K,A_K) \stackrel{\sim}{\to} \HH^1(K,J_K)$ can be represented by the
irreducible component $\Pic^{n}_{X/S}$ of $\Pic_{X_K/K}$ which classifies the invertible sheaves of degree $n$.
Hence, the class $n[X_K]\in \HH^1(K,A_K)$ is trivial if and only if $\Pic^{n}_{X_K/K}(K)\neq \emptyset$. On the other hand, since $\OO_K$ is
strictly henselian with algebraically closed residue field, we have $\mathrm{Br}(K)=0$.
Hence, $\Pic_{X_K/K}^n(K)=\Pic^n(X_K)$. As a result, $d_1$ is also the minimum of the degrees of the divisors with positive degree on $X_K$.
 Now, let $\Sigma_K\subset X_K$ be any divisor with positive degree, and let $\Sigma$ be its schematic closure in $X$.
Then we have $\mathrm{deg}(\Sigma_K)=\Sigma\cdot X_s=\Sigma\cdot(dD)=d(\Sigma\cdot D)$. Hence $d|\mathrm{deg}(\Sigma_K)$,
in particular $d\leq \mathrm{deg}(\Sigma_K)$.
 As a result, we get $d\leq d_1$.
Next, by definition of $d_2$, there exists a closed point of degree $d_2$ on $X_K$, hence a divisor of degree $d_2$ on $X_K$,
so we have $d_1\leq d_2$.
Since $\OO_K$ is strictly henselian, for each $i$, we can find a \emph{positive} divisor $\Delta_i$ of $X/S$ of degree $n_i$ (\cite{BLR} 9.1/10).
In particular, we have $d_2\leq n_i$ for each $i$, hence we get $d_2\leq d_3$.
Finally, to see that $d_3\leq d$, note that a
suitable combination of $\Delta_i$ gives us a divisor $\Delta'$ of degree $d$ of $X_K$.
In general, the divisor $\Delta'$ might not be positive, but since $X_K$ is of arithmetic genus $1$, and $d\geq 1$,
we have $h^{0}(X_K,\OO_{X_{K}}(\Delta_K'))>0$.
Hence there exists a \emph{positive} divisor $\Delta_K$ of degree $d$ of $X_K$ which is linearly equivalent to $\Delta_K'$.
 Let $\Delta:=\overline{\Delta_K}$ be the schematic closure of $\Delta_K$ in $X$. Then we have
\[
d=\mathrm{deg}(\Delta_K)=\Delta\cdot X_s=d(\Delta\cdot D).
\]
In particular, $\Delta\cdot D$=1, and $\Delta\cap D=\{y\}$ consists of only one point, and $D$ is \emph{regular} at $y$.
Let $C_i$ be the irreducible component of $D$ passing through $y$, then $C_i$ is of multiplicity $d$ in $X_s$, whence $d_3\leq d$.
This completes the proof.
\end{proof}

Since $S$ is an affine Dedekind scheme and $X$ is regular, by a result of Lichtenbaum (\cite{Liu}~8.3.16), $X$ is \emph{projective} over $S$. 
Let $\omega_{X/S}$ denote the canonical (invertible) sheaf of $f\colon X\to S$ (see \cite{Liu}, 6.4.7, for the definition and \cite{Liu}, 6.4.32, for the fact that it is isomorphic to the dualizing sheaf). 
Since $X_K/K$ is smooth projective curve of genus $1$ and the formation of the canonical sheaf is compatible with flat base change (\cite{Liu}, 6.4.9(b)), one finds that $(\omega_{X/S})_{|X_K}\cong \omega_{X_K/K}\cong \OO_{X_K}$ (\cite{Liu}, Example 7.3.35).
For any $n\geq 0$, let 
\[
\omega_n:=i_n^*(\omega_{X/S}(X_n))=i_n^*(\omega_{X/S}\otimes_{\OO_X} \OO_X(X_n)).
\]
Let $f_n$ denote the composite of ${i_n\colon}X_n\hookrightarrow X$ and $f{\colon X\to S}$. Then we have
\begin{equation}\label{omega}\omega_n= i_n^*(\omega_{X/S}\otimes_{\OO_X} \OO_X(X_n))=i_n^*(\omega_{X/S}\otimes_{\OO_X} \mathcal I^{n\vee})\cong i_n^*\omega_{X/S}\otimes_{\OO_{X_n}}(\mathcal I^n/ \mathcal I^{2n}) ^\vee, \end{equation} 
where $\mathcal{F}^{\vee}:=\mathcal{H}om_{X}(\mathcal{F},\OO_{X})$ for any coherent sheaf $\mathcal F$ on $X$. 
The following lemma is well-known.

\begin{lemma}\label{lem.rh} 
\begin{enumerate}
\item[(i)]
We have $f^{!}\OO_S\cong \omega_{X/S}[1]$ and $f_{n}^{!}\OO_{S}\cong \omega_{n}$ respectively in the derived category of quasi-coherent sheaves on $X$ and on $X_n$.
\item[(ii)] 
Let $\mathcal{F}$ be a coherent sheaf on $X_n$. Then we have a canonical isomorphism of $\OO_K$-modules
\[
\mathrm{Hom}_{\mathcal{O}_K}(\mathrm{H}^{1}\left(X_n,\mathcal{F}),\mathcal{O}_{K}/\pi^m\mathcal{O}_{K}\right) \stackrel{\sim}{\longrightarrow} \mathrm{H}^{0}(X_n,\mathcal{F}^{\vee}\otimes \omega_n) 
\]
for any integer $m\geq n$. 
\end{enumerate}
\end{lemma}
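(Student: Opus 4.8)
The plan is to deduce both statements from Grothendieck--Serre duality, handling part (i) first and then bootstrapping to part (ii). For (i), the isomorphism $f^{!}\OO_S\cong \omega_{X/S}[1]$ is essentially the definition of the dualizing complex for the proper Cohen--Macaulay (indeed regular) morphism $f\colon X\to S$ of relative dimension $1$: since $X/S$ is a relative curve with $X$ regular, the dualizing complex is concentrated in a single degree and is given by the canonical sheaf, shifted by the relative dimension. This is recorded in the references already cited (\cite{Liu}, 6.4.7 and 6.4.32). For the second isomorphism $f_n^{!}\OO_S\cong \omega_n$, I would use the factorization $f_n=f\circ i_n$ together with the functoriality of upper-shriek, namely $f_n^{!}=i_n^{!}\circ f^{!}$. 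Thus $f_n^{!}\OO_S\cong i_n^{!}(\omega_{X/S}[1])$, and it remains to compute $i_n^{!}$ on an invertible sheaf. Since $i_n\colon X_n\hookrightarrow X$ is a closed immersion defined by the invertible ideal $\mathcal I^{n}$, it is a regular immersion of codimension $1$, so $i_n^{!}(\mathcal G)\cong i_n^{*}\mathcal G\otimes \det(N)[-1]$ where the normal bundle is $N=(\mathcal I^{n}/\mathcal I^{2n})^{\vee}$. Applying this to $\mathcal G=\omega_{X/S}[1]$ produces exactly the expression in \eqref{omega}, with the shifts cancelling, giving $f_n^{!}\OO_S\cong i_n^{*}\omega_{X/S}\otimes(\mathcal I^n/\mathcal I^{2n})^{\vee}\cong\omega_n$.

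For part (ii), the strategy is to apply the duality isomorphism for the proper morphism $f_n\colon X_n\to S_m=\spec(\OO_K/\pi^m\OO_K)$, noting that $f_n$ factors through $S_m$ for every $m\geq n$ because $X_n$ is annihilated by $\mathcal I^{n}$ and $\mathcal I^{n}\supset \mathcal I^{2n}\ni\pi^{?}$; more precisely $X_n$ is an $\OO_K/\pi^n$-scheme, so it is also an $\OO_K/\pi^m$-scheme for $m\geq n$. Grothendieck duality over the Artinian base $\OO_K/\pi^m\OO_K$, with dualizing module $\OO_K/\pi^m\OO_K$ itself, gives for a coherent sheaf $\mathcal F$ on $X_n$ a perfect pairing between $\mathrm{H}^{1}(X_n,\mathcal F)$ and $\mathrm{H}^{0}(X_n,\mathcal F^{\vee}\otimes\omega_n)$, where $\omega_n=f_n^{!}\OO_S$ computed in part (i) serves as the relative dualizing sheaf. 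Concretely, duality yields
\[
\mathrm{H}^{1}(X_n,\mathcal F)^{\vee}\cong \mathrm{H}^{0}(X_n, R\mathcal Hom(\mathcal F, \omega_n)),
\]
and since $\mathcal F$ is locally free on the (Cohen--Macaulay) scheme $X_n$ the $R\mathcal Hom$ collapses to $\mathcal F^{\vee}\otimes\omega_n$ in degree $0$. Here $(-)^{\vee}$ on the left denotes $\mathrm{Hom}_{\OO_K}(-,\OO_K/\pi^m\OO_K)$, which is the correct dualizing functor for the Artinian Gorenstein base $\OO_K/\pi^m\OO_K$.

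The main technical point to verify carefully is the degree bookkeeping and the requirement $m\geq n$: one must check that $X_n\to S$ genuinely factors through $S_m$ and that the dualizing sheaf of $f_n$ relative to the base $S_m$ is independent of $m$ (for $m\geq n$), so that the single sheaf $\omega_n$ simultaneously serves all such $m$. This is where the hypothesis $m\geq n$ enters: it guarantees that $\pi^{m}$ kills $\mathcal O_{X_n}$ and that the relative dualizing complex stabilizes. A subtle part is confirming that $\mathcal F$ being coherent and locally free over $X_n$ — rather than over $X$ — suffices for the $R\mathcal Hom$ to reduce to an honest sheaf placed in degree zero; this follows because $X_n$ is Cohen--Macaulay of dimension $1$ with dualizing sheaf $\omega_n$ in a single degree, so the duality spectral sequence degenerates. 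Once these points are settled, the stated canonical isomorphism is exactly the instance of Serre--Grothendieck duality for $f_n$ over $S_m$, and the compatibility of $\omega_n$ with the computation in part (i) makes the isomorphism canonical rather than merely abstract.
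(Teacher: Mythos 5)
Your part (i) follows essentially the paper's own route: the paper likewise writes $f_n=f\circ i_n$ and computes $i_n^!$ via the formula for a regular closed immersion of codimension $1$ (\cite{RD}, III, Corollary 7.3), so that the shifts cancel against $\omega_{X/S}[1]$. The one difference is that you treat $f^{!}\OO_S\cong \omega_{X/S}[1]$ as ``essentially the definition'', whereas the paper must (and does) prove it, by factoring $f$ through $\iota\colon X\hookrightarrow \mathbb{P}^N_S$, using $g^!\OO_S\cong\omega_{Y/S}[N]$ for the smooth projection and the closed-immersion formula for $\iota^!$, and then invoking the adjunction formula of \cite{Liu}, 6.4.7, which is the paper's \emph{definition} of $\omega_{X/S}$; your citation of Liu identifies the canonical sheaf with a dualizing sheaf in Liu's sense, but matching that notion with Hartshorne's $f^!$ is precisely the content of this computation. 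Part (ii) is where you genuinely diverge. The paper applies Grothendieck--Serre duality to $f_n\colon X_n\to S$ over the discrete valuation ring, extracts $\HH^0$ of $\mathrm{RHom}_S(\mathrm{R}f_{n\ast}\mathcal{F},\OO_S)$ by a spectral sequence (using that coherent $\OO_S$-modules have projective dimension $\leq 1$ and that $f_{n\ast}\mathcal{F}$ is torsion), obtains $\mathrm{Ext}^1_{\OO_K}(\HH^1(X_n,\mathcal{F}),\OO_K)$, and only at the end converts this to $\mathrm{Hom}_{\OO_K}(\HH^1(X_n,\mathcal{F}),\OO_K/\pi^m\OO_K)$ using that $\HH^1$ is killed by $\pi^n$. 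You instead dualize over the Artinian base $S_m$, where the dual is directly $\mathrm{Hom}_{\OO_K}(-,\OO_K/\pi^m\OO_K)$, avoiding both the spectral sequence and the $\mathrm{Ext}^1$ detour. This works, but the compatibility you flag without proving---that $\omega_n$ computed from $f_n^!\OO_S$ is also the dualizing sheaf of $X_n$ over $S_m$---amounts to showing $h^!\OO_S\cong \OO_{S_m}[-1]$ for the closed immersion $h\colon S_m\hookrightarrow S$, so that $g_n^!\OO_{S_m}\cong f_n^!\OO_S[1]\cong\omega_n[1]$ (whence the duality isomorphism you want lives in degree $-1$, consistent with your display). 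That verification rests on the resolution $0\to\OO_K\to\OO_K\to\OO_K/\pi^m\OO_K\to 0$, i.e.\ exactly the same torsion computation the paper performs in its last step; the two proofs differ mainly in where this computation is placed.

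There is, however, one genuine gap: the lemma is stated for an \emph{arbitrary coherent} sheaf $\mathcal{F}$ on $X_n$, while your argument twice invokes local freeness of $\mathcal{F}$ (to make ``$R\mathcal{H}om$ collapse''). No such hypothesis is available, so as written you prove a strictly weaker statement. Fortunately local freeness is not needed at all: you only require the degree-zero cohomology of the complex $\mathrm{RHom}_{X_n}(\mathcal{F},\omega_n)$, and this is the honest group $\mathrm{Hom}_{X_n}(\mathcal{F},\omega_n)=\HH^0(X_n,\mathcal{H}om_{X_n}(\mathcal{F},\omega_n))$ for \emph{any} coherent $\mathcal{F}$, since $\mathcal{E}xt$-sheaves vanish in negative degrees; and $\mathcal{H}om_{X_n}(\mathcal{F},\omega_n)\cong \mathcal{F}^{\vee}\otimes\omega_n$ holds for every coherent $\mathcal{F}$ simply because $\omega_n$ is invertible. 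This is exactly how the paper argues (no collapse of the full $R\mathcal{H}om$ is ever claimed there). Replace your collapse argument by this remark and the gap closes.
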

\begin{proof} 
(i). Since the structure morphism $X\to S$ is projective, it factors into a closed immersion $\iota\colon X\hookrightarrow Y:=\mathbb P_S^N$ for some $N$, followed by the projection $g\colon Y\rightarrow S$. Observe that by the smoothness and projectivity of $g$ we have $ \omega_{Y/S} =\mathrm{det}\ \Omega_{Y/S}^{1}:= \Omega_{Y/S}^{N}$, and $g^{!}\OO_S=\omega_{Y/S}[N]$ in the derived category of quasi-coherent sheaves on $Y$ (\cite{RD}, Corollary~3.4 p.~383 and proof of Theorem~4.1 p.~389).
Let $\mathcal J\subset \OO_{Y}$ denote the ideal sheaf defining $\iota$. Then $\iota^*({\mathcal J}/{\mathcal J}^{2})$ is a locally free $\OO_X$-module of rank $N-1$ (\cite{Liu}, Corollary~6.3.8). 
By \cite{Liu}~6.4.7, we have 
\begin{equation}\label{eq.omega-X/S}
\omega_{X/S}\cong \iota^{*}(\mathrm{det}\ \Omega_{Y/S}^{1})\otimes_{\OO_{X}}
 \mathrm{det} \left( \iota^{*}({\mathcal J}/{\mathcal J}^{2})\right)^{\vee} .
\end{equation}
Since $f=g\circ \iota$, from the functoriality of $(-)^{!}$ with respect to compositions of morphisms of finite type, we have the following isomorphisms in the derived category of quasi-coherent sheaves on $X$
\[
f^{!}\OO_{S}\cong \iota^{!}(g^{!}\OO_S)\cong \iota^{!} (\omega_{Y/S}[N])\cong (\iota^{!}\omega_{Y/S})[N].
\]
By the computation of $\iota^{!}$ in \cite{RD}, III, Corollary 7.3, recalling that $\omega_{X/Y}:= \mathrm{det}\left(\iota^*(\mathcal J/\mathcal J^{2})\right)^\vee $ (\cite{RD} p.~140--141, Definition (b)), we get 
\[
\iota^{!}{\omega}_{Y/S}\cong \iota^{\ast}{\omega}_{Y/S}  \otimes_{\OO_{X}}  \mathrm{det} \left( \iota^{*}({\mathcal J}/{\mathcal J}^{2})\right)^{\vee}[1-N]
\cong \omega_{X/S}[1-N],
\]
where we use \eqref{eq.omega-X/S} for the latter isomorphism. We conclude, then, that $f^{!}\OO_S \cong \iota^{!}\omega_{Y/S}[N]\cong \omega_{X/S}[1]$, as asserted. The isomorphism $f_{n}^{!}\OO_{S}\cong \omega_{n}$ can be derived in exactly the same way, after recalling that $f_n=f\circ i_n$ with $\iota_n\colon X_n\to X$ a regular immersion of codimension $1$.

(ii). The Grothendieck-Serre duality theorem (\cite{RD}, VII, Corollary 3.4 (c)) implies a canonical isomorphism
\begin{equation}\label{eq.GroSerDuality}
\mathrm{RHom}_{S}(\mathrm{R}f_{n\ast}\mathcal{F},\OO_{S})\stackrel{\sim}{\longrightarrow}\mathrm{RHom}_{X_n}(\mathcal{F},f_{n}^{!}\OO_{S}).
\end{equation}
Now,  $\mathrm{R}^{i}f_{n\ast}\mathcal{F}$ is the Zariski sheaf associated with the presheaf $U\mapsto \HH^{i}(X_n\times_S U, \mathcal{F}|_{X_n\times_S U})$, for any $i\in \Z$. Consequently, since $X_n$ is a scheme of dimension $1$, $\mathrm{R}^{i}f_{n\ast}\mathcal{F}=0$ for $i\neq 0,1$. 
As a result, $\mathrm{R}f_{n\ast}\mathcal{F}$ has bounded cohomology. 
Furthermore, $\mathrm{R}^{i}f_{n\ast}\mathcal{F}$ is the coherent $\OO_S$-module whose group of global sections is $\HH^{i}(X_n, \mathcal{F})$; for $i=0$ the fact is evident since $\mathrm{R}^{0}f_{n\ast}\mathcal{F}=f_{n\ast}\mathcal{F}$, while for $i=1$ one uses Grothendieck spectral sequence $ \HH^{i}(S,{\mathrm R}^jf_{n\ast}\mathcal{F})\Rightarrow \HH^{i+j}(X_n, \mathcal{F})$ (\cite{Weibel}, 5.8.2) and the vanishing of $\HH^{i}(S, f_{n\ast} \mathcal{F})$ for $i>0$, due to the affiness of $S$.

We next compute the $\HH^0$ of the left-hand side complex in \eqref{eq.GroSerDuality}. Consider the following spectral sequence (see \cite{Weibel} chap.~5, prop.~5.7.6 for the homological version of this spectral sequence, and the extra sign $-$ is due to the fact that $\mathrm{Hom}_{S}(-,\OO_S)$ is a contravariant functor)
\begin{equation}\label{eq.spectralseq}
E_{2}^{i,j}=\mathrm{Ext}^{i}_{S}(\mathrm{R}^{-j}f_{n\ast}\mathcal{F},\OO_S)\Longrightarrow \HH^{i+j}(\mathrm{RHom}_{S}(\mathrm{R}f_{n\ast}\mathcal{F},\OO_S)).
\end{equation}
Since $S$ is the spectrum of a discrete valuation ring, any coherent $\OO_{S}$-module $\mathrm{R}^{-j}f_{n\ast}\mathcal{F}$ admits a free resolution of length $1$. In particular, $E_{2}^{i,j}=0$ for $i\neq 0,1$. Therefore, the differential maps $d_{r}\colon E_{r}^{i,j}\rightarrow E_{r}^{i+r,j-r+1}$ are all zero for all $r\geq 2$. Consequently, $E_{\infty}^{i,j}=E_{2}^{i,j}$, and it is zero whenever $j\notin \{0,-1\}$ or $i\notin\{0,1\}$. The usual six terms short exact sequence associated to the Grothendieck spectral sequence provides then an isomorphism
\begin{equation}\label{H01}
\HH^{0}\left(\mathrm{RHom}_{S}(\mathrm{R}f_{n \ast}\mathcal{F},\OO_{S})\right)\stackrel{\sim}{\longrightarrow} \mathrm{Ext}^{1}_{S}(\mathrm{R}^1f_{n \ast}\mathcal{F},\OO_{S})
\end{equation}
because $\mathrm{Hom}_{S}(f_{n\ast}\mathcal{F},\OO_S)$ is zero, since $S$ is integral and the stalk of $f_{n\ast}\mathcal{F}$ at the generic point is trivial.

On the other hand, by (i), we have $f_{n}^{!}\OO_S\cong \omega_n$ in the derived category of quasi-coherent sheaves on $X_n$. As a result, we find
\begin{equation}\label{H02}
\HH^{0}\left(\mathrm{RHom}_{X_n}(\mathcal{F},f_{n}^{!}\OO_{S})\right)\cong\HH^0
(\mathrm{R}\mathrm{Hom}_{X_n}(\mathcal{F},\omega_n))\cong \mathrm{Hom}_{X_n}(\mathcal{F},\omega_n)\cong \HH^{0}(X_n,\mathcal{F}^{\vee}\otimes\omega_{n}).
\end{equation}
Now, on applying $\HH^{0}$ to both sides of \eqref{eq.GroSerDuality}, using \eqref{H01}, \eqref{H02}, and recalling that the functor $\Gamma(S,-)$ provides an equivalence between the category of quasi-coherent $\OO_S$-modules and the category of $\OO_K$-modules, we get the following isomorphism
\[
\mathrm{Ext}^{1}_{\OO_K}(\HH^{1}(X_n,\mathcal{F}), \OO_K)\stackrel{\sim}{\longrightarrow} \HH^{0}(X_n,\mathcal{F}^{\vee}\otimes\omega_{n}).
\]
Finally, since the $\mathcal{O}_{K}$-module $\mathrm{H}^{1}(X_n,\mathcal{F})$ is killed by $\pi^n$, we also have
\[ \mathrm{Hom}_{\mathcal{O}_K}(\mathrm{H}^{1}\left(X_n,\mathcal{F}),\mathcal{O}_{K}/\pi^m\mathcal{O}_{K}\right)
{\stackrel{\sim}{\longrightarrow}}
\mathrm{Ext}^{1}_{\mathcal{O}_K}(\mathrm{H}^{1}(X_n,\mathcal{F}),\mathcal{O}_K)
\]
for any integer $m\geq n$.
\end{proof}
\begin{lemma}[\cite{Raynaud4}, 3.7.1]\label{interNum} For any $i=1,\ldots,r$, we have $\omega_{X/S}\cdot
C_i=0$.
\end{lemma}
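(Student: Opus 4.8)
The plan is to deduce the vanishing of each $\omega_{X/S}\cdot C_i$ from two ingredients: that the total intersection number of $\omega_{X/S}$ with the special fibre is zero, and that $\omega_{X/S}$ is relatively nef because $X$ is minimal.

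First I would record the global degree computation. Since $(\omega_{X/S})_{|X_K}\cong \omega_{X_K/K}\cong \OO_{X_K}$ and $X_K$ has genus $1$, the restriction of $\omega_{X/S}$ to the generic fibre has degree $0$; because the intersection number of a line bundle with a full fibre equals its degree on the generic fibre, I obtain
\[
\omega_{X/S}\cdot X_s=\sum_{i=1}^{r}n_i\,(\omega_{X/S}\cdot C_i)=2p_a(X_K)-2=0 .
\]

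Next I would prove that $\omega_{X/S}\cdot C_i\ge 0$ for every $i$ (relative nefness), which is exactly where minimality enters. Assume for contradiction that $\omega_{X/S}\cdot C_i<0$ for some $i$. The arithmetic adjunction formula on the regular arithmetic surface $X$ reads
\[
\omega_{X/S}\cdot C_i+C_i^2=2p_a(C_i)-2 ,
\]
while Zariski's lemma says that the intersection matrix of the (connected) fibre $X_s$ is negative semi-definite with radical generated by the class of $X_s$; in particular $C_i^2\le 0$. Combining these with $p_a(C_i)\ge 0$ and the assumption forces $C_i^2\in\{-1,0\}$. If $C_i^2=0$, then $C_i$ lies in the radical, so $X_s$ is supported on $C_i$ alone and the displayed adjunction identity degenerates to $n_i(\omega_{X/S}\cdot C_i)=0$, a contradiction. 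If $C_i^2=-1$, then adjunction gives $p_a(C_i)=0$ and $\omega_{X/S}\cdot C_i=-1$; since $k$ is algebraically closed and $C_i$ is integral, $p_a(C_i)=0$ means $C_i\cong \mathbb{P}^1_k$, so $C_i$ is an exceptional curve of the first kind, which by Castelnuovo's criterion could be contracted while keeping the total space regular, contradicting the minimality of $X$. (Alternatively one may simply invoke the known nefness of the relative dualizing sheaf of a relatively minimal arithmetic surface of positive genus.)

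Finally I would combine the two steps: in $\sum_i n_i(\omega_{X/S}\cdot C_i)=0$ every coefficient $n_i$ is positive and, by nefness, every factor $\omega_{X/S}\cdot C_i$ is $\ge 0$; hence each term vanishes, giving $\omega_{X/S}\cdot C_i=0$ for all $i$. The only real difficulty is the nefness step, where the minimality hypothesis is essential and must be converted into the exclusion of the $(-1)$-curve case via adjunction and Zariski's lemma; the degree computation and the concluding positivity argument are then routine.
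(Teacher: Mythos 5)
Your proof is correct and follows essentially the same route as the paper: the degree identity $\sum_{i} n_i\,(\omega_{X/S}\cdot C_i)=0$ coming from the triviality of $\omega_{X/S}$ on the generic fibre, combined with relative nefness of $\omega_{X/S}$ forced by minimality, and then positivity of the multiplicities $n_i$. The only difference is that where the paper simply cites Liu, Proposition 9.3.10(a), to conclude that a component with $\omega_{X/S}\cdot C_i<0$ (and $C_i^2<0$) would be an exceptional divisor, you reprove that step by hand via adjunction, Zariski's lemma and Castelnuovo's criterion.
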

\begin{proof} Since $\omega_{X/S}|_{X_K}\cong
\OO_{X_K}$, we have $\omega_{X/S}\cdot X_{s}=0$, \emph{i.e.},
$\sum_{i=1}^{r}n_i\left(\omega_{X/S}\cdot C_i\right)=0$.
In particular, we get the Lemma if $r=1$. Suppose now $r\geq 2$;
since $C_i\cdot X_s=0$, we obtain $C_i\cdot C_i<0$. If $\omega_{X/S}\cdot C_i<0$, by \cite{Liu} Proposition 9.3.10 (a), the divisor $C_i$ is exceptional in the sense of \cite{Liu} Definition 9.3.1, which gives us a contradiction with the fact that $X/S$ is a minimal regular surface. So $(\omega_{X/S}\cdot
C_i)\geq 0$. As a result, $\omega_{X/S}\cdot C_i=0$ for any $i$.
\end{proof}

\begin{corollary}[{\cite{Raynaud4}, 3.7.2}]\label{dualisant} There is a unique integer $n$, $0\leq n< d$, such that $\omega_{X/S}\cong \mathcal{I}^{n}$.
\end{corollary}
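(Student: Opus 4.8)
The plan is to combine the two facts already established for $\omega_{X/S}$: that it is trivial on the generic fibre, $(\omega_{X/S})_{|X_K}\cong \OO_{X_K}$, and that $\omega_{X/S}\cdot C_i=0$ for every $i$ (Lemma~\ref{interNum}). The first confines the class of $\omega_{X/S}$ to the kernel of the restriction $\Pic(X)\to \Pic(X_K)$; the second pins it down, inside that kernel, to an integral multiple of $D$. The period relation $\mathcal{I}^{d}\cong\OO_X$ will then fix the representative in the range $0\le n<d$, and a direct computation of the order of $\mathcal{I}$ will give uniqueness.

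First I would use that $X$ is regular, so $\Pic(X)=\mathrm{Cl}(X)$, together with the localization exact sequence $\bigoplus_{i=1}^{r}\Z\,[C_i]\to \mathrm{Cl}(X)\to \mathrm{Cl}(X_K)\to 0$ attached to the closed subset $X_s$. Since $(\omega_{X/S})_{|X_K}$ is trivial, the class of $\omega_{X/S}$ lies in the image of $\bigoplus_i\Z\,[C_i]$, so $\omega_{X/S}\cong \OO_X(\sum_i b_iC_i)$ for suitable integers $b_i$. Intersecting with $C_j$ and using Lemma~\ref{interNum} gives $\sum_i b_i(C_i\cdot C_j)=0$ for all $j$; that is, the vector $(b_i)$ lies in the radical of the intersection matrix $M=(C_i\cdot C_j)$.

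Next I would invoke Zariski's lemma. As $X_K$ is geometrically connected, the special fibre $X_s$ is connected, so $M$ is negative semidefinite with radical spanned over $\Q$ by the fibre class $(n_1,\dots,n_r)$. Because $\gcd(n_1,\dots,n_r)=d$ we have $\gcd(n_1/d,\dots,n_r/d)=1$, so every integral vector on this line has the form $m\,(n_i/d)$ with $m\in\Z$; in particular $(b_i)=m\,(n_i/d)$, i.e. $\omega_{X/S}\cong \OO_X(mD)=\mathcal{I}^{-m}$. Since $\mathcal{I}^{d}=\pi\OO_X\cong\OO_X$, the isomorphism class $\mathcal{I}^{-m}$ depends only on $-m$ modulo $d$, so replacing $-m$ by its residue $n\in\{0,\dots,d-1\}$ yields $\omega_{X/S}\cong \mathcal{I}^{n}$ and proves existence.

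For uniqueness it is enough to show that $\mathcal{I}$ has order exactly $d$ in $\Pic(X)$; we already know the order divides $d$. Suppose $\mathcal{I}^{e}\cong \OO_X$ with $1\le e\le d$, so the vertical divisor $eD$ is principal, $eD=\mathrm{div}(g)$ with $g\in K(X)^{\times}$. As $eD$ is supported on $X_s$, the divisor of $g$ is trivial on the dense open $X_K$, whence $g_{|X_K}$ is a nonzero constant and $g\in K^{\times}$; therefore $\mathrm{div}(g)=v_K(g)\,X_s=v_K(g)\,d\,D$, and comparing coefficients gives $d\mid e$, so $e=d$. The only step requiring genuine care is the third paragraph: correctly identifying the radical of the intersection form—where the connectedness of $X_s$ and Lemma~\ref{interNum} are indispensable—and verifying integrality of its generator via $\gcd(n_1,\dots,n_r)=d$; everything else is bookkeeping with the period relation $\mathcal{I}^{d}\cong\OO_X$.
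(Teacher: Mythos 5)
Your proof is correct. For existence it is essentially the paper's own argument: both write $\omega_{X/S}\cong\OO_X(Y)$ with $Y$ a vertical divisor (using triviality on $X_K$), use Lemma~\ref{interNum} to get $Y\cdot C_i=0$ for all $i$, and apply Zariski's lemma (\cite{BLR}, 9.5/10; your formulation via the radical of the intersection matrix is the same statement) to conclude that $Y$ is a rational — hence, by the gcd argument you spell out, an integral — multiple of $D$, and finally reduce modulo $d$ using $\mathcal{I}^d=\pi\OO_X\cong\OO_X$. Where you genuinely add something is uniqueness: the paper's proof does not address it, relying implicitly on the fact that $\mathcal{I}$ has order exactly $d$ in $\Pic(X)$, a fact the paper elsewhere quotes from \cite{Raynaud}, Th\'eor\`eme~6.4.1~(3). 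Your final paragraph proves this directly and correctly: if $eD=\mathrm{div}(g)$ with $1\le e\le d$, then $g$ is invertible on $X_K$, hence $g\in K^{\times}$ because $\HH^{0}(X_K,\OO_{X_K})=K$ for the proper geometrically integral curve $X_K$, so $\mathrm{div}(g)=v_K(g)\,X_s=v_K(g)\,d\,D$ and $d\mid e$. This makes the corollary self-contained, at the modest cost of a slightly longer argument.
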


\begin{proof} Since $\omega_{X/S}|_{X_K}\cong \OO_{X_K}$,
$\omega_{X/S}\cong \OO_{X}(Y)$, with $Y$ a divisor of $X$ with support contained in $X_s$.
Hence, $Y$ is a combination of the components $C_i$.
On the other hand, according to the previous Lemma, $Y\cdot C_i=0$, we have $Y\cdot Y=0$, hence, $Y$ is a rational multiple
of $X_s$ (\cite{BLR}, 9.5/10), \emph{i.e.}, $Y$ is linearly equivalent to $nD$ with $0\leq n<d$, whence the Corollary follows.
\end{proof}

The next result is needed in the proof of Proposition~\ref{prop.expression}.

\begin{corollary}[{\cite{Raynaud4}, 3.7.3}]\label{Classique} Suppose that $f\colon X\to S$ has a section $\sigma$. Let $\mathcal{J}$ be the ideal sheaf defining the closed immersion $\sigma\colon S\hookrightarrow X$ and set $\omega:=\sigma^{\ast}(\mathcal{J}/\mathcal{J}^{2})$.
 Then we have a canonical isomorphism $ f^{\ast}\omega \xrightarrow{\sim} \omega_{X/S}$.
\end{corollary}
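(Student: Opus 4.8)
The plan is to combine Grothendieck duality for the section $\sigma$ with the already-established structure of $\omega_{X/S}$. First I would observe that the existence of a section immediately pins down $d$. Indeed, $\sigma$ restricts on the generic fibre to a $K$-rational point $\sigma_K\in X_K(K)$, so in the notation of Lemma~\ref{d1=d2=d} one has $d_2=1$; since $d=d_2$, this forces $d=1$. Corollary~\ref{dualisant} then leaves only the value $n=0$, whence $\omega_{X/S}\cong \mathcal I^{0}=\OO_X=f^{*}\OO_S$. Thus the real content is not the triviality of $\omega_{X/S}$ as an abstract line bundle, but the identification of the resulting pullback datum with $\omega$.

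Next I would produce a canonical isomorphism $\sigma^{*}\omega_{X/S}\cong\omega$ by adjunction. Since $X$ is regular of dimension $2$ and $\sigma(S)$ is an integral closed subscheme of codimension $1$, the section $\sigma$ is a regular immersion of codimension $1$, so $\mathcal J/\mathcal J^{2}$ is invertible and $\omega=\sigma^{*}(\mathcal J/\mathcal J^{2})$ is a line bundle on $S$. Writing $f\circ\sigma=\id_S$ and using the functoriality of the twisted inverse image together with $\sigma^{!}(-)\cong \sigma^{*}(-)\otimes\omega^{\vee}[-1]$ (the fundamental local isomorphism, \cite{RD}, III, Corollary~7.3) and $f^{!}\OO_S\cong\omega_{X/S}[1]$ (Lemma~\ref{lem.rh}(i)), I compute
\[
\OO_S\cong (f\sigma)^{!}\OO_S\cong \sigma^{!}f^{!}\OO_S\cong \sigma^{!}\bigl(\omega_{X/S}[1]\bigr)\cong \sigma^{*}\omega_{X/S}\otimes\omega^{\vee},
\]
where $L\sigma^{*}=\sigma^{*}$ on the flat sheaf $\omega_{X/S}$ and the shifts $[1]$ and $[-1]$ cancel. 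This yields the desired canonical isomorphism $\sigma^{*}\omega_{X/S}\cong\omega$.

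Finally I would glue the two steps. Fixing an isomorphism $\omega_{X/S}\cong f^{*}\mathcal M$ with $\mathcal M=\OO_S$ from the first step and pulling back along $\sigma$, the relation $f\circ\sigma=\id_S$ gives $\mathcal M\cong \sigma^{*}f^{*}\mathcal M\cong \sigma^{*}\omega_{X/S}\cong\omega$; applying $f^{*}$ then produces $f^{*}\omega\cong f^{*}\mathcal M\cong \omega_{X/S}$, which is the assertion. The one point requiring care for the word \emph{canonical} is that the resulting identification be independent of the auxiliary isomorphism $\omega_{X/S}\cong f^{*}\mathcal M$: this holds because only the canonical adjunction isomorphism of the second step carries any content, the first step serving merely to guarantee that $\omega_{X/S}$ descends to the base.

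I expect the adjunction computation of the second step to be the main obstacle: one must correctly track the codimension-one shift and the conormal twist so that the two occurrences of $[1]$ and $[-1]$ cancel, and verify that no derived correction appears when $\sigma^{*}$ is applied to the invertible sheaf $\omega_{X/S}$. By contrast, the reduction to $d=1$ and the appeal to Corollary~\ref{dualisant} are immediate given the preceding results.
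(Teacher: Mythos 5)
Your proof is correct and follows essentially the same route as the paper's: reduce to $d=1$ via the $K$-rational point and Lemma~\ref{d1=d2=d}, deduce $\omega_{X/S}\cong\OO_X$ from Corollary~\ref{dualisant}, and run the identical $(f\sigma)^{!}\OO_S$ computation (functoriality of $(-)^{!}$ plus \cite{RD}, III, Corollary~7.3) to obtain $\sigma^{*}\omega_{X/S}\cong\omega$. The only cosmetic difference lies in the final bookkeeping: the paper concludes via the canonical adjunction isomorphism $f^{*}f_{*}\omega_{X/S}\xrightarrow{\sim}\omega_{X/S}$, whereas you fix an auxiliary trivialization $\omega_{X/S}\cong f^{*}\mathcal{M}$ and verify independence of the choice — both are valid.
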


\begin{proof} By assumption, the torsor $X_{K}$ has a $K$-rational point; it is then trivial as a torsor under $J_K$. According to Lemma \ref{d1=d2=d}, we have then $d=1$. As a result, $\omega_{X/S}\cong \OO_{X}$ (Corollary \ref{dualisant}), and the canonical morphism $f^{\ast}f_{\ast}\omega_{X/S}\rightarrow \omega_{X/S}$ given by adjunction is an isomorphism. On the other hand, we have:
\[
\OO_S\cong (f\sigma)^{!}\OO_{S}\cong \sigma^{!}\left(f^{!}\OO_S\right)\cong
\sigma^{!}(\omega_{X/S})[1]\cong \sigma^{\ast}\omega_{X/S}\otimes \omega^{\vee},
\]
where the first isomorphism follows from the fact that $f\circ \sigma=\mathrm{id}_{S}$, the second one uses the functoriality of $(-)^{!}$ with respect to compositions of morphisms, the third one is induced from Lemma~\ref{lem.rh}~(i), and the last one comes from the calculation of $\sigma^{!}$ (\cite{RD}, III, Corollary 7.3).
Hence $\omega\cong \sigma^{\ast}\omega_{X/S}\cong \sigma^{\ast}f^{\ast}f_{\ast}\omega_{X/S}\cong f_{\ast}\omega_{X/S}$ and we get
the following canonical isomorphisms $f^{\ast}\omega\cong f^{\ast}f_{\ast} \omega_{X/S} \cong \omega_{X/S}$, as desired.
\end{proof}

Let $n\geq 2$ be an integer, and ${\mathcal L}$ an invertible sheaf on $X$, which is of degree $0$ on each component of $X_1$.
Consider the following short exact sequence of sheaves over $X$
\begin{equation*}\label{suite de depart}
0\longrightarrow i_{n-1*} (\mathcal I|_{X_{n-1}}) \longrightarrow i_{n*} \OO_{X_n}\longrightarrow i_{1*}\OO_{X_{1}}\longrightarrow 0,
\end{equation*}
where $ |_{X_m}$ stands for the inverse image $i_m^*$.
By tensoring by the invertible sheaf $ {\mathcal L}^{\vee}\otimes \omega_{X/S}(X_n):= {\mathcal L}^{\vee}\otimes\omega_{X/S}\otimes \mathcal I^{n\vee}$ and recalling that $\omega_n:=i_n^*(\omega_{X/S}(X_n))$, we get the following
exact sequence of sheaves over $X$
\begin{equation}\label{eq.Lomega}
0\longrightarrow {\mathcal L}^{\vee}\otimes i_{n-1*} \omega_{n-1}\longrightarrow {\mathcal L}^{\vee}\otimes i_{n*} \omega_{n}\longrightarrow
{\mathcal L}^{\vee}\otimes i_{1*}(\omega_{n}|_{X_1})\longrightarrow 0 .
\end{equation}
Since $i_{n*}(\mathcal{L}^{\vee}|_{X_n}\otimes \omega_n)=i_{n*}(i_{n}^{\ast}\mathcal{L}^{\vee}\otimes \omega_n)\cong \mathcal{L}^{\vee}\otimes i_{n*}\omega_n$ and ${\mathcal L}^{\vee}\otimes i_{1*}(\omega_{n}|_{X_1})\cong i_{1*}(\mathcal L^\vee|_{X_1}\otimes \omega_n|_{X_1})$ we have the following exact sequence
\begin{equation}\label{exact}
0\longrightarrow
\HH^{0}(X_{n-1},{\mathcal L}^{\vee}|_{X_{n-1}}\otimes\omega_{n-1})\stackrel{h_1}{\longrightarrow}
\HH^{0}(X_n,{\mathcal L}^{\vee}|_{X_{n}}\otimes\omega_n)\stackrel{h_2}{\longrightarrow}
\HH^{0}(X_1,{\mathcal L}^{\vee}|_{X_{1}}\otimes\omega_n|_{X_{1}}).
\end{equation}
As a result, considering the lengths of the finite $\OO_K$-modules in \eqref{exact} and applying Lemma~\ref{lemme cohomologique}, we get
\begin{equation}\label{inequalities}
h^{0}(X_{n-1},{\mathcal L}^{\vee}|_{X_{n-1}}\otimes\omega_{n-1})\leq
h^{0}(X_n,{\mathcal L}^{\vee}|_{X_{n}}\otimes\omega_n)\leq
h^{0}(X_{n-1},{\mathcal L}^{\vee}|_{X_{n-1}}\otimes\omega_{n-1})+1.
\end{equation}

\begin{lemma}[\cite{Raynaud4}, 3.7.6] \label{suite exacte de dualisant} For $n$ a positive integer, we have either $\omega_{n}\cong {\mathcal L}|_{X_{n}}$, and in this case $\HH^{0}(X_1,{\mathcal L}^{\vee}|_{X_1}\otimes\omega_n|_{X_1})\cong k$ and the sequence (\ref{exact}) is right exact, or $\omega_n\not\cong
{\mathcal L}|_{X_{n}}$, in which case the canonical morphism $h_1\colon \HH^{0}(X_{n-1},{\mathcal L}^{\vee}{|_{X_{n-1}}}\otimes \omega_{n-1})\to \HH^{0}(X_n,{\mathcal L}^{\vee}{|_{X_{n}}}\otimes\omega_n)$ 
is bijective.
\end{lemma}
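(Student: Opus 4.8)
The plan is to deduce the whole dichotomy from a single analysis of the third term of the exact sequence \eqref{exact}, namely $\HH^0(X_1,{\mathcal L}^\vee|_{X_1}\otimes\omega_n|_{X_1})$, to which Lemma~\ref{lemme cohomologique} applies. First I would verify that the line bundle $L:={\mathcal L}^\vee|_{X_1}\otimes\omega_n|_{X_1}$ on $X_1$ has degree $0$ on each component $C_i$. Since ${\mathcal L}$ is of degree $0$ on each $C_i$ by hypothesis, it suffices to treat $\omega_n|_{X_1}$; using \eqref{omega} and the fact that the subscheme $X_n$ (defined by ${\mathcal I}^n$) corresponds to the effective divisor $nD$, one computes the degree of $\omega_{X/S}(X_n)$ on $C_i$ as $\omega_{X/S}\cdot C_i+n(D\cdot C_i)$. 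This vanishes because $\omega_{X/S}\cdot C_i=0$ (Lemma~\ref{interNum}) and $D\cdot C_i=\tfrac1d(X_s\cdot C_i)=0$, the latter being the fibre relation ($dD=X_s$ is principal). Hence Lemma~\ref{lemme cohomologique} (its hypotheses hold for $Y=X$, as $X$ is regular, $X_K$ is geometrically irreducible of genus $1$, and $Y_1=X_1$) yields the clean alternative: either $\HH^0(X_1,L)=0$, or $L\cong\OO_{X_1}$ and $\HH^0(X_1,L)\cong k$.

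The second, decisive ingredient is that $X_1\hookrightarrow X_n$ is defined by the nilpotent ideal ${\mathcal I}/{\mathcal I}^n$, hence is a homeomorphism inducing the same residue fields at every point. Consequently, a global section of an invertible sheaf on $X_n$ is nowhere vanishing if and only if its restriction to $X_1$ is (invertibility of a section at a point is detected on the residue field, by Nakayama, and this is unchanged by the nilpotent thickening). I will use this to pass between isomorphisms over $X_1$ and over $X_n$. Writing $L_n:={\mathcal L}^\vee|_{X_n}\otimes\omega_n$, note that $L_n\cong\OO_{X_n}$ is equivalent to $\omega_n\cong{\mathcal L}|_{X_n}$, that $\HH^0(X_n,L_n)$ is the middle term of \eqref{exact}, and that $h_2$ is the restriction map $\HH^0(X_n,L_n)\to\HH^0(X_1,L)$.

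I would then settle the two cases. Suppose $\omega_n\cong{\mathcal L}|_{X_n}$, i.e.\ $L_n\cong\OO_{X_n}$; restricting to $X_1$ gives $L\cong\OO_{X_1}$, so $\HH^0(X_1,L)\cong k$ by the first step, and the section $1\in\HH^0(X_n,\OO_{X_n})=\HH^0(X_n,L_n)$ restricts to the nowhere-vanishing generator of $\HH^0(X_1,L)$; thus $h_2$ is surjective and \eqref{exact} is right exact. Conversely, suppose $\omega_n\not\cong{\mathcal L}|_{X_n}$, i.e.\ $L_n\not\cong\OO_{X_n}$. If $\HH^0(X_1,L)=0$ then $h_2=0$ trivially; otherwise $\HH^0(X_1,L)\cong k$ has length $1$, so the image of $h_2$ is either $0$ or all of it. In the surjective case there would be $s\in\HH^0(X_n,L_n)$ with $s|_{X_1}$ a generator, hence (under $L\cong\OO_{X_1}$) a nowhere-vanishing section of $L$; by the homeomorphism argument $s$ would then be nowhere vanishing on $X_n$, forcing $L_n\cong\OO_{X_n}$ — a contradiction. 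Therefore $h_2=0$, and by exactness of \eqref{exact} the injection $h_1$ is bijective.

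I expect the main obstacle to be exactly the passage from ``$\omega_n|_{X_1}\cong{\mathcal L}|_{X_1}$'', which is all that Lemma~\ref{lemme cohomologique} detects on $X_1$, to the genuine statement ``$\omega_n\cong{\mathcal L}|_{X_n}$'' on $X_n$; these are a priori distinct conditions, and the bridge is precisely the lifting of a nowhere-vanishing section across the nilpotent immersion $X_1\hookrightarrow X_n$ via Nakayama. By contrast, the degree computation of the first step and the length-$\le 1$ bookkeeping on the image of $h_2$ are routine once Lemma~\ref{interNum} and the fibre relation $X_s\cdot C_i=0$ are in hand.
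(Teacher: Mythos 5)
Your proof is correct and takes essentially the same route as the paper's: both handle the case $\omega_n\cong\mathcal{L}|_{X_n}$ by lifting the constant section to see $h_2$ is surjective, and the case $\omega_n\not\cong\mathcal{L}|_{X_n}$ by showing $h_2=0$, using exactly the observation that a section whose restriction to $X_1$ is a nonzero constant is nowhere vanishing on the nilpotent thickening $X_n$ and would force $\mathcal{L}^{\vee}|_{X_n}\otimes\omega_n\cong\OO_{X_n}$. Your explicit check that $\mathcal{L}^{\vee}|_{X_1}\otimes\omega_n|_{X_1}$ has degree $0$ on each component (via Lemma~\ref{interNum} and $D\cdot C_i=0$), needed to invoke Lemma~\ref{lemme cohomologique}, is left implicit in the paper but is the same underlying justification.
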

\begin{proof} Suppose first that $\omega_n\cong {\mathcal L}|_{X_n}$, so that ${\mathcal L}^{\vee}|_{X_{n}}\otimes \omega_n\cong \mathcal{O}_{X_n}$.
Then the map $h_2$ in \eqref{exact} can be identified (in a non canonical way) with the canonical map
\begin{equation}\label{suite1}
\HH^{0}(X_n,\OO_{X_n})\longrightarrow \HH^{0}(X_1,\OO_{X_1}).
\end{equation}
By Lemma~\ref{lemme cohomologique}, we have $ \HH^{0}(X_1,\OO_{X_1})\cong k $. Hence, every element of $\HH^{0}(X_1,\OO_{X_1})$ can be lifted to an element of $\HH^{0}(X_n,\OO_{X_n})$. As a result, the map (\ref{suite1}) is surjective and hence the complex (\ref{exact}) is right exact. Thus we obtain the first assertion.
Next, suppose $\omega_n\ncong {\mathcal L}|_{X_n}$. If $({\mathcal L}^{\vee}|_{X_{n}} \otimes\omega_n )|_{X_{1}}\not\cong \OO_{X_1}$ we conclude by (Lemma~\ref{lemme cohomologique}) that $h_2=0$ and hence $h_1$ is an isomorphism. 
Suppose then $({\mathcal L}^{\vee}|_{X_{n}} \otimes\omega_n )|_{X_{1}}\cong \OO_{X_1}$ and show that still $h_2=0$. 
Let $a\in \HH^{0}(X_n,\mathcal{L}^{\vee}|_{X_n}\otimes\omega_n)$. Suppose that the image of $a$ in $ \HH^0(X_1,\mathcal{L}^{\vee}|_{X_1}\otimes \omega_n|_{X_1})\cong k$ (Lemma~\ref{lemme cohomologique}) is non-zero. Then $a$ is nowhere vanishing because its restriction at any point of $X_1$ (\emph{i.e.}, at any point of $X_n$) is non-zero. Hence the invertible sheaf $\mathcal{L}^{\vee}|_{X_n}\otimes\omega_n$ is trivial because it admits a nowhere vanishing section. But this contradicts the hypothesis.
\end{proof}

Next we investigate the Picard group of $X_n$. Let $n\geq 2$ be an integer. The kernel of the surjective morphism
$\Pic(X_n)\rightarrow \Pic(X_{n-1})$ is an $\OO_K$-module of finite length killed by $p$. More precisely, consider the closed
immersion $ i_{n-1}' \colon X_{n-1}\hookrightarrow X_n$, given by the ideal sheaf
$\mathfrak{N}:=\mathcal{I}^{n-1}/\mathcal{I}^{n}\subset \OO_{X_{n}}$. The sheaf $\mathfrak{N}$ is nilpotent (in fact,
$\mathfrak{N}^{2}=0$), so we get a short exact sequence of abelian sheaves over $X_n$:
\[
0\longrightarrow 1+\mathfrak{N}\longrightarrow \OO_{X_n}^{\ast}\longrightarrow
i_{n-1*}'\OO_{X_{n-1}}^{\ast}\longrightarrow 0.
\]
Since $\mathfrak{N}^{2}=0$, the morphism $x\mapsto 1+x$ defines an isomorphism of abelian sheaves
\begin{equation}\label{seq.beta}
\beta\colon \mathfrak{N}\longrightarrow 1+\mathfrak{N}.
\end{equation}
Since $X_n$ has dimension $1$, the cohomology groups $\HH^{2}(X_n,1+\mathfrak{N})\cong \HH^{2}(X_n,\mathfrak{N})$ are zero.
 In this way we get the following long exact sequence
\begin{equation}\label{seq.PicR}
\HH^{0}(X_{n-1},\OO_{X_{n-1}}^{\ast})\stackrel{\partial^\ast}{\longrightarrow} \HH^{1}(X_{n},1+\mathfrak{N})\longrightarrow \Pic(X_n) \stackrel{\alpha}{\longrightarrow}
\Pic(X_{n-1})\longrightarrow 0.
\end{equation}
On the other hand, from the following exact sequence of sheaves over $X_n$:
\begin{equation*}
0\longrightarrow \mathfrak{N}\longrightarrow \OO_{X_n}\longrightarrow
i_{n-1*}'\OO_{X_{n-1}}\longrightarrow 0,
\end{equation*}
we obtain a long exact sequence (recall that
$\HH^{2}(X_n,\mathfrak{N})=0$):
\begin{equation}\label{seq.seqforH1}
\HH^{0}(X_{n-1},\OO_{X_{n-1}})\stackrel{\partial}{\longrightarrow}
\HH^{1}(X_{n},\mathfrak{N})\longrightarrow \HH^{1}(X_{n},\OO_{X_n})
\stackrel{\alpha'}{\longrightarrow} \HH^{1}(X_{n-1},\OO_{X_{n-1}})\longrightarrow 0
.
\end{equation}
We have then the following result:
\begin{lemma}[D\'evissage d'Oort, \cite{Oort}, Proposition in \S~6]\label{Oort}
Consider the morphisms $\partial$ and $\partial^{\ast}$ given respectively in \eqref{seq.seqforH1} and \eqref{seq.PicR} above, then one has $\beta(\mathrm{im}(\partial))=\mathrm{im}(\partial^{\ast})$.
\end{lemma}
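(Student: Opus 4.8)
The plan is to compute everything with \v{C}ech cocycles on a fixed affine open cover $\mathcal U=\{U_i\}$ of $X_n$, which is simultaneously an affine cover of $X_{n-1}$ and of $X_1$ since the underlying spaces coincide. Recall the explicit descriptions of the two connecting maps: for $\bar a\in \HH^0(X_{n-1},\OO_{X_{n-1}})$ one lifts $\bar a|_{U_i}$ to $a_i\in\OO_{X_n}(U_i)$ and $\partial(\bar a)=[(a_i-a_j)]$, while for a unit $\bar u\in \HH^0(X_{n-1},\OO_{X_{n-1}}^{\ast})$ one lifts $\bar u|_{U_i}$ to a unit $u_i$ (possible because $\mathfrak N$ is nilpotent) and $\partial^{\ast}(\bar u)=[(u_iu_j^{-1})]$; finally $\beta$ carries an additive cocycle $(m_{ij})$ to the multiplicative cocycle $(1+m_{ij})$. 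Since the constants of $\OO_K$ lift to both $\OO_{X_n}$ and $\OO_{X_n}^{\ast}$, and since $X_s$ is connected so that $\HH^0(X_m,\OO_{X_m})$ is a local Artinian ring with residue field $k$, every class in $\mathrm{im}(\partial)$ is represented by some $\bar a$ in the maximal ideal $\mathfrak m$ (which is then nilpotent), and every class in $\mathrm{im}(\partial^{\ast})$ by a unit of the form $1+\bar a$ with $\bar a\in\mathfrak m$.

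The key computation is a logarithmic-derivative identity. Using $\mathfrak N^2=0$, the map $x\mapsto 1+x$ is an isomorphism of sheaves of groups $\mathfrak N\xrightarrow{\sim}1+\mathfrak N$, and $\mathfrak N$ is a module over $\OO_{X_1}$; a short cocycle manipulation then shows on one hand that $\partial$ satisfies the Leibniz rule $\partial(\bar a\bar b)=\bar a\,\partial(\bar b)+\bar b\,\partial(\bar a)$ (the coefficients acting through their images in $\OO_{X_1}$), and on the other hand that for $\bar a\in\mathfrak m$
\[
\partial^{\ast}(1+\bar a)=\beta\big((1+\bar a)^{-1}\cdot\partial(\bar a)\big)\quad\text{in }\HH^1(X_n,1+\mathfrak N).
\]
Consequently $\beta^{-1}(\mathrm{im}(\partial^{\ast}))$ is the image of the homomorphism $\Lambda\colon 1+\mathfrak m\to \HH^1(X_n,\mathfrak N)$, $1+\bar a\mapsto(1+\bar a)^{-1}\partial(\bar a)$ (the Leibniz rule shows $\Lambda$ is multiplicative, a genuine ``$d\log$''), whereas $\beta^{-1}\beta(\mathrm{im}(\partial))=\mathrm{im}(\partial)$ is the image of the additive homomorphism $\partial|_{\mathfrak m}$. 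Thus the lemma is reduced to the equality of subgroups $\mathrm{im}(\Lambda)=\mathrm{im}(\partial|_{\mathfrak m})$ inside $\HH^1(X_n,\mathfrak N)$.

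This last equality is the heart of the matter, the d\'evissage proper. If $K$ had characteristic $0$ one could finish at once, since $\log$ and $\exp$ converge on the nilpotent $\bar a$ and give mutually inverse bijections $\mathfrak m\leftrightarrow 1+\mathfrak m$ intertwining $\partial$ and $\Lambda$; the whole point of Oort's argument is to obtain the equality in arbitrary characteristic, where no such $\log/\exp$ is available. I would do this by filtering $\mathfrak m$ by the powers of the maximal ideal $\mathfrak m_1$ of $\HH^0(X_1,\OO_{X_1})$ acting on $\HH^1(X_n,\mathfrak N)$: the bijection $x\mapsto 1+x$ respects this finite exhaustive filtration, because products raise the filtration, and on each associated graded piece the correction factor $(1+\bar a)^{-1}-1$ acts trivially, so that $\Lambda$ and $\partial$ induce the \emph{same} map on $\mathrm{gr}$. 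An induction on length then yields $\mathrm{im}(\Lambda)=\mathrm{im}(\partial|_{\mathfrak m})$. The main obstacle is precisely this characteristic-free step: the absence of $\exp/\log$ forces one to replace the clean change of variables by the filtered comparison, and the argument hinges throughout on the square-zero identity $\mathfrak N^2=0$, which alone guarantees both that $x\mapsto 1+x$ is a homomorphism and that $\partial$ obeys the Leibniz rule.
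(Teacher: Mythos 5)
Your Čech set-up and the first reductions are sound: the description of $\partial$, $\partial^{\ast}$, $\beta$ on cocycles, the observation that constants must be taken from $\OO_K$ (not from $k$, which need not embed in $\HH^0(X_{n-1},\OO_{X_{n-1}})$ in mixed characteristic), the resulting reduction of $\mathrm{im}(\partial)$ and $\mathrm{im}(\partial^{\ast})$ to $\partial(\mathfrak m)$ and $\partial^{\ast}(1+\mathfrak m)$ with $\mathfrak m$ the (nilpotent) maximal ideal of the Artinian local ring $\HH^0(X_{n-1},\OO_{X_{n-1}})$, the Leibniz rule, and the identity $\partial^{\ast}(1+\bar a)=\beta\bigl((1+\bar a)^{-1}\partial(\bar a)\bigr)$ with coefficients acting through $\OO_{X_1}$ — all of this is correct. (For the record, the paper offers no proof of this lemma: it cites Oort, so your attempt can only be judged on its own terms.)

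The genuine gap is exactly at what you declare to be the heart of the matter. Your proposed dévissage filters by the powers of ``the maximal ideal $\mathfrak m_1$ of $\HH^0(X_1,\OO_{X_1})$''; but by Lemma~\ref{lemme cohomologique} (applied to $L=\OO_{X_1}$, exactly as the paper does elsewhere) one has $\HH^0(X_1,\OO_{X_1})\cong k$, a field, so $\mathfrak m_1=0$, the filtration is trivial, and the ``induction on length'' proves nothing. Moreover, even in a setting where such a filtration were nontrivial, knowing that two maps agree on associated graded pieces does not identify their images without a strictness argument, which you never supply; and the side remark that $\mathrm{char}(K)=0$ would permit $\log/\exp$ is also false, since $\HH^0(X_{n-1},\OO_{X_{n-1}})$ is an $\OO_K/\pi^m\OO_K$-algebra in which $p$ is nilpotent whatever the characteristic of $K$. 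The irony is that the fact which trivializes your filtration is precisely the missing step: since $\mathcal I$ kills $\mathfrak N=\mathcal I^{n-1}/\mathcal I^{n}$, the sheaf $\mathfrak N$ is an $\OO_{X_1}$-module, so any global section of $\OO_{X_{n-1}}$ acts on $\HH^1(X_n,\mathfrak N)$ through its image in $\HH^0(X_1,\OO_{X_1})=k$; for $\bar a\in\mathfrak m$ that image is $0$, hence $(1+\bar a)^{-1}$ acts as the identity and your key formula collapses to $\partial^{\ast}(1+\bar a)=\beta(\partial(\bar a))$ on the nose. Combined with your reduction to $\mathfrak m$, this yields $\mathrm{im}(\partial^{\ast})=\partial^{\ast}(1+\mathfrak m)=\beta(\partial(\mathfrak m))=\beta(\mathrm{im}(\partial))$ with no filtration, no induction, and no characteristic issue. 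So your proof is repairable in one line, but as written the decisive equality $\mathrm{im}(\Lambda)=\mathrm{im}(\partial|_{\mathfrak m})$ is not established.
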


As a consequence, $\ker(\alpha)\cong \mathrm{coker}(\partial)$ (as abelian sheaves). Since
$\mathfrak{N}=\mathcal{I}^{n-1}/\mathcal{I}^{n}$ is an $\OO_K$-module killed by $p$, it follows that $\mathrm{ker}(\alpha)\cong \mathrm{coker}(\partial)$ is an $\OO_K$-module of finite length killed by $p$. Hence the order of $\mathcal{I}|_{X_n}$ in $ \Pic(X_n)$ is equal either to the order of $\mathcal{I}|_{X_{n-1}}$, or $p$ times the order of $\mathcal{I}|_{X_{n-1}}$. 
Let $d'$ denote the order of the invertible sheaf $\mathcal{I}|_{X_1}$. Then for any $n\geq 2$, the order of $\mathcal{I}|_{X_n}$ is of the form $d'p^{\ell}$. 
On the other hand, since $\mathcal{I}|_{X_K}\cong \OO_{X_K}$, and since the invertible sheaf $\mathcal{I}$ is of order $d$, by Lemma~6.4.4
in \cite{Raynaud}, for $m\in {\mathbb Z}$ sufficiently large, the invertible sheaf $\mathcal{I}|_{X_m}$ is of order divisible by $d$.
Moreover, since $\mathcal{I}$ is of order $d$,
the order of $\mathcal{I}|_{X_m}$ divides $d$. Hence for $m\gg 0$, the order of $\mathcal{I}|_{X_m}$ is equal to $d$.
Let $e\geq 0$ be the integer such that $d=d'p^{e}$. For $i=0,\ldots, e$, let $m_i$ denote the smallest integer $n$ such that $\mathcal{I}|_{X_{n}}$ is of order $d'p^{i}$.
Let also \[\phi(n):=h^{1}(X_n,\OO_{X_n})\] be the length of the $\OO_K$-module $\HH^{1}(X_n,\OO_{X_n})$. 
By Lemma~\ref{lem.rh}~(ii) (with $\mathcal{F}=\OO_{X_n}$), $\phi(n)=h^0(X_n,\omega_{n})$. Therefore, according to the first inequality of \eqref{inequalities} we have $\phi(n)\geq \phi(n-1)$. Similarly, by Lemma~\ref{lem.rh} again, $\phi(n)>\phi(n-1)$ if and only if $h^0(X_n,\omega_{n})>h^0(X_{n-1},\omega_{n-1})$, hence if and only if $\omega_n\cong \OO_{X_n}$, in which case $\phi(n)=\phi(n-1)+1$ (Lemma~\ref{suite exacte de dualisant}). One then gets from Lemma \ref{Oort} the following Corollary:

\begin{corollary}\label{Pic_S} Let $n\geq 2$ be an integer. We have either $\phi(n)=\phi(n-1)$, in which case the morphism
$\alpha\colon\Pic(X_{n})\rightarrow \Pic(X_{n-1})$ is an isomorphism, or $\phi(n)=\phi(n-1)+1$, in which case $\ker(\alpha)$ is an
$\OO_{K}$-module of length $1$, and hence a $k$-vector space of dimension $1$.
\end{corollary}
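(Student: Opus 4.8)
The plan is to convert the statement about $\ker(\alpha)$ into a length computation on the coherent cohomology groups $\HH^1(X_n,\OO_{X_n})$, whose $\OO_K$-lengths are by definition the numbers $\phi(n)$. The bridge between the two worlds is the chain of isomorphisms $\ker(\alpha)\cong\mathrm{coker}(\partial)\cong\ker(\alpha')$, where the first isomorphism is the one already extracted from Lemma~\ref{Oort} and the second comes from the four-term sequence \eqref{seq.seqforH1}.

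First I would make the second isomorphism explicit. In \eqref{seq.seqforH1}, exactness at $\HH^1(X_n,\mathfrak{N})$ identifies the kernel of the map $\HH^1(X_n,\mathfrak{N})\to\HH^1(X_n,\OO_{X_n})$ with $\mathrm{im}(\partial)$, while exactness at $\HH^1(X_n,\OO_{X_n})$ identifies $\ker(\alpha')$ with the image of that same map. Hence it induces an isomorphism $\mathrm{coker}(\partial)=\HH^1(X_n,\mathfrak{N})/\mathrm{im}(\partial)\xrightarrow{\sim}\ker(\alpha')$. Combined with $\ker(\alpha)\cong\mathrm{coker}(\partial)$, which follows from Lemma~\ref{Oort} together with the isomorphism $\beta$ of \eqref{seq.beta} (it carries $\mathrm{im}(\partial)$ onto $\mathrm{im}(\partial^{\ast})$ inside $\HH^1(X_n,1+\mathfrak{N})\cong\HH^1(X_n,\mathfrak{N})$, so the quotients appearing in \eqref{seq.PicR} and \eqref{seq.seqforH1} match), this yields $\ker(\alpha)\cong\ker(\alpha')$ as $\OO_K$-modules.

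Next I would read off the length of $\ker(\alpha')$. Since $\alpha'$ is surjective by \eqref{seq.seqforH1}, the short exact sequence
\[
0\longrightarrow\ker(\alpha')\longrightarrow\HH^1(X_n,\OO_{X_n})\stackrel{\alpha'}{\longrightarrow}\HH^1(X_{n-1},\OO_{X_{n-1}})\longrightarrow 0
\]
and additivity of length give $\mathrm{length}_{\OO_K}\ker(\alpha')=\phi(n)-\phi(n-1)$. By the discussion preceding the corollary---Lemma~\ref{lem.rh}~(ii) with $\mathcal{F}=\OO_{X_n}$, the inequalities \eqref{inequalities}, and Lemma~\ref{suite exacte de dualisant}---this difference is either $0$ or $1$.

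Finally the case analysis is immediate. If $\phi(n)=\phi(n-1)$ then $\ker(\alpha)\cong\ker(\alpha')=0$; as $\alpha$ is surjective by \eqref{seq.PicR}, it is an isomorphism. If $\phi(n)=\phi(n-1)+1$ then $\ker(\alpha)$ has length one over $\OO_K$, hence is a simple $\OO_K$-module, that is, isomorphic to $k$ and in particular a one-dimensional $k$-vector space (consistent with the earlier observation that $\ker(\alpha)$ is killed by $p$). I expect no genuine obstacle: once Lemma~\ref{Oort} supplies the identification $\ker(\alpha)\cong\ker(\alpha')$, the argument is pure bookkeeping with lengths of finite $\OO_K$-modules. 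The only point demanding care is keeping track of which arrows are already known to be surjective, so that length is strictly additive across the relevant short exact sequences.
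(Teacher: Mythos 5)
Your proof is correct and follows essentially the same route as the paper's: both rest on the sequences \eqref{seq.PicR} and \eqref{seq.seqforH1}, on Oort's d\'evissage (Lemma~\ref{Oort}) to identify $\ker(\alpha)\cong\mathrm{coker}(\partial^{\ast})\cong\mathrm{coker}(\partial)\cong\ker(\alpha')$, and on the dichotomy $\phi(n)-\phi(n-1)\in\{0,1\}$ established just before the corollary via Lemma~\ref{lem.rh}~(ii) and Lemma~\ref{suite exacte de dualisant}. The only cosmetic difference is that the paper handles the case $\phi(n)=\phi(n-1)$ by deducing surjectivity of $\partial$ and hence of $\partial^{\ast}$, whereas you run both cases uniformly through the single identification $\ker(\alpha)\cong\ker(\alpha')$ together with additivity of length.
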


\begin{proof} If $\phi(n)=\phi(n-1)$, the morphism $\alpha'$ in \eqref{seq.seqforH1} is an isomorphism, and this implies that $\partial$ in \eqref{seq.seqforH1} is surjective. Consequently, by Lemma~\ref{Oort}, the morphism $\partial^{\ast}$ in \eqref{seq.PicR} is surjective. Hence $\alpha\colon \Pic(X_{n})\rightarrow \Pic(X_{n-1})$ is an isomorphism. When $\phi(n)=\phi(n-1)$, $\ker(\alpha')\cong \mathrm{coker}(\partial)$ is an $\OO_{K}$-module of length $1$. From Lemma~\ref{Oort}, under the identification $\HH^{1}(X_n,\mathfrak{N})\cong \HH^{1}(X_n,1+\mathfrak{N})$ induced by the isomorphism $\beta$ in \eqref{seq.beta}, we get that $\ker(\alpha)\cong\mathrm{coker}(\partial^{\ast})$ is a $\OO_K$-module of dimension $1$. Hence $\ker(\alpha)$ is a $k$-vector space of dimension $1$, as desired.
\end{proof}

\begin{lemma}[\cite{Raynaud4}, 3.7.9]\label{ki}
\begin{itemize}
\item[(i)] For $i=0,\ldots,e$, the sheaf $\omega_{m_i}$ is isomorphic to $\mathcal{O}_{X_{m_i}}$.
\item[(ii)] For $i=0,\ldots, e-1$, there exists an integer $k_i>0$ such that $m_{i+1}=m_i+k_id'p^{i}$.
\item[(iii)] The integers $m\in (m_i,m_{i+1}]$ such that $\phi(m)=\phi(m-1)+1$ are exactly those which can be
written as $m=m_i+hd'p^{i}$ for some integer $h$.
\end{itemize}
\end{lemma}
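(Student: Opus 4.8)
The plan is to reduce all three statements to one computation of $\omega_n$ as a power of $\mathcal I|_{X_n}$, after which they become elementary divisibility assertions. By Corollary~\ref{dualisant} write $\omega_{X/S}\cong \mathcal I^{n_0}$ with $0\le n_0<d$. Substituting this into \eqref{omega} and using $\OO_X(X_n)\cong \mathcal I^{n\vee}$, one gets
\[
\omega_n\cong i_n^{*}\!\left(\mathcal I^{n_0}\otimes \mathcal I^{n\vee}\right)\cong \left(\mathcal I|_{X_n}\right)^{n_0-n},
\]
where $\mathcal I^{n_0-n}$ abbreviates $\mathcal I^{n_0}\otimes(\mathcal I^{n})^{\vee}$. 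Writing $c_n$ for the order of $\mathcal I|_{X_n}$ in $\Pic(X_n)$, this yields the key equivalence
\[
\omega_n\cong \OO_{X_n}\iff c_n\mid (n-n_0).
\]
I will use this together with two facts recorded just before Corollary~\ref{Pic_S}: that $\phi(n)=\phi(n-1)+1$ precisely when $\omega_n\cong\OO_{X_n}$ (and $\phi(n)=\phi(n-1)$ otherwise), and that $c_n\in\{c_{n-1},\,p\,c_{n-1}\}$ where, by Corollary~\ref{Pic_S}, a strict increase $c_n>c_{n-1}$ can occur only at an index $n$ with $\phi(n)=\phi(n-1)+1$. Recall also that $c_n=d'p^{i}$ for $n\in[m_i,m_{i+1})$ and that $m_0=1$.

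For (i) with $i\ge 1$, the order strictly increases at $m_i$, since $c_{m_i}=d'p^{i}$ while $c_{m_i-1}=d'p^{i-1}$; hence $\phi(m_i)=\phi(m_i-1)+1$ and therefore $\omega_{m_i}\cong\OO_{X_{m_i}}$. The base case $i=0$ (so $m_0=1$) is the one genuinely separate input and must be treated by hand: first, $\omega_1$ has degree $0$ on every component $C_j$, because $\omega_{X/S}\cdot C_j=0$ by Lemma~\ref{interNum} and $D\cdot C_j=\tfrac1d X_s\cdot C_j=0$. Next, Lemma~\ref{lem.rh}~(ii) with $\mathcal F=\OO_{X_1}$ gives $\HH^0(X_1,\omega_1)\cong \mathrm{Hom}_{\OO_K}(\HH^1(X_1,\OO_{X_1}),\OO_K/\pi^m)$, which is nonzero since $X_1$ is a connected curve of arithmetic genus one, so $\HH^1(X_1,\OO_{X_1})\neq 0$. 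Lemma~\ref{lemme cohomologique} then forces $\omega_1\cong\OO_{X_1}$.

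For (ii) I apply (i) at both levels: part (i) gives $d'p^{i}\mid(m_i-n_0)$ and $d'p^{i+1}\mid(m_{i+1}-n_0)$, whence $d'p^{i}\mid(m_{i+1}-m_i)$; as $m_{i+1}>m_i$ this gives $m_{i+1}=m_i+k_id'p^{i}$ with $k_i\ge 1$. For (iii), fix $m\in(m_i,m_{i+1}]$. If $m<m_{i+1}$ then $c_m=d'p^{i}$, so by the key equivalence $\phi(m)=\phi(m-1)+1$ iff $d'p^{i}\mid(m-n_0)$, i.e. iff $d'p^{i}\mid(m-m_i)$ (using $d'p^{i}\mid(m_i-n_0)$), i.e. iff $m=m_i+hd'p^{i}$; and if $m=m_{i+1}$ the order jumps there, so $\phi(m_{i+1})=\phi(m_{i+1}-1)+1$, while $m_{i+1}=m_i+k_id'p^{i}$ by (ii) is again of the required form. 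The main obstacle is thus isolated in the base case of (i); once the equivalence $\omega_n\cong\OO_{X_n}\Leftrightarrow c_n\mid(n-n_0)$ is in hand, the remaining content of (ii) and (iii) is pure bookkeeping with congruences modulo $d'p^{i}$.
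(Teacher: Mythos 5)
Your proof is correct and follows essentially the same route as the paper's: both rest on Corollary~\ref{dualisant} to write $\omega_n\cong\mathcal I^{\,n_0-n}|_{X_n}$ and on the equivalence between a jump of $\phi$ and triviality of $\omega_n$, so that (ii) and (iii) reduce to divisibility by the order of $\mathcal I|_{X_n}$, exactly as in the paper's restriction argument. Your explicit treatment of the base case $i=0$ (degree zero of $\omega_1$ via Lemma~\ref{interNum}, duality from Lemma~\ref{lem.rh}, and Lemma~\ref{lemme cohomologique}) spells out a step the paper leaves implicit through its assertion $\varphi(1)=1$ in \S\ref{def de psi}, which is a welcome addition.
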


\begin{proof} (i) Let $n>1$ be an integer such that the order of $\mathcal{I}|_{X_n}$ is different from that of $\mathcal{I}|_{X_{n-1}}$.
We then have $\phi(n)=\phi(n-1)+1$, and the canonical map $\Pic(X_n)\rightarrow \Pic(X_{n-1})$ has a kernel of length $1$ (Corollary \ref{Pic_S}). Now use the exact sequence (\ref{exact}) once again. 
By Lemma~\ref{lem.rh}~(ii), the injective morphism $\HH^{0}(X_{n-1},\omega_{n-1})\rightarrow \HH^{0}(X_n,\omega_n)$ has a non-trivial cokernel, and this implies that $\omega_n\cong \OO_{X_n}$ (Lemma~\ref{suite exacte de dualisant}).

For the assertions (ii) and (iii), recall that
$\omega_{m_i}=\omega_{X/S}(m_iD)|_{X_{m_i}}$, and by Corollary~\ref{dualisant}, there exists an integer $n$ ($1\leq n\leq d-1$)
such that $\omega_{X/S}\cong \mathcal{I}^{n}$.
Let $\mathcal{L}_m:=\omega_{X/S}(mD)$, then $\mathcal{L}_m\cong \mathcal{I}^{n-m}$.
 But $\omega_{m_{i+1}}=\mathcal{L}_{m_{i+1}}|_{X_{m_{i+1}}}=\mathcal{I}^{n-m_{i+1}}|_{X_{m_{i+1}}}\cong \OO_{X_{m_{i+1}}}$.
Hence, $\omega_{m_{i+1}}|_{X_{m_i}}=\mathcal{I}^{n-m_{i+1}}|_{X_{m_i}}\cong \OO_{X_{m_i}}$ by (i).
Since $\omega_{m_i}=\mathcal{I}^{n-m_i}|_{X_{m_i}}\cong \OO_{X_{m_i}}$, we have $\mathcal{I}^{m_{i+1}-m_i}|_{X_{m_i}}\cong \OO_{X_{m_i}}$,
and thus there exists an integer $k_i>0$ such that $m_{i+1}=m_i+k_id'p^{i}$.
The same argument also gives us that, for an integer $m$ so that $m_{i+1}\geq m> m_i$ and $\phi(m)>\phi(m-1)$, we have $\omega_m\cong \OO_{X_m}$ and there exists an integer $0<h\leq k_i$ verifying $m=m_i+hd'p^{i}$.
Conversely, let $m$ be an integer of the form $m=m_i+hd'p^{i}$ (for some $0<h\leq k_i$); we show that $\phi(m)>\phi(m-1)$.
We may assume that $m<m_{i+1}$, hence $\mathcal{I}|_{X_m}$ is of order $d'p^{i}$. By Lemma~\ref{suite exacte de dualisant},
we only need to show that $\omega_{m}\cong \OO_{X_m}$.
But
\begin{eqnarray*}
\omega_{m}&= &\mathcal{L}_m|_{X_m} \cong 
\mathcal{I}^{n-m}|_{X_{m}}=\mathcal{I}^{n-m_i-hd'p^{i}}|_{X_{m}} = \mathcal{I}^{n-m_{i+1}+m_{i+1}-m_i-hd'p^{i}}|_{X_{m} }
\\
& \cong & \omega_{m_{i+1}}|_{X_{m}}\otimes
\mathcal{I}^{(k_i-h)d'p^{i}}|_{X_m} \cong \OO_{X_m} ,
\end{eqnarray*}
since $\omega_{m_{i+1}}\cong \OO_{X_{m_{i+1}}}$ and $\mathcal{I}|_{X_{m}}$
is of order $d'p^{i}$. This completes the proof.
\end{proof}

\subsection{The function $\psi$} \label{def de psi}
We now come to the key construction of this section.
We define a function $\varphi\colon {\mathbb R}_{\geq 0}\rightarrow {\mathbb R}_{\geq 0}$ such that the graph of $\varphi$ is just the upper concave envelope of the set $\{(n,\phi(n))~|~n\in {\mathbb Z}_{\geq 0}\}\subset {\mathbb R}^{2}$.
Then $\varphi$ is a continuous piecewise linear function with $\varphi(0)=0$, and $\varphi(1)=1$ by Lemma \ref{lemme cohomologique}. Since $X/S$ is proper flat of relative dimension $1$, by \cite{EGA3}~7.7.5 (the equivalence of conditions (a') and (b') for $p=-1$) and 7.8.4 (the equivalence of conditions (b) and (e) for $p=-2$), we have
\[
\HH^{1}(X,\OO_X)\otimes_{\OO_{K}}\OO_{K}/{\pi^n\OO_K}\cong \HH^{1}(X_{nd},\OO_{X_{nd}}).
\]
Therefore $\lim_{n\rightarrow +\infty}\phi(nd)=+\infty$ since $\HH^{1}(X,\OO_{X})\otimes_{\OO_K}K\cong \HH^{1}(X_K,\OO_{X_K})$ is of dimension $1$ over $K$. Consequently, the function $\varphi\colon \mathbb{R}_{\geq 0}\rightarrow \mathbb{R}_{\geq 0}$ is strictly increasing with unbounded image, hence bijective.
Let $\psi\colon{\mathbb R}_{\geq 0}\rightarrow {\mathbb R}_{\geq 0}$ be its inverse. Then $\psi$ is also continuous and piecewise linear. For any integer $n\geq 1$, $\psi(n)$ is just the smallest integer $m\geq 1$ such that $\phi(m)=n$.
If we denote by ${\delta_n}$ the order of the invertible sheaf $\mathcal{I}|_{X_{\psi(n)}}$, then by Lemma~\ref{ki}~(iii) we have
\begin{equation}\label{eq.psind}
\psi(n+1)=\psi(n)+ \delta_n,
\end{equation}
and for all $m\in {\mathbb Z}$ such that $\psi(n)\leq m<\psi(n+1)$, the morphism of groups $ \Pic(X_{m})\rightarrow \Pic(X_{\psi(n)})$
is an isomorphism (Corollary \ref{Pic_S}). Observe that $\delta_1=d'$ with $d'$ the order of $\mathcal I|_{X_1}$. 
We call the functions $\varphi,\psi\colon{\mathbb Z}_{\geq 1}\rightarrow{\mathbb Z}_{\geq 1}$ the Herbrand functions; they are exact analogues
of the Herbrand functions used by Serre in \cite{SerreCFT}, \S 3.

\setlength{\unitlength}{0.3cm}
\begin{picture}(70,25)
\thicklines \put(1,1){\vector(1,0){24}}
\put(1,1){\vector(0,1){20}} \drawline(1,1)(2,4)(4,7)(6,10)
\dottedline{0.4}(6,10)(10,13)\drawline(10,13)(18,16)(22,17.5)

\drawline(2,1.1)(2,0.8)\drawline(4,1.1)(4,0.8)\drawline(6,1.1)(6,0.8)
\drawline(10,1.1)(10,0.8)\drawline(18,1.1)(18,0.8)
\drawline(0.8,4)(1.1,4)\drawline(0.8,7)(1.1,7)\drawline(0.8,10)(1.1,10)
\drawline(0.8,13)(1.1,13)\drawline(0.8,16)(1.1,16)

\put(0.5,0.5){\tiny{$0$}} \put(1.8,0){\tiny{$1$}}
\put(3.0,0){\tiny{$1\!\!+\!\!\delta_1$}}
\put(5.3,0){\tiny{$1\!\!+\!\!\delta_1\!\!+\!\!\delta_2$}}\put(24.5,0){\tiny{$x$}}

\dottedline{0.8}(10,13)(10,1) \dottedline{0.8}(18,16)(18,1)
\put(13,7){\vector(-1,0){3}}\put(15,7){\vector(1,0){3}}\put(14,6.8){\tiny{$d$}}

\put(0.2,3.8){\tiny{$1$}}\put(0.2,6.8){\tiny{$2$}}
\put(0.2,9.8){\tiny{$3$}}\put(0.2,20.5){\tiny{$y$}}

\dottedline{0.8}(1,13)(10,13)\dottedline{0.8}(1,16)(18,16)
\put(7,15){\vector(0,1){1}}\put(7,14){\vector(0,-1){1}}\put(6.8,14.2){\tiny{1}}

\put(4,20){Graph of $\varphi$}

\put(28,1){\vector(1,0){22}} \put(28,1){\vector(0,1){20}}
\drawline(28,1)(31,2)(34,4)(37,6)
\dottedline{0.4}(37,6)(40,10)\drawline(40,10)(43,18)(44.5,22)

\drawline(31,1.1)(31,0.8)\drawline(34,1.1)(34,0.8)\drawline(37,1.1)(37,0.8)
\drawline(40,1.1)(40,0.8)\drawline(43,1.1)(43,0.8)

\drawline(27.8,2)(28.1,2)\drawline(27.8,4)(28.1,4)\drawline(27.8,6)(28.1,6)
\drawline(27.8,10)(28.1,10)\drawline(27.8,18)(28.1,18)

\put(27.5,0.5){\tiny{$0$}} \put(30.8,0){\tiny{$1$}}
\put(33.8,0){\tiny{$2$}} \put(36.8,0){\tiny{$3
$}}\put(49.5,0){\tiny{$x$}}

\dottedline{0.8}(40,10)(40,1) \dottedline{0.8}(43,18)(43,1)
\put(41,7){\vector(-1,0){1}}\put(42,7){\vector(1,0){1}}\put(41.5,6.8){\tiny{1}}

\put(27.2,1.8){\tiny{$1$}}\put(25.9,3.8){\tiny{$1\!\!+\!\!\delta_1$}}
\put(24.5,5.8){\tiny{$1\!\!+\!\!\delta_1\!\!+\!\!\delta_2$}}\put(27.2,20.5){\tiny{$y$}}

\dottedline{0.8}(28,10)(40,10)\dottedline{0.8}(28,18)(43,18)
\put(35,13){\vector(0,-1){3}}\put(35,15){\vector(0,1){3}}\put(35,14){\tiny{$d$}}

\put(31,20){Graph of $\psi$}

\end{picture}

To finish this section, we present some corollaries of the previous discussion, which will be useful in the next section.
The first follows directly from Lemma~\ref{suite exacte de dualisant}, \eqref{inequalities}, and Lemma~\ref{lem.rh}~(ii).

\begin{corollary}\label{lemme cohomologie pour L} Let ${\mathcal L}$ be an invertible sheaf on $X$, of degree $0$ on each component of $X_1$, and let $n$ be an integer $\geq 2$.
Then we have $h^{1}(X_{n-1},{\mathcal L}|_{X_{n-1}})\leq h^{1}(X_n,{\mathcal L}|_{X_{n}})\leq h^{1}(X_{n-1},{\mathcal L}|_{X_{n-1}})+1$.
Moreover, $ h^{1}(X_n,{\mathcal L}|_{X_{n}})= h^{1}(X_{n-1},{\mathcal L}|_{X_{n-1}})+1$ if and only if ${\mathcal L}|_{X_n}\cong \omega_n$.
\end{corollary}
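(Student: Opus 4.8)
The plan is to convert the statement about $h^{1}$ into one about $h^{0}$ of a dual sheaf, and then to read off both assertions from the inequalities \eqref{inequalities} and from Lemma~\ref{suite exacte de dualisant}, which the preceding discussion has already established for exactly the class of sheaves considered here. Concretely, I would apply Lemma~\ref{lem.rh}~(ii) with $\mathcal{F}=\mathcal{L}|_{X_n}$ (so that $\mathcal{F}^{\vee}=\mathcal{L}^{\vee}|_{X_n}$, since $\mathcal{L}$ is invertible). For any choice of $m\geq n$ this furnishes a canonical isomorphism of $\OO_K$-modules
\[
\mathrm{Hom}_{\OO_K}\!\left(\HH^{1}(X_n,\mathcal{L}|_{X_n}),\OO_K/\pi^{m}\OO_K\right)\xrightarrow{\ \sim\ }\HH^{0}(X_n,\mathcal{L}^{\vee}|_{X_n}\otimes\omega_n).
\]
Since $\HH^{1}(X_n,\mathcal{L}|_{X_n})$ is of finite length and killed by $\pi^{n}$, duality over the discrete valuation ring $\OO_K$ preserves its length; hence $h^{1}(X_n,\mathcal{L}|_{X_n})=h^{0}(X_n,\mathcal{L}^{\vee}|_{X_n}\otimes\omega_n)$, and likewise with $n$ replaced by $n-1$.

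With these two length identities in hand, the first claim is immediate: the chain
\[
h^{1}(X_{n-1},\mathcal{L}|_{X_{n-1}})\leq h^{1}(X_n,\mathcal{L}|_{X_n})\leq h^{1}(X_{n-1},\mathcal{L}|_{X_{n-1}})+1
\]
is precisely \eqref{inequalities} read through the above dictionary. The only hypothesis to check is that $\mathcal{L}$ is of degree $0$ on every component of $X_1$; this is exactly the standing assumption of the corollary, and (together with Lemma~\ref{interNum} and the fact that $\mathcal{I}$ is of degree $0$ on each $C_i$) it is all that was needed to build the exact sequence \eqref{exact} and to bound its rightmost term via Lemma~\ref{lemme cohomologique}.

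For the equality criterion I would invoke Lemma~\ref{suite exacte de dualisant} directly. In \eqref{exact} the rightmost term $\HH^{0}(X_1,\mathcal{L}^{\vee}|_{X_1}\otimes\omega_n|_{X_1})$ is either $0$ or isomorphic to $k$, so the jump $h^{0}(X_n,\mathcal{L}^{\vee}|_{X_n}\otimes\omega_n)=h^{0}(X_{n-1},\mathcal{L}^{\vee}|_{X_{n-1}}\otimes\omega_{n-1})+1$ occurs if and only if $h_1$ fails to be surjective, i.e.\ exactly when \eqref{exact} is right exact with nonzero last term. Lemma~\ref{suite exacte de dualisant} pins down this dichotomy: the jump happens precisely when $\omega_n\cong\mathcal{L}|_{X_n}$, and otherwise $h_1$ is bijective and there is no jump. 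Transporting this back through the length identities of the first paragraph gives $h^{1}(X_n,\mathcal{L}|_{X_n})=h^{1}(X_{n-1},\mathcal{L}|_{X_{n-1}})+1$ if and only if $\mathcal{L}|_{X_n}\cong\omega_n$, as asserted.

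The only genuinely substantive point, and hence the step I would be most careful about, is the length-preservation used in the first paragraph: one must record that for a finite-length $\OO_K$-module $M$ killed by $\pi^{n}$ and any $m\geq n$, the module $\mathrm{Hom}_{\OO_K}(M,\OO_K/\pi^{m}\OO_K)$ has the same length as $M$ (every homomorphism factors through the $\pi^{n}$-torsion $\pi^{m-n}\OO_K/\pi^{m}\OO_K\cong\OO_K/\pi^{n}\OO_K$). Granting this elementary observation, both assertions are a direct translation, and I expect no further obstacle.
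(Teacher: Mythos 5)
Your proof is correct and takes exactly the paper's route: the paper's entire proof of this corollary is the remark that it ``follows directly from Lemma~\ref{suite exacte de dualisant}, \eqref{inequalities}, and Lemma~\ref{lem.rh}~(ii)'', which is precisely the combination you use. The one detail the paper leaves implicit --- that for a finite-length $\OO_K$-module $M$ killed by $\pi^{n}$ and any $m\geq n$ the module $\mathrm{Hom}_{\OO_K}(M,\OO_K/\pi^{m}\OO_K)$ has the same length as $M$, so the duality of Lemma~\ref{lem.rh}~(ii) really converts $h^{1}$ into $h^{0}$ --- is exactly the point you single out and verify, so nothing is missing.
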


\begin{corollary}[\cite{Raynaud4}, 3.9.1]\label{condition pour L} Let ${\mathcal L}$ be an invertible sheaf on $X$, of degree $0$ on each component of $X_1$,
and let $n$ be an integer $\geq 1$.
Then, if the morphism $ i_n^*\colon \HH^{1}(X,{\mathcal L})\rightarrow \HH^{1}(X_n,{\mathcal L}|_{X_{n}})$ is not bijective,
there exists an integer $m>n$ such that ${\mathcal L}^{\vee}|_{X_m}\otimes \omega_m$ is trivial on $X_m$.
\end{corollary}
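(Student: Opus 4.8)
The plan is to prove the contrapositive: assuming that $\mathcal{L}^{\vee}|_{X_m}\otimes\omega_m$ is non-trivial on $X_m$ for \emph{every} $m>n$, I will show that $i_n^*$ is bijective. Throughout I write $\phi_{\mathcal{L}}(m):=h^{1}(X_m,\mathcal{L}|_{X_m})$ for the length of the finite-length $\OO_K$-module $\HH^{1}(X_m,\mathcal{L}|_{X_m})$, and I note that $\mathcal{L}^{\vee}|_{X_m}\otimes\omega_m$ is trivial if and only if $\mathcal{L}|_{X_m}\cong\omega_m$.

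First I would record two preliminary facts. For the transition maps, tensoring the exact sequence $0\to\mathfrak{N}\to\OO_{X_m}\to i_{m-1*}'\OO_{X_{m-1}}\to 0$ (with $\mathfrak{N}=\mathcal{I}^{m-1}/\mathcal{I}^{m}$) by the invertible sheaf $\mathcal{L}|_{X_m}$ and passing to cohomology yields, since $\dim X_m=1$ forces $\HH^{2}=0$, a surjection $\HH^{1}(X_m,\mathcal{L}|_{X_m})\to\HH^{1}(X_{m-1},\mathcal{L}|_{X_{m-1}})$ for every $m\geq 2$; here one uses the projection formula to identify $\mathcal{L}|_{X_m}\otimes i_{m-1*}'\OO_{X_{m-1}}\cong i_{m-1*}'(\mathcal{L}|_{X_{m-1}})$. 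Secondly, since $X/S$ is proper and $\mathcal{L}$ is coherent, $\HH^{1}(X,\mathcal{L})$ is a finitely generated module over the complete ring $\OO_K$, hence already $\pi$-adically complete; the theorem of formal functions, applied along the cofinal system $X_{jd}=X\times_S S_j$ (cofinal among the $X_m$ because $\mathcal{I}^{d}=\pi\OO_X$), therefore identifies $\HH^{1}(X,\mathcal{L})$ with $\varprojlim_m\HH^{1}(X_m,\mathcal{L}|_{X_m})$, the maps $i_m^*$ being the canonical projections.

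With these in hand the argument is short. Under the standing assumption $\mathcal{L}|_{X_m}\not\cong\omega_m$ for all $m>n$, Corollary~\ref{lemme cohomologie pour L} gives $\phi_{\mathcal{L}}(m)=\phi_{\mathcal{L}}(m-1)$ for every $m>n$, the only other possibility being a jump by one, which occurs precisely when $\mathcal{L}|_{X_m}\cong\omega_m$. Since each transition map $\HH^{1}(X_m,\mathcal{L}|_{X_m})\to\HH^{1}(X_{m-1},\mathcal{L}|_{X_{m-1}})$ is surjective and, for $m>n$, is a surjection between $\OO_K$-modules of equal finite length, it is bijective for every $m>n$. Thus the inverse system $\{\HH^{1}(X_m,\mathcal{L}|_{X_m})\}_{m\geq n}$ consists of isomorphisms, so its limit is $\HH^{1}(X_n,\mathcal{L}|_{X_n})$ and the projection $i_n^*$ is an isomorphism. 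This is exactly the contrapositive of the assertion.

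I expect the only delicate point to be the bookkeeping around the theorem of formal functions — specifically, checking that the $X_{jd}$ are cofinal among the $X_m$ and that the passage to the $\pi$-adic completion is harmless because $\HH^{1}(X,\mathcal{L})$ is already complete. Everything else reduces to Corollary~\ref{lemme cohomologie pour L} together with the elementary observation that a surjection between finite-length $\OO_K$-modules of equal length is bijective.
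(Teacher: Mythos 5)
Your proof is correct and takes essentially the same route as the paper: both arguments rest on the inverse-limit description $\HH^{1}(X,\mathcal{L})\cong\varprojlim_m\HH^{1}(X_m,\mathcal{L}|_{X_m})$ together with the dichotomy that a transition map $\HH^{1}(X_m,\mathcal{L}|_{X_m})\to\HH^{1}(X_{m-1},\mathcal{L}|_{X_{m-1}})$ fails to be injective exactly when $\mathcal{L}|_{X_m}\cong\omega_m$, a fact coming from Grothendieck--Serre duality (Lemma~\ref{lem.rh}) and Lemma~\ref{suite exacte de dualisant}. Your only repackaging is to argue the contrapositive by length bookkeeping through Corollary~\ref{lemme cohomologie pour L} --- which the paper derives from those same two ingredients --- while spelling out the surjectivity of the transition maps and the formal-functions step that the paper's proof leaves as unproved remarks.
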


\begin{proof} We first remark that $i_n^*$ is surjective,
and we have $\HH^{1}(X,{\mathcal L})\cong\varprojlim_{m\geq 0} \HH^{1}(X_m, {\mathcal L}|_{X_m})$.
 Moreover, $i_n^*$ is not bijective if and only if there exists $m> n$ such that $\HH^{1}(X_m, {\mathcal L}|_{X_m})\rightarrow \HH^{1}(X_{m-1},{\mathcal L}|_{X_{m-1}})$ is not injective.
By Lemma~\ref{lem.rh}, this is equivalent to saying that the injective morphism
$\HH^{0}(X_{m-1},{\mathcal L}^{\vee}|_{X_{m-1}}\otimes\omega_{m-1})\rightarrow \HH^{0}(X_m, {\mathcal L}^{\vee}|_{X_m}\otimes\omega_{m})$
is not surjective.
Hence, we have ${\mathcal L}^{\vee}|_{X_m}\otimes \omega_{m}\cong \OO_{X_m}$ (Lemma~\ref{suite exacte de dualisant}) and one concludes.
\end{proof}

For $\mathcal{L}$ an invertible sheaf on $X$ of degree $0$ on each component of $X_1$, the $\OO_K$-module $\HH^{1}(X,\mathcal{L})$ is of \emph{infinite} length if and only if $\mathcal{L}|_{X_K}\cong \OO_{X_K}$, that is, if and only if $\mathcal{L}$ is isomorphic to a power of $\mathcal{I}=\OO_{X}(-D)$ (see the proof of Corollary~\ref{dualisant}). Hence when $\mathcal{L}|_{X_K}$ is not isomorphic to $\OO_{X_K}$, the $\OO_K$-module $\HH^{1}(X,\mathcal{L})$ is of \emph{finite} length. Moreover, we have

\begin{corollary}[\cite{Raynaud4}, 3.9.2]\label{cle} Let ${\mathcal L}$ be an invertible sheaf on $X$ of degree $0$ on each component of $X_1$, and let $n\geq 1$ be an integer. Suppose that the $\OO_K$-module $\HH^{1}(X,{\mathcal L})$ is of length $\geq n$.
Then ${\mathcal L}|_{X_{\psi(n)}}\cong \mathcal{I}^{i}|_{X_{\psi(n)}}$ with $i$ a suitable integer.
\end{corollary}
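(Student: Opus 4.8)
The plan is to track the length function $g(m):=h^{1}(X_m,{\mathcal L}|_{X_m})$ and to show that $\mathcal{L}|_{X_{\psi(k)}}$ is a power of $\mathcal{I}|_{X_{\psi(k)}}$ for every index $k$ not exceeding the total number of ``jumps'' of $g$. First I would record the dualizing-sheaf computation: by Corollary~\ref{dualisant} there is a unique $0\le n_0<d$ with $\omega_{X/S}\cong \mathcal{I}^{n_0}$, so the definition $\omega_m=i_m^{*}(\omega_{X/S}\otimes_{\OO_X}\OO_X(X_m))$ together with $\OO_X(X_m)=\mathcal{I}^{m\vee}$ gives $\omega_m\cong \mathcal{I}^{n_0-m}|_{X_m}$ for all $m$. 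Since $\mathcal{L}$ has degree $0$ on each component of $X_1$, Corollary~\ref{lemme cohomologie pour L} shows that $g$ is non-decreasing with steps in $\{0,1\}$ and that $g(m)=g(m-1)+1$ exactly when ${\mathcal L}|_{X_m}\cong\omega_m\cong \mathcal{I}^{n_0-m}|_{X_m}$. Finally, as in the proof of Corollary~\ref{condition pour L}, $\HH^{1}(X,{\mathcal L})\cong \varprojlim_m \HH^{1}(X_m,{\mathcal L}|_{X_m})$ with surjective transition maps, so the length of $\HH^{1}(X,{\mathcal L})$ equals the total number of jumps of $g$.

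The heart of the argument is a jump count carried out interval by interval. I would fix $k\ge 1$ and analyse the half-open interval $[\psi(k),\psi(k+1))$, which by \eqref{eq.psind} has length exactly $\delta_k$, the order of $\mathcal{I}|_{X_{\psi(k)}}$. On this range Corollary~\ref{Pic_S} (and the remark following \eqref{eq.psind}) gives that the restriction $\Pic(X_m)\to\Pic(X_{\psi(k)})$ is an isomorphism; transporting the jump condition ${\mathcal L}|_{X_m}\cong\mathcal{I}^{n_0-m}|_{X_m}$ along it, $g$ jumps at $m$ if and only if ${\mathcal L}|_{X_{\psi(k)}}\cong \mathcal{I}^{n_0-m}|_{X_{\psi(k)}}$. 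As $m$ runs over the $\delta_k$ consecutive integers of the interval, the exponent $n_0-m$ runs over every residue class modulo $\delta_k$ exactly once, so the sheaves $\mathcal{I}^{n_0-m}|_{X_{\psi(k)}}$ exhaust all powers of $\mathcal{I}|_{X_{\psi(k)}}$, each once. Hence $g$ jumps at most once in $[\psi(k),\psi(k+1))$, and it jumps there precisely when ${\mathcal L}|_{X_{\psi(k)}}$ is a power of $\mathcal{I}|_{X_{\psi(k)}}$.

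To conclude, I would use that the intervals $[\psi(k),\psi(k+1))$ for $k\ge 1$ partition $[1,\infty)$ (recall $\psi(1)=1$), so the total number of jumps of $g$ equals the number of indices $k$ for which ${\mathcal L}|_{X_{\psi(k)}}$ is a power of $\mathcal{I}|_{X_{\psi(k)}}$. By hypothesis this total is $\ge n$, so the set $T:=\{k\ge 1 : {\mathcal L}|_{X_{\psi(k)}}\text{ is a power of }\mathcal{I}|_{X_{\psi(k)}}\}$ has at least $n$ elements. Moreover $T$ is downward closed: if ${\mathcal L}|_{X_{\psi(k)}}\cong \mathcal{I}^{a}|_{X_{\psi(k)}}$ and $k'\le k$, then restricting along the closed immersion $X_{\psi(k')}\hookrightarrow X_{\psi(k)}$ yields ${\mathcal L}|_{X_{\psi(k')}}\cong \mathcal{I}^{a}|_{X_{\psi(k')}}$. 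An initial segment of $\Z_{\ge 1}$ with at least $n$ elements contains $n$, so $n\in T$, which is exactly the assertion.

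I expect the delicate step to be the interval analysis: one must match the length $\delta_k$ of $[\psi(k),\psi(k+1))$ against the order $\delta_k$ of $\mathcal{I}|_{X_{\psi(k)}}$ so that the twists $\mathcal{I}^{n_0-m}|_{X_{\psi(k)}}$ sweep out each power exactly once, and one must check that the isomorphism $\Pic(X_m)\cong\Pic(X_{\psi(k)})$ really transports the jump condition as claimed. The case where $\HH^{1}(X,{\mathcal L})$ has infinite length, equivalently ${\mathcal L}$ is itself a power of $\mathcal{I}$, is immediate (then every $\psi(k)$ works) and may be disposed of separately at the outset.
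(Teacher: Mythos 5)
Your proof is correct, and it rests on the same key mechanism as the paper's: the jump criterion of Corollary~\ref{lemme cohomologie pour L}, combined with the fact (Corollary~\ref{Pic_S} and \eqref{eq.psind}) that on each interval $[\psi(k),\psi(k+1))$, whose length $\delta_k$ equals the order of $\mathcal{I}|_{X_{\psi(k)}}$, all restriction maps to $\Pic(X_{\psi(k)})$ are isomorphisms, so that the sheaves $\omega_m\cong \mathcal{I}^{n_0-m}|_{X_m}$ sweep out each power of $\mathcal{I}|_{X_{\psi(k)}}$ exactly once. Where you genuinely depart from the paper is in how the conclusion is extracted. The paper argues by induction on the level: at each step it invokes Corollary~\ref{condition pour L} (non-bijectivity of $\HH^{1}(X,\mathcal{L})\to \HH^{1}(X_{\psi(n')-1},\mathcal{L}|_{X_{\psi(n')-1}})$ produces an $m\geq \psi(n')$ with $\mathcal{L}|_{X_m}\cong \omega_m$) to see that $\mathcal{L}|_{X_{\psi(n')}}$ is a power of $\mathcal{I}$, deduces that the $n'$-th interval contains exactly one jump, hence that $h^{1}(X_{\psi(n'+1)-1},\mathcal{L}|_{X_{\psi(n'+1)-1}})=n'$, and closes with one final application of Corollary~\ref{condition pour L}. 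You instead count all jumps at once (the length of $\HH^{1}(X,\mathcal{L})$ equals the total number of jumps, via the surjective $\varprojlim$ description), identify this total with the cardinality of the set $T$ of levels $k$ at which $\mathcal{L}|_{X_{\psi(k)}}$ is a power of $\mathcal{I}|_{X_{\psi(k)}}$, and finish with downward closure of $T$ plus pigeonhole. This removes both the induction and Corollary~\ref{condition pour L} from the argument and gives a cleaner logical skeleton; the paper's induction, in exchange, yields the sharper intermediate statement about the exact lengths of the truncated cohomology groups. One boundary point you should make explicit: the criterion you quote from Corollary~\ref{lemme cohomologie pour L} is stated for $m\geq 2$, while your analysis of the first interval also needs it at $m=1$ (with $g(0):=0$); this case holds because $h^{1}(X_1,\mathcal{L}|_{X_1})=h^{0}(X_1,\mathcal{L}^{\vee}|_{X_1}\otimes\omega_1)$ by Lemma~\ref{lem.rh}~(ii), so Lemma~\ref{lemme cohomologique} gives $g(1)=1$ if and only if $\mathcal{L}|_{X_1}\cong\omega_1$ --- exactly as in the base case of the paper's induction.
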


\begin{proof} We will prove by induction that, under our assumptions and for any $n'$, $1\leq n'\leq n-1$,
the $\OO_K$-module $\HH^{1}(X_{\psi(n'+1)-1},{\mathcal L}|_{X_{\psi(n'+1)-1}})$ is of length $n'$.
As a result, the canonical map $\HH^{1}(X,{\mathcal L})\rightarrow \HH^{1}(X_{\psi(n)-1},{\mathcal L}|_{X_{\psi(n)-1}})$ is not bijective
(here we define by convention $\HH^{1}(X_{0},{\mathcal L}|_{X_{0}})=0$).
Hence Corollary~\ref{condition pour L} provides an integer $m>\psi(n)-1$ such that
${\mathcal L}|_{X_{m}}\cong \omega_{m}\cong \mathcal{I}^{\bar {n}-m}|_{X_{m}}$, where $\bar n$ is the integer appearing in Corollary~\ref{dualisant}. Since $\psi(n)\leq m$, the Corollary follows.

We begin with the case where $n'=1$ (hence $n\geq 2$). By Lemma~\ref{lemme cohomologique}, either the $\OO_K$-module $\HH^{1}(X_1,{\mathcal L}|_{X_1})$ is of length $1$, which is equivalent to saying that ${\mathcal L}|_{X_1}\cong \OO_{X_1}$, or $\HH^{1}(X_1,{\mathcal L}|_{X_1})$ is trivial, and, in this case, the natural morphism $\HH^{1}(X,{\mathcal L})\to \HH^{1}(X_1,{\mathcal L}|_{X_1})$ is not bijective.
By Corollary ~\ref{condition pour L} there is then an integer $m>1$, such that ${\mathcal L}|_{X_m}\cong \omega_{m}=\omega_{X/S}(X_m)|_{X_m}$.
Hence ${\mathcal L}|_{X_1}\cong \omega_{m}|_{X_1}$ is, isomorphic to $ \mathcal{I}^{\bar {n}-m}|_{X_{1}}$, again applying Corollary~\ref{dualisant}.
Thus, in both cases, we have ${\mathcal L}|_{X_1}\cong \mathcal{I}^{i}|_{X_1}$ for a suitable integer $i$.
Moreover, for $m'$ an integer such that $1\leq m'\leq \psi(2)-1 $, the canonical morphism $\Pic^{0}(X_{m'})\rightarrow \Pic^{0}(X_1)$ is bijective (see Corollary~\ref{Pic_S}); hence ${\mathcal L}|_{X_{m'}}\cong \mathcal{I}^{i}|_{X_{m'}}$. 
But $\psi(2)=\psi(1)+\delta_1=1+\delta_1$, and $\omega_{X/S}$ is isomorphic to a power of $\mathcal{I}$ (Corollary~\ref{dualisant}), there exists a unique integer $m'$ such that $1\leq m' \leq\psi(2)-1$, and $ {\mathcal L}|_{X_{m'}}\cong \omega_{m'}(=(\omega_{X/S}\otimes \mathcal{I}^{-m'})|{X_{m'}})$.
Hence, by Lemma~\ref{lemme cohomologie pour L}, we find that the $\OO_K$-module $\HH^{1}(X_{\psi(2)-1},{\mathcal L}|_{X_{\psi(2)-1}})$ is of length $1$.
Suppose now that the above assertion has been verified for $1\leq n'-1< n$ (with $n'<n$). Under the assumptions of the Lemma, the map
\[
\HH^{1}(X,{\mathcal L})\longrightarrow \HH^{1}(X_{\psi(n')-1},{\mathcal L}|_{X_{\psi(n')-1}})
\]
is not surjective. Hence, there exists an integer $m\geq \psi(n')$, such that ${\mathcal L}|_{X_m}\cong \omega_{m}$, and so ${\mathcal L}|_{X_{\psi(n')}}\cong \mathcal{I}^{i}|_{X_{\psi(n')}}$ for $0\leq i<\delta_{n'}$. 
On the other hand, since $\psi(n'+1)=\psi(n')+\delta_{n'}$ (see \eqref{eq.psind}), there exists a unique integer $m$ such that $\psi(n')\leq m\leq \psi(n'+1)-1$, and ${\mathcal L}|_{X_{m}}\cong \omega_m$, in particular, the $\OO_K$-module $\HH^{1} (X_{\psi(n'+1)-1}, {\mathcal L}|_{X_{\psi(n'+1)-1}})$ is of length $n'$. 
This finishes the induction, and hence also the proof of the Corollary.
\end{proof}

\subsection{Numerical studies}\label{Etude numerique}

We maintain the notation introduced in the beginning of this section. So $X_K$ denotes a $K$-torsor under an elliptic curve $A_K$, and $f\colon X\rightarrow S$ denotes its $S$-proper regular minimal model. We consider the $S$-proper minimal regular model $f'\colon X'\rightarrow S$ of the elliptic curve $\Pic^{0}_{X_K/K}$. As a result, $f'$ has a canonical section given by $e$, the schematic closure of the identity element of $X'_{K}$ in $X'$. Its dualizing sheaf is $f{'}^{\ast}\omega$ (with $\omega$ defined by the section $e$, see Corollary~\ref{Classique}). The main result of this part is that, by using some numerical invariants of $X/S$, one can then recover the sheaf $\omega_{X/S}$ from the sheaf $\omega$ ({Lemma~\ref{lemma.omega}}).

According to \cite{LLR}, Theorem 3.8, there exists a morphism of $\OO_K$-modules
\[
\tau_X\colon \HH^{1}(X,\OO_{X})\longrightarrow \HH^{1}(X',\OO_{X'})
\]
which extends the natural isomorphism over the generic fibre. Moreover, its kernel is the torsion part of
$\HH^{1}(X,\OO_X)$ (\cite{LLR}, 3.1 a)), and the $\OO_K$-modules $\ker(\tau_X)$ and $\mathrm{coker}(\tau_X)$ have the same length.
In this section we give an estimate for this length.

 By duality, we obtain the following map
\[
\tau_{X}^{\vee}\colon \HH^{0}(X',\omega_{X'/S})\cong
\left(\HH^{1}(X',\OO_{X'})\right)^{\vee}\longrightarrow
\left(\HH^{1}(X,\OO_X)\right)^{\vee}\cong
\HH^{0}(X,\omega_{X/S}).
\]
On the other hand, we have $f'_{\ast}\omega_{X'/S}\cong f_{\ast}'f^{'\ast}\omega\cong \omega$
(see Corollary~\ref{Classique} with $\omega$ defined via the $S$-section of $f'$ associated with the identity element of $X_K'$).
Hence we get following canonical map:
\[
\tau_{X}^{\vee}\colon\omega(S)\cong \HH^{0}(X',\omega_{X'/S})\longrightarrow
\HH^{0}(X,\omega_{X/S})=f_{\ast}\omega_{X/S}(S)
\]
which is injective, but not an isomorphism in general. Since $S$ is affine, $\tau_{X}^{\vee}$
also corresponds to an injective morphism of sheaves, again denoted by $\tau_{X}^{\vee}:$
\begin{equation}\label{tau}
\tau_{X}^{\vee}\colon \omega\longrightarrow f_{\ast}\omega_{X/S}.
\end{equation}
which gives by adjunction the following non-zero canonical morphism of sheaves on $X$:
\[
\tau_{X}^{\vee '}\colon f^{\ast}\omega\longrightarrow \omega_{X/S}.
\]
Since $\tau_{X}^{\vee}$ is an isomorphism on the generic fibre $X_K$ of $X$, the same holds for $\tau_{X}^{\vee'}$.
Under the identification given by the restriction of $\tau_{X}^{\vee'}$ to ${X_K}$, $f^{\ast}(\omega)$
and $\omega_{X/S}$ are naturally identified with two $\mathcal{O}_X$-submodules of $\omega_{X_K/K}\cong\left(\omega_{X/S}\right)|_{X_K}$.

\begin{lemma}[\cite{Raynaud4}, pp.~31--32]\label{lemma.omega} We have $\omega_{X/S}=f^{\ast}\omega \otimes \mathcal{I}^{-\chi}$ for a suitable non-negative integer $\chi$, as submodule of $\omega_{X_K/K}$.
Moreover, $[\chi/d]$, \emph{i.e.}, the biggest integer $\leq \chi/d$, is the length of the torsion part of $\HH^{1}(X,\OO_X)$.
\end{lemma}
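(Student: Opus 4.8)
The plan is to treat the two assertions in turn: first I would pin down the vertical divisor comparing $f^{\ast}\omega$ and $\omega_{X/S}$ using intersection theory, and then I would read off the desired length by transporting the map $\tau_X$ to the base $S$ via duality.

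For the identification $\omega_{X/S}=f^{\ast}\omega\otimes \mathcal{I}^{-\chi}$, note that the adjoint morphism $\tau_{X}^{\vee'}\colon f^{\ast}\omega\to \omega_{X/S}$ is a non-zero map of invertible sheaves on the integral scheme $X$, hence injective, and it is an isomorphism over $X_K$; thus it exhibits $f^{\ast}\omega$ and $\omega_{X/S}$ as invertible $\OO_X$-submodules of $\omega_{X_K/K}$ agreeing on the generic fibre. Consequently $\mathcal{M}:=\omega_{X/S}\otimes(f^{\ast}\omega)^{-1}$ is trivial on $X_K$, so $\mathcal{M}\cong \OO_X(V)$ for an effective divisor $V$ supported on $X_s$ (effective because $f^{\ast}\omega\subseteq \omega_{X/S}$). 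I would then compute intersection numbers: $\omega_{X/S}\cdot C_i=0$ by Lemma~\ref{interNum}, while $f^{\ast}\omega\cdot C_i=0$ since $f^{\ast}\omega$ is pulled back from $S$, so $V\cdot C_i=0$ for all $i$. As the intersection form on the components of the connected fibre $X_s$ is negative semi-definite with radical $\Q\cdot X_s=\Q\cdot dD$ (\cite{BLR}, 9.5/10), $V$ is a rational multiple of $X_s$; writing $V=\lambda X_s$ and using $\gcd_i(n_i)=d$, integrality forces $\lambda d=:\chi\in\Z$, whence $V=\chi D$ with $\chi\geq 0$. This yields $\omega_{X/S}=f^{\ast}\omega\otimes \OO_X(\chi D)=f^{\ast}\omega\otimes \mathcal{I}^{-\chi}$ inside $\omega_{X_K/K}$.

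For the length statement, I would transport $\tau_{X}^{\vee'}$ to $S$. Its adjoint is exactly $\tau_{X}^{\vee}\colon \omega\to f_{\ast}\omega_{X/S}$; applying $f_{\ast}$ to $f^{\ast}\omega\hookrightarrow f^{\ast}\omega\otimes \OO_X(\chi D)$ and using the projection formula with $f_{\ast}\OO_X=\OO_S$ identifies $\tau_{X}^{\vee}$ with $\mathrm{id}_{\omega}\otimes(f_{\ast}\OO_X\hookrightarrow f_{\ast}\OO_X(\chi D))$. Since $\omega$ is invertible, hence free, over the local ring $\OO_K$, the length of $\mathrm{coker}(\tau_{X}^{\vee})$ equals that of $\HH^{0}(X,\OO_X(\chi D))/\HH^{0}(X,\OO_X)$. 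On the other hand $\tau_X$ induces on the torsion-free quotient $\HH^{1}(X,\OO_X)/\mathrm{tor}\to \HH^{1}(X',\OO_{X'})$, both free of rank one, an injection given by multiplication by some $\pi^{\ell}$; since $\tau_{X}^{\vee}$ is its $\OO_K$-linear dual, it too is multiplication by $\pi^{\ell}$, so $\mathrm{length}\,\mathrm{coker}(\tau_{X}^{\vee})=\mathrm{length}\,\mathrm{coker}(\tau_X)=\mathrm{length}\,\ker(\tau_X)$, the last equality being the equal-length property of $\ker(\tau_X)$ and $\mathrm{coker}(\tau_X)$ recalled above (\cite{LLR}, 3.1, 3.8), where moreover $\ker(\tau_X)$ is the torsion part of $\HH^{1}(X,\OO_X)$.

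It remains to compute $\HH^{0}(X,\OO_X(\chi D))$. The crucial observation is that $dD=X_s=\mathrm{div}(\pi)$, so multiplication by $\pi$ gives $\OO_X(dD)\cong \OO_X$; writing $\chi=qd+r$ with $0\leq r<d$ and $q=[\chi/d]$, this gives $\OO_X(\chi D)\cong \pi^{-q}\OO_X(rD)$ and hence $\HH^{0}(X,\OO_X(\chi D))=\pi^{-q}\HH^{0}(X,\OO_X(rD))$. Finally, for $0\leq r<d$ the group $\HH^{0}(X,\OO_X(rD))$ is an $\OO_K$-submodule of $\HH^{0}(X_K,\OO_{X_K})=K$ containing $\OO_K$, hence a fractional ideal $\pi^{-a}\OO_K$; and $a=0$, since $\pi^{-1}$ is not a section because $\mathrm{div}(\pi^{-1})+rD=(r-d)D$ fails to be effective when $r<d$. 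Thus $\HH^{0}(X,\OO_X(\chi D))=\pi^{-[\chi/d]}\OO_K$, so $\mathrm{length}\,\mathrm{coker}(\tau_{X}^{\vee})=[\chi/d]$, and combining with the previous paragraph proves that $[\chi/d]$ equals the length of the torsion part of $\HH^{1}(X,\OO_X)$. The main obstacle I anticipate is the first step, namely proving that the comparison divisor $V$ is an integral multiple of $D$ and not merely some vertical divisor; this is where Lemma~\ref{interNum}, the negative semi-definiteness of the fibre, and the arithmetic of the multiplicities $n_i$ all enter, whereas the passage to $S$ and the fractional-ideal computation are comparatively routine.
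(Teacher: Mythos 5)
Your proof is correct and follows essentially the same route as the paper: injectivity of $\tau_X^{\vee'}$ from regularity, vanishing intersection numbers forcing the comparison divisor to be a multiple of $D$, then the projection formula and the computation $f_{\ast}(\mathcal{I}^{-\chi})=\pi^{-[\chi/d]}\OO_S$ to identify the cokernel length. The only differences are expository: you spell out the duality step (the multiplication-by-$\pi^{\ell}$ argument relating $\mathrm{coker}(\tau_X^{\vee})$ to $\ker(\tau_X)$) and the appeal to \cite{BLR}, 9.5/10, which the paper leaves implicit, and you compute $\HH^{0}(X,\OO_X(\chi D))$ by division with remainder where the paper uses a maximality argument.
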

\begin{proof} Since $f^{\ast}\omega$ is invertible and the scheme $X$ is regular, the generically injective morphism $\tau_{X}^{\vee '}$
is automatically injective.
By tensoring both sides with $\omega^{-1}_{X/S}$, we get an invertible ideal sheaf
\[
\mathcal{J}:=f^{\ast}\omega\otimes \omega_{X/S}^{-1}\hookrightarrow \mathcal{O}_{X}.
\]
The closed subscheme $V(\mathcal{J})$ of $X$ defined by $\mathcal{J}$ has support contained in $X_s$. Moreover, the intersection
numbers $V(\mathcal{J})\cdot C_i=0$ for any irreducible component $C_i$ of $X_s$ (Lemma~\ref{interNum}).
Hence the effective divisor $V(\mathcal{J})\hookrightarrow X$ is a multiple of $D=X_1=V(\mathcal{I})\hookrightarrow X$.
So one can find some non-negative integer
$\chi\in \mathbb{N}$ such that $\mathcal{J}=f^{\ast}\omega \otimes \omega_{X/S}^{-1}=\mathcal{I}^{\chi}$.
In other words, we find the following identification $\omega_{X/S}=f^{\ast}\omega \otimes \mathcal{I}^{-\chi}$
as submodules of $\omega_{X_K/K}$.
 This proves the first assertion. Under the latter identification, the morphism $\tau_{X}^{\vee '}$ is then obtained from the
canonical map: $\mathcal{O}_{X}\hookrightarrow \mathcal{I}^{-\chi}$ after tensoring both sides by $f^{\ast}\omega$.
Hence the morphism $\tau_{X}^{\vee}$ in \eqref{tau} can now be described as the following composition:
\[
\tau_{X}^{\vee}: \omega\stackrel{\sim}{\longrightarrow} f_{\ast}f^{\ast}\omega \cong f_{\ast}(f^{\ast}\omega\otimes \mathcal{O}_{X})\longrightarrow f_{\ast} (f^{\ast}\omega\otimes \mathcal{I}^{-\chi})=f_{\ast}\omega_{X/S},
\]
where the first isomorphism is just the adjunction map, and the third map is induced by the canonical injection $\mathcal{O}_{X}\hookrightarrow \mathcal{I}^{-\chi}$.
Hence, by using the projection formula
\[
f_{\ast}(f^{\ast}\omega\otimes \mathcal{I}^{-\chi})\cong \omega\otimes f_{\ast}\left(\mathcal{I}^{-\chi}\right),
\]
the morphism $\tau_{X}^{\vee}$ is obtained from the canonical map $\mathcal{O}_{S}=f_{\ast}\mathcal{O}_{X}\rightarrow f_{\ast}\left(\mathcal{I}^{-\chi}\right)$ after tensoring by the invertible sheaf $\omega$. Now, if we identify these two sheaves as $\mathcal{O}_S$-submodule of
$f_{K,\ast}\mathcal{O}_{X_K}=\mathcal{O}_{\mathrm{Spec}(K)}$ (here $f_K$ is the generic fibre of $f$), we have
$f_{\ast}\left(\mathcal{I}^{-\chi}\right)=\pi^{-[\chi/d]}\mathcal{O}_S\subset \mathcal{O}_{\mathrm{Spec}(K)}$.
Indeed, if we write $f_{\ast}(\mathcal{I}^{-\chi})=\pi^{-r}\OO_S$ for some non-negative integer $r$, then $r$ is the
largest integer $r'$ such that $\pi^{-r'}\OO_S\subset f_{\ast}(\mathcal{I}^{-\chi})$. But this last inclusion is equivalent to the inclusion
\[
f^{\ast}(\pi^{-r'}\OO_S)=\mathcal{I}^{-dr'}\subset \mathcal{I}^{-\chi},
\]
hence is also equivalent to the condition $-dr'\geq -\chi$, namely $r'\leq \chi/d$. The maximum of the
possible $r'$ is then given by $r=[\chi/d]$, and thus we obtain $f_{\ast}(\mathcal{I}^{-\chi})=\pi^{-[\chi/d]}\OO_S$.
Hence, if we identify $\omega$ and $f_{\ast}\omega_{X/S}$ as $\mathcal{O}_S$-submodule of
$\omega_K=\omega\otimes_{\mathcal{O}_S}\mathcal{O}_{\mathrm{Spec}(K)}$, the injection \eqref{tau} gives us the following equality inside
$\omega_K$: $f_{\ast}(\omega_{X/S}) =\pi^{-[\chi/d]}\omega$. As a result, we find that
$\mathrm{coker}(\tau_{X}^{\vee})$ and the torsion part of $\HH^{1}(X,\mathcal{O}_X)$ are both of length $[\chi/d]$.
\end{proof}

\begin{proposition}[\cite{Raynaud4}, pp.~31--32]\label{prop.expression} Let $X_K/K$ be a torsor under an elliptic curve $A_K$. Let $\chi\geq 0$ be the integer introduced in Lemma~\ref{lemma.omega}. Then one has
\begin{eqnarray}\label{expression}
\chi=(d-1) +k_0(d-d')+k_1(d-d'p)\ldots+k_{e-1}(d-d'p^{e-1}),
\end{eqnarray}
where the positive integers $k_i$ were introduced in Lemma~\ref{ki} and $d'$ is the order of the invertible sheaf  $\mathcal{I}_{|X_1}$, with $d= d'p^e$.
\end{proposition}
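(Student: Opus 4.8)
The plan is to pin down $\chi$ from two independent pieces of data: its residue modulo $d$, read off from the abstract isomorphism class of $\omega_{X/S}$, and its integral part $[\chi/d]$, which Lemma~\ref{lemma.omega} identifies with the length $t$ of the torsion submodule of $\HH^1(X,\OO_X)$. Since $\chi\geq 0$ together with $[\chi/d]=t$ and a prescribed residue determines $\chi$ uniquely, it suffices to compute each datum in terms of $d$, $d'$ and the $k_i$, and then to combine them. Throughout I use that, by Corollary~\ref{dualisant}, there is a unique $\bar n$ with $0\leq\bar n<d$ and $\omega_{X/S}\cong\mathcal I^{\bar n}$, so that $\omega_m=i_m^{*}(\omega_{X/S}\otimes\mathcal I^{-m})\cong\mathcal I^{\bar n-m}|_{X_m}$ is trivial on $X_m$ exactly when $m\equiv\bar n$ modulo the order of $\mathcal I|_{X_m}$.

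First I would determine $\chi\bmod d$. As $\omega$ is a line bundle on the local base $S$ it is trivial, whence $f^{*}\omega\cong\OO_X$; the identity $\omega_{X/S}=f^{*}\omega\otimes\mathcal I^{-\chi}$ of Lemma~\ref{lemma.omega} then gives $\omega_{X/S}\cong\mathcal I^{-\chi}$ as line bundles. Comparing with $\omega_{X/S}\cong\mathcal I^{\bar n}$ and using that $\mathcal I$ has order exactly $d$ in $\Pic(X)$ yields $\chi\equiv-\bar n\pmod d$. To identify $\bar n$ I note that for $m\geq m_e$ the order of $\mathcal I|_{X_m}$ equals $d$, so $\omega_m\cong\OO_{X_m}$ iff $m\equiv\bar n\pmod d$, while Lemma~\ref{ki}(iii) (applied in the last interval) says $\omega_m\cong\OO_{X_m}$ iff $m\equiv m_e\pmod d$. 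Hence $\bar n\equiv m_e\pmod d$, where $m_e=1+\sum_{i=0}^{e-1}k_id'p^{i}$ by Lemma~\ref{ki}(ii) together with $m_0=1$. Since $d\nmid m_e$ (equivalently $\bar n\neq 0$) whenever $d>1$, this gives $d-\bar n=d\lceil m_e/d\rceil-m_e$.

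Next I would compute $t=[\chi/d]$. Because $\phi(m)=h^{0}(X_m,\omega_m)$ increases by $1$ exactly when $\omega_m\cong\OO_{X_m}$ and is constant otherwise (Lemma~\ref{suite exacte de dualisant} and \eqref{inequalities}), $\phi(N)$ counts the integers $1\leq m\leq N$ with $\omega_m\cong\OO_{X_m}$. By Lemma~\ref{ki} these occur on $[m_i,m_{i+1})$ precisely at $m\equiv m_i\pmod{d'p^{i}}$, i.e.\ $k_i$ times per interval, hence $\sum_{i=0}^{e-1}k_i$ times on $[1,m_e)$ and then once every $d$ steps from $m_e$ on. A direct count gives, for $nd\geq m_e$,
\[
\phi(nd)=\sum_{i=0}^{e-1}k_i+n-\lceil m_e/d\rceil+1.
\]
On the other hand the base-change isomorphism $\HH^{1}(X,\OO_X)\otimes_{\OO_K}\OO_K/\pi^{n}\cong\HH^{1}(X_{nd},\OO_{X_{nd}})$ recalled in \S~\ref{def de psi}, combined with the decomposition $\HH^{1}(X,\OO_X)\cong\OO_K\oplus(\text{torsion})$ (the free rank being $1$ by the genus-$1$ hypothesis), gives $\phi(nd)=n+t$ for $n\gg 0$. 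Comparing the two expressions produces $t=\sum_{i=0}^{e-1}k_i-\lceil m_e/d\rceil+1$.

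Finally I would assemble the answer. Writing $\chi=dt+r$ with $0\leq r<d$, the relations $[\chi/d]=t$ and $\chi\equiv-\bar n\pmod d$ force $r=d-\bar n$; substituting the two formulas above, the terms $\lceil m_e/d\rceil$ cancel and
\[
\chi=d\Big(\sum_{i=0}^{e-1}k_i\Big)+d-m_e=(d-1)+\sum_{i=0}^{e-1}k_i\bigl(d-d'p^{i}\bigr),
\]
using $m_e=1+\sum_{i=0}^{e-1}k_id'p^{i}$; the degenerate case $d=1$ (so $e=0$, $\bar n=0$, $\chi=t=0$) matches the empty formula. I expect the only real obstacle to be the bookkeeping in the count of $\phi(nd)$, namely keeping track of the interval endpoints and of the floor/ceiling discrepancy that must cancel against $d-\bar n$; the rest is a direct combination of Lemmas~\ref{lemma.omega}, \ref{ki} and \ref{suite exacte de dualisant}.
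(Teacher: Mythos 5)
Your overall strategy is sound and is in essence the paper's own argument reorganized: both proofs determine $\chi$ from (a) the congruence forced by the triviality of $\omega_{m_e}$ together with the fact that $\mathcal{I}|_{X_{m_e}}$ has order $d$, and (b) the identity $[\chi/d]=\ell(\mathcal{T})$ of Lemma~\ref{lemma.omega} combined with the stabilized growth $\phi(nd)=n+\ell(\mathcal{T})$ for $n\gg 0$. (The paper packages (a) and (b) into the single equation $\chi+m_e=\alpha d$ and then shows $\alpha=\phi(m_e)$; you split the same information into a residue and an integer part.) However, there is one genuine error: the parenthetical claim that $d\nmid m_e$, equivalently $\bar n\neq 0$, whenever $d>1$, is false, and both the identity $d-\bar n=d\lceil m_e/d\rceil-m_e$ and the step $r=d-\bar n$ break down exactly when $\bar n=0$. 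Nothing in the paper excludes $\bar n=0$ for $d>1$ (Corollary~\ref{dualisant} explicitly allows $n=0$), and the case does occur: for the wild double fibre of an elliptic fibration on a singular or supersingular Enriques surface in characteristic $2$ one has $\omega_X\cong\OO_X$ globally, hence $\omega_{X/S}\cong\OO_X$ on the local model, i.e.\ $\bar n=0$ with $d=2$; in the paper's invariants this example has $d=2$, $d'=1$, $e=1$, $k_0=1$, $m_1=2$, $\chi=2$, so indeed $d\mid m_e$. This is precisely why the paper's proof writes $m_e=hd-a$ with $0\leq a<d$, allowing $a=0$, rather than assuming the residue is nonzero.

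The gap is local and easily repaired, because you never need $\bar n$ itself, only $\chi\bmod d$. From $\chi\equiv-\bar n\equiv -m_e\pmod d$, the residue $r:=\chi-d[\chi/d]$ equals $(-m_e)\bmod d$, and one checks that $(-m_e)\bmod d=d\lceil m_e/d\rceil-m_e$ in all cases, including $d\mid m_e$, where both sides vanish. Substituting this expression for $r$ (instead of $d-\bar n$) into your final assembly gives
\[
\chi=dt+r=d\Big(\sum_{i=0}^{e-1}k_i\Big)+d-m_e
\]
unconditionally, which is the asserted formula. With that one-line correction your proof is complete and matches the paper's.
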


\begin{proof} By Lemma~\ref{ki}, we have $\phi(m_e)=1+k_0+\ldots k_{e-1}$, and, for $n>m_e$, we have $\phi(n)=\phi(n-1)+1$ if and only if $n-m_e$ is a multiple of $d=d'p^{e}$.
 In particular, if we write $m_e=hd-a$ for non-negative integers $h$, $0\leq a<d$, we have $\phi(m_e)=\phi(hd)$.
Let ${\mathcal T}$ be the torsion subsheaf of ${\mathcal M}:= \mathrm{R}^{1}f_{\ast}\OO_X$.
Consider ${\mathcal L} :={\mathcal M}/{\mathcal T}$, which is free of rank $1$ over $S$.
We have $\mathrm{R}^{1}f_{\ast}(i_{nd*}\OO_{X_{nd}})= \mathrm{R}^{1}f_{\ast}\left(\OO_{X}/\pi^{n}\OO_X\right)\cong {\mathcal M}/\pi^{n}{\mathcal M}$.
Hence, for $n\geq h$, the length of ${\mathcal M}/\pi^{n}{\mathcal M}$, \emph{i.e.}, $\phi(nd)$, increases by $1$ with $n$.
 This means that ${\mathcal T}$ is killed by $\pi^{h}$, and that
$\ell({\mathcal M}/\pi^{n}{\mathcal M})=\ell({\mathcal T})+\ell({\mathcal L}/\pi^{n}{\mathcal L})$.
In particular, if we take $n=h$, we get
\begin{equation}\label{phi(hd)}
\phi(m_e)=\phi(hd)=\ell\left(\mathrm{R}^{1}f_{\ast}\mathcal{O}_{X_{hd}}\right)=
\ell\left({\mathcal M}/\pi^{h}{\mathcal M}\right)=\ell({\mathcal T})+h.
\end{equation}
On the other hand, $\omega_{m_e}=\mathcal{I}^{-(\chi+m_e)}|_{X_{m_e}}$ is the trivial invertible sheaf (Lemmas~\ref{ki}, (i) and \ref{lemma.omega}), and since $\mathcal{I}|_{X_{m_e}}$
is of order $d$, there exists an integer $\alpha$ such that $\chi+m_e=\alpha d $. Hence $\chi=(\alpha-h)d+a$, and we have
$\ell(T)=[\chi/d]=\alpha-h$. Thus, (by using the equality (\ref{phi(hd)}) and Lemma~\ref{lemma.omega}), we find that
$ \ell(T)=[\chi/d]=\alpha-h=\phi(m_e)-h$. Hence $\alpha=\phi(m_e)$, and
\begin{eqnarray*}
\chi& = & \phi(m_e)d-m_e \\ & =&
(1+k_0+\ldots+k_{e-1})d-(1+k_0d'+\ldots+k_{e-1}d'p^{e-1})
\\ &=&
(d-1) +k_0(d-d')+k_1(d-d'p)\ldots+k_{e-1}(d-d'p^{e-1}).
\end{eqnarray*}
\end{proof}

\begin{corollary}[\cite{Raynaud4}, 3.8.2]\label{coro.tametorsor} The following conditions are equivalent:
\begin{itemize}
\item[(i)] $X/S$ is cohomologically flat in dimension $0$.
\item[(ii)] $\chi<d$.
\item[(iii)] $e=0$.
\item[(iv)] $\mathcal{I}|_{X_1}$ is of order $d$.
\end{itemize}
\noindent Moreover, if these conditions are satisfied, we have $\chi=d-1$.
\end{corollary}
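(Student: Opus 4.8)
The plan is to run through the equivalences by combining the explicit formula of Proposition~\ref{prop.expression} with the cohomological interpretation of $\chi$ supplied by Lemma~\ref{lemma.omega}. The equivalence (iii)$\iff$(iv) is immediate from the definitions: by construction $e$ is the non-negative integer determined by $d=d'p^{e}$, where $d'$ is the order of $\mathcal I|_{X_1}$; hence $e=0$ holds exactly when $d'=d$, i.e.\ exactly when $\mathcal I|_{X_1}$ has order $d$. Thus (iii) and (iv) are literally the same condition, and it remains to tie them to (i) and (ii).

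Next I would deduce (ii)$\iff$(iii) and the final assertion directly from Proposition~\ref{prop.expression}, where
\[
\chi=(d-1)+\sum_{i=0}^{e-1}k_i\,(d-d'p^{i}).
\]
If $e=0$ the sum is empty, so $\chi=d-1<d$; this proves both (iii)$\Rightarrow$(ii) and the closing statement $\chi=d-1$. Conversely, assume $e\geq 1$. For each $0\leq i\leq e-1$ one has $d'p^{i}\leq d'p^{e-1}<d'p^{e}=d$, so every summand $k_i(d-d'p^{i})$ is strictly positive (recall $k_i>0$ by Lemma~\ref{ki}); in particular the $i=0$ term gives $k_0(d-d')\geq d-d'\geq 1$, whence $\chi\geq (d-1)+1=d$. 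This contradicts (ii), so (ii)$\Rightarrow e=0$. Therefore (ii), (iii), (iv) are equivalent and each forces $\chi=d-1$; note that none of this step goes beyond elementary bookkeeping with the formula.

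Finally, the equivalence (i)$\iff$(ii) carries the real content. By Lemma~\ref{lemma.omega} the length of the torsion submodule of $\HH^{1}(X,\OO_X)$ equals $[\chi/d]$, which vanishes exactly when $\chi<d$ (as $\chi\geq 0$); so (ii) is equivalent to the torsion-freeness of $\HH^{1}(X,\OO_X)$. The remaining ingredient is the criterion that the proper flat curve $X/S$, which satisfies $f_{*}\OO_X=\OO_S$ since $X$ is integral and normal, is cohomologically flat in dimension $0$ if and only if $R^{1}f_{*}\OO_X$ is torsion-free. I would obtain this by representing $\mathrm Rf_{*}\OO_X$ by a two-term complex $[K^{0}\xrightarrow{d}K^{1}]$ of finite free $\OO_K$-modules and applying cohomology-and-base-change (\cite{EGA3}): putting $d$ in Smith normal form, and using that both generic cohomology groups are one-dimensional, one finds $\dim_k\HH^{0}(X_s,\OO_{X_s})=1+\dim_k\big(T\otimes_{\OO_K}k\big)$, where $T$ is the torsion part of $\mathrm{coker}(d)=\HH^{1}(X,\OO_X)$. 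By Nakayama this equals $1$, i.e.\ $k\xrightarrow{\sim}\HH^{0}(X_s,\OO_{X_s})$, precisely when $T=0$. This identifies (i) with the torsion-freeness of $\HH^{1}(X,\OO_X)$, hence with (ii), closing the cycle. The delicate point—and the main obstacle—is exactly this base-change bookkeeping, namely verifying that the failure of cohomological flatness is measured by the $\pi$-torsion of $R^{1}f_{*}\OO_X$, rather than the formal manipulations surrounding Proposition~\ref{prop.expression}.
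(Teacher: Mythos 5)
Your proof is correct and follows essentially the same route as the paper's: (iii)$\Leftrightarrow$(iv) by the definition of $e$, then (ii)$\Leftrightarrow$(iii) together with the closing equality $\chi=d-1$ read off from the formula of Proposition~\ref{prop.expression} (empty sum if $e=0$; each summand $k_i(d-d'p^i)$ strictly positive if $e\geq 1$, forcing $\chi\geq d$), and finally (i)$\Leftrightarrow$(ii) by combining $\ell(\mathcal{T})=[\chi/d]$ from Lemma~\ref{lemma.omega} with the criterion that cohomological flatness in dimension $0$ amounts to torsion-freeness of $\mathrm{R}^1f_*\OO_X$. The only difference is that you prove this last criterion in detail (two-term perfect complex, Smith normal form over the DVR, Nakayama), whereas the paper simply invokes it as a known fact; this is a welcome expansion of a cited ingredient rather than a different approach.
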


\begin{proof} The $S$-scheme $X$ is cohomologically flat in dimension $0$ if and only if ${\mathcal T}$, the torsion subsheaf of
${\mathrm R}^1f_*\OO_X$, is trivial, \emph{i.e.}, if and only if $\ell({\mathcal T})=[\chi/d]=0$; the latter assertion is equivalent to saying that $\chi<d$, hence (i)$\Leftrightarrow$(ii). The equivalence between (iii) and (iv) comes from the definition of $e$. If $e>0$, then by Lemma~\ref{ki}~(ii) all integers $k_i$ are positive and $\chi\geq d$ by \eqref{expression}. Therefore (ii)$\Leftrightarrow$(iii).
\end{proof}

\begin{remark}\label{rem.tameandwild} Further conditions equivalent to the conditions in Corollary~\ref{coro.tametorsor} can be found in \ref{coro.tametorsor2}. By the discussion after Lemma~\ref{Oort}, these conditions are satisfied if $(d,p)=1$. But note that, as shown in \cite{Raynaud}~Remark~9.4.3~(d), it is possible for the equivalent conditions of Corollary~\ref{coro.tametorsor} to be satisfied when $p$ divides $d$.
\end{remark}

\section{Filtrations and comparison of the pro-algebraic structures}\label{filtrations}
{Recall that $\OO_K$ is} a complete discrete valuation ring with field of fractions $K$ and algebraically closed residue field $k$ of positive characteristic $p>0$, and $X$ denotes the proper regular minimal $S$-model of a $K$-torsor $X_K$ under an elliptic curve.
For each positive integer $n$ we have a canonical morphism of groups $\Pic^{0}(X)\rightarrow \Pic^{0}(X_n)$, from where we obtain a filtration on the group $\Pic^{0}_{X/S}(S)\cong\Pic^{0}(X)$ by the subgroups $\ker(\Pic^0(X)\rightarrow \Pic^0(X_n))$, $n\geq 1$.
On the other hand, the group $J(S)$ of the $S$-points of the identity component $J$ of the N\'eron model of the elliptic curve $\Pic^{0}_{X_K/K}$ is naturally filtered by the subgroups $\ker(J(S)\rightarrow J(S_n))$, $n\geq 1$, where $S_n:=\spec(\OO_K/\pi^n \OO_K)$. The reader should bear in mind that $X_n$ denotes the closed subscheme of $X$ defined by the ideal sheaf $\mathcal I^n$ and that $\mathcal{I}\subset \OO_X$ is the ideal sheaf such that $\mathcal I^d=\pi \OO_X\subset \OO_X$.
The aim of this section is to study the relation between these two filtrations with respect to the natural morphism of sheaves $q\colon \Pic^{0}_{X/S}\rightarrow J$ in \eqref{eq.SurPiczero}. The result can be stated in a satisfying form by using the Greenberg realization functors (see Theorem~\ref{resultat final}).

For this construction, we shall make intensive use of the notion of \emph{dilatation} of an $S$-scheme. We recall this construction briefly (see \cite{BLR}, \S~3.2, for more details). Let $H$ be a smooth $S$-scheme of finite type, $W\hookrightarrow
{H_s}$ a closed subscheme over $k$.
Denote by $\mathcal{J}$ the ideal sheaf of $W\hookrightarrow H$.
Let $\mathrm{Bl}_{W}(H)$ denote the blowing-up of $H$ along the center $W\hookrightarrow H$.
Then, by definition, \emph{the dilatation of $H$ along the center $W\hookrightarrow H$} is the largest open scheme $H'\subset \mathrm{Bl}_W(H)$ such that the ideal $\mathcal{J}\OO_{H'}\subset \OO_{H'}$ is generated by $\pi$.
According to \cite{BLR}, 3.2/1, $H'$ is a flat $S$-scheme, satisfying the following universal property:
let $Z$ be a flat $S$-scheme and $v\colon Z\rightarrow H$ a morphism of $S$-schemes such that its restriction to special fibres,
${v_s}\colon Z_s\rightarrow H_s$, factors through $W\hookrightarrow H_s$, then there exists a unique $S$-morphism $v'\colon Z\rightarrow H'$
rendering the obvious diagram commutative.
Furthermore, if $W$ is a smooth over $k$, then $H'$ is smooth over $S$ (\cite{BLR}, 3.2/3), and if $H$ is an $S$-group scheme, then the same is $H'$ (combine \cite{BLR}, 3.2/1 and 3.2/2).

In order to simplify the presentation, for the remainder of the section we will use the following notation: let $n\in \mathbb{Z}_{\geq 0}$ and $ m\in \mathbb Z_{\geq 0}\cup \{\infty\}$ with $n\leq m$ (by convention $n\leq \infty$ for any $n\in \mathbb Z_{\geq 0}$). Denote by $\mathrm{P}^{[n,m]}$ the kernel of the canonical morphism of functors $\Pic^{0}_{X_m/S}\rightarrow \Pic^{0}_{X_n/S}$.
Here, we set $X_{\infty}=X$ and $\Pic^{0}_{X_0/S}=0$, the final object in the category of abelian fppf-sheaves on $S$. Furthermore let
\[
\PP^{[n]}:=\PP^{[n,\infty]}=\ker(\Pic^{0}_{X/S}\longrightarrow
\Pic^{0}_{X_n/S}), \qquad \quad
\PP_{[n]}:=\PP^{[0,n]}=\Pic^{0}_{X_n/S}.
\]
In particular, $\PP^{[0]}=\Pic_{X/S}^{0}$.
For any integer $n\geq 1$, we define by induction a smooth $S$-group scheme $J^{[n]}$ as the the
dilatation of $J^{[n-1]}$ along the unit element of the special fibre of $J^{[n-1]}_s$ (here, $J^{[0]}:=J$).

\setcounter{lemma}{0}

\begin{lemma}\label{JJJ} For any $n\in {\mathbb Z}_{\geq 1}$, we have the following exact sequence:
\begin{equation}\label{eq.J}
0\longrightarrow J^{[n]}(S)\longrightarrow J(S)\longrightarrow
J(S_n)\longrightarrow 0.
\end{equation}
\end{lemma}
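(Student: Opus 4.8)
The statement asserts two things: that $J(S)\to J(S_n)$ is surjective, and that its kernel is precisely $J^{[n]}(S)$, the latter being regarded as a subgroup of $J(S)$. I would treat these separately. For the surjectivity, the plan is to exploit that $J$ is smooth, hence formally smooth, over $S=\spec(\OO_K)$ together with the completeness of $\OO_K$. For each $m\geq 1$ the closed immersion $S_m\hookrightarrow S_{m+1}$ is defined by the ideal $\pi^m\OO_{K,m+1}$, which is square-zero since $2m\geq m+1$; formal smoothness then gives the surjectivity of $J(S_{m+1})\to J(S_m)$. Thus $\{J(S_m)\}_{m\geq 1}$ is a projective system with surjective transition maps, and since $J$ is of finite type over $S$ and $\OO_K$ is $\pi$-adically complete and separated one has $J(S)=\varprojlim_m J(S_m)$; projecting onto the $n$-th term yields the surjection $J(S)\twoheadrightarrow J(S_n)$.

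It remains to identify the kernel. First note that each $J^{[n]}\to J$ is an iterated dilatation, hence affine and an isomorphism over the generic fibre; as $J^{[n]}$ is flat and separated over the trait $S$, an $S$-section is determined by its generic fibre, so $J^{[n]}(S)\hookrightarrow J^{[n]}(K)=J(K)$ and therefore $J^{[n]}(S)\to J(S)$ is injective. I would then prove by induction on $n$ that, under this inclusion, $J^{[n]}(S)=\ker\bigl(J(S)\to J(S_n)\bigr)$. For $n=1$ this is immediate from the universal property of the dilatation recalled in the text (\cite{BLR}, 3.2/1): since $S_1=\spec(k)$ and $J^{[1]}=\mathrm{Dil}_{\{e\}}(J)$, an $S$-point of $J$ lifts to $J^{[1]}$ precisely when its special fibre is the unit, i.e. $J^{[1]}(S)=\ker(J(S)\to J(k))=\ker(J(S)\to J(S_1))$. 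For the inductive step, writing $J^{[n]}=\mathrm{Dil}_{\{e\}}(J^{[n-1]})$, the same universal property gives
\[
J^{[n]}(S)=\ker\bigl(J^{[n-1]}(S)\to (J^{[n-1]})_s(k)\bigr),
\]
while by the inductive hypothesis $J^{[n-1]}(S)=\{g\in J(S): g\equiv e \bmod \pi^{n-1}\}$. The task is therefore to show that, for such a $g$, the reduction of $g$ to $(J^{[n-1]})_s(k)$ vanishes if and only if $g\equiv e\bmod \pi^{n}$.

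This last point is the main obstacle, because the special fibre $(J^{[n-1]})_s$ and the reduction map $J^{[n-1]}(S)\to (J^{[n-1]})_s(k)$ are not merely the restriction of the reduction of $J$, and the naive functorial induction is blocked by the non-flatness of $S_1$ over $S$. I would resolve it by passing to a smooth local model. Since $J$ is smooth over $S$ of relative dimension $1$, \'etale-locally around the unit section there is an \'etale $S$-morphism to $\mathbb{A}^1_S$ carrying the unit to the origin, and the formation of the dilatation is compatible with such morphisms (\cite{BLR}, 3.2/1, 3.2/3); one is thus reduced to $\mathbb{A}^1_S$, where the dilatation along the origin of the special fibre is the explicit substitution $x=\pi x'$. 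In these coordinates an element $g\equiv e\bmod\pi^{n-1}$ is given by $x=\pi^{n-1}a$ with $a\in\OO_K$, its image in $(J^{[n-1]})_s(k)$ is the ``leading coefficient'' $a\bmod\pi$, and this vanishes exactly when $x\in\pi^{n}\OO_K$, i.e. when $g\equiv e\bmod\pi^{n}$. Equivalently, one may phrase the computation through the formal group $\hat J\cong\mathrm{Spf}\,\OO_K[[x]]$, for which $\ker(J(S)\to J(k))\cong\pi\OO_K$ and each dilatation rescales the coordinate by $\pi$. This completes the induction and, together with the surjectivity established above, yields the exact sequence \eqref{eq.J}.
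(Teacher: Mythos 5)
Your proposal is correct and follows essentially the same route as the paper: surjectivity from smoothness of $J$ over the complete base, and identification of the kernel by passing to an \'etale $S$-morphism onto affine space near the unit section, where the iterated dilatations become the explicit substitutions $x=\pi^n x_n$ and the claim is a direct computation. The paper's proof differs only in inessential ways --- it uses $\mathbb{A}^m_S$ rather than $\mathbb{A}^1_S$ and proves inline (by induction, rather than citing \cite{BLR}) the two facts your reduction relies on, namely that $U^{[n]}\to U$ is the base change of $Z^{[n]}\to Z$ along the \'etale map and that the unit is the unique point above the origin, which is exactly what makes the two-way transfer of the kernel condition legitimate.
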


\begin{proof} The map $J(S)\to J(S_n)$ is surjective by the smoothness of $J$. By the universal property of dilatations we get inclusions $J^{[n]}(S)\subseteq J(S)$ and the exactness of \eqref{eq.J} for $n=1$. In order to prove the exactness for $n>1$ we need to work with the local description of dilatations as in \cite{BLR}, \S~3.2.
Let then $U\subset J$ be an open neighbourhood of the zero section $0:=0_J$ of $J$, and $f\colon U\to Z:= \mathbb{A}_S^m=\spec(\OO_K[x_1,\dots, x_m])$ an \'etale morphism of $S$-schemes sending $0\in J(S)$ to the zero section $0':=0_{Z}$ of $Z$; see \cite{BLR}, 2.2/11. 
Up to shrinking $U$ we may assume that $0_s\in U_s(k)$ is the only point above $0_{s}'\in Z_s(k)=k^m$. Let $U^{[n]}$ denote the pre-image of $U\subset J$ via the canonical map $J^{[n]}\rightarrow J$. 
Then, for $n\geq 1$, $U^{[n]}$ is the dilatation of $U^{[n-1]}$ along the closed point $0_s\in U^{[n-1]}_{s}(k)$ and $U^{[n]}(S)=J^{[n]}(S)$.
Define now inductively $Z^{[0]}:=Z$, and for $n>0$, $Z^{[n]}\cong \spec(\OO_K[\pi^{-n}x_1,\dots, \pi^{-n}x_m])$ the dilatation of $Z^{[n-1]}$ along the zero section of $Z^{[n-1]}_s$; on algebras the canonical map $Z^{[n]}\to Z^{[n-1]}$ sends $\pi^{-n+1}x_i$ to $\pi^{-n+1}x_i:=\pi(\pi^{-n}x_i)$. By direct computations, considering $Z$ as an $S$-group scheme via the isomorphism $Z\cong \G_a^m$ we get an exact sequence
\begin{equation}\label{eq.Z}
0\longrightarrow Z^{[n]}(S)\longrightarrow Z(S)\longrightarrow
Z(S_n)\longrightarrow 0
\end{equation}
for any $n$. Moreover, using the \'etaleness of $f$, one shows inductively that $f$ induces a morphism $f^{[n]}\colon U^{[n]}\rightarrow Z^{[n]}$ which identifies $U^{[n]}\rightarrow U$ with the base change of $Z^{[n]}\rightarrow Z$ along $f$. In particular, $f^{[n]}$ is \'etale, and $0_s\in U^{[n]}_s$ is the only point above the zero section of $Z_s^{[n]}$.

Let $0_n$ (respectively $0'_n$) denote the composition of $S_n\to S$ with the $0$ (respectively $0'$) section.
The morphism $f$ induces a bijection between $U^{[n]}(S)$ and $Z^{[n]}(S)$ for $n\geq 1$. Indeed, for $n=1$, the \'etaleness of the map $f$ assures that for any $\sigma \in Z(S)$ that becomes $0'$ modulo $\pi$ there is a unique $\sigma'\in U(S)$ that becomes $0$ modulo $\pi$. For higher $n$ one proceeds by induction recalling that $0_1\in U^{[n-1]}(S_1) $ is the only point above $0_1'\in Z^{[n-1]}(S_1)$, and that the induced morphism $f^{[n-1]}\colon U^{[n-1]}\rightarrow Z^{[n-1]}$ is \'etale.

We are now ready to show that \eqref{eq.J} is exact. Take $\tau\in J^{[n]}(S)= U^{[n]}(S)=Z^{[n]}(S)$ and let $\sigma:
=f(\tau)\in Z(S)$. Then the reduction of $\sigma$ modulo $\pi^n$ is $0_n'$ by the exactness of \eqref{eq.Z}. Hence the reduction modulo $\pi^n$ of $\tau$ in $U(S_n)$ must be $0_n\in U(S_n)$ since $U $ is \'etale over $Z$, and $0_s$ is the only point in $U_s(k)$ above $0'_s$. In particular the reduction of $\tau$ is $0_n\in J(S_n)$ and hence \eqref{eq.J} is a complex.
Finally, take $\tau\in J(S)$ whose reduction modulo $\pi^n$ is $0_n\in J(S_n)$.
 In particular $\tau\in U(S)$, and $f(\tau)\in Z(S)$ is contained in the kernel of the natural map $Z(S)\rightarrow Z(S_n)$. Thus, $f(\tau)\in Z^{[n]}(S)= U^{[n]}(S)=J^{[n]}(S)$, as desired.
\end{proof}
By \ref{JJJ}, one then has a diagram with exact rows and columns
\begin{eqnarray}\label{diagram for J}
\xymatrix{&   J^{[n]}(S)\ar@{^(->}[d]\ar@{=}[r]      & J^{[n]}(S) \ar@{^(->}[d]&&\\
0\ar[r]& J^{[n-1]}(S)\ar[r]\ar@{->>}[d]& J(S)\ar[r]\ar@{->>}[d]& J(S_{n-1})\ar[r]\ar@{=}[d]& 0 \\ 0\ar[r]& J^{[n-1]}(S_1)\ar[r]^{\varrho_n}& J(S_n)\ar[r]& J(S_{n-1})\ar[r]& 0.}
\end{eqnarray}

\subsection{Pro-algebraic structures}\label{fil}
Recall that, in this paper, a \emph{pro-algebraic group over $k$} is a pro-object in the category of $k$-group
schemes of finite type (see \S~\ref{sec.proalg-gree}).
The aim of this subsection is to show that, with the help of Greenberg realization functors, the morphism
$q\colon \mathrm{Pic}^0(X)\rightarrow J(S)$ is pro-algebraic in nature.

Let $n\geq 1$ be an integer. Consider $\Gr(\PP_{[n]})$, the Greenberg realization of the Picard functor $\PP_{[n]}=\Pic_{X_{n}/S}^{0}$ ({Theorem~\ref{Lipman}}). The natural morphism of functors $\PP_{[n+1]}\rightarrow \PP_{[n]}$ induces a morphism of smooth $k$-group schemes $\alpha_n\colon \mathrm{Gr}(\PP_{[n+1]})\rightarrow \mathrm{Gr}(\PP_{[n]})$. Thus we obtain a pro-algebraic group over $k$ (in the sense of
\S~\ref{sec.proalg-gree})
\[
\Gr(\Pic_{X/S}^{0}):=\{(\Gr(\PP_{[n]}), \alpha_n)\}_{n\geq 1}.
\]

\begin{lemma}\label{lemme en greenberg}
The morphism $\alpha_n\colon \mathrm{Gr}(\PP_{[n+1]})\rightarrow \mathrm{Gr}(\PP_{[n]})$ is a smooth and surjective morphism of smooth connected $k$-group schemes. Moreover, either $\alpha_n$ is an isomorphism, in which case we have $\phi(n+1)=\phi(n)$, or $\ker(\alpha_n)$ is a
$k$-vector group of dimension $1$, in which case we have $\phi(n+1)=\phi(n)+1$.
\end{lemma}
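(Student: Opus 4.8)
The plan is to obtain the source, target and their dimensions from Theorem~\ref{Lipman}, the surjectivity and the structure of the kernel from Proposition~\ref{pro.GrePicKer}, and then to read off the dichotomy from the behaviour of $\phi$ already recorded before Corollary~\ref{Pic_S}.

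First I would record the two group schemes. The inclusion $X_n\hookrightarrow X_{n+1}$ is the closed immersion defined by the ideal $\mathfrak N:=\mathcal I^{n}/\mathcal I^{n+1}\subset \OO_{X_{n+1}}$, which is square-zero, in particular nilpotent. Both $X_n\to S$ and $X_{n+1}\to S$ satisfy the hypotheses of Theorem~\ref{Lipman}, so $\Gr(\PP_{[n]})=\Gr(\Pic^{0}_{X_n/S})$ and $\Gr(\PP_{[n+1]})=\Gr(\Pic^{0}_{X_{n+1}/S})$ are smooth connected $k$-group schemes, of dimensions $\phi(n)$ and $\phi(n+1)$ respectively. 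Here I use that, for a finite-length $\OO_K$-module killed by a power of $\pi$, the length of the $W$-module appearing in Theorem~\ref{Lipman} coincides with the $\OO_K$-length defining $\phi$, since each $\OO_K$-composition factor is $k$ and $k$ has $W$-length $1$.

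Next I would analyse $\alpha_n$. Applying Proposition~\ref{pro.GrePicKer} to the nilpotent thickening $X_n\hookrightarrow X_{n+1}$ shows that the forgetful morphism $u\colon \Gr(\Pic_{X_{n+1}/S})\to \Gr(\Pic_{X_n/S})$ is an epimorphism of smooth $k$-group schemes whose kernel is a smooth connected unipotent group. To descend to the degree-zero parts I would note that $X_n$ and $X_{n+1}$ have the same underlying space and the same reduction $\bigcup_i C_i$, so ``degree $0$ on each component'' is the same condition on both; hence the canonical morphism $\Pic^{0}_{X_{n+1}/S}\to \Pic_{X_{n+1}/S}\times_{\Pic_{X_n/S}}\Pic^{0}_{X_n/S}$ is an isomorphism, and, exactly as for \eqref{eq.cartforGr}, so is the induced morphism of Greenberg realizations. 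Consequently $\alpha_n$ is the base change of $u$ along $\Gr(\Pic^{0}_{X_n/S})\hookrightarrow \Gr(\Pic_{X_n/S})$; it is therefore again an fppf epimorphism, and $\ker(\alpha_n)\cong \ker(u)$ is smooth connected unipotent. Being an fppf epimorphism of smooth $k$-group schemes, $\alpha_n$ is faithfully flat; its geometric fibres are torsors under the smooth group $\ker(\alpha_n)$, hence smooth, so $\alpha_n$ is smooth. In particular $\dim\ker(\alpha_n)=\phi(n+1)-\phi(n)$.

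Finally I would invoke the dichotomy for $\phi$. By the discussion preceding Corollary~\ref{Pic_S}, $\phi(n+1)-\phi(n)\in\{0,1\}$. If $\phi(n+1)=\phi(n)$ then $\ker(\alpha_n)$ is a smooth connected $k$-group of dimension $0$, hence trivial; since $\alpha_n$ is an fppf epimorphism with trivial kernel it is an isomorphism. If $\phi(n+1)=\phi(n)+1$ then $\ker(\alpha_n)$ is a smooth connected unipotent $k$-group of dimension $1$, and since $k$ is algebraically closed such a group is necessarily isomorphic to $\G_a$, i.e. a one-dimensional vector group. I expect the only genuinely technical point to be the passage from $\Pic$ to $\Pic^{0}$ in the third paragraph, namely the identification $\ker(\alpha_n)\cong\ker(u)$ via the cartesian square; once this is in place, Theorem~\ref{Lipman}, Proposition~\ref{pro.GrePicKer} and the known increments of $\phi$ give the result immediately.
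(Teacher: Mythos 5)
Your proposal is correct and takes essentially the same route as the paper: Theorem~\ref{Lipman} for representability, connectedness and dimension, Proposition~\ref{pro.GrePicKer} for the epimorphism with smooth connected unipotent kernel, and the known increments of $\phi$ for the dichotomy. The only cosmetic differences are that the paper descends from $\Pic$ to $\Pic^{0}$ by restricting to identity components --- which, since $\ker(u)$ is connected, yields the same identification $\ker(\alpha_n)\cong\ker(u)$ as your cartesian-square base change --- and reads off the dichotomy from Corollary~\ref{Pic_S} rather than from the dimension count you perform, both of which rest on the same discussion preceding that corollary.
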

\begin{proof}By Theorem \ref{Lipman} (and its proof) $\mathrm{Gr}(\PP_{[n]})$ is (represented by) a smooth connected $k$-group scheme which is the identity component of $ \mathrm{Gr}(\Pic_{X_{n}/S})$. Now, by Proposition~\ref{pro.GrePicKer} the canonical map $\mathrm{Gr}(\Pic_{X_{n+1}/S})\to \mathrm{Gr}(\Pic_{X_{n}/S})$ is smooth and surjective with connected unipotent kernel. Hence, so is the restriction of $\alpha_n$ to the identity components. One concludes then by Corollary~\ref{Pic_S}.
\end{proof}

On passing to the projective limit of the perfect group schemes $\GGr(\PP_{[n]})$ and using the fact that
\[
\Pic^{0}(X)\cong\varprojlim \Pic^{0}(X_n)\cong\varprojlim \Gr(\PP_{[n]})(k),
\]
we get a pro-algebraic structure in the sense of Serre on the group $\Pic_{X/S}^{0}(S)=\Pic^{0}(X)$. Denote the Serre pro-algebraic group
so obtained by
\[
\bm{\Pic^{0}(X)}:=\varprojlim \bm{\Gr}(\PP_{[n]}).
\]
Similarly, since $\PP^{[n]}(S)=\ker(\Pic^{0}_{X/S}(S)\rightarrow \PP_{[n]}(S))$, we find that the group $\PP^{[n]}(S)$ can also
be endowed with a pro-algebraic structure in the sense of Serre, denoted by $\bm{\PP^{[n]}(S)}$.
Thus we obtain a decreasing filtration of $\bm{\Pic^{0}(X)}$ by pro-algebraic subgroups:
\begin{equation}\label{eq.FilPic}
\ldots \subset \bm{\PP^{[n+1]}(S)}\subset \bm{\PP^{[n]}(S)}\subset \ldots \subset \bm{\PP^{[1]}(S)}\subset \bm{\PP^{[0]}(S)}=\bm{\Pic^{0}(X)}.
\end{equation}

Secondly, from the $S$-group scheme $J$, we can construct a pro-algebraic group $\{(\Gr_{n}(J),\beta_n)\}_{n\geq 1}$, where each $\Gr_{n}(J)$ is smooth, and hence a Serre pro-algebraic algebraic group
\[
\bm{J(S)}:=\bm{\Gr}(J):=\varprojlim \bm{\Gr_n}(J)
\]
whose group of $k$-points is $J(S)$.
Moreover, the canonical map $J(S)=\Gr(J)(k)\rightarrow \Gr_n(J)(k)=J(S_n)$ is also pro-algebraic in nature, hence its kernel
can also be endowed with a pro-algebraic structure.
This last pro-algebraic group, according to the short exact sequence \eqref{eq.J}, is just the sub-pro-algebraic group
$\bm{J^{[n]}(S)}\subset \bm{J(S)}$ induced by the canonical map of $S$-group schemes $J^{[n]}\rightarrow J$.
In this way, we also obtain a decreasing filtration of $\bm{J(S)}$ by sub-pro-algebraic groups:
\begin{equation}\label{eq.FilJ}
\ldots \subset \bm{J^{[n+1]}(S)}\subset \bm{J^{[n]}(S)}\subset \ldots \subset \bm{J^{[1]}(S)}\subset \bm{J^{[0]}(S)}=\bm{J(S)}.
\end{equation}

On the other hand, for each integer $n\geq 1$, the morphism $q\colon \Pic_{X/S}^{0}\rightarrow J$ induces a morphism of
functors $\Pic_{X/S}^{0}\times_S S_n=\PP_{[nd]}\times_S S_n\rightarrow J\times_S S_n$, hence a morphism of algebraic $k$-groups:
\[
\Gr(\PP_{[nd]})\longrightarrow \Gr_n(J).
\]
In particular, we obtain a morphism of pro-algebraic groups:
\begin{eqnarray}\label{eq.morproalg}
\Gr(\Pic_{X/S}^{0})=\{(\Gr(\PP_{[n]}), \alpha_n)\}_{n\geq 1}\longrightarrow
\{(\Gr_{n}(J),\beta_n)\}_{n\geq 1}=\Gr(J).
\end{eqnarray}
In this way, we find that the canonical morphism $q\colon \mathrm{Pic}^0(X)\rightarrow J(S)$ is the morphism on $k$-rational
points induced by a morphism of Serre pro-algebraic groups:
\begin{eqnarray}\label{q pro-alg}
\bm q\colon \bm{\mathrm{Pic}^{0}(X)}\longrightarrow \bm{J(S)}.
\end{eqnarray}
In fact, we can be more precise in comparing the two filtrations \eqref{eq.FilPic} and \eqref{eq.FilJ}. The main result of this section
(see Theorem \ref{resultat final}) says that the above filtrations are compatible via $\bm q$ and this fact suggests that the morphism $\bm{q}$ should be thought as an analogue of the norm map studied by Serre in \cite{SerreCFT}~\S~3.3-3.4. In order to explore the compatibility between the two filtrations, we start by proving a useful result on the length of torsion sheaves.

\subsection{A result on intersection theory}\label{resultat intersection}
The results of this section hold for $\OO_K$ any discrete valuation ring; as usual $S:=\spec(\OO_K)$ and $s$ is the closed point of $S$. In the following, a coherent sheaf on an integral scheme is called a \emph{torsion coherent sheaf} if its stalk at the generic point is trivial. Moreover, for a torsion coherent sheaf $\mathcal{T}$ defined over the spectrum of a discrete valuation ring, let $\ell(\mathcal T)$ denote the length of $\mathcal T$. For $\alpha\in Z(S)$ an $S$-point of a separated $S$-scheme $Z$, let $\alpha(S)$ be the image of $S$ by $\alpha$, together with the reduced subscheme structure.

\begin{proposition}\label{intersection}
Let $Z$ be a smooth $S$-scheme of finite type and $\xi$ a generic point of its special fibre $Z_s$. Let $\alpha\colon S\rightarrow Z$ be a section of $Z/S$ such that $\alpha(s)\in \overline{\{\xi\}}\subset Z_s$. Let $\mathcal{M}$ be a coherent sheaf on $Z$, whose support $\mathrm{Supp}(\mathcal{M})$ is pure of codimension $1$ in $Z$.
Suppose that the stalk $\mathcal{M}_{\xi}$ of $\mathcal{M}$ at $\xi$ is of length $\ell$ as a torsion $\OO_{Z,\xi}$-module, and that $\alpha(S)\nsubseteq \mathrm{Supp}(\mathcal{M})$.

(1) We have $\ell(\alpha^{\ast}\mathcal{M}) \geq \ell$, with equality if and only if the following conditions are satisfied:
the support of $\mathcal{M}$ at $\alpha(s)$ is contained in $Z_s$, \emph{i.e.}, $\overline{\{\xi\}}\subset Z_s$ is the only possible component of $\mathrm{Supp}(\mathcal{M})$ containing $\alpha(s)$, and $\mathcal{M}$ is Cohen-Macaulay at $\alpha(s)$.

(2) Suppose furthermore that the support of $\mathcal{M}_K$ on $Z_K$ is not empty, and let $H_{K}$ be the scheme having $\mathrm{Supp}(\mathcal{M}_{K})$ as underlying space with its reduced structure and $H:=\overline{H_K}\subset Z$ its schematic closure in $Z$ (which is a relative effective divisor). Suppose moreover that $\alpha(s)\in H_s$. Let $\zeta$ be the generic point of an irreducible component of $H$ passing through $x$. Then $\ell(\alpha^{\ast}\mathcal{M})\geq \ell+1$. Moreover, if the equality
$\ell(\alpha^{\ast}\mathcal{M})=\ell+1$ holds, then
 (a)~$\mathcal{M}$ is Cohen-Macaulay at $\alpha(s)$; (b)~$H$ is regular at $\alpha(s)$, and $\mathcal{M}$ is of length $1$ at $\zeta$; (c)~$H$ cuts the closed subscheme $\alpha(S)\hookrightarrow Z$ transversally at $\alpha(s)$, \emph{i.e.}, the intersection number at $\alpha(s)$ of the closed subscheme $\alpha(S)$ (of dimension $1$) with $H$ (of codimension $1$ and regular around the point $\alpha(s)$) is equal to $1$.
\end{proposition}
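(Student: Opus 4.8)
The plan is to reduce everything to multiplicity theory in the regular local ring $R:=\OO_{Z,x}$ at the point $x:=\alpha(s)$, and to extract both inequalities together with their equality cases from the classical comparison between the colength of a parameter ideal and its Samuel multiplicity. Write $M:=\mathcal M_x$, a finitely generated $R$-module with $\dim M=r:=\dim R-1$ (support pure of codimension one). Since $Z/S$ is smooth and $\alpha$ is a section, $R$ is regular of dimension $r+1$, the subscheme $\alpha(S)$ is cut out near $x$ by an ideal $I=(t_1,\dots,t_r)$ generated by part of a regular system of parameters with $R/I\cong\OO_K$, and $\pi$ completes $t_1,\dots,t_r$ to a regular system of parameters; moreover $\overline{\{\xi\}}$ is locally the special fibre $V(\pi)$, so $\mathfrak p:=\pi R$ is the height-one prime with $\ell_{R_{\mathfrak p}}(M_{\mathfrak p})=\ell$. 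The hypothesis $\alpha(S)\nsubseteq\mathrm{Supp}(\mathcal M)$ makes the intersection $\alpha(S)\cap\mathrm{Supp}(\mathcal M)$ proper, hence $0$-dimensional at $x$; equivalently $\ell_{\OO_K}(M/IM)<\infty$ and $t_1,\dots,t_r$ is a system of parameters for $M$. Finally $\ell(\alpha^{\ast}\mathcal M)=\ell_{\OO_K}(M/IM)$.

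The two algebraic inputs are: (A) for a system of parameters $\underline t$ of the $r$-dimensional module $M$ one has $\ell(M/\underline t M)\ge e(\underline t;M)$, with equality if and only if $M$ is Cohen--Macaulay (Serre; Bruns--Herzog); and (B) the associativity formula $e(\underline t;M)=\sum_{\mathfrak q}\ell_{R_{\mathfrak q}}(M_{\mathfrak q})\,e(\underline t;R/\mathfrak q)$, the sum over the codimension-one points $\mathfrak q$ of $\mathrm{Supp}(\mathcal M)$ specializing to $x$ (Matsumura). Each $e(\underline t;R/\mathfrak q)$ is a positive integer: $R/\mathfrak q$ is a domain of dimension $r$, and $\underline t$ restricts to a parameter ideal on it because $\alpha(S)\nsubseteq\mathrm{Supp}(\mathcal M)$ again guarantees proper intersection. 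For $\mathfrak q=\mathfrak p$ one gets $e(\underline t;R/\mathfrak p)=1$: indeed $R/\mathfrak p=R/\pi R$ is regular of dimension $r$ and, since $I+\pi R=\mathfrak m$, the images $\bar t_1,\dots,\bar t_r$ generate its maximal ideal, so the multiplicity is that of a regular local ring, namely $1$. Combining (A) and (B) and isolating the $\mathfrak p$-term yields
\[
\ell(\alpha^{\ast}\mathcal M)\ \ge\ e(\underline t;M)\ =\ \ell\cdot 1+\sum_{\mathfrak q\ne\mathfrak p}\ell_{R_{\mathfrak q}}(M_{\mathfrak q})\,e(\underline t;R/\mathfrak q)\ \ge\ \ell,
\]
which is Part~(1). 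Equality throughout forces $\ell(M/IM)=e(\underline t;M)$, i.e. $M$ Cohen--Macaulay by (A), together with the vanishing of the remaining sum; as each term there is $\ge 1$, this means no prime $\mathfrak q\ne\mathfrak p$ of $\mathrm{Supp}(\mathcal M)$ passes through $x$, i.e. the support at $x$ lies in $Z_s$. This is exactly the stated criterion.

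For Part~(2), the hypothesis $\alpha(s)\in H_s$ supplies a codimension-one prime $\mathfrak q_\zeta\ne\mathfrak p$ attached to the chosen component $\zeta$ of $H$ through $x$, lying in $\mathrm{Supp}(\mathcal M)$; hence the displayed sum contains the term $\ell_{R_{\mathfrak q_\zeta}}(M_{\mathfrak q_\zeta})\,e(\underline t;R/\mathfrak q_\zeta)\ge 1$, giving $\ell(\alpha^{\ast}\mathcal M)\ge\ell+1$. If equality holds, (A) yields~(a), and the sum over $\mathfrak q\ne\mathfrak p$ must equal $1$; since every summand is a positive integer there is a unique such prime, namely $\mathfrak q_\zeta$, with $\ell_{R_{\mathfrak q_\zeta}}(M_{\mathfrak q_\zeta})=1$ (``$\mathcal M$ of length $1$ at $\zeta$'') and $e(\underline t;R/\mathfrak q_\zeta)=1$, the uniqueness showing that $H$ is locally irreducible at $x$. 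Writing the height-one prime $\mathfrak q_\zeta=(g)$ ($R$ being a regular UFD) and using that the hypersurface $R/(g)$ is Cohen--Macaulay, one has $e(\underline t;R/(g))=\ell_{\OO_K}(\OO_K/(\alpha^{\ast}g))=v_K(\alpha^{\ast}g)$; so $e=1$ means $\alpha^{\ast}g$ is a uniformizer, which forces $g\notin\mathfrak m^2$, whence $H=V(g)$ is regular at $x$ (this is~(b)) and meets $\alpha(S)$ transversally with intersection number $1$ (this is~(c)).

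I expect the only genuinely delicate part to be the bookkeeping of the equality cases: verifying that the intersection is proper on each individual component (so that every $e(\underline t;R/\mathfrak q)$ is defined and positive), and faithfully translating the algebraic equalities $\ell_{R_{\mathfrak q_\zeta}}(M_{\mathfrak q_\zeta})=1$ and $e(\underline t;R/\mathfrak q_\zeta)=1$ into the geometric assertions (b) and (c). The substantive analytic input, the inequality $\ell(M/\underline t M)\ge e(\underline t;M)$ with its Cohen--Macaulay equality criterion, is standard and may simply be cited.
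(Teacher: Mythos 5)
Your proof is correct, but it takes a genuinely different route from the paper's. The paper argues by induction on the relative dimension: it first proves a two-dimensional technical lemma (Lemma~\ref{lemme technique}) by filtering $\mathcal M$ through the pullbacks of a composition series of $\mathcal M_{\xi}$, then slices $Z$ along the equations $f_1,\dots,f_d$ cutting out $\alpha(S)$, carefully propagating the equality conditions (Cohen--Macaulayness, support in $Z_s$, regularity and transversality of $H$) through each slice; for part (2) it even passes to an \'etale extension of $S$ to make the residue field infinite, so that a sufficiently generic slicing function $f_1+\lambda f_2$ exists. You instead collapse the entire statement into a single identity of Samuel multiplicities at $x=\alpha(s)$: the inequality $\ell(M/\underline t M)\ge e(\underline t;M)$ with equality if and only if $M$ is Cohen--Macaulay, combined with the associativity formula, in which the vertical prime $\mathfrak p=(\pi)$ contributes exactly $\ell$ (since $R/\pi R$ is regular with maximal ideal generated by $\bar t_1,\dots,\bar t_r$) and every other prime contributes a positive integer; both inequalities and all equality clauses then follow by inspecting the terms of one sum. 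What your approach buys is brevity, no induction, and no genericity/infinite-residue-field device; what it costs is importing the full strength of multiplicity theory --- in particular the converse direction of the Cohen--Macaulay criterion for a \emph{fixed} parameter ideal (vanishing of the first partial Euler characteristic $\chi_1$ forces $\underline t$ to be an $M$-sequence), which is the deep input and should be cited precisely (Bruns--Herzog, Theorem 4.7.10, or Serre, Alg\`ebre locale, IV) --- whereas the paper's argument is elementary and self-contained, using only filtrations, Nakayama, and basic Cohen--Macaulay facts.

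Two small points you should make explicit. First, in the equality analysis you need that every codimension-one prime of $\mathrm{Supp}(\mathcal M)$ through $x$ other than $(\pi)$ is horizontal, hence corresponds to a component of $H$ through $x$; this holds because $Z_s$ is smooth over $k$, so $R/\pi R$ is a domain and $\overline{\{\xi\}}$ is the only vertical divisorial component through $x$ --- this is what identifies the ``extra'' terms of the associativity formula with components of $H$ and makes the uniqueness argument in (2)(b) legitimate. Second, the degenerate case $\mathcal M_x=0$ (possible when $\ell=0$) should be treated separately, since the multiplicity criterion (A) is stated for nonzero modules; in that case all assertions are trivially verified.
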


Before proving this result consider the following technical Lemma.

\begin{lemma}\label{lemme technique} Let $Z=\mathrm{Spec}({\mathsf A})$ be a local noetherian regular scheme of dimension $2$,
and $\mathcal{M}$ a torsion coherent $\OO_Z$-module such that $\mathrm{Supp}(\mathcal{M})$ is of dimension $1$.
Let $H_1,\ldots, H_n$ be the irreducible components of $\mathrm{Supp}(\mathcal{M})$, endowed with the reduced subscheme structure.
Denote by $\xi_i$ the generic point of $H_i$, and by $\ell_i$ the length of $\mathcal{M}_{\xi_{i}}$ as an $\OO_{Z,\xi_i}$-module. Let finally $f\in {\mathsf A}$ be an element, which is part of a system of regular parameters of ${\mathsf A}$, such that $Z_1:=V(f)\subset Z$ is not contained in $\mathrm{Supp}(\mathcal{M})$. 
Then $\ell(\mathcal{M}/f\mathcal{M})\geq \sum_{i=1}^{n}\ell_i$, with equality if and only if the following conditions are satisfied: (i) for each $i$, the scheme $H_i$ is regular
and cuts $Z_1$ transversally in $Z$; (ii) the $\OO_Z$-module $\mathcal{M}$ is Cohen-Macaulay.
\end{lemma}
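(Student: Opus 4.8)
The plan is to reduce the statement to the additivity of an Euler characteristic along a prime filtration of $\mathcal{M}$, and then to read off the inequality and its equality case from local computations at the generic points $\xi_i$ and at the closed point. First I would record two finiteness facts. Since $\mathsf{A}$ is regular it is a domain and $f$ is a non-zerodivisor, so $\mathsf{A}/f\mathsf{A}$ admits the free resolution $0\to \mathsf{A}\xrightarrow{f}\mathsf{A}\to \mathsf{A}/f\mathsf{A}\to 0$; hence for every finitely generated $\mathsf{A}$-module $N$ one has $\mathrm{Tor}_0^{\mathsf{A}}(\mathsf{A}/f,N)=N/fN$, $\mathrm{Tor}_1^{\mathsf{A}}(\mathsf{A}/f,N)=N[f]:=\ker(f\colon N\to N)$, and $\mathrm{Tor}_i=0$ for $i\geq 2$. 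Because $Z_1=V(f)$ is not contained in $\mathrm{Supp}(\mathcal{M})$, the intersection $\mathrm{Supp}(\mathcal{M})\cap Z_1$ is $0$-dimensional, so both $\mathcal{M}/f\mathcal{M}$ and $\mathcal{M}[f]$ have finite length. I would then set $\chi(f;N):=\ell(N/fN)-\ell(N[f])$ for every finitely generated $N$ with $\dim N\leq 1$ and $N/fN$ of finite length, and observe that the long exact $\mathrm{Tor}$-sequence makes $\chi(f;-)$ additive on short exact sequences of such modules.

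Next I would apply d\'evissage. Choose a filtration $0=M_0\subset M_1\subset\cdots\subset M_N=\mathcal{M}$ with $M_j/M_{j-1}\cong \mathsf{A}/\mathfrak{p}_j$. Since $\mathrm{Supp}(\mathsf{A}/\mathfrak{p}_j)=V(\mathfrak{p}_j)\subseteq \mathrm{Supp}(\mathcal{M})$ and $\dim\mathsf{A}=2$, every $\mathfrak{p}_j$ lies in $\{\mathfrak{p}_1,\ldots,\mathfrak{p}_n,\mathfrak{m}\}$, where $\mathfrak{p}_i$ corresponds to $\xi_i$ and $\mathfrak{m}$ is the maximal ideal. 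Localizing the filtration at $\xi_i$ shows that $\mathfrak{p}_i$ occurs exactly $\ell_i$ times, since the localized graded pieces vanish except for the $\ell_i$ copies of the residue field $\kappa(\xi_i)$. By additivity, $\chi(f;\mathcal{M})=\sum_i \ell_i\,\chi(f;\mathsf{A}/\mathfrak{p}_i)+c\cdot\chi(f;k)$ for some $c\geq 0$. Here $\chi(f;k)=\ell(k/fk)-\ell(k[f])=1-1=0$ because $f\in\mathfrak{m}$, while $\mathsf{A}/\mathfrak{p}_i=\mathcal{O}_{H_i}$ is a $1$-dimensional local domain on which $f$ is a non-zerodivisor, so $\chi(f;\mathsf{A}/\mathfrak{p}_i)=\ell(\mathcal{O}_{H_i}/f\mathcal{O}_{H_i})$. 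This yields the key identity
\[
\ell(\mathcal{M}/f\mathcal{M})=\ell(\mathcal{M}[f])+\sum_{i=1}^{n}\ell_i\,\ell(\mathcal{O}_{H_i}/f\mathcal{O}_{H_i}).
\]

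Finally I would extract the inequality and its equality case from this identity. The term $\ell(\mathcal{M}[f])$ is $\geq 0$, and each $\ell(\mathcal{O}_{H_i}/f\mathcal{O}_{H_i})\geq 1$ because $\mathcal{O}_{H_i}$ is local of dimension $1$ and the image of $f$ lies in its maximal ideal but is nonzero; hence $\ell(\mathcal{M}/f\mathcal{M})\geq \sum_i\ell_i$. Equality forces simultaneously $\ell(\mathcal{M}[f])=0$ and $\ell(\mathcal{O}_{H_i}/f\mathcal{O}_{H_i})=1$ for all $i$. For the first, $\mathrm{Ass}(\mathcal{M})\subseteq\{\mathfrak{p}_1,\ldots,\mathfrak{p}_n,\mathfrak{m}\}$ with all the $\mathfrak{p}_i$ associated, and $f\notin\mathfrak{p}_i$ (as $Z_1\nsubseteq \mathrm{Supp}(\mathcal{M})$); thus $f$ is a non-zerodivisor on $\mathcal{M}$, i.e. $\mathcal{M}[f]=0$, exactly when $\mathfrak{m}\notin\mathrm{Ass}(\mathcal{M})$, which for a module of dimension $1$ is precisely the Cohen-Macaulay condition (ii). For the second, $\mathsf{A}$ is a regular local ring, hence a UFD, so $\mathfrak{p}_i=(g_i)$ is principal; then $\ell(\mathcal{O}_{H_i}/f\mathcal{O}_{H_i})=\ell(\mathsf{A}/(g_i,f))=1$ holds iff $(g_i,f)=\mathfrak{m}$, which forces $g_i\notin\mathfrak{m}^2$ (so $H_i$ is regular) and says that $g_i,f$ form a regular system of parameters, i.e. $H_i$ cuts $Z_1$ transversally — this is condition (i).

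I expect the main technical points to be the additivity bookkeeping of $\chi(f;-)$ (ensuring that every module in sight has finite length and that the $\mathrm{Tor}_{\geq 2}$ terms genuinely vanish, which is where regularity of $\mathsf{A}$ and $f\notin\mathfrak{p}_i$ are used) and the clean translation of the numerical equality $\ell(\mathcal{O}_{H_i}/f\mathcal{O}_{H_i})=1$ into the geometric statements of regularity and transversality.
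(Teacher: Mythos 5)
Your proof is correct, and it follows a genuinely different route from the paper's. The paper proves Lemma~\ref{lemme technique} by induction on the number $n$ of components of $\mathrm{Supp}(\mathcal{M})$: for $n=1$ it pulls back a composition series of $\mathcal{M}_{\xi}$ to a filtration $\mathcal{M}_0\subset\mathcal{M}_1\subset\cdots\subset\mathcal{M}_\ell=\mathcal{M}$ whose bottom piece $\mathcal{M}_0$ records the embedded torsion, shows that multiplication by $f$ is injective on the successive quotients $\mathcal{C}_i$ for $i\geq 1$, and translates the two conditions characterizing equality ($\mathcal{M}_0=0$ and each $\mathcal{C}_i/f\mathcal{C}_i$ of length $1$) into Cohen--Macaulayness, regularity and transversality; for $n>1$ it splits $\mathcal{M}$ into a piece supported on $H_1$ and a Cohen--Macaulay piece supported on the remaining components and applies induction. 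You instead extract everything from a single d\'evissage: additivity of $\chi(f;N)=\ell(N/fN)-\ell(N[f])$ along a prime filtration (legitimate because $f$ is a non-zerodivisor of the domain $\mathsf{A}$, so $\mathsf{A}/f\mathsf{A}$ has projective dimension $1$ and the Tor sequence has only six terms, all of finite length since $\mathrm{Supp}(N)\cap Z_1$ is zero-dimensional), together with $\chi(f;k)=0$ and the localization count showing that $\mathfrak{p}_i$ occurs exactly $\ell_i$ times. The resulting identity $\ell(\mathcal{M}/f\mathcal{M})=\ell(\mathcal{M}[f])+\sum_{i}\ell_i\,\ell(\mathcal{O}_{H_i}/f\mathcal{O}_{H_i})$ is strictly sharper than the lemma, since it quantifies the defect from equality; it treats all components and embedded primes uniformly, with no induction, and both directions of the equality criterion drop out at once from the two clean equivalences $\mathcal{M}[f]=0\Leftrightarrow\mathfrak{m}\notin\mathrm{Ass}(\mathcal{M})\Leftrightarrow\mathcal{M}$ Cohen--Macaulay (for a module of dimension one) and $\ell(\mathsf{A}/(g_i,f))=1\Leftrightarrow(g_i,f)=\mathfrak{m}$, where you use that the height-one prime $\mathfrak{p}_i$ of the factorial ring $\mathsf{A}$ is principal, generated by $g_i$. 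What the paper's argument buys in exchange is that it is entirely elementary (no homological algebra), and its device of splitting $\mathcal{M}$ into a part with, and a part without, embedded associated points is the same technique reused in the proof of Proposition~\ref{intersection}.
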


\begin{proof} Remark first that a coherent $\mathcal{O}_{Z}$-module $\mathcal{N}$ with one dimensional support is Cohen-Macaulay if and
only if $\mathcal{N}$ has no embedded associated points.
Indeed, suppose first that $\mathcal{N}$ has no embedded associated points. Let $\mathfrak{P}_1,\ldots,\mathfrak{P}_r\subset \mathsf A$
be the minimal ideals of the support of $\mathcal{N}$, and let $f\in \mathfrak{m}_{\mathsf A}\setminus \cup_{i}\mathfrak{P}_i$
 (where $\mathfrak{m}_{\mathsf{A}}\subset \mathsf A$ is the maximal ideal). Then multiplication by $f$ provides an injective map
(\cite{SerreLocalAlg}, Chapter\ I, \S\ B\, Corollary 1 of Proposition 7)
 \[
\mathcal{N}\longrightarrow \mathcal{N},\quad n\mapsto f\cdot n.
\]
Hence, the maximal $M$-sequence of $\mathcal{N}$ has at least $1=\mathrm{dim}(\mathcal{N})$ element, which implies that
$\mathcal{N}$ is Cohen-Macaulay (\cite{SerreLocalAlg}, \S~B.1, Definition 1). The converse statement follows from Proposition~13 of \S\ B.2 in \cite{SerreLocalAlg}.

 In order to prove the Lemma, we use induction on $n$.
 Let us begin with the case where $n=1$.
Denote by $\xi=\xi_1$ the generic point of $\mathrm{Supp}(M)_{\mathrm{red}}=H_1=H$, and by $\ell=\ell_1$ the length of $\mathcal{M}$
at $\xi$. Hence, $\mathcal{M}_{\xi}$ has a filtration by $\OO_{Z,\xi}$-submodules:
\[
0=\mathcal{M}_{\xi,0}\subset
\mathcal{M}_{\xi,1}\subset\ldots\subset
\mathcal{M}_{\xi,\ell}=\mathcal{M}_{\xi},
\]
where the successive quotients are isomorphic to $k(\xi)$.
We then define $\mathcal{M}_{i}$ as the inverse image of $\mathcal{M}_{\xi,i}$ via the canonical map $\mathcal{M}\rightarrow \mathcal{M}_{\xi}$,
thus obtaining a filtration on $\mathcal{M}$:
\[
\mathcal{M}_{-1}:=0\subseteq\mathcal{M}_{0}\subset \mathcal{M}_{1}\subset\ldots\subset
\mathcal{M}_{\ell}=\mathcal{M}.
\]
In general, ${\mathcal M}_{0}\neq 0$, and it is trivial if and only if $\mathcal{M}$ has no embedded associated points.
 For each $i\geq 0$, let $\mathcal{C}_i:=\mathcal{M}_{i}/\mathcal{M}_{i-1}$, which has no embedded associated point by definition whenever $i\geq 1$.
Moreover, $\mathcal{C}_{i,\xi}\cong \mathcal{M}_{\xi,i} /\mathcal{M}_{\xi,i-1}$, hence $\mathcal{C}_{0,\xi}=0$ and for $i\geq 1$, we have $\mathcal{C}_{i,\xi}\cong k(\xi)$.
 In particular, if $i\geq 1$, we have $\mathcal{C}_{i}\neq 0$ with schematic support equal to $H=\mathrm{Supp}(\mathcal{M})_{\mathrm{red}}$.
Indeed, we only need to show that the schematic support $\mathrm{Supp}(\mathcal{C}_i)=V(\mathrm{Ann}(\mathcal{C}_i))$ is \emph{reduced}.
Since $\mathcal{C}_{i,\xi}\cong k(\xi)$, $\mathrm{Supp}(\mathcal{C}_i)$ is reduced at $\xi$, hence it is generically reduced.
Furthermore, since $\mathcal{C}_i$ has no embedded associated points, so too is the scheme $\mathrm{Supp}(\mathcal{C}_i)$.
So $\mathrm{Supp}(\mathcal{C}_i)$ is reduced, and hence, equal to $H$ as subscheme of $Z$.
On the other hand, for $i\geq 1$, since the $\mathcal{O}_{Z}$-module $\mathcal{C}_i$ has no embedded associated points, and $Z_1=V(f)$ is
not contained in $H$, the map ``multiplication by $f$'':
\[
\mathcal{C}_i\longrightarrow \mathcal{C}_i, ~~~~ x\mapsto f\cdot x
\]
is injective for $i\geq 1$. From this, we get a filtration of $\mathcal{M}/f\mathcal{M}$:
\[
0\subseteq \mathcal{M}_{0}/f\mathcal{M}_0\subset
\mathcal{M}_{1}/f\mathcal{M}_1\subset\ldots\subset
\mathcal{M}_{\ell}/f\mathcal{M}_{\ell}=\mathcal{M}/f\mathcal{M},
\]
where for each $i\geq 1$, the quotient of $\mathcal{M}_{i}/f\mathcal{M}_{i}$ by $\mathcal{M}_{i-1}/f\mathcal{M}_{i-1}$ is isomorphic
to $\mathcal{C}_i/f\mathcal{C}_i$ which is non-zero since $\mathcal{C}_i\neq 0$. As a result, $\ell(\mathcal{M}/f\mathcal{M})\geq \ell$.
Moreover, $\ell(\mathcal{M}/f\mathcal{M})=\ell$, if and only if the following two conditions are realized:
(a) $\mathcal{M}_{0}/f\mathcal{M}_0=0$ which means $\mathcal{M}_0=0$ by Nakayama's lemma;
(b) for each $i$ ($1\leq i\leq \ell$), the $\OO_{Z}$-module $\mathcal{C}_i/f\mathcal{C}_i$ is of length $1$ over $\mathcal{O}_Z/f\OO_Z$.

Now, suppose that $\ell(\mathcal{M}/f\mathcal{M})=\ell$, or equivalently that the conditions (a) and (b) above are verified.
We will prove that $\mathcal{M}$ is Cohen-Macaulay, and the schematic support $H$ of $C_i$ is regular and cuts
the subscheme $V(f)\hookrightarrow Z$ transversally.
In fact, condition (a) implies that the $\mathcal{O}_{Z}$-module $\mathcal{M}$ has no embedded associated points, in particular, is Cohen-Macaulay.
On the other hand, suppose that $\mathrm{Ann}(\mathcal{C}_i)=(g)\subset A$ (hence $H$ is defined by the equation $g=0$ in $Z$),
and let $c\in \mathcal{C}_i$ be such that $c\notin f\mathcal{C}_i$. Condition (b) together with Nakayama's Lemma imply that the
$\OO_{Z}$-module $\mathcal{C}_i$ is generated by $c$. The morphism $\OO_Z\rightarrow \mathcal{C}_i=\OO_Z c$ defined
by $\lambda\mapsto \lambda c$ is then surjective, with kernel the ideal $(g)=\mathrm{Ann}(\mathcal{C}_i)=\mathrm{Ann}(c)$. Therefore,
$\OO_{Z}/(g,f)\cong \mathcal{C}_i/f\mathcal{C}_i$ is of length $1$ over $\OO_{Z}/f\OO_Z$. Hence
$\mathrm{Supp}(\mathcal{C}_i)=\mathrm{Supp}(\mathcal{M})_{\mathrm{red}}=H=V(g)$ is regular and cuts $V(f)\hookrightarrow Z$ transversally.
Conversely, suppose that $\mathcal{M}$ is Cohen-Macaulay and that the scheme $H$ is regular and cuts $V(f)\hookrightarrow Z$ transversally.
In particular, $\mathcal{M}$ has no embedded associated point, which implies that $\mathcal{M}_0=0$, whence condition (a) holds.
Moreover, since $H=\mathrm{Spec}({\mathsf A}/g{\mathsf A})$ is regular of dimension $1$, ${\mathsf A}/g{\mathsf A}$ is a principal ideal
domain. Therefore, the $\mathcal{O}_H=\mathcal{O}_{Z}/g\mathcal{O}_{Z}$-module $\mathcal{C}_i$ is free of rank $1$. Hence,
\[
\ell(\mathcal{C}_i/f\mathcal{C}_i)=\ell({\mathsf A}/(f,g))=1
\]
since $H=V(g)\hookrightarrow Z$ cuts $V(f)\hookrightarrow Z$ transversally.
In this way we get condition (b), which completes the proof of the Lemma in the case $n=1$.

Suppose now that the assertion of the lemma has been verified for $n-1\geq 1$. Let $\mathcal{M}'\subset \mathcal{M}$ be the submodule
defined as the kernel of the following map
\[
\mathcal{M}\longrightarrow \bigoplus_{i=2}^{n}\iota_{i,\ast}\iota_{i}^{\ast}\mathcal{M}
\]
with $\iota_i\colon \mathrm{Spec}(k(\xi_i))\rightarrow Z$ the canonical map, and define
$\mathcal{M}''$ by the following exact sequence:
\[
0\longrightarrow \mathcal{M}'\longrightarrow \mathcal{M}\longrightarrow
\mathcal{M}''\longrightarrow 0.
\]
Then $\mathcal{M}''$ has no embedded associated points (and so is Cohen-Macaulay) and has support $\cup_{i=2}^{n}H_i$, while $\mathcal{M}'$
has support $H_1$. One then has the following exact sequence (since $\mathcal{M}''$ Cohen-Macaulay
and $V(f)\nsubseteq \mathrm{Supp}(\mathcal{M}'')$):
\[
0\longrightarrow \mathcal{M}'/f\mathcal{M}'\longrightarrow
\mathcal{M}/f\mathcal{M}\longrightarrow
\mathcal{M}''/f\mathcal{M}''\longrightarrow 0.
\]
Hence, we have $\ell(\mathcal{M}/f\mathcal{M})=\ell(\mathcal{M}'/f\mathcal{M}')+
\ell(\mathcal{M}''/f\mathcal{M}'')$.
Moreover, by the definitions of $\mathcal{M}'$ and $\mathcal{M}''$, we have
\[
\mathcal{M}_{\xi_1}' \cong \mathcal{M}_{\xi_1}, \quad \textrm{and } \mathcal{M}_{\xi_i}''\cong \mathcal{M}_{\xi_i} \ \textrm{for }i=2,\ldots, n.
\]
By applying the induction hypothesis, we get
\begin{eqnarray*}
\ell(\mathcal{M}/f\mathcal{M}) = \ell(\mathcal{M}'/f\mathcal{M}')+
\ell(\mathcal{M}''/f\mathcal{M}'') \geq \ell(\mathcal{M}_{\xi_1}')+\sum_{i=2}^{n}\ell(\mathcal{M}_{\xi_i}'') = \sum_{i=1}^{n}\ell_i,
\end{eqnarray*}
with equality if and only if $\ell(\mathcal{M}'/f\mathcal{M}')=\ell_1$, and $\ell(\mathcal{M}''/f\mathcal{M}'')=\sum_{i=2}^{n}\ell_i$.
In other words, equality holds if and only if (a) $\mathcal{M}',\mathcal{M}''$ are Cohen-Macaulay, and
(b) the subschemes $H_i$ are regular cutting $Z_1$ transversally in $Z$. Since $\mathcal{M}''$ is already Cohen-Macaulay,
 condition (a) is equivalent to saying that $\mathcal{M}$ is Cohen-Macaulay. This completes the proof of the Lemma.
\end{proof}

\begin{proof}[Proof of \ref{intersection}]
Since $\alpha\colon S\rightarrow Z$ is a section of $Z/S$, there exist elements $f_1,\ldots,f_d$ of $\OO_{Z,x}$ which generate,
together with $\pi$, the maximal ideal of $\OO_{Z,x}$ and $\alpha(S)=V(f_1,\ldots,f_d)\hookrightarrow Z$. Up to replacing $Z$ by
its localization at $x$, we may assume that $Z$ is the spectrum of a regular local ring of dimension $d+1$.
In particular $Z_s=\overline{\{\xi\}}$ is regular and irreducible.
We will prove the Proposition by induction on $d$. The case $d=0$, \emph{i.e.}, $S=Z$, is trivial.
We start illustrating the case $d=1$. In this case, $Z$ is a $2$-dimensional local regular scheme, with $\mathcal{M}$ a torsion
coherent module on $Z$. When $\ell=0$, the conclusion of (1) is clear since in this situation, we always have
$\ell(\alpha^{\ast}\mathcal{M})\geq \ell=0$, and an equality means that $x\notin \mathrm{Supp}(\mathcal{M})$, or equivalently,
$\mathcal{M}_x=0$. In fact, here we have $\mathcal{M}=0$ since $x\in Z$ is the only closed point of the local scheme $Z$.
 To finish the proof of (1) for $d=1$, we may assume that $\ell\geq 1$.
Since $\xi\in \mathrm{Supp}(\mathcal{M})$, the closed subscheme $Z_s=\overline{\{\xi\}}\subseteq \mathrm{Supp}(\mathcal{M})$ is
one of the irreducible components of $\mathrm{Supp}(\mathcal{M})$ in $Z$.
 Now by applying Lemma~\ref{lemme technique}, we get $\ell(\alpha^{\ast}\mathcal{M})\geq \ell$.
The equality holds if and only if $Z_s$ is the only component of $Z$, $Z_s$ cuts $\alpha(S)$ transversally in $Z$
and $\mathcal M$ is Cohen-Macaulay at $x$. We now consider assertion (2).
By assumption, $\mathrm{Supp}(\mathcal{M})$ is the union of the one dimensional subscheme $H$ with, possibly, $Z_s$
if $\ell>0$; hence $\ell(\alpha^{\ast}\mathcal{M})\geq \ell+1$. If $\ell(\alpha^{\ast}\mathcal{M})=\ell+1$, on applying
Lemma~\ref{lemme technique} once again, we see that $\mathcal M$ is Cohen-Macaulay at $x$, the subscheme $H\subset Z$ is irreducible,
regular and cuts $\alpha(S)$ transversally at $x$. Moreover, $\mathcal{M}$ must have length $1$ at the generic point of $H$. This
proves (2), and hence the Proposition, for $d=1$.

For the general case, denote by $Z_1\hookrightarrow Z$ the closed subscheme defined by the ideal $(f_1)$, by $\xi_1$ the generic point of $Z_{1,s}$
and by $\mathcal{M}_1$ the pull-back of $\mathcal{M}$ to $Z_1$.
Then $Z_1$ is a regular local scheme of dimension $d$, which is not contained in the support of $\mathcal{M}$.
In particular, $\mathcal{M}_1$ is again a torsion coherent sheaf on $Z_1$. The morphism $\alpha\colon S\rightarrow Z$ factors
through $Z_1\hookrightarrow Z$, and we denote by $\alpha_1\colon S\rightarrow Z_1$ the morphism obtained in this way.
In particular, $\alpha^{\ast}{\mathcal{M}}\cong \alpha_{1}^{\ast}\mathcal{M}_1$ and $\alpha(S)\not\subseteq {\rm Supp}({\mathcal M}_1)$.
Hence, in order to prove the first assertion of (1), we only need to verify that the module $\mathcal{M}_1$ is of
length $\geq \ell$ at $\xi_1$ and then apply the induction hypothesis. To see the inequality $\ell(\mathcal{M}_{1,\xi_1})\geq \ell$, since
\[
\mathcal{M}_{1,\xi_1}\cong \mathcal{M}_{\xi_1}/f_1 \mathcal{M}_{\xi_1},
\]
we are reduced to showing that the restriction of the torsion sheaf $\widetilde{\mathcal{M}}:=\mathcal{M}|_{\mathrm{Spec}(\OO_{Z,\xi_1})}$
to the subscheme $\mathrm{Spec}(\OO_{Z,\xi_1}/f_{1}\OO_{Z,\xi_1})\hookrightarrow \mathrm{Spec}(\OO_{Z,\xi_1})=:\widetilde{Z}$
is of length $\geq \ell$. By definition, $\xi\in Z_s$ is contained in the special fibre $\widetilde{Z}_{s}$ of $\widetilde{Z}$,
and $\widetilde{\mathcal{M}}$ has length $\ell$ at $\xi\in \widetilde{Z}_{s}$.
Hence, we need only apply Lemma~\ref{lemme technique} to the two dimensional regular local scheme $\widetilde{Z}$ to get the conclusion.
We can also summarize the previous arguments by the following relations:
\begin{eqnarray}\label{eq.relations between length}
\ell(\alpha^{\ast}\mathcal{M})=\ell(\alpha_{1}^{\ast}\mathcal{M}_1)\geq \ell(\mathcal{M}_{1,\xi_1})=\ell(\widetilde{\mathcal{M}}/f_{1}\widetilde{\mathcal{M}})\geq \ell (\widetilde{\mathcal{M}}_{\xi})=\ell(\mathcal{M}_{\xi})=\ell.
\end{eqnarray}

Next, we examine the condition $\ell(\alpha^{\ast}\mathcal{M})=\ell$.
By \eqref{eq.relations between length}, we have $\ell(\alpha^{\ast}\mathcal{M})=\ell$ if and only if
(a) $\ell(\widetilde{\mathcal{M}}/f_{1}\widetilde{\mathcal{M}})=\ell(\widetilde{\mathcal{M}}_{\xi})=\ell$,
and (b) $\ell(\mathcal{M}_{1,\xi_1})=\ell(\alpha_{1}^\ast\mathcal{M}_1)$.
Consider the conditions
 (${\rm a}'$): $\mathcal{M}$ is Cohen-Macaulay having support contained in $Z_s$ at $\xi_1$ and
(${\rm b}'$): $\mathcal{M}_1$ is Cohen-Macaulay, with support contained in $Z_{1,s}$ at $x$.
On applying the induction hypothesis to the torsion module $\mathcal{M}_1$ on the $d$-dimensional scheme $Z_1$, one checks immediately
 that condition (b) is equivalent to condition (${\rm b}'$).
Furthermore on applying the induction hypothesis to the torsion module $\widetilde{\mathcal{M}}=\mathcal{M}|_{\mathrm{Spec}(\OO_{Z,\xi_1})}$
on the $2$-dimensional local scheme $\widetilde{Z}=\mathrm{Spec}(\OO_{Z,\xi_1})$, and since $\widetilde{Z}$ is the localization of $Z$
at the point $\xi_1\in Z$, we find that conditions (a) and (${\rm a}'$) are equivalent.
Hence $\ell(\alpha^{\ast}\mathcal{M})=\ell$ if and only if (${\rm a}'$) and (${\rm b}'$) hold.

Now, we proceed with the proof of the second part of (1). Suppose first $\ell(\alpha^{\ast}\mathcal{M})=\ell$, or equivalently,
that the previous conditions (${\rm a}'$) and (${\rm b}'$) hold, and prove that $\mathcal{M}$ is Cohen-Macaulay with support
contained in $Z_{s}$ at $x$.
We first claim that the multiplication by $f_1$ on $\mathcal{M}$ provides an injective map.
To see this fact, let $\mathcal{M}'$ be the submodule of $\mathcal{M}$ formed by the elements killed by a power of $f_1$,
and let $\mathcal{M}''=\mathcal{M}/\mathcal{M}'$. By definition,
$\mathrm{Supp}(\mathcal{M}')\subset Z_1\cap \mathrm{Supp}(\mathcal{M})$, which is hence of codimension at least $2$.
By definition of $\mathcal{M}''$, the multiplication by $f_1$ on $\mathcal{M}''$ is an injective map,
hence the canonical map
\begin{equation}\label{eq.MM}
\mathcal{M}'/f_{1}\mathcal{M}'\longrightarrow \mathcal{M}/f_{1}\mathcal{M}=\mathcal{M}_1
\end{equation}
is injective.
On one hand, condition (${\rm a}'$) above implies that $\xi_1\notin \mathrm{Supp}(\mathcal{M'})$
(since $\mathrm{Supp}(\mathcal{M}')\subset V(f_1)\cap Z_s=Z_{1,s}\subset Z_s$ and $\mathcal{M}$
is Cohen-Macaulay at $\xi_1$ by (${\rm a}'$)), hence $\xi_1\not\in\mathrm{Supp}(\mathcal{M}'/f_{1}\mathcal{M}')$.
 In particular, $\mathrm{Supp}(\mathcal{M}'/f_1\mathcal{M}')\cap Z_{1,s}\subsetneq Z_{1,s}$.
On the other hand, condition (${\rm b}'$) and the injection \eqref{eq.MM} imply that the support
of $\mathcal{M}'/f_{1}\mathcal{M}'$ is contained in $Z_{1,s}$.
Hence $\mathcal{M}'/f_{1}\mathcal{M}'=0$, and then $\mathcal{M}' =0$ by Nakayama's Lemma.
As a result, the multiplication by $f_1$ on $\mathcal{M}$ is injective.
Moreover the quotient sheaf $\mathcal{M}_1=\mathcal{M}/f_1\mathcal{M}$ is Cohen-Macaulay of dimension $d-1=\mathrm{dim}(Z_1)-1$
by (${\rm b}'$), hence also $\mathcal{M}$ is Cohen-Macaulay.
 To see that the support of $\mathcal{M}$ is contained in $Z_s$, suppose $\mathrm{Supp}(\mathcal{M})$ contains a component
$\Gamma$ different from $Z_s$ at the point $x$.
Since we have shown that $\mathcal{M}$ is Cohen-Macaulay at $x$, it follows that $\Gamma$ is also of codimension $1$.
By condition (b) above, $\mathcal{M}_{1}$ has support contained in $Z_{1,s}$ at $x$. So $\Gamma\cap Z_1\subset Z_{1,s}$ which is in
fact an equality of sets for reasons of dimension.
As a result, one finds that $\xi_1\in \Gamma$, which means that $\mathrm{Supp}(\mathcal{M})$ has at least two components
(of codimension $1$) at $\xi_1$, but this is impossible because of the condition (${\rm a}'$).
This proves that $\mathcal{M}$ is Cohen-Macaulay with support contained in $Z_s$ at $x$. Conversely, suppose $\mathcal{M}$ is
Cohen-Macaulay with support contained in $Z_s$ at $x$. We must prove that $\ell(\alpha^{\ast}\mathcal{M})=\ell$.
First of all, this condition implies in particular that $\mathcal{M}$ is Cohen-Macaulay with support contained
in $Z_s$ at $\xi_1$, namely condition (${\rm a}'$) holds. To complete the proof of (1), we only need to show that
condition (${\rm b}'$) also holds.
It is clear that the support of $\mathcal{M}_{1}$ is contained in $Z_{1,s}$, so we need only verify that $\mathcal{M}_1$ is Cohen-Macaulay.
Since $Z_{1,s}=\mathrm{Supp}(\mathcal{M}_1)$ has dimension equal to $\mathrm{dim}(\mathcal{M})-1$, $\mathcal{M}_1$ is also Cohen-Macaulay (\cite{SerreLocalAlg}, Chapter~IV $\S$ B.2, Proposition~14). This finishes the proof of (1).

To finish the proof of (2), since this is a local question for the \'etale topology on $S$, we may assume that $S$ is strictly local,
in particular the residue field is an infinite set. This implies that the residue field $k(x)$ of $Z$ at $x$ is also infinite.
Since $k(x)$ is an infinite field, up to replacing $f_1$ by $f_1+\lambda f_2$, for a suitable $\lambda\in \OO_{Z,x}^{\ast}$,
we may assume that $Z_{1,s}\nsubseteq H_s$, so that $H_{1,s}=H_s\cap Z_{1,s}\hookrightarrow Z_{1,s}$ is of codimension $1$ in $Z_{1,s}$
(where $H_1:=H\cap Z_1$).

As we have seen in \eqref{eq.relations between length}, $\mathcal{M}_{1}$ has length $\geq\ell$ at $\xi_1$.
Since $x\in H_{1,s}$, on applying the induction hypothesis to $Z_1$, we find that
\begin{equation}\label{eq.relationlength2}
\ell(\alpha^{\ast}\mathcal{M})=\ell(\alpha_{1}^{\ast}\mathcal{M}_{1})\geq \ell(\mathcal{M}_{1,\xi_1})+1\geq \ell +1.
\end{equation}
This proves the first assertion in (2). From now on, we suppose that $\ell(\alpha^{\ast}\mathcal{M})=\ell+1$.
According to \eqref{eq.relationlength2}, we get
$\ell(\alpha^{\ast}\mathcal{M})=\ell(\alpha_{1}^{\ast}\mathcal{M}_1)=\ell+1$, and $\mathcal{M}_{1}$ is of length $\ell$ at $\xi_1$.
By the induction hypothesis, we have (i) $H_{1,\mathrm{red}}$ is irreducible and regular, and moreover $\alpha_1(S)$ cuts $H_1$ transversally in $Z_1$ at $x$; (ii) $\mathcal{M}_1$ is Cohen-Macaulay at $x$ in $Z_1$, and if we denote by $\zeta_1\in H_1$ the generic point of $H_1$, then $\mathcal{M}_1$ is of length $1$ at $\zeta_1$.
Denote by $Z'$ the localization of $Z$ at $\zeta_1$, and by $\mathcal{M}'$ the inverse image of $\mathcal{M}$ by the canonical morphism $Z'\rightarrow Z$.
Then, $\mathcal{M}'/f_1\mathcal{M}'$ is of length $1$ over $\OO_{Z'}/f_1\OO_{Z'}$.
Hence, on applying Lemma~\ref{lemme technique} to the torsion module $\mathcal{M}|_{\mathrm{Spec}(\mathcal{O}_{Z,\zeta_1})}$ on the two dimension regular local scheme $\mathrm{Spec}(\mathcal{O}_{Z,\zeta_1})$ we get that $H$ is regular at $\zeta_1$, and it cuts $Z_1$ transversally at $\zeta_1$.
Moreover, $\mathcal{M}$ is Cohen-Macaulay with support contained in $H$ at $\zeta_1$, and $\mathcal{M}$ is of length $1$ at the generic point $\zeta$ of $H$. Using now the fact that $H_{1}=H\cap Z_1$ is irreducible, we find that $H$ itself must be irreducible, since otherwise, $H$ would have at least two components at $\zeta_1$. Therefore, $H_1$ is generically reduced.
But $H_1$ is a divisor inside a regular scheme $Z_1$, hence $H_1$ is Cohen-Macaulay and thus $H_1$ is reduced. By assertion (i),
we find that $H$ is irreducible and regular, cutting $Z_1$ transversally at $x$ inside $Z$. Now we need only verify that $\mathcal{M}$
is Cohen-Macaulay on $Z$. By (ii), we need only show that $\mathcal{M}$ has no embedded associated points.
Denote by $\mathcal{N}'$ the biggest quotient without embedded associated points of $\mathcal{M}$, and denote by $\mathcal{N}$
the $\OO_Z$-submodule defined by the following exact sequence
\[
0\longrightarrow \mathcal{N}\longrightarrow \mathcal{M}\longrightarrow
\mathcal{N}'\longrightarrow 0;
\]
we have the short exact sequence
\[
0\longrightarrow \mathcal{N}/f_1\mathcal{N}\longrightarrow \mathcal{M}/f_1\mathcal{M}\longrightarrow \mathcal{N}'/f_1\mathcal{N}' \longrightarrow 0.
\]
According to Nakayama's Lemma, to complete the proof, we need only show that $\mathcal{N}/f_1\mathcal{N}=0$.
By definition, $\mathrm{Supp}(\mathcal{N}')=\mathrm{Supp}(\mathcal{M})$, and $\ell(\mathcal{N}_{\xi}' )=\ell(\mathcal{M}_{\xi})=\ell$.
Hence, according to the first part of (2) (which has already been proved), we have $\ell(\alpha^ {\ast}\mathcal{N}')\geq \ell+1$.
On the other hand, there is a surjection $\alpha^{\ast}\mathcal{M}\rightarrow \alpha^{\ast}\mathcal{N}'$, whence we have
 $\ell+1=\ell(\alpha^{\ast}\mathcal{M})\geq \ell(\alpha^{\ast}\mathcal{N}' )$.
As a result, we have $\ell(\alpha^{\ast}\mathcal{N}' )=\ell+1$. Hence the $\OO_{Z}$-module $\mathcal{N}'$ again satisfies
the assumptions of the assertion (2) of this Proposition: On applying what was proved a few lines above to the torsion sheaf
$\mathcal{N}'$ in place of $\mathcal M$, we get $\ell\left(\left(\mathcal{N}'/f_{1}\mathcal{N}'\right)_{\zeta_1}\right)=1$,
and $\ell\left(\left(\mathcal{N}'/f_{1}\mathcal{N}'\right)_{\xi_1}\right)=\ell$.
Hence $(\mathcal{N}/f_{1}\mathcal{N})_{\zeta_1}=(\mathcal{N}/f_{1}\mathcal{N})_{\xi_1}=0$. As a result, the support of
$\mathcal{N}/f_{1}\mathcal{N}$ is of dimension $<d-2$.
But we have seen that $\mathcal{M}_{1}=\mathcal{M}/f_1\mathcal{M}$ is Cohen-Macaulay with support $H_1\cup Z_{1,s}$
of dimension $d-2$, hence we must have $\mathcal{N}/f_{1}\mathcal{N}=0$.
This completes the proof.
\end{proof}

\subsection{Preliminaries on the comparison between the pro-algebraic structures}\label{sec.compfil}
The aim of this section is to show that the canonical map of sheaves $q\colon \Pic^{0}_{X/S}\rightarrow J$ in \eqref{eq.SurPiczero}
induces, for each $n\geq 1$, a morphism of smooth algebraic $k$-groups
\begin{equation}\label{eq.qn}
q_n\colon \Gr(\PP_{[\psi(n)]})=\mathrm{Gr}(\Pic^{0}_{X_{\psi(n)}/S})\longrightarrow
\mathrm{Gr}_n(J),
\end{equation}
and that the maps $q_n$ are compatible in the evident way.

Let $n\geq 1 $ be an integer. We have seen in \S~\ref{fil} that there exists a morphism of sheaves
$\Pic^{0}_{X_{nd}/S} \times_S S_n\rightarrow J\times_S S_n$.
In this way we get a morphism of smooth algebraic $k$-groups
\begin{equation}\label{mor}
q_n'\colon \mathrm{Gr}(\PP_{[nd]})\longrightarrow \mathrm{Gr}_{n}(J) .
\end{equation}
Since $\psi(n)\!\!\leq\!\! nd$ (cf. \S~\ref{def de psi}), there is a canonical morphism of smooth algebraic $k$-groups
\[
\Gr(\PP_{[nd]})\longrightarrow
\Gr(\PP_{[\psi(n)]}).
\]
So, in order to prove the existence of $q_n$ as above, it suffices to verify that the morphism $q_n'$ in \eqref{mor} factors as follows:
\[
\xymatrix{ {\Gr}(\PP_{[nd]})\ar[r]^{~q_n'}\ar[d]&
{\Gr}_{n}(J)\\
{\Gr}(\PP_{[\psi(n)]})\ar@{.>}[ru]_{q_n}& }
\]
On the other hand, since the morphism of algebraic $k$-groups $\mathrm{Gr}(\PP_{[nd]})\rightarrow
\mathrm{Gr}(\PP_{[\psi(n)]})$ has smooth kernel (see Lemma~\ref{lemme en greenberg}) and $k$ is algebraically closed, \emph{we need only
check the factorization on the level of $k$-points.} Since the maps $\Pic^{0}(X)\to \Pic^{0}(X_{[\psi(n)]})={\Gr}(\PP_{[\psi(n)]})(k)$ are
surjective, the verification is reduced to proving the existence of the following factorization:
\begin{equation*}\label{dia.PicJq}
\xymatrix{\Pic^{0}(X)\ar@{->>}[r]\ar@{->>}[d]_{q}&
\Pic^{0}(X_{\psi(n)})\ar@{.>}[d]^{q_n}\\ J(S)\ar@{->>}[r]& J(S_n)}
\end{equation*}
where we again denote by $q_n$ the map induced by \eqref{eq.qn} on $k$-points.
 With the help of the rigidified Picard functor we will establish this factorization via induction on $n$.

\subsection{Comparison of the pro-algebraic structures}\label{Not}
 In the following discussion we fix a rigidificator $Y\hookrightarrow X$ of the relative Picard functor $\Pic_{X/S}$, and for simplicity, we
denote by $G:=(\Pic_{X/S},Y)^{0}$ the identity component of the rigidified Picard functor of $X/S$ along $Y/S$.
As usual $J$ denotes the identity component of the N\'eron model of $\Pic^{0}_{X_K/K}$ over $S$.
According to Proposition 3.2 in \cite{LLR}, $G$ is representable by a smooth separated $S$-group scheme. Consider the
canonical morphism of $S$-group schemes $r\colon G=(\Pic_{X/S},Y)^{0}\rightarrow \Pic^{0}_{X/S}$ (recalled in \S~\ref{Rappels Pic}),
which is surjective for the \'etale topology. Let $H$ be the schematic closure of $\ker(r_{K})\subset G_{K}$ in $G$. It is a flat $S$-group
scheme of finite type with smooth generic fibre, which is also the kernel of the canonical morphism $\theta\colon G\rightarrow J$
(composition of $r\colon G\rightarrow \Pic^{0}_{X/S}$ and the epimorphism $q\colon \Pic^{0}_{X/S}\rightarrow J$; see
for example \cite{Raynaud}, 4.1, for the fact that $\ker(\theta)=H$). Since $S$ is strictly local, the morphism $r$ induces
a surjective map (still denoted by $r$) between the $S$-sections:
\[
r\colon G(S)\longrightarrow \Pic^{0}(X).
\]
Let $\mathfrak{L}$ be the (rigidified) Poincar\'e sheaf on $X\times G$.
For each $n\in {\mathbb Z}_{\geq 1}$, we denote by
\[
\xymatrix{r_n\colon G(S)\ar[r]^{r} & \Pic^{0}(X)\ar[r] & \Pic^{0}(X_{\psi(n)})}
\]
the composition of maps which sends $\varepsilon \in G(S)$ to $\mathcal{L}_{\varepsilon}|_{X_{\psi(n)}}\in
\Pic^{0}(X_{\psi(n)})$, where $\mathcal{L}_{\varepsilon}$ is the sheaf $(\mathrm{id}_{X}\times \varepsilon)^{\ast}\mathfrak{L}$.
These maps are all surjective since $\OO_K$ is strictly henselian.

Let $p_G\colon X\times_S G\rightarrow G$ be the projection onto the second factor, and consider the object
$\mathrm{R}p_{G,\ast}\mathfrak L$ in the derived category of $\mathcal{O}_S$-modules.
It is well known that this complex is quasi-isomorphic to a perfect complex of perfect amplitude contained in $[0,1]$,
\emph{i.e.}, locally for the Zariski topology on $G$, $\R p_{G,\ast}\mathfrak{L}$ can be represented by a complex
\[
\xymatrix{\ldots\ar[r]& \mathcal{F}^{0}\ar[r]^{u}&
\mathcal{F}^{1}\ar[r]& \ldots}.
\]
with $\mathcal{F}^{i}$ ($i=0,1$) locally free $\OO_G$-modules of the same rank (\cite{EGA3}, 6.10.5).
The cokernel $\mathcal{M}$ of $u$ gives the $\mathcal{O}_{G}$-module $\mathrm{R}^{1}p_{G,\ast}\mathfrak{L}$, and for
any section $\varepsilon\colon S\rightarrow G$ of $G/S$, the pull-back $\varepsilon^{\ast}\mathcal{M}$ is given by the
cohomology group $\mathrm{H}^{1}(X,\mathcal{L}_{\varepsilon})$.
 On the other hand, for $L$ an invertible sheaf of degree $0$ on $X_{K}$, $\HH^{1}(X_{K},L)\neq 0$ if and only if $L\cong \OO_{X_K}$.
Therefore, the morphism $u$ above is injective and $\mathrm{det}(u)\neq 0$.
Hence $\mathcal{M}$ is a torsion $\OO_G$-module which admits a resolution of length $1$ by locally free $\OO_G$-modules.
In particular, $\mathcal{M}$ is Cohen-Macaulay, with support $\mathrm{Supp}(\mathcal{M})\subset G$ purely of codimension $1$ satisfying
the inclusion relations (of sets) $H\subset \mathrm{Supp}(\mathcal{M}) \subset H\cup G_s$.

\begin{lemma}\label{longueur 0} Let notation be as above. Then $\mathrm{Supp}(\mathcal{M})=H$ as sets.
\end{lemma}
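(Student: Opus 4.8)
The plan is to prove that no generic point of the special fibre $G_s$ lies in $\mathrm{Supp}(\mathcal{M})$; combined with the already-established inclusions $H\subset \mathrm{Supp}(\mathcal{M})\subset H\cup G_s$, this forces $\mathrm{Supp}(\mathcal{M})=H$ as sets. First I would record that no component of $G_s$ can be contained in $H$: since $\ker(r_K)$ is a codimension-one subgroup of $G_K$ (its quotient being $\Pic^0_{X_K/K}$) and $H$ is flat over $S$ with generic fibre $\ker(r_K)$, the special fibre $H_s$ has dimension $\dim G_K-1<\dim G_s$. Hence every generic point $\zeta$ of $G_s$ satisfies $\zeta\notin H$, and it suffices to treat these points. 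Fix such a $\zeta$. As $G$ is smooth (hence regular) over $S$ and $\zeta$ has codimension $1$, the local ring $\OO_{G,\zeta}$ is a discrete valuation ring; the stalk $\mathcal{M}_\zeta$ is a finitely generated $\OO_{G,\zeta}$-module, so by Nakayama it is enough to show that the fibre $\mathcal{M}_\zeta\otimes_{\OO_{G,\zeta}}\kappa(\zeta)=\mathcal{M}_\zeta/\mathfrak{m}_\zeta\mathcal{M}_\zeta$ vanishes. Because $\mathcal{M}$ is the top cohomology sheaf of the perfect complex of amplitude $[0,1]$ representing $\mathrm{R}p_{G,\ast}\mathfrak{L}$, cohomology and base change in the top degree identifies this fibre with $\HH^1(X_\zeta,\mathcal{L}_\zeta)$, where $X_\zeta=X_s\otimes_k\kappa(\zeta)$ (as $\zeta$ lies over $s$) and $\mathcal{L}_\zeta=\mathfrak{L}|_{X_\zeta}$ is the degree-zero line bundle classified by $\zeta$.

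It remains to prove $\HH^1(X_\zeta,\mathcal{L}_\zeta)=0$. Since $X\to S$ is proper and flat with fibres of arithmetic genus $1$, constancy of the Euler characteristic in flat families gives $\chi(\OO_{X_s})=\chi(\OO_{X_K})=0$, and as $\mathcal{L}_\zeta$ has degree $0$ on every irreducible component of $X_\zeta$ we obtain $\chi(\mathcal{L}_\zeta)=0$, i.e. $h^1(X_\zeta,\mathcal{L}_\zeta)=h^0(X_\zeta,\mathcal{L}_\zeta)$. Thus it suffices to show $\HH^0(X_\zeta,\mathcal{L}_\zeta)=0$. I would run the filtration $X_1=D\subset X_2\subset\cdots\subset X_d=X_s$ (base-changed to $\kappa(\zeta)$): for each $m$ the short exact sequence $0\to(\mathcal{I}^{m-1}/\mathcal{I}^m)\otimes\mathcal{L}_\zeta\to\mathcal{L}_\zeta|_{X_m}\to\mathcal{L}_\zeta|_{X_{m-1}}\to0$ yields on global sections a left-exact sequence whose kernel term is $\HH^0$ of the line bundle $\mathcal{I}^{m-1}|_{X_1}\otimes\mathcal{L}_\zeta|_{X_1}$ on $D$. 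Since $\mathcal{I}=\OO_X(-D)$ and $D\cdot C_i=\tfrac1d X_s\cdot C_i=0$, every $\mathcal{I}^{j}|_{X_1}$ has degree $0$ on each component of $D$, so these kernel terms are degree-zero line bundles on $D$, to which Lemma~\ref{lemme cohomologique} applies.

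The heart of the matter is that $\mathcal{L}_\zeta|_{X_1}$ corresponds to the \emph{generic} point of $\Pic^0_{X_1/k}$. Restriction of line bundles along $D=X_1\hookrightarrow X_s$ induces a homomorphism $G_s\to\Pic^0_{X_1/k}$ of $k$-group schemes; by the dévissage of Corollary~\ref{Pic_S} the map $\Pic^0(X_s)\to\Pic^0(X_1)$ is surjective with connected kernel, so $G_s\to\Pic^0_{X_1/k}$ is surjective and the component of $G_s$ through $\zeta$ dominates the positive-dimensional group $\Pic^0_{X_1/k}$ (recall $\dim\Pic^0_{X_1/k}=\phi(1)=1$ by Lemma~\ref{lemme cohomologique}). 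Consequently $\zeta$ maps to the generic point of $\Pic^0_{X_1/k}$, so $\mathcal{L}_\zeta|_{X_1}$ is neither trivial nor isomorphic to any of the finitely many $k$-rational (torsion) bundles $\mathcal{I}^{j}|_{X_1}$. Hence $\mathcal{I}^{m-1}|_{X_1}\otimes\mathcal{L}_\zeta|_{X_1}\not\cong\OO_{D}$ for every $m$, and Lemma~\ref{lemme cohomologique} forces each kernel term to vanish, making every restriction $\HH^0(X_m,\mathcal{L}_\zeta|_{X_m})\hookrightarrow\HH^0(X_{m-1},\mathcal{L}_\zeta|_{X_{m-1}})$ injective; the same lemma gives $\HH^0(X_1,\mathcal{L}_\zeta|_{X_1})=0$ since $\mathcal{L}_\zeta|_{X_1}\not\cong\OO_D$. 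Chaining the injections down to $m=1$ yields $\HH^0(X_\zeta,\mathcal{L}_\zeta)=0$, hence $\mathcal{M}_\zeta=0$ and $\zeta\notin\mathrm{Supp}(\mathcal{M})$.

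The main obstacle I anticipate is the rigorous justification that $\zeta$ maps to the generic point of $\Pic^0_{X_1/k}$: one must pass carefully between the rigidified object $G=(\Pic_{X/S},Y)^0$, its Greenberg realization, and the naive group $\Pic^0(X_s)$, and verify that the restriction-to-$D$ homomorphism is compatible with these identifications and surjective on \emph{each} connected component of $G_s$, so that the genericity conclusion holds for every generic point of $G_s$ rather than only one. A secondary technical point is the interplay of cohomology and base change with Nakayama used to descend from the vanishing of the fibre to that of the stalk $\mathcal{M}_\zeta$; this step is clean precisely because $\mathcal{M}$ is a torsion module admitting a length-one locally free resolution, so its stalk at the codimension-one point $\zeta$ is a finite-length module over the discrete valuation ring $\OO_{G,\zeta}$.
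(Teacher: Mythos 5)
Your proof is correct in substance, but it takes a genuinely different route from the paper's. The paper argues by contradiction using \emph{integral sections}: if $\mathcal{M}$ had length $\ell\geq 1$ at the generic point $\xi$ of $G_s$, then Proposition~\ref{intersection}\,(1) would force $\ell\bigl(\varepsilon^{\ast}\mathcal{M}\bigr)=\ell\bigl(\HH^{1}(X,\mathcal{L}_{\varepsilon})\bigr)\geq 1$ for \emph{every} section $\varepsilon\in G(S)$, whence by Corollary~\ref{cle} every $\mathcal{L}_{\varepsilon}|_{X_1}$ would be a power of $\mathcal{I}|_{X_1}$; this makes the image of the surjection $r_1\colon G(S)\to \Pic^{0}(X_1)$ finite, contradicting the infinitude of $\Pic^{0}(X_1)$. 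You instead work fibrally at the generic point $\zeta$ itself: cohomology and base change in top degree plus Nakayama reduce the claim to $\HH^{1}(X_{\zeta},\mathcal{L}_{\zeta})=0$, which you obtain from $\chi(\mathcal{L}_{\zeta})=0$ together with a d\'evissage along $X_1\subset\cdots\subset X_d$ killing $\HH^{0}$, the key input being that $\zeta$ maps to the \emph{generic} point of $\Pic^{0}_{X_1/k}$, so $\mathcal{L}_{\zeta}|_{X_1}$ is none of the finitely many torsion bundles $\mathcal{I}^{j}|_{X_1}$. Both proofs exploit the same tension (positive-dimensionality of $\Pic^{0}_{X_1/k}$ versus finiteness of the powers of $\mathcal{I}|_{X_1}$) and both use surjectivity of restriction onto $\Pic^{0}(X_1)$; but yours bypasses Proposition~\ref{intersection} and Corollary~\ref{cle} entirely (i.e.\ the intersection theory and the Herbrand-function d\'evissage over $\OO_K$), at the price of redoing the d\'evissage over the residue field $\kappa(\zeta)$.

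One step you should make explicit: Lemma~\ref{lemme cohomologique} is stated for the curve $X_1$ over the algebraically closed field $k$, whereas you invoke it for $X_1\otimes_k\kappa(\zeta)$ with $\kappa(\zeta)$ far from algebraically closed. This is fixable, but not automatic from the paper's statement. Either base change to $\overline{\kappa(\zeta)}$ (where $X_1\otimes_k\overline{\kappa(\zeta)}$ is still indecomposable of canonical type, the components being geometrically irreducible), apply Mumford's lemma there, and descend --- a nonzero section of $L$ becomes a nonzero constant over $\overline{\kappa(\zeta)}$, hence is nowhere vanishing, hence trivializes $L$ already over $\kappa(\zeta)$ --- or argue by semicontinuity on $\Pic^{0}_{X_1/k}$: the locus where $h^{0}$ of the (twisted) universal bundle is $\geq 1$ is closed with $[\OO_{X_1}]$ (respectively $[\mathcal{I}^{-(m-1)}|_{X_1}]$) as its only $k$-point, hence is that single closed point and misses the generic point. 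With this supplement, and the justification you already flag for the generic-to-generic claim (surjectivity of $G_s(k)\to\Pic^{0}(X_1)$ plus reducedness of $\Pic^{0}_{X_1/k}$ give dominance of the scheme map), your argument is complete.
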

\begin{proof}
Let $\xi$ be the generic point of $ G_s$, and $\ell$ the length of the $\OO_{G,\xi}$-module $\mathcal{M}_{\xi}$.
We will first prove by contradiction that $\ell=0$.
Suppose then $\ell\geq 1$.
Let $\varepsilon\in G(S)$ be a section of $G$, and let $\mathcal{L}_{\varepsilon}:=(\mathrm{id}_{X}\times \varepsilon)^{\ast}\mathfrak{L}$.
According to Proposition~\ref{intersection}, the $\OO_K$-module $\HH^{1}(X,\mathcal{L}_{\varepsilon})\cong \varepsilon^{\ast}\mathcal{M}$
is of length at least $ \ell\geq 1$.
By Corollary~\ref{cle}, this is equivalent to saying that $\mathcal{L}_{\varepsilon}|_{X_1}\cong \mathcal{I}^{i}|_{X_1}$ with $i$ a suitable integer.
This last fact implies that the surjective homomorphism
\[
r_1\colon G(S)\longrightarrow \Pic^{0}(X_1), ~~~~\varepsilon\mapsto
\mathcal{L}_{\varepsilon}|_{X_1}
\]
has finite image. However, this produces a contradiction since $k$ is algebraically closed and so $\Pic^{0}(X_1)\cong \Pic^{0}_{X_1/k}(k)$
is an infinite group. Therefore ${\mathcal M}_\xi=0$. In particular, $\xi\notin \mathrm{Supp}({\mathcal M})$, which completes the proof
of the Lemma since $\mathrm{Supp}(\mathcal{M})\subset G$ is purely of codimension $1$.
\end{proof}

Let us begin the comparison of the two filtrations defined in \S~\ref{fil} at the level $n=1$. Since $X_1/S$ can be defined
over the closed point $s$ of $S$, its Picard functor $\PP_{[1]}=\Pic^{0}_{X_1/S}$ can also be defined over $s$.
Hence, by adjunction, the morphism of functors $r_1\colon G\rightarrow \PP_{[1]}$ corresponds to a morphism of algebraic groups
over the closed points $s$ of $S$:
\[
r_{1,s}\colon G_s \longrightarrow
\PP_{[1],s}=\Pic^{0}_{X_1/k},
\]
which renders the following diagram commutative
\[
\xymatrix{G\ar[r]\ar[d] & \Pic_{X/S}^{0}\ar[r] & \PP_{[1]}=i_{\ast}\PP_{[1],s}\\
i_{\ast}G_s\ar@{.>}[rru]_{i_{\ast}r_{1,s}}
& & }
\]
Let $x\in G_s(k)$ be a closed point, $\varepsilon\in G(S)$ a section lifting $x$ and put
$\mathcal{L}_{\varepsilon}=(\mathrm{id}_{X}\times \varepsilon)^{\ast}\mathfrak{L}$.
This is a rigidified invertible sheaf on $X$.
Since $\mathrm{Supp}(\mathcal{M})=H$ (Lemma \ref{longueur 0}), one finds that $x=\varepsilon(s)\in H_s(k)$
if and only if the $\OO_K$-module $\varepsilon^{\ast}\mathcal{M}=\HH^{1}(X,\mathcal{L}_{\varepsilon})$ is of length $\geq 1$.
Moreover, in view of Lemma~\ref{cle} this last condition is equivalent to saying that
$\mathcal{L}_{\varepsilon}|_{X_1}\cong \mathcal{I}^{i}|_{X_1}$ for a suitable integer $i$.
In particular, the image $r_{1,s}(H_{s}(k))$ is a finite set of $\mathrm{P}_{[1],s}(k)$, and the kernel of $r_{1,s}$ is contained in $H_s$.
 Let $Z$ be the schematic closure of the set of those points $x\in G_s(k)$ which admit a lifting $\varepsilon\in G(S)$
such that $\mathcal{L}_{\varepsilon}|_{X_1}\cong \OO_{X_1}$. By continuity of $r_{1,s}$, the subgroup scheme $Z\subset G_s$ is a union of
irreducible components of ${H}_{s,\mathrm{red}}$. Denote by $G^{[1]}\rightarrow G$ the dilatation of $G$ with center $Z\subset G_s$.
By definition of $Z$, we have an exact sequence of smooth $k$-group schemes
\begin{equation}\label{eq.ZGPic}
0\longrightarrow Z\longrightarrow G_s\stackrel{r_1}{ \longrightarrow }
\Pic^{0}_{X_1/k}\longrightarrow 0.
\end{equation}
Moreover, according to the universal property of dilatations (\cite{BLR}, 3.2/1), we have the following short exact sequence of abstract groups:
\begin{equation}\label{exact de P}
0\longrightarrow G^{[1]}(S)\longrightarrow G(S)\longrightarrow
\Pic^{0}(X_1)\longrightarrow 0.
\end{equation}

Consider now the morphism $\theta\colon G\rightarrow J$. Denote by $G^{[1]'}$ the dilatation of $G$ with center
$H_{s,\mathrm{red}}=\mathrm{ker}(\theta)_{s,\mathrm{red}}\hookrightarrow G_s$.
Since $H_s$ is the kernel of the canonical map $\theta_s\colon G_s\rightarrow J_s$, the universal property of dilatations
implies that the following sequence is exact
\begin{equation}\label{exact de J}
0\longrightarrow G^{[1]'}(S)\longrightarrow G(S)\longrightarrow
J(S_1)\longrightarrow 0.
\end{equation}
Since $Z\subset H_{s,\mathrm{red}}$ is an open subgroup, $G^{[1]}$ is an open subgroup of $G^{[1]'}$.
From the exact sequences \eqref{exact de P}, \eqref{exact de J}, we obtain a morphism of groups $q_1\colon \Pic^{0}(X_1)\rightarrow J(S_1)$
which makes the external square commute:
\[
\xymatrix{G(S)\ar[r]^{r}\ar@{=}[d]& \Pic^0(X)\ar[d]^{q}\ar[r] & \Pic^{0}(X_1)\ar[d]^{q_1}\\
G(S)\ar[r]^{\theta}& J(S)\ar[r]\ar[r] & J(S_1)}
\]
Moreover, from the fact that $q\circ r=\theta$ and the surjectivity of $r$, the square on the right also commutes.
The morphism of abstract groups $q_1$ is surjective (since all the other maps are), and has kernel generated
by $\mathcal{I}|_{X_1}\in \Pic(X_1)$ (see Corollary~\ref{cle}).
Note that the diagram above fits also into a bigger commutative diagram
\begin{eqnarray}\label{diagram for level 1}
\xymatrix{ G^{[1]}(S)\ar@{->>}[rd]\ar[rdd]\ar@{^(->}[rr] & & G[S]\ar@{->>}[rd]^r\ar@{->>}[rdd]_<<<<<{\theta}\ar@{->>}[rr] & & \Pic^{0}(X_1)\ar@{=}[rd]\ar@{->>}[rdd]_<<<<<{q_1} & \\ & \PP^{[1]}(S)\ar@{^(->}[rr]\ar@{.>}[d]^{\exists ~ q^{[1]}}& & \Pic^{0}(X)\ar@{->>}[rr]\ar@{->>}[d]^{q}& & \Pic^{0}(X_1)\ar@{->>}[d]^{q_1} \\ & J^{[1]}(S)\ar@{^(->}[rr]& & J(S)\ar@{->>}[rr]& & J(S_1)}
\end{eqnarray}
On the level of pro-algebraic groups we have then shown that the morphism $q$ in \eqref{eq.morproalg} induces a map
\begin{equation}\label{eq.morq1}
q_1\colon \Gr( \Pic^{0}_{X_{\psi(1)}/S})\longrightarrow \Gr_1(J)
\end{equation}
because, as we noted in \S~\ref{sec.compfil}, it is sufficient to check the factorization on $k$-points.

In order to proceed with the comparison of the filtrations for higher $n$, let us denote by $\mathcal{M}^{[1]}$ (respectively
by $\mathcal{M}^{[1]'}$) the inverse image of $\mathcal{M}$ over $G^{[1]}$ (respectively over $G^{[1]'}$) via
the morphism $G^{[1]}\rightarrow G$ (respectively via the morphism $G^{[1]'}\rightarrow G$).
Let $H^{[1]}$ (respectively $H^{[1]'}$) be the schematic closure of $H_{K}\hookrightarrow G_{K}^{[1]}=G_{K}$ in $G^{[1]}$ (respectively in $G^{[1]'}$).
Then $\mathcal{M}^{[1]}$ (respectively $\mathcal{M}^{[1]'}$) is a coherent torsion sheaf with support in $H^{[1]}\cup G^{[1]}_s$
(respectively in $H^{[1]'}\cup G^{[1]'}_s$), which admits a resolution of length $1$ by locally free $\mathcal{O}_{G^{[1]}}$-modules
(respectively $\mathcal{O}_{G^{[1]'}}$-modules).
In particular, since the schemes $G^{[1]}$ and $G^{[1]'}$ are regular, $\mathcal{M}^{[1]}$ and $\mathcal{M}^{[1]'}$ are Cohen-Macaulay as modules.
On the other hand, by the universal property of dilatations, the composed morphism $G^{[1]}\rightarrow G\rightarrow J$
(respectively $G^{[1]'}\rightarrow G\rightarrow J$) factors through $J^{[1]}\rightarrow J$. We denote by
$\theta^{[1]}\colon G^{[1]}\rightarrow J^{[1]}$ (respectively by $\theta^{[1]'}\colon G^{[1]'}\rightarrow J^{[1]}$) the morphism obtained in this way.

\begin{lemma}\label{pf rec}
\begin{itemize}
\item[(i)] Let $\xi_1'$ be a generic point of $G^{[1]'}_s$, then the $\OO_{G^{[1]'},\xi_1'}$-module $\mathcal{M}^{[1]'}_{\xi_1'}$ is of length $1$.
\item[(ii)] The scheme $H$ is normal.
\item[(iii)]The morphism $\theta^{[1]}\colon G^{[1]}\rightarrow J^{[1]}$ induces a surjection $G^{[1]}(S)\rightarrow J^{[1]}(S)$.
In particular, $\theta^{[1]}$ is a faithfully flat morphism of $S$-group schemes, with $\ker(\theta^{[1]})=H^{[1]}$.
\end{itemize}
\end{lemma}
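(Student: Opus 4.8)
The plan is to establish the three assertions in the order (i) $\Rightarrow$ (ii) $\Rightarrow$ (iii), the first being by far the most delicate. Throughout I exploit that $\mathcal{M}=\mathrm{R}^1p_{G,\ast}\mathfrak{L}$ admits a resolution of length one by locally free sheaves of equal rank, so that its pull-back $\mathcal{M}^{[1]'}$ is again Cohen-Macaulay on the regular scheme $G^{[1]'}$, with support contained in $H^{[1]'}\cup G^{[1]'}_s$ and containing $H^{[1]'}$. For (i), I fix a generic point $\xi_1'$ of $G^{[1]'}_s$ and set $\ell:=\ell(\mathcal{M}^{[1]'}_{\xi_1'})$. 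Since $\xi_1'$ lies over a point of $H=\mathrm{Supp}(\mathcal{M})$ (Lemma~\ref{longueur 0}), one has $\ell\geq 1$, and the whole content is the reverse inequality $\ell\leq 1$. To obtain it I would test $\mathcal{M}^{[1]'}$ against a carefully chosen section, using that $\varepsilon^{\ast}\mathcal{M}^{[1]'}=\HH^1(X,\mathcal{L}_{\varepsilon})$ for every $\varepsilon\in G^{[1]'}(S)$. The goal is to find $\varepsilon$ with (a) $\varepsilon_K\notin H_K$, so that $\varepsilon(S)\not\subseteq\mathrm{Supp}(\mathcal{M}^{[1]'})$; (b) $\varepsilon(s)\in H^{[1]'}_s$; and (c) $\ell(\HH^1(X,\mathcal{L}_{\varepsilon}))=2$. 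Granting such an $\varepsilon$, Proposition~\ref{intersection}(2), applied with $Z=G^{[1]'}$, the sheaf $\mathcal{M}^{[1]'}$ and the divisor $H^{[1]'}=\overline{H_K}$, yields $2=\ell(\varepsilon^{\ast}\mathcal{M}^{[1]'})\geq \ell+1$, whence $\ell\leq 1$ and therefore $\ell=1$.

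The construction of $\varepsilon$ is where Corollary~\ref{cle} and the function $\psi$ enter. Using the surjectivity of the maps $r_n\colon G(S)\to\Pic^0(X_{\psi(n)})$, I would choose $\varepsilon\in G(S)$ whose image $\mathcal{L}_{\varepsilon}$ agrees with a power $\mathcal{I}^{i}$ on $X_{\psi(2)}$ but not on $X_{\psi(3)}$. By the dictionary between the length of $\HH^1(X,\mathcal{L}_{\varepsilon})$ and the level in the filtration (Corollary~\ref{cle} together with \eqref{eq.psind} and Corollary~\ref{Pic_S}), this forces $\ell(\HH^1(X,\mathcal{L}_{\varepsilon}))=2$, giving (c); the weaker condition $\mathcal{L}_{\varepsilon}|_{X_1}\cong\mathcal{I}^{i}|_{X_1}$ already puts $\varepsilon(s)$ into $H_{s,\mathrm{red}}$, hence $\varepsilon\in G^{[1]'}(S)$ by the universal property of the dilatation, while a generic choice compatible with these constraints secures (a). Verifying (b) precisely --- that is, matching the $\mathcal{I}$-adic level $\psi(2)$ on the Picard side with the order of vanishing of $\theta^{[1]'}(\varepsilon)$ along the zero section on the N\'eron side, so that $\varepsilon(s)$ genuinely lands in the closure $H^{[1]'}_s$ rather than merely in $\mathrm{Supp}(\mathcal{M}^{[1]'})_s$ --- is the technical heart of the matter and the step I expect to be the main obstacle.

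Once (i) is available, (ii) is short. As $\mathcal{M}$ is determinantal, $H=\mathrm{Supp}(\mathcal{M})$ is the Cartier divisor $V(\det u)$ inside the regular scheme $G$, hence a hypersurface and in particular Cohen-Macaulay (condition $S_2$); moreover $H$ is irreducible, being the closure of the smooth connected group $H_K=\ker(r_K)$. Because the generic length of $\mathcal{M}$ along $H$ equals $1$ --- on the generic fibre $h^1(X_K,L)\leq 1$ for a degree-zero sheaf $L$ on the genus-one curve $X_K$, with equality only for $L\cong\OO_{X_K}$ --- the equation $\det u$ vanishes to order $1$ along $H$, so $H$ is generically reduced and thus reduced. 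For regularity in codimension one it remains to treat the generic points $\zeta$ of $H_s$, where $\OO_{G,\zeta}$ is a two-dimensional regular local ring; a direct dilatation computation (substituting $t=\pi u$ for a local equation $t$ of $H$ at $\zeta$) shows that, were $H$ singular at $\zeta$, the pull-back $\mathcal{M}^{[1]'}$ would acquire length $>1$ at the corresponding $\xi_1'$, contradicting (i). Hence $H_s$ is reduced at $\zeta$ and, $H$ being $\OO_K$-flat, $H$ is regular there. Serre's criterion then yields that $H$ is normal.

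Finally (iii). Surjectivity of $\theta^{[1]}$ on $S$-sections is a diagram chase inside \eqref{diagram for level 1}. Given $y\in J^{[1]}(S)\subseteq J(S)$, the surjectivity of $\theta$ provides $x\in G(S)$ with $\theta(x)=y$; the image of $r(x)$ in $\Pic^0(X_1)$ lies in $\ker(q_1)$, which is generated by $\mathcal{I}|_{X_1}$ (Corollary~\ref{cle}). Lifting $\mathcal{I}\in\ker(q)\subset\Pic^0(X)$ through the surjection $r\colon G(S)\to\Pic^0(X)$ produces $\tilde{\mathcal{I}}\in G(S)$ with $\theta(\tilde{\mathcal{I}})=q(\mathcal{I})=0$, hence $\tilde{\mathcal{I}}\in\ker(\theta)(S)=H(S)$, whose image in $\Pic^0(X_1)$ is the generator $\mathcal{I}|_{X_1}$; modifying $x$ by a suitable element of $H(S)$ we may assume $r(x)$ dies in $\Pic^0(X_1)$, i.e.\ $x\in G^{[1]}(S)$ by \eqref{exact de P}, and then $\theta^{[1]}(x)=y$. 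With surjectivity on $S$-points established, $\theta^{[1]}$ is a homomorphism of smooth $S$-group schemes whose generic fibre $\theta_K$ is faithfully flat (with kernel $H_K$) and whose special fibre is surjective on $k$-points by strict henselianity of $\OO_K$; the fibrewise flatness criterion then shows $\theta^{[1]}$ is faithfully flat. Its kernel is consequently $\OO_K$-flat with generic fibre $H_K$, and therefore coincides with the schematic closure $H^{[1]}$ of $H_K$, as claimed.
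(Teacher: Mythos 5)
Your reduction of (i) to producing a single section $\varepsilon\in G^{[1]'}(S)$ satisfying (a), (b), (c) has a genuine gap at exactly the point you flag, and the gap cannot be repaired within your strategy: the existence of such a section is essentially equivalent to the conclusion you are trying to prove. Indeed, set $\ell_1:=\ell(\mathcal{M}^{[1]'}_{\xi_1'})$ and suppose $\ell_1\geq 2$. For any section $\varepsilon$ with $\varepsilon(S)\not\subseteq\mathrm{Supp}(\mathcal{M}^{[1]'})$ and $\varepsilon(s)\in H^{[1]'}_s\cap\overline{\{\xi_1'\}}$, Proposition~\ref{intersection}~(2) already forces $\ell(\varepsilon^{\ast}\mathcal{M}^{[1]'})\geq \ell_1+1\geq 3$, so no section can satisfy (a), (b) and (c) simultaneously; conversely, if $\ell_1=1$ there is nothing left to prove. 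Thus any successful construction of $\varepsilon$ must first rule out $\ell_1\geq 2$, which is the whole content of (i), and your proposal (whose steps (a) and (c) are otherwise fine, and which would in addition need $\varepsilon(s)$ to land in the prescribed component $\overline{\{\xi_1'\}}$) does not reduce the difficulty. The paper argues in the opposite direction, using Proposition~\ref{intersection}~(1) rather than (2): assuming $\ell_1\geq 2$, \emph{every} section through $\overline{\{\xi_1'\}}$ satisfies $\ell(\HH^{1}(X,\mathcal{L}_{\varepsilon}))\geq \ell_1\geq 2$, hence by Corollary~\ref{cle} its restriction to $X_{\psi(2)}$ lies in the finite set of powers of $\mathcal{I}|_{X_{\psi(2)}}$; since $r_2'$ factors through the algebraic group $G^{[1]'}_s$ (a factorization proved by a pro-algebraic connectedness argument) and $\overline{\{\xi_1'\}}$ is an irreducible component, the group structure then forces the image of $G^{[1]'}(S)\to \Pic^{0}(X_{\psi(2)})$ to be \emph{finite}. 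This contradicts the infinitude of that image, which is obtained from the surjection $G^{[1]}(S)\twoheadrightarrow \PP^{[1,\psi(2)]}(S)$ (comparing \eqref{exact de P} with the corresponding sequence of Picard groups) together with Corollary~\ref{Pic_S}, by which $\PP^{[1,\psi(2)]}(S)$ is a one-dimensional $k$-vector space and hence infinite. This finiteness-versus-infiniteness argument, exploiting the group structure rather than a single well-placed section, is the idea missing from your proposal.

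Your part (ii) inherits the gap, since it relies on (i), and it also contains a confusion: $H$ is reduced by construction, being the schematic closure of the reduced (indeed smooth, connected) torus $H_K=\ker(r_K)$, so no order-of-vanishing argument for $\det u$ is needed; moreover the claim that $\det u$ vanishes to order $1$ along $H_K$ does not follow merely from $h^{1}(X_K,L)\leq 1$, which only gives cyclicity of the stalk, not length one. The actual content of (ii) is regularity of $H$ at the generic points of $H_s$, which you defer to an uncarried-out ``dilatation computation''; the paper obtains it by lifting a closed point $x'\in\overline{\{\xi_1'\}}\setminus H^{[1]'}_s$ to a section, deducing $\ell(\varepsilon^{\ast}\mathcal{M})=1$ from (i) and Proposition~\ref{intersection}~(1), and then invoking the equality case of Proposition~\ref{intersection}~(2) on $G$, followed by Serre's criterion. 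By contrast, your part (iii) is correct and is a legitimate alternative to the paper's argument: your diagram chase, which modifies a preimage of $y\in J^{[1]}(S)$ by a multiple of a lift $\tilde{\mathcal{I}}\in H(S)$ of $\mathcal{I}$ so that it dies in $\Pic^{0}(X_1)$, replaces the paper's route through the surjectivity of $G^{[1]'}(S)\to J^{[1]}(S)$ (universal property of dilatations) and the descent to the open subgroup $G^{[1]}$ via \cite{BLR}, 9.2/6; the faithful flatness and the identification $\ker(\theta^{[1]})=H^{[1]}$ are then argued the same way in both.
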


\begin{proof} Observe first that we have a commutative diagram with exact rows, where the first row is \eqref{exact de P}:
\begin{equation*}\label{dia.GPic}
\xymatrix{0\ar[r]& G^{[1]}(S)\ar[r]\ar[d]& G(S)\ar[r]\ar[d]^{r_2} &
\Pic^{0}(X_1)\ar[r]\ar@{=}[d] & 0 \\ 0\ar[r] &
\PP^{[1,\psi(2)]}(S)\ar[r]& \PP_{[\psi(2)]}(S)\ar[r]&
\PP_{[1]}(S)\ar[r]& 0 }
\end{equation*}
The morphism $G^{[1]}(S)\rightarrow \PP^{[1,\psi(2)]}(S)$ is surjective, since the map $r_2$ is surjective. Moreover, by Corollary \ref{Pic_S},
 the group $\PP^{[1,\psi(2)]}(S)$ is an $\OO_K$-module of length $1$. Hence, it is an infinite group. Therefore, the composed morphism
\[
G^{[1]}(S)\longrightarrow G(S)\longrightarrow \PP_{[\psi(2)]}(S)=\Pic^{0}({X_{\psi(2)}}),
\]
has infinite image. Hence, the composed morphism
\begin{equation}\label{eq.GGPS2}
G^{[1]'}(S)\longrightarrow G(S)\longrightarrow \PP_{[\psi(2)]}(S)=\Pic^{0}({X_{\psi(2)}})
\end{equation}
also has infinite image because $G^{[1]}$ is an open subgroup of $G^{[1]'}$.

Next, we consider the map of functors $r_{2}'\colon G^{[1]'}\rightarrow \mathrm{P}_{[\psi(2)]}=\Pic_{X_{\psi(2)}/S}^{0}$
obtained as the composition of $G^{[1]'}\rightarrow G$ with $r_{2}\colon G\rightarrow \mathrm{P}_{[\psi(2)]}$.
This map induces a morphism of pro-algebraic groups over $k$, again denoted by $r_2'$:
\[
r_2'\colon \Gr(G^{[1]'})\longrightarrow \Gr(\mathrm{P}_{[\psi(2)]}).
\]
We claim that this morphism factors through $G_{s}^{[1]'}=\Gr_{1}(G^{[1]'})$:
\begin{equation}\label{dia.GrGP2}
\xymatrix{\Gr(G^{[1]'})\ar[r]^{r_2'}\ar[d] & \Gr(\mathrm{P}_{[\psi(2)]}) \\ G^{[1]'}_{s}\ar@{.>}[ru]_{\exists~ r_{2,s}'} & }
\end{equation}
Let $\mathcal{K}$ be the kernel of the morphism $\mathrm{Gr}(G^{[1]'})\rightarrow G_{s}^{[1]'}$; it is pro-smooth and connected.
Let $\varepsilon\in G^{[1]'}(S)$ be such that $\varepsilon(s)\in G^{[1]'}_{s}$ is the unit element. In particular, the support of
the torsion module $\mathcal{M}^{[1]'}$ at $\varepsilon(s)$ has two irreducible components, which implies, according to
Proposition \ref{intersection} (1), that the $\mathcal{O}_K$-module $\varepsilon^{\ast}\mathcal{M}^{[1]'}=\mathrm{H}^1(X,\mathcal{L}_{\varepsilon})$
has length at least $2$ (here, $\mathcal{L}_{\varepsilon}=(\mathrm{id}\times \varepsilon)^{\ast}\left(\mathfrak{L}|_{X\times G^{[1]'}}\right)$).
In particular, by Corollary~\ref{cle}, the restriction $\mathcal{L}_{\varepsilon}|_{X_{\psi(2)}}$ is a power of $\mathcal{I}|_{X_{\psi(2)}}$.
Since the invertible sheaf $\mathcal{I}$ is of finite order and since $k$ is algebraically closed, this implies that the
induced map $\mathcal{K}\rightarrow \Gr(\mathrm{P}_{[\psi(2)]})$ has finite image, in particular, it is trivial since $\mathcal K$
is pro-smooth and connected. This fact ensures the existence of the factorization in \eqref{dia.GrGP2}.

In order to prove (i), let us denote by $\ell_1$ the length of the $\OO_{G^{[1]'},\xi_1'}$-module $\mathcal{M}^{[1]'}_{\xi_1'}$.
By definition of $G^{[1]'}$, we have $\ell_1\geq 1$. Suppose $\ell_1\geq 2$. Let $\varepsilon\in G^{[1]'}(S)$ be a section of $G^{[1]'}$
such that $\varepsilon(s)\in \overline{\{\xi_1'\}}\subset G^{[1]'}_s$ and denote by $\mathcal{L}_{\varepsilon}$
the associated rigidified invertible sheaf on $X$. According to Proposition~\ref{intersection} (1), the
$\OO_K$-module $\varepsilon^{\ast}\mathcal{M}^{[1]'}\cong \HH^{1}(X,\mathcal{L}_{\varepsilon})$ is of length $\geq \ell_1\geq 2$.
Hence, by Corollary~\ref{cle}, we have $\mathcal{L}_{\varepsilon}|_{X_{\psi(2)}}\cong \mathcal{I}^{i}|_{X_{\psi(2)}}$ for a suitable integer $i$.
Thus, $r_{2,s}'(\overline{\{\xi_{1}'\}})\subset \Gr(\mathrm{P}_{[\psi(2)]})$ consists of a single element because $r_{2,s}'$ is continuous
and the set $\{\mathcal{I}^{j}:j\in\mathbb{Z}\}$ is finite. Using the fact that $r_{2,s}'$ is a morphism of groups and
that $\overline{\{\xi_{1}' \}}$ is an irreducible component of $G^{[1]'}_{s}$, we deduce that the morphism $r_{2,s}'$ and
hence the map $r_2'\colon G^{[1]'}(S)\rightarrow \PP_{[\psi(2)]}(S)=\Pic^{0}(X_{\psi(2)})$ has finite image.
This contradicts the assertion on the infinity of the image of \eqref{eq.GGPS2} proved above. Hence $\ell_1=1$, and
this concludes the proof of (i).

Assertion (ii) is just a corollary of (i). In fact, for $Y_1$ an irreducible component of $H_s$, let $\xi_1'$ denote the
generic point of $G_{s}^{[1]'}$ lying above $Y_1$.
Let $x'\in \overline{\{\xi_1'\}}\subset G^{[1]'}_{s}$ be a closed point not contained in $H^{[1]'}_{s}$, and
$\varepsilon'\colon S\rightarrow G^{[1]'}$ a section lifting $x'$, which also gives a section $\varepsilon\colon S\rightarrow G$
by composition with $G^{[1]'}\to G$.
 Let $x=\varepsilon(s)\in H_{s}$.
Since $x'\notin H^{[1]'}_{s}$, and $\ell(\mathcal{M}^{[1]'}_{\xi_{1}'})=1$ by assertion (i) of this Lemma, the $\mathcal{O}_K$-module
$\varepsilon'^{\ast}\mathcal{M}^{[1]'}=\varepsilon^{\ast}\mathcal{M}$ is of length $1$ (see Proposition~\ref{intersection} (1)).
According to Proposition~\ref{intersection} (2), this last condition implies that $H$ is regular at $x$.
 Hence $H$ is regular at the generic point of the irreducible component $Y_1$ of $H_s$ because $Y_1$ contains $x$.
Since this can be done for any generic point of $H_s$, one finds that $H$ is normal by using Serre's criterion of normality
(recall that the generic fibre $H_K$ of $H$ is regular, and the scheme $H$, being a divisor of a regular scheme, is Cohen-Macaulay).

For (iii), recall that the composed morphism $G(S)\rightarrow \Pic^{0}(X)\rightarrow J(S)$ is surjective (see \S~\ref{Rappels Pic}).
Since $G^{[1]'}$ is the dilatation of $G$ along $H_{s,\mathrm{red}}$, the surjectivity of the last map implies that the
map $G^{[1]'}(S)\rightarrow J^{[1]}(S)$ is also surjective.
Since $G^{[1]}\subset G^{[1]'}$ is an open subgroup, with non-empty special fibre, and the abstract group $G^{[1]'}(S)/G^{[1]}(S)$ is a
finite group, according to \cite{BLR}, 9.2/6, the morphism $\theta^{[1]}$ also induces a surjection $G^{[1]}(S)\rightarrow J^{[1]}(S)$.
In particular, using the fact that the two $S$-group schemes $G^{[1]}$ and $J^{[1]}$ are smooth, we find that the morphism $\theta^{[1]}$
is faithfully flat, and hence $\ker(\theta^{[1]})=H^{[1]}$ since both $\ker(\theta^{[1]})$ and $H^{[1]}$ are flat closed
subgroup schemes of $G^{[1]}$ having the same generic fibre.
\end{proof}

By abuse of notation, let us denote by $r_2$ both the following composition of morphisms
\begin{equation*}\label{eq.beta2}
\xymatrix{r_2\colon G^{[1]}\ar[r] & G\ar[r] & \PP_{[\psi(2)]}}
\end{equation*}
and the induced morphism of pro-algebraic groups over $k$:
$\mathrm{Gr}(G^{[1]})\rightarrow \mathrm{Gr}(\PP_{[\psi(2)]})$.
As we have seen in the proof of Lemma~\ref{pf rec}, diagram \eqref{dia.GrGP2}, (since $G^{[1]}\subset G^{[1]'}$ is an open subgroup),
the map $r_2$ factors through the canonical surjection $\Gr(G^{[1]})\rightarrow G^{[1]}_{s}$:
\[
\xymatrix{\mathrm{Gr}(G^{[1]})\ar[r]^{r_2}\ar[d]& \mathrm{Gr}(\PP_{[\psi(2)]}) \\ G^{[1]}_s\ar@{.>}[ru]_{ \exists~ r_{2,s}}
}
\]
 Next, define $Z_1:=\ker(r_{2,s})_{\mathrm{red}}\hookrightarrow
G^{[1]}_s$.
Then the same argument used for $Z$ in \eqref{eq.ZGPic}, in the case $n=1$, implies that $Z_1$ is a union of connected components of
$H^{[1]}_{s,\mathrm{red}}$.

Now we use constructions similar to those used in the comparison at the first level.
Let $G^{[2]}$ (respectively $G^{[2]'}$) be the dilatation of $G^{[1]}$ along the closed smooth subgroup $Z_1$ of $G^{[1]}_s$
(respectively along $H^{[1]}_{s,\mathrm{red}}\hookrightarrow G^{[1]}_s$), and let $\alpha^{[1]}$ be the
composed morphism $G^{[2]}\rightarrow G^{[1]}\rightarrow G$.
According to \cite{BLR}, 3.2/3, $G^{[2]}$ is a smooth $S$-group scheme, and we have an exact sequence:
\begin{equation*}
0\longrightarrow G^{[2]}(S)\longrightarrow G^{[1]}(S)\longrightarrow
\PP^{[1,\psi(2)]}(S)\longrightarrow 0.
\end{equation*}
On the other hand, since $H^{[1]}_s$ is the kernel of the morphism $G^{[1]}_s\rightarrow J^{[1]}_s$, according to the universal property
of dilatations, we have an exact sequence of abstract groups
\[
0\longrightarrow G^{[2]'}(S)\longrightarrow G^{[1]}(S)\longrightarrow
J^{[1]}(S_1)\longrightarrow 0.
\]
Since $Z_1\subset H^{[1]}_{s,\mathrm{red}}$ is an open subgroup scheme, $G^{[2]}\subset G^{[2]'}$ is an open subgroup.
Hence we obtain a morphism $\alpha\colon \PP^{[1,\psi(2)]}(S)\rightarrow J^{[1]}(S_1)$ which makes the following diagram
\begin{eqnarray}\label{level 2}
\xymatrix{G^{[1]}(S)\ar@{->>}[r]\ar@{->>}[rd]& \PP^{[1]}(S)\ar[r] \ar[d]^{q^{[1]}}& \PP^{[1,\psi(2)]}(S)\ar[d]^{\alpha} \\
&J^{[1]}(S)\ar[r] & J^{[1]}(S_1)}
\end{eqnarray}
commute. Hence we get a diagram with exact rows,
\[
\xymatrix{\PP^{[1]}(S)\ar@{^(->}[drr]\ar[ddr]^{\beta'}\ar@/_1pc/[ddd]_<<<<<<<{q^{[1]}} & &&&&&\\
&& \Pic^{0}(X)\ar@{->>}[dd]_<<<<<<{~~q}\ar@{->>}[dr]_{\beta}\ar@{->>}[drrr]^{|_{X_1}}&&&&\\
0\ar[r]&\PP^{[1,\psi(2)]}(S)\ar[rr]\ar[dd]^<<<<<{\alpha}&&\Pic^{0}(X_{\psi(2)}) \ar@{.>>}[dd]^<<<<<<{q_2}\ar[rr]&&\Pic^{0}(X_{1})\ar[ld]^ {q_1}\ar[dd]^{q_1}\ar[r]&0\\
 J^{[1]}(S)\ar@{^(->}[rr]\ar[rd]& &J(S)\ar@{->>}[dr]_{\gamma}\ar@{->>}[rr]&&J(S_1)\ar@{=}[rd]&&\\
0\ar[r]& J^{[1]}(S_1)\ar[rr]_{\varrho_1} && J(S_2)\ar[rr]&& J(S_1)\ar[r]&0.
}
\]
This diagram (without the existence of $q_2$) is seen to be commutative by combining the commutativity of
diagrams \eqref{diagram for J}, \eqref{diagram for level 1} and \eqref{level 2}.
In order to see that $q_2$ exists, we need only show that $\ker(\beta)\subset \ker(\gamma\circ q )$.
Since the upper horizontal sequence is the push-out of the {\textquotedblleft diagonal\textquotedblright} exact sequence
along $\beta'$, we have $\ker(\beta)=\ker(\beta')$. Hence, to complete the proof, it is sufficient to recognize that the two maps
$\PP^{[1]}(S)\to J(S_2)$, obtained, one following the path through $\Pic^0(X)$ and the other via $\PP^{[1,\psi(2)]}(S)$, coincide.
This fact can be easily checked by diagram chasing.
In this way, we have shown the existence of the morphism $q_2$.

Moreover, by Corollary~\ref{cle}, the kernel of $q_2\colon \Pic^{0}(X_{\psi(2)})\rightarrow J(S_2)$ is generated by
$\mathcal{I}|_{X_{\psi(2)}}\in \Pic^{0}(X_{\psi(2)})$. In particular the morphism $q$ in \eqref{eq.morproalg} induces
a morphism $q_2\colon \Gr(\Pic^{0}_{X_{\psi(2)}/S})\rightarrow \Gr_2(J)$ that is compatible with the morphism $q_1$ in \eqref{eq.morq1}.

The general case can be done by induction on $n$, by using the same argument as before.
Finally, we summarize our results in the following theorem:

\begin{theorem}\label{resultat final} Let $X_K$ be a $K$-torsor under an elliptic curve and $X$ its $S$-proper regular minimal model. Let $J$ be the identity component of the N\'eron model over $S$ of the jacobian $\mathrm{Pic}_{X_K/K}^0$ of $X_K$. Then for any integer $n\geq 1$, the surjective morphism of fppf-sheaves $q\colon \Pic^{0}_{X/S}\rightarrow J$ in \eqref{eq.SurPiczero} induces a morphism of smooth $k$-group schemes
\[
q_n\colon \mathrm{Gr}(\Pic^0_{X_{\psi(n)}/S})\longrightarrow \mathrm{Gr}_n(J)
\]
making the obvious diagram commute.
Moreover, the morphism $q_n$ defined above is an isogeny of algebraic $k$-groups, and the group of $k$-points of the kernel of $q_n$ is given by
\[
\ker(q_n)(k)=\{\mathcal{I}^{i}|_{X_{\psi(n)}}: ~i\in
{\mathbb Z}~\}\subset \Gr(\PP_{[\psi(n)]})(k)\cong
\Pic^{0}(X_{\psi(n)}).
\]
\end{theorem}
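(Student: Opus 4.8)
The plan is to assemble the result from the level-by-level construction already carried out in the preceding paragraphs. The existence of $q_n$ and the commutativity of the diagram have essentially been established by induction: the cases $n=1$ and $n=2$ are treated explicitly (see diagrams \eqref{diagram for level 1} and the large diagram preceding the theorem), and the general inductive step proceeds by exactly the same argument, passing from $G^{[n-1]}$ to $G^{[n]}$ and $G^{[n]'}$ via dilatations, using Lemma~\ref{pf rec} (in its version for level $n-1$) to control the length of $\mathcal{M}^{[n-1]'}$ at the generic points of the special fibre and to guarantee that $\theta^{[n-1]}$ is faithfully flat with kernel $H^{[n-1]}$. As noted in \S\ref{sec.compfil}, since the transition morphism $\Gr(\PP_{[nd]})\to \Gr(\PP_{[\psi(n)]})$ has smooth kernel and $k$ is algebraically closed, it suffices to verify the factorization on $k$-points, which is what the abstract-group diagrams accomplish.

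First I would record that $q_n$ is a morphism of smooth connected $k$-group schemes: both source $\Gr(\PP_{[\psi(n)]})=\Gr(\Pic^{0}_{X_{\psi(n)}/S})$ and target $\Gr_n(J)$ are smooth by Theorem~\ref{Lipman} and by the smoothness of $\Gr_n(J)$ recalled in \S\ref{sec.proalg-gree}. To prove that $q_n$ is an isogeny, I would argue that it is surjective and has finite kernel. Surjectivity on $k$-points follows from the surjectivity of $q=q(S)$ in \eqref{eq.qP} together with the surjectivity of $\Pic^{0}(X)\to \Pic^{0}(X_{\psi(n)})$ and of $J(S)\to J(S_n)$ (Lemma~\ref{JJJ}); since $k$ is algebraically closed and both groups are smooth, surjectivity on $k$-points yields surjectivity as a morphism of $k$-group schemes. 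For the kernel, the key input is Corollary~\ref{cle}: an element of $\Pic^{0}(X_{\psi(n)})$ lying in $\ker(q_n)$ lifts to some $\mathcal{L}\in\Pic^{0}(X)$ whose image $q(\mathcal{L})\in J(S)$ dies in $J(S_n)$, and tracing through the commutative diagram identifies such classes precisely with the restrictions $\mathcal{I}^{i}|_{X_{\psi(n)}}$, $i\in\mathbb{Z}$. This simultaneously computes $\ker(q_n)(k)=\{\mathcal{I}^{i}|_{X_{\psi(n)}}: i\in\mathbb{Z}\}$ and shows it is finite, because $\mathcal{I}$ has finite order $\delta_n$ on $X_{\psi(n)}$ (by the definition of $\delta_n$ in \S\ref{def de psi} and Lemma~\ref{ki}); hence the kernel is a finite cyclic group, and $q_n$ is an isogeny.

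The main obstacle is the inductive bookkeeping rather than any single hard estimate: one must check that the dilatation constructions defining $G^{[n]}$ and $G^{[n]'}$ continue to satisfy the two exact sequences
\[
0\to G^{[n]}(S)\to G^{[n-1]}(S)\to \PP^{[n-1,\psi(n)]}(S)\to 0,\qquad
0\to G^{[n]'}(S)\to G^{[n-1]}(S)\to J^{[n-1]}(S_1)\to 0,
\]
and that the analogue of Lemma~\ref{pf rec}(i), namely $\ell(\mathcal{M}^{[n-1]'}_{\xi})=1$ at each generic point $\xi$ of the special fibre, persists at every level. This length-one statement is what makes $q_n$ an isogeny (rather than merely surjective with positive-dimensional kernel) and what makes $\theta^{[n-1]}$ faithfully flat; it rests on Proposition~\ref{intersection} together with the infinitude-of-image argument in the proof of Lemma~\ref{pf rec}, applied with $\PP_{[\psi(n)]}$ in place of $\PP_{[\psi(2)]}$. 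Once these are in place, the diagram chase producing $q_n$ from $q^{[n-1]}$ and the push-out sequence is formally identical to the $n=2$ case already displayed, and the description of the kernel follows from Corollary~\ref{cle} exactly as at levels $1$ and $2$.
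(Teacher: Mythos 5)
Your proposal is correct and follows essentially the same route as the paper: Theorem~\ref{resultat final} is indeed just a summary of the level-by-level dilatation construction of \S~\ref{Not} (with the reduction to $k$-points from \S~\ref{sec.compfil}, the level-$n$ analogues of Lemma~\ref{pf rec} controlling the torsion module $\mathcal{M}$ via Proposition~\ref{intersection}, and Corollary~\ref{cle} identifying the kernel with the powers of $\mathcal{I}|_{X_{\psi(n)}}$), which is exactly the induction you outline. Your explicit remarks on surjectivity of $q_n$ and on finiteness of the kernel (via the finite order $\delta_n$ of $\mathcal{I}|_{X_{\psi(n)}}$) only make precise what the paper leaves implicit.
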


\begin{corollary}\label{cor.finalresult} The morphism $\bm{ q}\colon\bm{ \Pic^{0}(X)}\longrightarrow \bm{J(S)}$
in \eqref{q pro-alg} maps $\bm{\PP^{[\psi(n)]}(S)}$ onto $\bm{J^{[n]}(S)}$, thus inducing an isogeny of connected quasi-algebraic groups
\[
\bm{q_{n}}\colon \GGr(\PP_{[\psi(n)]})\longrightarrow \bm{\Gr_n}(J)
\]
whose kernel is generated by the element $\mathcal{I}|_{X_{\psi(n)}}\in \GGr(\PP_{[\psi(n)]})(k)=\Pic^{0}(X_{\psi(n)})$. Furthermore $\bm q$ is an epimorphism in the abelian category of Serre pro-algebraic groups (in particular it is surjective on $k$-sections),  and the kernel of $\bm q$ is isomorphic to $\Z/d\Z$. 
\end{corollary}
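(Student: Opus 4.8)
The plan is to deduce Corollary~\ref{cor.finalresult} from Theorem~\ref{resultat final} by applying the perfect Greenberg realization and then passing to the projective limit over $n$, the only nontrivial input being a Mittag--Leffler control of the inverse limit.

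First I would pass to perfect realizations at each finite level. By Theorem~\ref{resultat final}, for every $n\geq 1$ the morphism $q$ induces an isogeny of smooth $k$-group schemes $q_n\colon \Gr(\Pic^0_{X_{\psi(n)}/S})\to \Gr_n(J)$, and applying the exact functor $\GGr$ yields the desired isogeny $\bm{q_n}\colon \GGr(\PP_{[\psi(n)]})\to \bm{\Gr_n}(J)$ in Serre's category. Both sides are connected: $\GGr(\PP_{[\psi(n)]})$ by Theorem~\ref{Lipman}, and $\bm{\Gr_n}(J)$ by induction from $\Gr_1(J)=J_s$ (which is connected since $J$ is an identity component) using that the transition maps $\alpha_n$ have connected kernels (\S~\ref{sec.proalg-gree}). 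The explicit description of $\ker(q_n)(k)$ in Theorem~\ref{resultat final}, together with Corollary~\ref{cle}, identifies $\ker(\bm{q_n})$ as the finite cyclic group generated by $\mathcal{I}|_{X_{\psi(n)}}$, of order $\delta_n$. This already gives the assertion about $\bm{q_n}$.

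Next I would assemble the $\bm{q_n}$ into $\bm q$ and compute its kernel. Since $\psi$ is strictly increasing (\S~\ref{def de psi}), the indices $\{\psi(n)\}$ are cofinal, so $\bm{\Pic^0(X)}=\varprojlim_n \GGr(\PP_{[\psi(n)]})$ and $\bm{J(S)}=\varprojlim_n\bm{\Gr_n}(J)$; the compatibility of the $q_n$ noted in Theorem~\ref{resultat final} then gives $\bm q=\varprojlim_n \bm{q_n}$. The transition maps on the kernels send $\mathcal{I}|_{X_{\psi(n+1)}}$ to $\mathcal{I}|_{X_{\psi(n)}}$, hence are the surjections $\Z/\delta_{n+1}\Z\to\Z/\delta_n\Z$; as $\delta_n$ stabilizes to $d$ for $n\gg0$ (\S~\ref{def de psi}), the system $\{\ker(\bm{q_n})\}$ is Mittag--Leffler with $\varprojlim_n \ker(\bm{q_n})\cong\Z/d\Z$, generated by the compatible family of restrictions of $\mathcal{O}_X(D)=\mathcal{I}^{-1}$. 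The vanishing of $\varprojlim^1$ for this Mittag--Leffler system of finite groups then yields that $\bm q$ is an epimorphism with $\ker(\bm q)\cong\Z/d\Z$; this proves the last two assertions, surjectivity on $k$-sections being already contained in \eqref{eq.qP}.

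Finally I would compare the two filtrations. For fixed $n$ the rows of
\[
\xymatrix{
0\ar[r] & \bm{\PP^{[\psi(n)]}(S)}\ar[r]\ar[d] & \bm{\Pic^0(X)}\ar[r]\ar[d]^{\bm q} & \GGr(\PP_{[\psi(n)]})\ar[r]\ar[d]^{\bm{q_n}} & 0\\
0\ar[r] & \bm{J^{[n]}(S)}\ar[r] & \bm{J(S)}\ar[r] & \bm{\Gr_n}(J)\ar[r] & 0
}
\]
are exact by the definitions in \S~\ref{fil} and \eqref{eq.J}, the right-hand projections being epimorphisms by Lemma~\ref{lemme en greenberg} and the Mittag--Leffler property. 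Applying the snake lemma in the abelian category of Serre pro-algebraic groups, the induced map $\ker(\bm q)=\Z/d\Z\to\ker(\bm{q_n})=\Z/\delta_n\Z$ sends $\mathcal{O}_X(D)$ to its restriction and is therefore surjective, so the connecting homomorphism vanishes and $\mathrm{coker}(\bm{\PP^{[\psi(n)]}(S)}\to\bm{J^{[n]}(S)})$ embeds into $\mathrm{coker}(\bm q)=0$. Hence $\bm q$ maps $\bm{\PP^{[\psi(n)]}(S)}$ onto $\bm{J^{[n]}(S)}$, completing the proof. The main obstacle will be the inverse-limit bookkeeping: checking that the perfect realizations $\bm{q_n}$ are genuinely compatible with the transition maps, and justifying exactness of $\varprojlim$ (equivalently $\varprojlim^1=0$) in Serre's category for the Mittag--Leffler system of finite kernels, which is precisely what upgrades surjectivity on $k$-points to the epimorphism property of $\bm q$.
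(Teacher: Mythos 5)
Your proposal is correct, and its skeleton matches the paper's: both deduce everything from Theorem~\ref{resultat final}, both pass to limits to get $\bm q$, and both apply the snake lemma to exactly the same two-row diagram. The differences lie in the two finishing moves. For the surjectivity onto $\bm{J^{[n]}(S)}$, the paper does \emph{not} use your kernel computation: it observes only that the snake lemma makes $\mathrm{coker}\bigl(\bm{\PP^{[\psi(n)]}(S)}\to\bm{J^{[n]}(S)}\bigr)$ a \emph{finite} pro-algebraic group (a quotient of $\ker(\bm{q_n})$), and then kills it by connectedness of $\bm{J^{[n]}(S)}$, which follows from $J^{[n]}/S$ having connected fibres; you instead first pin down $\ker(\bm q)\cong\Z/d\Z$ and use the generator-to-generator surjection $\ker(\bm q)\to\ker(\bm{q_n})$ to make the connecting homomorphism vanish. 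For the kernel itself, the paper imports the structure of $\ker(q)$ on $S$-sections from Raynaud (Th\'eor\`eme~6.4.1~(3) of \cite{Raynaud}), whereas you compute $\varprojlim_n \Z/\delta_n\Z\cong\Z/d\Z$ internally, using the stabilization $\delta_n=d$ for $n\gg 0$ (which the paper establishes in \S~\ref{Etude numerique} via Raynaud's Lemma~6.4.4, so you are still leaning on the same source, just a different statement). Each route has its merits: yours is self-contained given the finite-level data and makes the generator $\mathcal{O}_X(D)$ explicit throughout, while the paper's connectedness argument is more robust in that it needs only finiteness of the kernels, not their exact orders, and it gets the filtration statement before (and independently of) the kernel computation. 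One small remark: your Mittag--Leffler/$\varprojlim^1$ bookkeeping, while harmless, is not really needed, since in Serre's category of pro-algebraic groups filtered projective limits are exact (\cite{SerrePro}), which is precisely why the paper can dispatch the epimorphism claim in one line as ``a projective limit of surjective morphisms of quasi-algebraic groups.''
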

\begin{proof} According to the previous theorem, we have $\bm q(\bm{\PP^{[\psi(n)]}(S)})\subset \bm{J^{[n]}(S)}$, hence $\bm q$
induces the isogeny $\bm {q_n}$ with properties as stated in the corollary. In particular, being a projective limit of surjective morphisms of quasi-algebraic groups, $\bm q$ is an epimorphism in the category of Serre pro-algebraic groups.
In order to finish the proof, we need only establish that the last inclusion is in fact an equality. To see this, we consider the
quotient of $\bm{J^{[n]}(S)}$ by $\bm {q}(\bm{\PP^{[\psi(n)]}(S)})$.
By applying the snake Lemma to the following diagram with exact rows
\[
\xymatrix{0\ar[r]& \bm{\PP^{[\psi(n)]}(S)}\ar[r]\ar[d] & \bm{\Pic^0(X)}\ar[r]\ar[d]^{\bm{q}}& \GGr(\PP_{[\psi(n)]})\ar[r]\ar[d]^{\bm{q_n}}& 0 \\0\ar[r]& \bm{J^{[n]}(S)}\ar[r] & \bm{J(S)}\ar[r]& \GGr_n(J)\ar[r]& 0 }
\]
we find that the quotient $\bm{J^{[n]}(S)}$ by $\bm{\PP^{[\psi(n)]}(S)}$ is a finite pro-algebraic group. Since $J^{[n]}/S$
has connected fibres, the pro-algebraic group $\bm{J^{[n]}(S)}$ is connected, hence the cokernel of the left vertical arrow
is necessarily trivial. Thus $\bm q(\bm{\PP^{[\psi(n)]}(S)})= \bm{J^{[n]}(S)}$. The kernel of $\bm q$ is cyclic of order $d$
because the kernel of any $\bm q_n$ is a constant finite group and the kernel of $q$ in \eqref{eq.qP} is isomorphic to $\Z/d\Z$
and is generated by $\mathcal I$ (\cite{Raynaud}, Th\'eor\`eme~6.4.1~(3)).
\end{proof}

\begin{corollary}\label{coro.tametorsor2} Let $X_K/K$ be a torsor under an elliptic curve, and $X/S$ its proper regular minimal model. The following conditions are equivalent:
\begin{enumerate}
\item[(i)] The $S$-scheme $X/S$ is cohomologically flat in dimension $0$.
\item[(ii)] The Picard functor $\Pic^{0}_{X/S}$ is representable, and the canonical map $\Pic^{0}_{X/S}\rightarrow J$ is \'etale.
\item[(iii)] The extension of Serre pro-algebraic groups associated to $X/S$ deduced from \ref{cor.finalresult}
 \[
 0\longrightarrow \mathbb{Z}/d\mathbb{Z}\longrightarrow \bm{\Pic^{0}(X)}\stackrel{\bm q}\longrightarrow \bm{J(S)}\longrightarrow 0
 \]
 lies in the subgroup $\mathrm{Ext}^{1}(\bm{\Gr}_{1}(J),\mathbb{Z}/d\mathbb{Z})\subset \mathrm{Ext}^{1}(\bm{J(S)},\mathbb{Z}/d\mathbb{Z})$.
\end{enumerate}
\end{corollary}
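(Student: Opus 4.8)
The plan is to use Corollary~\ref{coro.tametorsor} as a hub. Since it already identifies (i) with the vanishing of the ``wild'' exponent $e$ (equivalently $d'=d$, equivalently $\delta_1=d$, where $\delta_1=d'$ is the order of $\mathcal{I}|_{X_1}$ and $d=d'p^e$), it suffices to prove separately that (iii)$\iff e=0$ and (ii)$\iff e=0$. Throughout I would freely invoke Theorem~\ref{resultat final} and Corollary~\ref{cor.finalresult}, which furnish the isogenies $\bm q_n\colon \GGr(\PP_{[\psi(n)]})\to \bm{\Gr}_n(J)$ with cyclic kernel generated by $\mathcal{I}|_{X_{\psi(n)}}$ of order $\delta_n$, the identity $\ker(\bm q)=\langle\mathcal{I}\rangle\cong \Z/d\Z$, and the surjectivity of $\bm q$ from $\bm{\PP^{[\psi(n)]}(S)}$ onto $\bm{J^{[n]}(S)}$; recall also $\psi(1)=1$.

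For (iii)$\iff e=0$ I would apply the inflation--restriction sequence attached to $0\to \bm{J^{[1]}(S)}\to \bm{J(S)}\to \bm{\Gr}_1(J)\to 0$ in the abelian category of Serre pro-algebraic groups: the class of our extension lies in the image of $\mathrm{Ext}^{1}(\bm{\Gr}_1(J),\Z/d\Z)$ exactly when its restriction to $\bm{J^{[1]}(S)}$ splits. This restriction is $0\to \Z/d\Z\to \bm q^{-1}(\bm{J^{[1]}(S)})\to \bm{J^{[1]}(S)}\to 0$, and since $\bm q(\bm{\PP^{[1]}(S)})=\bm{J^{[1]}(S)}$ (Corollary~\ref{cor.finalresult}, using $\psi(1)=1$), I would first identify $\langle\mathcal{I}\rangle\cap \bm{\PP^{[1]}(S)}$ with $\langle\mathcal{I}^{d'}\rangle\cong \Z/p^e\Z$, because $\mathcal{I}^{j}|_{X_1}$ is trivial iff $d'\mid j$; thus $\bm{\PP^{[1]}(S)}\to \bm{J^{[1]}(S)}$ is surjective with kernel $\Z/p^e\Z$. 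As every transition kernel in the tower defining $\bm{\PP^{[1]}(S)}$ is a vector group (Lemma~\ref{lemme en greenberg}), $\bm{\PP^{[1]}(S)}$ is connected and is the identity component of $\bm q^{-1}(\bm{J^{[1]}(S)})=\bm{\PP^{[1]}(S)}+\langle\mathcal{I}\rangle$. Any splitting would have connected image, hence land in $\bm{\PP^{[1]}(S)}$ and yield a section of $\bm{\PP^{[1]}(S)}\twoheadrightarrow \bm{J^{[1]}(S)}$; such a section exists iff $\Z/p^e\Z$ is trivial (for $e\geq 1$ it would force $\pi_0(\bm{\PP^{[1]}(S)})\neq 0$, a contradiction). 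This gives (iii)$\iff e=0$.

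For (ii)$\iff e=0$ I would proceed as follows. If (i) holds, then $\Pic_{X/S}$, and hence its open subfunctor $\Pic^{0}_{X/S}$, is representable by a smooth $S$-group scheme (Raynaud's criterion: representability $\iff$ cohomological flatness). To see that $q$ is \'etale one checks that $\ker(q)$ is \'etale: its group of $S$-sections is $\langle\mathcal{I}\rangle\cong\Z/d\Z$, while $e=0$ forces $\delta_1=d$, so already the level-one isogeny $\bm q_1$ has kernel of order $d=\#\ker(\bm q)$; the $d$ sections $\mathcal{I}^{j}$ thus have distinct reductions at level one, so the special fibre of $\ker(q)$ is reduced with $d$ points and $q$ is \'etale. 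Conversely, if (ii) holds then $\ker(q)$ is \'etale, whence its $d$ sections remain distinct after reduction; comparing with Theorem~\ref{resultat final} this gives $\delta_1=d$, i.e.\ $d'=d$ and $e=0$, so (i) holds by Corollary~\ref{coro.tametorsor}.

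The main obstacle I anticipate is the \'etaleness half of (ii). Because $\Pic^{0}_{X/S}$ is not separated---the sections $\mathcal{I}^{j}$ lie in the schematic closure of the unit and constitute the non-separated kernel of $q$, which has trivial generic fibre yet $\Z/d\Z$ worth of $S$-sections---one cannot detect \'etaleness naively on the generic fibre. The clean way around this is to transport the question to the perfect Greenberg realizations, where $\ker(\bm q)\cong \Z/d\Z$ is an honest finite group and the obstruction to \'etaleness becomes exactly whether this kernel is already captured by $\bm q_1$, that is, whether $\delta_1=d$; it is precisely here that Theorem~\ref{resultat final} together with the computation $\delta_1=d'$ does the essential work.
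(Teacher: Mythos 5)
Your treatment of (i)$\Leftrightarrow$(iii) is correct and follows essentially the paper's own route: the paper, too, reduces (iii) to the splitting of the restricted extension $0\to\Z/d\Z\to\bm q^{-1}(\bm{J^{[1]}(S)})\to\bm{J^{[1]}(S)}\to 0$ and then plays the connectedness of $\GGr(\PP^{[1]})$ against the order of $\mathcal{I}|_{X_1}$. In fact your version is the more precise one: the paper's converse asserts that (iii) makes $\bm q$ an \emph{isomorphism} from $\bm q^{-1}(\bm{J^{[1]}(S)})$ onto $\bm{J^{[1]}(S)}$, whereas (iii) only yields a section; your argument (the section has connected image, hence lands in the identity component $\bm{\PP^{[1]}(S)}$, which would then split off $\ker\bigl(\bm{\PP^{[1]}(S)}\twoheadrightarrow\bm{J^{[1]}(S)}\bigr)=\langle\mathcal{I}^{d'}\rangle\cong\Z/p^e\Z$, contradicting connectedness unless $e=0$) is the correct form of that step.

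The genuine gaps are in (i)$\Leftrightarrow$(ii), which the paper does \emph{not} reprove from the pro-algebraic theory: it settles it by citing Proposition~5.2 of Raynaud's paper together with Corollary~\ref{coro.tametorsor}. Your substitute argument fails at two places. For (i)$\Rightarrow$(ii): from ``the $d$ sections $\mathcal{I}^{j}$ have distinct reductions'' you conclude ``the special fibre of $\ker(q)$ is reduced with $d$ points''. This is a non sequitur. The special fibre of $\ker(q)$ is the $k$-group scheme $\ker(q_s)\subset\Pic^{0}_{X_s/k}$, and the fact that it contains $d$ distinct rational points puts no bound on its order or its infinitesimal part. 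Moreover this cannot be repaired inside the framework you propose: Theorem~\ref{resultat final} and Corollary~\ref{cor.finalresult} only describe $k$-points of kernels, and the perfect Greenberg realization $\GGr$ kills precisely the non-reduced structure that \'etaleness measures. Even the part that \emph{is} visible pro-algebraically (namely $\dim\Pic^{0}_{X_s/k}=h^{1}(X_s,\OO_{X_s})=1$ under (i), by Theorem~\ref{Lipman} and $\chi(\OO_{X_s})=0$, so that $\ker(q_s)$ is finite) only gives finiteness, not reducedness; identifying $\ker(q_s)$ as the \'etale group of order $d$ is real content, and it is exactly what the Raynaud citation supplies.

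For (ii)$\Rightarrow$(i) you have in addition a level confusion. The reductions of the sections $\mathcal{I}^{j}$ live in $\Pic^{0}(X_s)=\Pic^{0}(X_d)$, so \'etaleness of $\ker(q)$ gives that $\mathcal{I}|_{X_d}$ has order $d$, i.e.\ $m_e\le d$ in the notation of Lemma~\ref{ki}; this is strictly weaker than $\delta_1=d'=d$, and nothing in Theorem~\ref{resultat final} converts distinctness at level $d$ into distinctness at level $1$ (the jumps of the order of $\mathcal{I}|_{X_n}$ could all occur at levels $\le d$ while $e\ge 1$). The efficient correct argument for this direction needs no \'etaleness at all: condition (ii) includes representability of $\Pic^{0}_{X/S}$, and representability already forces cohomological flatness (the other half of the same result of Raynaud, recalled in the Introduction and in \S~\ref{Rappels Pic}); this is in effect what the paper's one-line citation does. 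Note also that the quoted criterion gives representability by an \emph{algebraic space}, so your parenthetical use of it to produce a smooth $S$-group \emph{scheme} in (i)$\Rightarrow$(ii) is again leaning on the very result being cited.
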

\begin{proof} The equivalence between (i) and (ii) follows from Proposition~5.2 of \cite{Raynaud} and from Corollary~\ref{coro.tametorsor}. To see (i)$\Leftrightarrow$ (iii), suppose first that $X/S$ is cohomologically flat in dimension $0$, namely $\mathcal{I}|_{X_1}$ is of order $d$ (Corollary~\ref{coro.tametorsor}); we then have the following commutative diagram
\[
\xymatrix{0\ar[r]& \mathbb{Z}/d\mathbb{Z}\ar[r]\ar@{=}[d] & \bm{\Pic^0(X)}\ar[r]^{\bm{q}}\ar[d]& \bm{J(S)}\ar[r]\ar[d]& 0 \\0\ar[r]& \mathbb{Z}/d\mathbb{Z}\ar[r] & \GGr(\mathrm{P}_{[1]})\ar[r]^{\bm{q_1}}& \GGr_1(J)\ar[r]& 0 }
\]
In particular, we get (iii). Conversely, if condition (iii) holds, the morphism $\bm{q}$ induces an isomorphism between $\bm{q}^{-1}(\bm{J^{[1]}(S)})$ and $\bm{J^{[1]}(S)}=\ker(\bm{J(S)}\rightarrow \GGr_{1}(J))$. In particular, Serre pro-algebraic group $\bm{q}^{-1}(\bm{J^{[1]}(S)})$ is connected. On the other hand, $\bm{q}^{-1}(\bm{J^{[1]}(S)})$ contains the subgroup $\GGr(\mathrm{P}^{[1]})$ of index $d/d'$ with $d'$ the order of $\mathcal{I}|_{X_1}$. As a result, we find $\bm{q}^{-1}(\bm{J^{[1]}(S)})=\GGr(\mathrm{P}^{[1]})$ by the connectedness of $\bm{J^{[1]}(S)}$. Therefore, $d=d'$, and $\mathcal{I}|_{X_1}$ is of order $d$, hence $X/S$ is cohomologically flat in dimension $0$ (Corollary~\ref{coro.tametorsor}).
\end{proof}

We record the following fact, which will not be used in the rest of this paper.

\begin{remark} For each integer $n\geq 1$, let $N^{[n]}$ be the kernel of the morphism of $S$-group schemes $\theta^{[n]}\colon G^{[n]}\rightarrow J^{[n]}$ obtained inductively by a sequence of dilatations of $\theta\colon G\rightarrow J$ in the proof of Theorem~\ref{resultat final} (and its omitted induction steps). The proof of Theorem~\ref{resultat final} (especially of Lemma~\ref{pf rec} (ii)) shows that the scheme $N^{[n]}$ is normal. Moreover, one can verify that the scheme $N^{[n]}$ is smooth over $S$ for sufficiently large $n$.
\end{remark}

\section{Shafarevich's pairing} \label{ShafaPairing}
{Let $A_K$ be an abelian variety over $K$.} We construct in this section via the rigidified Picard functor a homomorphism $\Xi'\colon {\rm H}^1(K,A_K)\to{\rm Hom} (\pi_1(\GGr(A^\prime)), \Q/\Z)$ which coincides with the restriction of \eqref{eq.sha} to the $n$-parts when $n\in {\Z}_{>0}$ is prime to $p$, and more generally, for all positive integers $n$ in the mixed characteristic case (Theorem \ref{thm.main}).
In Section \ref{sec.comparison}, we will use these constructions to study the morphism $\Phi_d$ in \eqref{eq.phid}.
All group schemes we will work with are assumed to be commutative.

\subsection{The component group of a torus}\label{sec.comptori}
One of the key facts in the construction of Shafarevich's duality is the pro-algebraic structure of the cohomology group ${\HH}^1_\fl(K,\mu_n)$, where $\mu_n$ denotes the finite subgroup scheme of $n$-th roots of unity in the multiplicative group $\G_{m,K}$.
 We first recall this construction. Observe that the N\'eron model $T$ over $S=\spec(\OO_K)$ of a torus $T_K$ is locally of finite type over $S$, but, in general, not of finite type over $S$. It is of finite type over $S$ if and only if $T_K$ does not contain split tori (cf. \cite{BLR}, 10.2/1).
Let $\Lambda_K$ denote the character group of $T_K$.
If $T_K$ has no non-trivial split quotient, \emph{i.e.}, if $\Lambda_K(K)=\{0\}$, then $T$ is of finite type.

\begin{lemma}\label{lem.comptori}
There exists a functorial construction that associates to a finite multiplicative group scheme $F_K$ a Serre pro-algebraic group $\bm{{\rm H}^1(K, F_K)}$ whose group of $k$-rational points is isomorphic to ${\rm H}^1 _\fl(K, F_K)$.
\end{lemma}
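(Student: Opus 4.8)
The plan is to present $F_K$ as the kernel of an isogeny of tori, to identify $\HH^1_\fl(K,F_K)$ with the cokernel of a morphism of N\'eron models, and then to apply the perfect Greenberg realization functor of \S\ref{sec.proalg-gree}. First I would fix a presentation of $F_K$. Since $F_K$ is multiplicative, its Cartier dual is \'etale and corresponds to a finite discrete module $M$ under the absolute Galois group $G_K$ of $K$. Choosing a surjection $\Lambda_2\twoheadrightarrow M$ from a $G_K$-lattice (for instance a permutation lattice) and setting $\Lambda_1:=\ker(\Lambda_2\to M)$, which is again a lattice, the exact sequence $0\to\Lambda_1\to\Lambda_2\to M\to0$ dualizes to a short exact sequence of $K$-group schemes
\[
0\longrightarrow F_K\longrightarrow T_K\stackrel{\lambda}{\longrightarrow} T_K'\longrightarrow 0,
\]
in which $T_K,T_K'$ are tori and $\lambda$ is an isogeny.

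The key cohomological input is the vanishing $\HH^1_\fl(K,T_K)=0$ for every torus $T_K$ over $K$ (note that $\HH^1_\fl$ and $\HH^1$ for the \'etale topology coincide, as $T_K$ is smooth). To obtain this, write $j\colon\spec K\hookrightarrow S$ and let $T$ be the lft N\'eron model of $T_K$, so that $T$ represents $j_{\ast}T_K$ on smooth $S$-schemes. Because $\OO_K$ is strictly henselian with algebraically closed residue field $k$, one has $\mathrm{R}^{1}j_{\ast}T_K=0$: the stalk at the closed point is $\Pic$ of the generic point of the strictly henselian trait, which vanishes (reducing to $\G_m$ over the finite, totally ramified extension that splits $T_K$). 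Moreover $\HH^{1}(S,T)=\HH^{1}(k,T_s)=\HH^{1}(k,\pi_0(T_s))=0$, using Lang's theorem for the smooth connected $k$-group $T_s^{0}$ and the triviality of the Galois action on the finitely generated component group over the algebraically closed field $k$. The Leray spectral sequence for $j$ then forces $\HH^{1}_\fl(K,T_K)=0$. Granting this, the long exact sequence of flat cohomology attached to the isogeny yields a canonical isomorphism $\HH^{1}_\fl(K,F_K)\cong\mathrm{coker}\bigl(T_K(K)\stackrel{\lambda}{\to}T_K'(K)\bigr)$.

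Next I would pass to lft N\'eron models $T,T'$ over $S$, so that $T_K(K)=T(S)$ and $T_K'(K)=T'(S)$, and $\lambda$ extends to a homomorphism of $S$-group schemes $T\to T'$ by the N\'eron mapping property. The perfect Greenberg realization produces a morphism $\GGr(T)\to\GGr(T')$ of pro-algebraic groups over $k$; these are not Serre pro-algebraic in general, their component groups being the cocharacter lattices $\pi_0(T_s),\pi_0(T'_s)$ (compare the example after the exactness statement for $\GGr$ in \S\ref{sec.proalg-gree}). I would then define $\bm{\HH^{1}(K,F_K)}$ to be the cokernel of this morphism. Since $\lambda$ is an isogeny, the induced map on component groups is injective with finite cokernel, so this cokernel has finite, hence profinite, component group and is therefore a genuine Serre pro-algebraic group. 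Its group of $k$-rational points equals $T'(S)/\lambda(T(S))=\HH^{1}_\fl(K,F_K)$, because over the algebraically closed field $k$ the quotient morphism is surjective on $k$-points, which gives the required identification.

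The remaining work, and where I expect the main obstacle to lie, is to establish functoriality and independence of the chosen torus presentation. A morphism $F_K\to F_K'$ need not lift directly to the selected resolutions, so I would show that any two presentations are dominated by a common one and that the resulting cokernels are canonically isomorphic, by a d\'evissage comparing the associated complexes of N\'eron models; functoriality of $\bm{\HH^{1}(K,-)}$ then follows formally. The other delicate point, already isolated above, is the vanishing $\HH^{1}_\fl(K,T_K)=0$ in the equal characteristic case, where $K$ is imperfect and Steinberg's theorem on connected linear groups over fields of cohomological dimension $\le 1$ is not available; the N\'eron-model argument via $\mathrm{R}^{1}j_{\ast}$ and Lang's theorem is designed precisely to circumvent this difficulty uniformly in both residue characteristics.
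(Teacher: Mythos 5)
Your skeleton (realize $F_K$ as the kernel of an isogeny of tori, identify $\HH^1_\fl(K,F_K)$ with a cokernel of sections of N\'eron models, apply the perfect Greenberg functor and take the cokernel) is the same as the paper's, but the two steps carrying the real content both have gaps. The most serious one is your proof of the key vanishing $\HH^{1}_{\fl}(K,T_K)=0$. The Leray argument is circular: since $S$ is strictly henselian, the stalk of $\mathrm{R}^{1}j_{\ast}T_K$ at the closed geometric point is the colimit of $\HH^{1}(U\times_S\spec(K),T_K)$ over \'etale neighbourhoods $U$ of $s$, and this colimit is $\HH^{1}(K,T_K)$ itself; asserting $\mathrm{R}^{1}j_{\ast}T_K=0$ is thus a restatement of the claim, not a proof. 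Moreover the parenthetical justification --- reduce to $\G_m$ over a splitting extension $L/K$ --- only gives $\HH^{1}(L,T_L)=0$ (Hilbert 90); by inflation--restriction the remaining obstruction is $\HH^{1}(\mathrm{Gal}(L/K),T_K(L))$, and that is exactly where the difficulty lives. For the norm-one torus of $L/K$, for instance, this group is $K^{\ast}/N_{L/K}(L^{\ast})$, whose vanishing is the surjectivity of the norm map \eqref{eq.Norm}, i.e.\ a consequence of $\mathrm{Br}(K)=0$ (Lang's theorem: $K$ is a $(C_1)$-field), not of Hilbert 90. This is precisely the fact the paper invokes (``since $K$ is a $(C_1)$-field, tori are cohomologically trivial'', after \cite{Beg}); your argument does not circumvent it, it presupposes it.

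The second gap concerns why the cokernel of $\GGr(T)\to\GGr(T')$ is a \emph{Serre} pro-algebraic group. Your claim that an isogeny induces an \emph{injective} map on component groups of the lft N\'eron models is false: the component group of the N\'eron model of a torus is the module of coinvariants of the cocharacter lattice under Galois, and coinvariants are only right exact, so the kernel can be a nontrivial finite group (e.g.\ the isogeny $\G_m\times R^{1}_{L/K}\G_m\to \Re_{L/K}(\G_{m,L})$ for $L/K$ ramified quadratic kills the $\Z/2\Z$ coming from the norm-one factor). What is true --- and what the paper proves by splitting off the maximal split quotients $T^{(d)}_{i,K}$ --- is that the kernel \emph{and} cokernel on $\pi_0$ are finite; both finitenesses are then used, via a snake-lemma argument, to exhibit the cokernel of $\GGr(T_1)\to\GGr(T_2)$ as an extension of a finite group by the quotient of a genuine Serre pro-algebraic group $\bm P$ (the cokernel on identity components) by a finite constant group. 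This care is necessary because the $\GGr(T_i)$ themselves are \emph{not} Serre pro-algebraic groups, so one cannot compute ``the component group of the cokernel'' by right exactness inside Serre's category, as your one-line deduction implicitly does. Finally, independence of the presentation and functoriality, which you defer to an unexecuted common-refinement d\'evissage, are handled in the paper by a concrete device (given $g\colon F_K\to F_K'$, embed $F_K'$ in $T_{1,K}/\ker(g)$ and factor the isogeny) together with a citation of \cite{Beg} for independence; as written, this part of your proposal is incomplete.
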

\proof (Cf. \cite{Beg}, 4.3.) Let $f\colon T_{1,K}\to T_{2,K}$ be an isogeny of tori with kernel $F_K$. Let $\Lambda_{i,K}$ (respectively $T_i$) be the character group (respectively the N\'eron model) of $ T_{i,K}$, $i=1,2$.
Let $T_{1,K}^{(d)}$ denote the torus (\emph{d\'eploy\'e}) whose character group is the constant free group $\Lambda_{1,K}(K)$, and similarly for $T_{2,K}^{(d)}$. They are split tori with isomorphic component groups, say $\Z^r$. Furthermore the isogeny $f$ induces an isogeny $f^{(d)}\colon T_{1,K}^{(d)}\to T_{2,K}^{(d)} $ that is injective on component groups.
The torus $T_{i,K}'$, defined as the kernel of the quotient map $ T_{i,K}\to T_{i,K}^{(d)}$, admits a N\'eron model of finite type over $S$
because its character group is $\Lambda_{i,K}'\cong \Lambda_{i,K}/ \Lambda_{i,K}(K)$.
Since $K$ is a $(C_1)$-field, tori are cohomologically trivial. Hence there is an isomorphism $T_{i,K}(K)/T_{i,K}'(K) \stackrel{\sim}{\to} T_{i,K}^{(d)}(K)$ and the complexes of component groups
\[\pi_0( T_i')\longrightarrow \pi_0(T_i)\longrightarrow \pi_0(T_i^{(d)})\longrightarrow 0,\quad i=1,2, \] are exact. One deduces from this fact that the kernel and the cokernel of the homomorphism $ \pi_0(T_1)\to \pi_0(T_2)$ are finite groups.

The identity components of the N\'eron models $T_i$ are smooth group schemes of finite type (\cite{BLR}, 10.1). Hence their perfect Greenberg realizations are Serre pro-algebraic groups.
Let $\bm P$ denote the cokernel of the map $\GGr(T_1^0)\to \GGr(T_2^0)$.
Now, the cokernel $\bm{{\rm H}^1(K,F_K)}$ of the map $\GGr(T_1)\to \GGr(T_2)$ is an extension of the finite group $\pi_0(T_2)/ \pi_0(T_1)$ by the quotient of $\bm P$ by a finite constant group, hence it is a Serre pro-algebraic group. By construction the group of $k$-points of $\bm{{\rm H}^1(K,F_K)}$ is ${\rm H}^1_\fl(K,F_K)$. Furthermore, the pro-algebraic group $\bm{{\rm H}^1(K,F_K)}$ does not depend on the isogeny $f$ (\cite{Beg}, 4.3 (b)).

For the functoriality, consider a morphism of finite multiplicative group schemes $g\colon F_K\to F_K'$ and let $f\colon T_{1,K}\to T_{2,K}$ be an isogeny of tori with kernel $F_K$. Then $F_K'$ embeds in $T_{1,K}':=T_{1,K}/\ker(g)$ and we have an isogeny of tori $f'\colon T_{1,K}'\to T_{2,K}$ with kernel $F_K'$. Since $f$ factors through $f'$, we get a morphism $\bm{{\rm H}^1(K, F_K)}\to \bm{{\rm H}^1(K, F_K')}$.
\qed

For our later work we will also need the following result:

\begin{lemma}\label{lem.torustor}
Let $0\to T_K\to G_K\to A_K\to 0$ be an extension of an abelian variety $A_K$ by a torus $T_K$. Let $A$, $G$, $T$, be the N\'eron models of $A_K$, $G_K$, $T_K$, respectively, over $S$. Then the above sequence induces a homomorphism of profinite groups $\pi_1(\GGr (A)) \to \pi_0(T)_\tor$ where the index $\tor$ indicates the torsion subgroup.
\end{lemma}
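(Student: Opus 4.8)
The plan is to realize the desired map as the connecting homomorphism of the long exact $\pi_0$--$\pi_1$ sequence attached to the N\'eron models, and then to observe that it is forced into the torsion subgroup for reasons of compactness. First I would record the exactness of the sequence of N\'eron models on $S$-sections. Since $K$ is a $(C_1)$-field, the torus $T_K$ is cohomologically trivial, so $\HH^{1}_\fl(K,T_K)=0$; combined with the N\'eron mapping property $T(S)=T_K(K)$, $G(S)=G_K(K)$, $A(S)=A_K(K)$, the exact sequence of $K$-points becomes a short exact sequence of abstract groups
\[
0\longrightarrow T(S)\longrightarrow G(S)\longrightarrow A(S)\longrightarrow 0 .
\]
The closed immersion $T_K\hookrightarrow G_K$ and the projection $G_K\to A_K$ extend, by the N\'eron property, to homomorphisms $T\to G$ and $G\to A$ of (lft) N\'eron models inducing these maps.

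Next I would extract the connecting homomorphism. Applying the exact functor $\GGr$ and invoking the long exact sequence (iii) of \S\ref{sec.proalg-gree} produces
\[
\cdots\longrightarrow \pi_1(\GGr(G))\longrightarrow \pi_1(\GGr(A))\xrightarrow{\ \partial\ }\pi_0(\GGr(T))=\pi_0(T)\longrightarrow\cdots .
\]
To build $\partial$ concretely without assuming $T$ of finite type, I would pull back the extension $\GGr(G)\to\GGr(A)$ along the universal covering of $\GGr(A)^{0}$ in Serre's category, obtaining an extension of the profinite group $\pi_1(\GGr(A))$ by $\GGr(T)$, and then compose the resulting monodromy map with $\GGr(T)\to\pi_0(T)$; note that this uses only the quotient $\GGr(T)\to\pi_0(T)$, which is available for locally-of-finite-type $T$. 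By facts (i)--(ii) the source $\pi_1(\GGr(A))$ is profinite, being the fundamental group of the finite-type object $\GGr(A)$.

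The torsion statement is then formal, and here the split part of $T_K$ is used as a bookkeeping device. Writing $T^{(d)}_K$ for the maximal split quotient of $T_K$ (so $\pi_0(T^{(d)})\cong\Z^{r}$ is free) and $T'_K=\ker(T_K\to T^{(d)}_K)$ (whose N\'eron model $T'$ is of finite type, with $\pi_0(T')$ finite), the component-group sequence $\pi_0(T')\to\pi_0(T)\to\pi_0(T^{(d)})\to 0$, exactly as in the proof of Lemma~\ref{lem.comptori}, identifies $\pi_0(T)_\tor$ with the kernel of $\pi_0(T)\to\pi_0(T^{(d)})$. Now the composite of $\partial$ with this projection is the connecting map of the split-quotient extension $0\to T^{(d)}\to G/T'_K\to A\to 0$; being induced by a morphism in the pro-algebraic formalism it has compact image, and a finite subgroup of the torsion-free group $\Z^{r}$ is trivial, so the composite vanishes. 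Hence $\operatorname{im}(\partial)\subseteq\pi_0(T)_\tor$, and $\partial$ corestricts to a homomorphism $\pi_1(\GGr(A))\to\pi_0(T)_\tor$ of profinite groups (the target being finite), as asserted.

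The genuine obstacle is the exactness input needed to define $\partial$ at all: one must know that $\GGr(G)\to\GGr(A)$ is an epimorphism with kernel $\GGr(T)$, i.e. that $0\to T\to G\to A\to 0$ is suitably exact as smooth $S$-group schemes (with $G\to A$ smooth), even though the N\'eron-model functor is not exact and neither $T$ nor $G$ need be of finite type. I expect to handle this by combining the $S$-section exactness above with smoothness of $G\to A$ over the strictly henselian base $S$, so that surjectivity on $k$-points upgrades to an epimorphism of Greenberg realizations, and by carrying out the split-torus reduction carefully enough that fact (iii) is applied only to a genuine finite-type short exact sequence; matching the component groups obtained in that reduction back to $\pi_0(T)_\tor$ is precisely the delicate point that the splitting into $T'$ and $T^{(d)}$ is designed to control.
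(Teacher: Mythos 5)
Your opening step (exactness of the N\'eron-model sequence on $S$-sections, via cohomological triviality of $T_K$ over the $(C_1)$-field $K$ and the N\'eron mapping property) is exactly the paper's, and your closing observation is correct as far as it goes: a homomorphism from the profinite group $\pi_1(\GGr(A))$ to $\pi_0(T^{(d)})\cong\Z^r$ must vanish, and $\pi_0(T)_\tor=\ker\left(\pi_0(T)\to\pi_0(T^{(d)})\right)$. But the heart of the lemma is precisely the step you defer: constructing the connecting homomorphism $\partial$ when $T$ (hence $G$) is only locally of finite type. In that case $\GGr(T)$ and $\GGr(G)$ are \emph{not} Serre pro-algebraic groups (their component groups contain copies of $\Z$), so every tool you invoke there --- fact (iii) of \S~\ref{sec.proalg-gree}, the long exact sequence of the $\pi_i$'s, the projectivity/lifting property of the universal cover, the ``monodromy map'' of the restricted extension --- is a theorem \emph{inside} Serre's abelian category and is simply unavailable for the extension $0\to\GGr(T)\to\GGr(G)\to\GGr(A)\to 0$. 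Your proposed repairs do not close this. Smoothness of $G\to A$ over the strictly henselian base at best yields that $\GGr(G)\to\GGr(A)$ is an epimorphism; it says nothing about lifting the universal cover of $\GGr(A)^{0}$ through it. And the ``split-torus reduction, so that fact (iii) is applied only to a genuine finite-type short exact sequence'' cannot be carried out with the pieces $T'_K$, $T^{(d)}_K$ you chose: the split quotient is exactly the non-finite-type part (the N\'eron model of $T^{(d)}_K$ has $\pi_0\cong\Z^r$), the models of $G_K$ and of $G_K/T'_K$ are likewise not of finite type, and the only finite-type kernel sequence $0\to T'\to G\to j_*(G_K/T'_K)\to 0$ has the wrong quotient. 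So every sequence in your scheme still fails the finite-type hypothesis, and the connecting maps you compare (for the original and for the split-quotient extension) are both undefined by the cited results.

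The missing idea is the paper's replacement of your decomposition of $T_K$ over $K$ by a decomposition of the N\'eron model $T$ over $S$: one passes to the maximal finite-type subgroup $T^\ft\subseteq T$ (it contains $T^0$ and has $\pi_0(T^\ft)\cong\pi_0(T)_\tor$, using $\pi_0(T)=\pi_0(T)_\tor\oplus\pi_0(T)_\fr$ over the algebraically closed $k$), and uses the SGA7 vanishing of extensions of $A^0$ by $T/T^\ft$ to show that the original sequence is induced by a sequence $0\to T^\ft\to\tilde G\to A^0\to 0$ of \emph{finite-type} smooth $S$-group schemes, still exact on $S$-sections. Fact (iii) then applies verbatim, giving $\pi_1(\GGr(A^0))\to\pi_0(\GGr(T^\ft))$, and the torsion-valuedness is automatic --- no compactness argument is needed --- because the target is $\pi_0(T)_\tor$ by construction; one concludes with $\pi_1(\GGr(A^0))\stackrel{\sim}{\to}\pi_1(\GGr(A))$. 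Note that $T^\ft$ is genuinely different from your $T'$ (the latter is the N\'eron model of a subtorus, whose identity component does not even have generic fibre $T_K$). If you insist on your route, the statement you would have to prove from scratch is an extension of Serre's theory outside his category, e.g.\ that extensions of a connected perfect pro-group scheme by a discrete free abelian group split (because $\Z$-torsors over connected schemes are trivial and discrete-valued cocycles on connected products are constant); as written, this is asserted rather than proved, and it is in effect a hand-made substitute for the SGA7 input the paper uses.
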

\proof The sequence of N\'eron models of the above extension is exact on $S$-sections; indeed, by the universal property of N\'eron models, $T_K(K)=T(S), G_K(K)=G(S)$, $A_K(K)=A(S)$, and $T_K$ is cohomologically trivial. Hence the sequence of N\'eron models provides an extension of perfect $k$-schemes
$0\to \GGr(T)\to \GGr(G)\to\GGr(A)\to 0$.
If $T$ is of finite type, $\pi_0(T)_\tor=\pi_0(T)\cong \pi_0(\GGr(T))$ is finite. Furthermore the above sequence is a sequence of Serre pro-algebraic groups, and the desired map comes from the long exact sequence of the $\pi_i$'s (see \S~\ref{sec.proalg-gree},(iii)).

Suppose that $T$ is locally of finite type. Since $k$ is algebraically closed, $\pi_0(T)=\pi_0(T)_\tor\oplus \pi_0(T)_\fr$
with $\pi_0(T)_\fr$ torsion-free.
Let $T^\ft$ be the maximal subgroup of $T$ whose component group is finite. In particular, $T^\ft$ contains the identity component $T^0$ and $\pi_0(T^\ft){\stackrel{\sim}{\to} } \pi_0(T)_\tor$. Then the sequence we started with extends to a sequence $0\to T^\ft\to \tilde G\to A^0\to 0$ that is exact on $S$-sections because all extensions of $A^0$ by $T/T^\ft$ are trivial (\cite{SGA7}, \S~5.7, 5.5). Then we have an exact sequence of Serre pro-algebraic groups $ 0\to \GGr(T^\ft)\to \GGr(\tilde G)\to \GGr(A^0)\to 0$. By the long exact sequence of the $\pi_i$'s we get then a map $ \pi_1(\GGr (A^0))\to  \pi_0(\GGr(T^\ft))$. The conclusion follows using the canonical isomorphisms $  \pi_1(\GGr (A^0)) \stackrel{\sim}{\to} \pi_1(\GGr (A))$ and 
$\pi_0(\GGr(T^\ft))\stackrel{\sim}{\to} \pi_0(T)_\tor$.
\qed
\medskip

{In the case $F_K=\mu_{n}$ we can describe explicitly the component group of $\bm{{\rm H}^1(K, F_K)}$.
\begin{lemma}\label{comp-mun}  The component group of $\bm{{\rm H}^1(K, \mu_n)}$ is canonically isomorphic to $ \Z/n\Z$.
\end{lemma}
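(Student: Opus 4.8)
The plan is to apply the general construction of Lemma~\ref{lem.comptori} to the Kummer isogeny $f=[n]\colon \G_{m,K}\to \G_{m,K}$, whose kernel is $F_K=\mu_n$, and then to read off the component group from the extension produced in the proof of that lemma. Since $\G_{m,K}$ is already split, the auxiliary tori simplify drastically: with the notation of Lemma~\ref{lem.comptori} one has $T_{1,K}=T_{2,K}=\G_{m,K}$, the \emph{déployé} tori $T_{i,K}^{(d)}$ coincide with $\G_{m,K}$, and the subtori $T_{i,K}'=\ker(T_{i,K}\to T_{i,K}^{(d)})$ are trivial. Consequently $T_i^{0}=\G_{m,\OO_K}$ for $i=1,2$, and the proof of Lemma~\ref{lem.comptori} exhibits $\bm{{\rm H}^1(K,\mu_n)}$ as an extension
\[
0\longrightarrow \bm P/C\longrightarrow \bm{{\rm H}^1(K,\mu_n)}\longrightarrow \pi_0(T_2)/\pi_0(T_1)\longrightarrow 0,
\]
where $\bm P=\mathrm{coker}\big(\GGr(T_1^{0})\xrightarrow{[n]}\GGr(T_2^{0})\big)$ and $C$ is a finite constant group.

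First I would check that the kernel $\bm P/C$ is connected. Indeed $T_2^{0}=\G_{m,\OO_K}$ is a smooth $S$-group scheme of finite type with connected special fibre $\G_{m,k}$, so by fact (i) of \S~\ref{sec.proalg-gree} its perfect Greenberg realization satisfies $\pi_0(\GGr(T_2^{0}))=0$, i.e.\ $\GGr(T_2^{0})$ is connected. As $\bm P$ is a quotient of $\GGr(T_2^{0})$ and $\bm P/C$ a quotient of $\bm P$, both are connected; in particular $\pi_0(\bm P/C)=0$. Next I would identify the quotient $\pi_0(T_2)/\pi_0(T_1)$: the lft Néron model of $\G_{m,K}$ has component group canonically isomorphic to $\Z$ via the valuation $\mathrm{ord}\colon K^{\times}=\G_{m,K}(K)\to \Z$, and the map induced by $f=[n]$ on $K$-points is $x\mapsto x^{n}$, hence multiplication by $n$ on $\pi_0=\Z$. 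Therefore $\pi_0(T_2)/\pi_0(T_1)\cong \Z/n\Z$ canonically.

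Finally I would apply the functor $\pi_0$ to the displayed extension. Since $\pi_0$ is right exact with left derived functor $\pi_1$, and the kernel $\bm P/C$ is connected (so $\pi_0(\bm P/C)=0$ and the connecting map into it vanishes), the associated exact sequence collapses to an isomorphism $\pi_0(\bm{{\rm H}^1(K,\mu_n)})\xrightarrow{\sim}\pi_0(\Z/n\Z)=\Z/n\Z$; equivalently, a connected normal subgroup lies in the identity component, so passing to component groups kills $\bm P/C$ and leaves $\Z/n\Z$. The resulting isomorphism is canonical because it is induced by the canonical identification $\pi_0(T_2)/\pi_0(T_1)\cong\Z/n\Z$ above, matching Kummer theory ${\rm H}^1_\fl(K,\mu_n)\cong K^{\times}/(K^{\times})^{n}$ followed by $\mathrm{ord}\bmod n$. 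The only step requiring genuine care is the bookkeeping internal to Lemma~\ref{lem.comptori}: one must confirm that in the split case every finite-constant correction is absorbed into the connected kernel $\bm P/C$, so that the \emph{entire} component group originates from $\pi_0(T_2)/\pi_0(T_1)$. Granting this, no serious obstacle remains, as the computation reduces to the connectedness statement together with the valuation map.
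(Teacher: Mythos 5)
Your proof is correct and takes essentially the same approach as the paper: both apply the construction of Lemma~\ref{lem.comptori} to the Kummer isogeny $[n]\colon \G_{m,K}\to\G_{m,K}$, identify the component group of the lft N\'eron model of $\G_{m,K}$ with $\Z$ (on which $[n]$ induces multiplication by $n$), and conclude by right-exactness of $\pi_0$. The paper is merely more economical, applying $\pi_0$ directly to the presentation of $\bm{{\rm H}^1(K,\mu_n)}$ as the cokernel of $[n]$ on $\GGr(\mathcal G)$, which renders your connectedness check on $\bm P/C$ and the final bookkeeping concern unnecessary (right-exactness absorbs them automatically).
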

\begin{proof} Using the Kummer sequence $0\to \mu_n\to \G_{m,K}\to \G_{m,K}\to 0$, we get that $\bm{{\rm H}^1(K, \mu_n)}$ is the cokernel of the $n$-multiplication on $\GGr(\mathcal G)$ with $\mathcal G$ the N\'eron model of $ \G_{m,K}$ over $S$. By the right exactness of the functor $\pi_0$, we then get the isomorphisms $\pi_0(\bm{{\rm H}^1(K, \mu_n)} \stackrel{\sim}{\leftarrow} \pi_0(\GGr(\mathcal G))/n\pi_0(\GGr(\mathcal G)) \stackrel{\sim}{\rightarrow} \Z/n\Z$, since $\pi_0(\GGr(\mathcal G))$ is canonically isomorphic to $\pi_0(\mathcal G_s)$, and hence to $\Z$ (see \S \ref{sec.proalg-gree} (i)).
\end{proof}
}

\subsection{B\'egueri's construction}\label{sec.beg}
 In this section we assume that $K$ has characteristic $0$.
Given $K$-schemes $Z_K$ and $U_K$, let $Z_{U_K}$ denote the fibred product $Z_K\times_K U_K$, viewed as a scheme over $U_K$. Furthermore, we will identify any commutative $K$-group scheme with the corresponding fppf sheaf so that $\mathrm{Hom}$ and $\mathrm{Ext}$ groups or sheaves are always meant for the fppf topology.

Let $X_K$ be a $K$-torsor under $A_K$ and let $n$ be a positive integer such that $n[X_K]$ is trivial in ${\rm H}^1_\fl(K,A_K)$. Since the order of $X_K$ is defined as the order of $[X_K]$, it is the minimum among such integers. The element $[X_K]$ in ${\rm H}^1_\fl(K,A_K)$ corresponds to an extension of group schemes over $K$
\begin{equation}\label{eq.etaB} 0\longrightarrow A_K\longrightarrow B_K\longrightarrow \Z\longrightarrow 0,
\end{equation}
so that the fibre at $1\in \Z$ is isomorphic to $X_K$ (\cite{SGA7}, VII, \S 1.4). Since the class of $X_K$ in ${\rm H}^1_\fl(K,A_K)$ is $n$-torsion, the exact sequence \eqref{eq.etaB} is the pull-back along $\Z\to \Z/n\Z$ of a, not unique, extension
\begin{equation}\label{eq.eta}\eta\colon 0\longrightarrow A_K\stackrel{\alpha}{\longrightarrow} E_K\longrightarrow \Z/n\Z\longrightarrow 0
\end{equation}
and $X_K$ is isomorphic to the fibre of $E_K$ at $1\in \Z/n\Z$. Let
\begin{equation}\label{eq.etan}
\eta_n \colon 0\longrightarrow{}_n A_K\longrightarrow {}_n E_K\longrightarrow \Z/n\Z\longrightarrow 0
\end{equation}
be the sequence of $n$-torsion subgroups. Consider also the exact sequence
\begin{equation}\label{eq.muvE}
0\longrightarrow \mu_n\longrightarrow V_{{}_nE_K}^*\longrightarrow \underline{\rm Ext}^1(E_K,\G_m)_{{}_nE_K}\stackrel{\tau_E}{\longrightarrow}
 \underline{\rm Ext}^1(E_K,\G_m)\cong A^\prime_K \longrightarrow 0
\end{equation}
(cf. \cite{Beg}, 2.3.2)
where $V_{{}_nE_K}^*$ denotes the torus $\Re_{{}_nE_K/K}(\G_{m, {}_nE_K})$ representing the Weil restriction functor that associates to a $K$-scheme $S'$ the group $\G_{m,K}(S'\times_K {}_nE_K )$ (\cite{BLR}, 7.6). The isomorphism on the right is due to the vanishing of the sheaf $\underline{\rm Ext}^i(\Z/n\Z,\G_m)$ for $i=1,2$; to prove this fact recall that $n\colon \G_{m}\to \G_{m}$ is an epimorphism for the fppf topology and that $\underline{\rm Ext}^i(\Z,\G_m)=0$ for $i>0$. Observe that
\[\mu_n\cong \underline{\rm Hom}(\Z/n\Z,\G_m) \stackrel{\sim}{\longrightarrow} 
 \underline{\rm Hom}(E_K,\G_m).
\]
The second map in \eqref{eq.muvE} sends a homomorphism $f\colon E_K\to \G_{m,K}$ to its restriction to ${}_nE_K$, while
the third arrow sends $g\in \G_{m,K}( {}_nE_K ) $ to (the isomorphism class of) the trivial extension endowed with the section $g$,
and the map $\tau_E$ forgets the rigidification along ${}_nE_K$.

We now describe B\'egueri's construction of Shafarevich's duality following \cite{Beg}.
Let $F_K$ be a finite $K$-group scheme and $F_K^D$ its Cartier dual. There is
 a short exact sequence (cf. \cite{Beg}, 2.2.1)
\begin{equation}\label{eq.muZ2}
0\longrightarrow F_K^D\longrightarrow V_{F_K}^*\longrightarrow \underline{\rm Ext}^1(F_K,\G_m)_{F_K}\longrightarrow 0,
\end{equation}
where the second map forgets the group structure and the third map associates to each $f\in \G_{m,K}(F_K)$ the trivial extension
endowed with the rigidification induced by $f$.
We also recall the following exact sequence (cf. \cite{Beg}, 2.3.1)
\begin{equation}\label{eq.ExtAAprime}
0\longrightarrow V_{{}_nA_K}^*\longrightarrow \underline{\rm Ext}^1(A_K,\G_m)_{{}_nA_K}\stackrel{\tau_A}{\longrightarrow}
 A^\prime_K\longrightarrow 0.
\end{equation}
In \cite{Beg}, 8.2.2, B\'egueri first constructs a map
\begin{equation*}\label{eq.Gamma}
\Gamma\colon {\rm H}^1_\fl(K,{}_nA_K)\longrightarrow {\rm Ext}^1(\GGr(A'),\bm{{\rm H}^1(K,\mu_n) })
\end{equation*}
as follows: any element in ${\rm H}^1_\fl(K,{}_nA_K)$ corresponds to a sequence $\eta_n$ as in \eqref{eq.etan}. Consider now the diagram
\begin{equation*}\label{eq.diaBeg}
\xymatrix{
\mu_n\ar@{=}[r]\ar@{^{(}->}[d]&\mu_n\ar@{^{(}->}[d]&0\ar[d]\\
V_{\Z/n\Z}^*\ar[r]\ar[d]^{v_1}&V_{{}_nE_K}^*\ar[r]\ar[d]^{v_2}&V_{{}_nA_K}^*\ar[d]^{v_3}\\
 \underline{\rm Ext}^1(\Z/n\Z,\G_m)_{\Z/n\Z}\ar[d]\ar[r]& \underline{\rm Ext}^1(E_K,\G_m)_{{}_nE_K}\ar@{->>}[d]^{\tau_E}\ar[r]& \underline{\rm Ext}^1(A_K,\G_m)_{{}_nA_K}\ar@{->>}[d]^{\tau_A}\\
0& A'_K\ar@{=}[r]& A'_K&}
\end{equation*}
where the rows are complexes and the vertical sequences are those in \eqref{eq.muZ2}, for $F_K=\Z/n\Z$,
 \eqref{eq.muvE}, \eqref{eq.ExtAAprime}, respectively.
Since $K$ has characteristic $0$, the second row consists of tori, while the third row consists of semi-abelian varieties.
Hence they all admit N\'eron models, locally of finite type over $S$. On passing to the perfection of the Greenberg realization of the N\'eron models
and considering the cokernels of the maps induced by $v_1,v_2,v_3$, one gets a complex of Serre pro-algebraic groups (cf. Lemma~\ref{lem.comptori})
\begin{equation}\label{eq.ZEA}
 0\longrightarrow \bm{{\rm H}^1(K,\mu_n)} \longrightarrow \bm{{\rm Ext}^1(E_K,\G_m)}\longrightarrow \GGr(A^\prime)\longrightarrow 0;
\end{equation}
this is indeed an exact sequence because on $k$-points it induces the exact sequence
\[
 0\longrightarrow{\rm Ext}^1(\Z/n\Z,\G_m)\longrightarrow {\rm Ext}^1(E_K,\G_m)\longrightarrow {\rm Ext}^1(A_K,\G_m)\longrightarrow 0,
\]
where the group on the left is isomorphic to ${ \rm H}^1_\fl(K,\mu_n)$ and the one on the right is isomorphic to $A'(\OO_K)$. We have thus associated with \eqref{eq.etan} an extension of $\GGr(A')$ by $\bm{\HH^1(K,\mu_n)}$: this is the image of \eqref{eq.etan}
via $\Gamma$.

The homomorphism
\begin{equation*}\label{eq.psid}
\psi_n\colon {\rm H}^1_\fl(K,{}_nA_K)\longrightarrow {\rm Ext}^1(\GGr(A^{\prime 0}),\Z/n\Z)
 \end{equation*}
in \cite{Beg}, 8.2.3, is then obtained by applying first $\Gamma$, then the pull-back along
$\GGr(A^{\prime 0})\to \GGr(A^{\prime})$ and, finally, the push-out along $\bm{\HH^1(K,\mu_n)}\to \pi_0(\bm{\HH^1(K,\mu_n)}) \cong \Z/n\Z $ (cf.~Lemma~\ref{comp-mun}).
Let
\begin{equation}\label{eq.shaX}
\psi_n(\eta_n)\colon \quad 0\longrightarrow \Z/n\Z \longrightarrow W(X_K)\longrightarrow \GGr(A^{\prime 0})\longrightarrow 0
\end{equation}
denote the image of \eqref{eq.etan} via $\psi_n$.
Recall now that (cf. \cite{SerrePro}, 5.4)
\begin{equation}\label{eq.ExtHom-pro}
{\rm Ext}^1(\GGr(A^{\prime 0}), \Q/\Z) \stackrel{\sim}{\longrightarrow}  {\rm Hom}(\pi_1(\GGr(A^{\prime 0})), \Q/\Z) \stackrel{\sim}{\longleftarrow} {\rm Hom}(\pi_1(\GGr(A^\prime)), \Q/\Z) .
\end{equation}
In terms of homomorphisms of profinite groups, the extension \eqref{eq.shaX} then corresponds to a map
 \begin{equation}\label{eq.utau}
u^\tau=u^\tau_{X_K} \colon \pi_1(\GGr(A^{\prime}))\longrightarrow \pi_0(\bm{{\rm H}^1(K,\mu_n)} ) \cong \Z/n\Z\subset \Q/\Z
\end{equation}
deduced from \eqref{eq.ZEA} (or equivalently, from the pull-back of
\eqref{eq.ZEA} along $\GGr(A^{\prime 0})\to \GGr(A')$)
 via the long exact sequence of $\pi_i$'s.

\begin{theorem}[\cite{Beg}, 8.2.3, 8.3.6]
 \begin{itemize}
\item[(i)] The extension class $\psi_n(\eta_n)$ in \eqref{eq.shaX} depends only on the class of the sequence \eqref{eq.etaB}, \emph{i.e.}, on $[X_K]$; furthermore, $\psi_n$ factors through an isomorphism $\Psi_n\colon {}_{n}{\rm H}^1_\fl( K,A_K)\stackrel{\sim}{\to} {\rm Ext}^1(\GGr(A^{\prime 0}),\Z/n\Z)$. 
\item[(ii)] Let $n',n\in \Z_{>0}$, with $n|n'$; then the following diagram
\[\xymatrix{{}_n {\rm H}^1_\fl (K,A_K)\ar[r]\ar[d]^{\Psi_n }& {}_{n'}{\rm H}^1_\fl( K,A_K)\ar[d]^{\Psi_{n'} }\\
{\rm Ext}^1(\GGr(A^{\prime 0}),\Z/n\Z)  \ar[r]& {\rm Ext}^1(\GGr(A^{\prime 0}),\Z/n'\Z)
}\]
 commute, where the upper horizontal arrow is the usual inclusion of torsion subgroups of ${\rm H}^1_\fl (K,A_K)$ and the lower horizontal arrow is the push-out along the inclusion $\Z/n\Z\to \Z/n'\Z$ (in $\Q/\Z$). 
\item[(iii)]Passing to the limit on $n$, the homomorphisms $\Psi_n$ provide an isomorphism $ {\rm H}^1_\fl( K,A_K)\stackrel{\sim}{\to} {\rm Ext}^1(\GGr(A^{\prime 0}), \Q/\Z) $ and hence Shafarevich duality $ {\rm H}^1_\fl( K,A_K)\stackrel{\sim}{\to} {\rm Hom}(\pi_1(\GGr(A^\prime)), \Q/\Z)$ in \eqref{eq.sha} via the isomorphisms \eqref{eq.ExtHom-pro}.
\end{itemize}
\end{theorem}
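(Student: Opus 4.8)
The plan is to take the construction of $\psi_n$ and the exactness of \eqref{eq.ZEA} as already given, so that the substance of the theorem is: in (i) that $\psi_n$ descends to a well-defined map $\Psi_n$ on ${}_n{\rm H}^1_\fl(K,A_K)$ which is bijective; in (ii) a naturality statement in $n$; and in (iii) a passage to the colimit. I would organise everything around the homomorphism $u^\tau$ of \eqref{eq.utau}: by \eqref{eq.ExtHom-pro} the class $\psi_n(\eta_n)$ is completely recorded by $u^\tau\in{\rm Hom}(\pi_1(\GGr(A^\prime)),\Z/n\Z)$, obtained as the connecting map of the long exact sequence of $\pi_i$'s (\S\ref{sec.proalg-gree}(iii)) applied to \eqref{eq.ZEA} after pull-back along $\GGr(A^{\prime 0})\to\GGr(A^\prime)$. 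Thus all three parts can be phrased as statements about this system of connecting maps.

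For the descent in (i), recall that $\eta_n$ represents a class in ${\rm H}^1_\fl(K,{}_nA_K)$ and that the surjection ${\rm H}^1_\fl(K,{}_nA_K)\twoheadrightarrow{}_n{\rm H}^1_\fl(K,A_K)$ induced by $0\to{}_nA_K\to A_K\xrightarrow{n}A_K\to0$ has kernel the image of $A_K(K)/nA_K(K)$. Hence it suffices to show that $\psi_n(\eta_n)$ splits whenever the torsor $X_K$ (the fibre of $E_K$ over $1$) is trivial, i.e. whenever the class of $\eta_n$ comes from a point $a\in A_K(K)=A(\OO_K)$. For such an $\eta_n$ I would trace the effect of $a$ through the B\'egueri diagram and check that it modifies the rigidification data along ${}_nE_K$ only by an integral section; since $a$ reduces into the identity component of the N\'eron model $A$, after forming N\'eron models, applying $\GGr$ and passing to cokernels this modification lands in the image of an identity component and therefore dies in $\pi_0(\bm{{\rm H}^1(K,\mu_n)})\cong\Z/n\Z$ (Lemma~\ref{comp-mun}). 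Thus $u^\tau=0$ and \eqref{eq.shaX} splits.

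The bijectivity of $\Psi_n$ is the heart of (i) and the step I expect to be hardest. Here I would argue by d\'evissage relating both sides to the finite flat group scheme ${}_nA_K$ and its Cartier dual ${}_nA^\prime_K={}_n(A_K)^D$. On the target side, \eqref{eq.ExtHom-pro} identifies ${\rm Ext}^1(\GGr(A^{\prime 0}),\Z/n\Z)$ with the continuous homomorphisms $\pi_1(\GGr(A^\prime))\to\Z/n\Z$, which only see the maximal quotient of $\pi_1(\GGr(A^\prime))$ of exponent $n$; I would compute this quotient from the exact sequences of \S\ref{sec.proalg-gree}(i)--(iii) for the N\'eron model $A^\prime$. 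On the source side I would use the analogous sequences for tori furnished by Lemma~\ref{lem.comptori} together with $\pi_0(\bm{{\rm H}^1(K,\mu_n)})\cong\Z/n\Z$. The two computations should be matched by the local Cartier duality pairing between ${\rm H}^1_\fl(K,{}_nA_K)$ and ${\rm H}^1_\fl(K,{}_nA^\prime_K)$: injectivity of $\Psi_n$ would follow from its non-degeneracy (if $u^\tau=0$ then \eqref{eq.ZEA} splits after pull-back and push-out, forcing $[X_K]=0$), and surjectivity from a length/dimension count showing that the two finite groups have equal order. Identifying the abstract pairing produced by the construction with Cartier duality, and keeping track of the contribution of the connected parts in the passage between $A^\prime$ and $A^{\prime 0}$, is the delicate point.

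For (ii) I would invoke naturality of the entire construction in $n$: for $n\mid n^\prime$ the inclusion $\Z/n\Z\hookrightarrow\Z/n^\prime\Z$ lifts \eqref{eq.etan} to a compatible $\eta_{n^\prime}$ and yields a map of B\'egueri diagrams, so that the connecting maps $u^\tau$ for $n$ and $n^\prime$ are compatible with the push-out along $\Z/n\Z\to\Z/n^\prime\Z$; commutativity of the square then follows by chasing the two long exact sequences of $\pi_i$'s. Finally, for (iii): since $K$ is a local field with algebraically closed residue field, ${\rm H}^1_\fl(K,A_K)$ is torsion, hence equals $\varinjlim_n{}_n{\rm H}^1_\fl(K,A_K)$. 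Taking the filtered colimit of the isomorphisms $\Psi_n$, legitimate by the compatibility in (ii), and using that $\pi_1(\GGr(A^\prime))$ is profinite so that ${\rm Hom}(\pi_1(\GGr(A^\prime)),\Q/\Z)=\varinjlim_n{\rm Hom}(\pi_1(\GGr(A^\prime)),\Z/n\Z)$, one obtains the isomorphism ${\rm H}^1_\fl(K,A_K)\xrightarrow{\sim}{\rm Hom}(\pi_1(\GGr(A^\prime)),\Q/\Z)$, which is Shafarevich duality \eqref{eq.sha}; equivalently, via \eqref{eq.ExtHom-pro}, the isomorphism ${\rm H}^1_\fl(K,A_K)\xrightarrow{\sim}{\rm Ext}^1(\GGr(A^{\prime 0}),\Q/\Z)$.
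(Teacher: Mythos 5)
This theorem is not proved in the paper at all: it is quoted from B\'egueri's memoir (\cite{Beg}, 8.2.3 and 8.3.6), where it is the principal result, established by developing a full duality theory for finite $K$-group schemes on top of Serre's local class field theory. So there is no internal proof to compare with, and your proposal must be judged as an attempt to reprove B\'egueri's theorem; as such it contains two genuine gaps, both in part (i), which you rightly identify as the heart of the matter. The first is in the descent step. Reducing to the vanishing of $u^\tau$ when $\eta_n$ comes from a point $a\in A_K(K)$ is correct, but your reason --- that ``$a$ reduces into the identity component of the N\'eron model $A$'' --- is false in general: $A(\OO_K)\to\pi_0(A_s)$ is surjective and the component group is usually nontrivial. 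What actually makes the class die (visible in the paper's own prime-to-$p$ discussion, \eqref{eq.pi1cup}) is integrality on the \emph{other} side: the connecting map $\delta$ sends $\pi_1(\GGr(A^\prime))$ onto ${}_nA^{\prime 0}(\OO_K)$, the torsion points reducing into the identity component of the \emph{dual} N\'eron model, and one must show that pairing a coboundary class $\delta a$ against such points has trivial image in $\pi_0(\bm{{\rm H}^1(K,\mu_n)})\cong\Z/n\Z$. This is a statement about component groups and N\'eron-type pairings, not about where $a$ reduces, and it requires a real argument.

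The second, more serious gap is in the bijectivity step. Your plan rests on a ``local Cartier duality pairing between ${\rm H}^1_\fl(K,{}_nA_K)$ and ${\rm H}^1_\fl(K,{}_nA^\prime_K)$'' whose non-degeneracy gives injectivity and whose ``length/dimension count'' of two finite groups gives surjectivity. Both premises fail here. Since the residue field is algebraically closed, ${\rm Br}(K)=0$ (used repeatedly in the paper), hence ${\rm H}^2_\fl(K,\mu_n)={}_n{\rm Br}(K)=0$ and the cup-product pairing of the two ${\rm H}^1$'s is identically zero; there is no non-degenerate pairing of this shape --- its absence is precisely why Shafarevich/B\'egueri duality pairs ${\rm H}^1_\fl(K,A_K)$ against the pro-algebraic $\pi_1$ rather than against another cohomology group. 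Moreover, when $p\mid n$ the relevant groups are not finite (already ${\rm H}^1_\fl(K,\mu_p)=K^*/(K^*)^p$ is infinite, which is the very reason B\'egueri endows these cohomology groups with pro-algebraic structures), so no counting argument can yield surjectivity; and your injectivity argument (``$u^\tau=0$ forces $[X_K]=0$'') simply restates what is to be proved. Filling these gaps amounts to redoing B\'egueri's finite-group-scheme duality, which is the actual content hidden behind the citation. Parts (ii) and (iii) of your proposal (naturality in $n$, passage to the colimit using that ${\rm H}^1_\fl(K,A_K)$ is torsion and $\pi_1(\GGr(A^\prime))$ is profinite) are correct, but these are the routine parts of the theorem.
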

Hence we can deduce that
\begin{corollary}\label{cor.utau}
Shafarevich's duality $ {\rm H}^1_\fl( K,A_K)\stackrel{\sim}{\to} {\rm Hom}(\pi_1(\GGr(A^\prime)), \Q/\Z)$ in \eqref{eq.sha} maps the class of the torsor $X_K$ to the homomorphism $ u^\tau_{X_K}$ in \eqref{eq.utau}.
\end{corollary}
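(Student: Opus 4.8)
The plan is to prove the corollary by unwinding the definitions of both sides and matching them at a finite level, so the argument is essentially a compatibility check rather than a new construction. By part (iii) of the preceding theorem, the Shafarevich duality isomorphism \eqref{eq.sha} is, by its very definition, the composite of the isomorphism $\varinjlim_n \Psi_n\colon {\rm H}^1_\fl(K,A_K)\xrightarrow{\sim}{\rm Ext}^1(\GGr(A^{\prime 0}),\Q/\Z)$ with the canonical isomorphisms \eqref{eq.ExtHom-pro}. Hence it suffices to track the class $[X_K]$ through this composite and to identify its image with the homomorphism $u^\tau_{X_K}$ of \eqref{eq.utau}.

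Concretely, I would fix an integer $n$ with $n[X_K]=0$ and choose an extension $\eta_n$ as in \eqref{eq.etan} lifting the torsor. By part (i) of the preceding theorem, $\Psi_n([X_K])$ equals the class $\psi_n(\eta_n)$ of the extension \eqref{eq.shaX} in ${\rm Ext}^1(\GGr(A^{\prime 0}),\Z/n\Z)$. Under the finite-coefficient form of \eqref{eq.ExtHom-pro}, namely the isomorphism ${\rm Ext}^1(\GGr(A^{\prime 0}),\Z/n\Z)\cong{\rm Hom}(\pi_1(\GGr(A')),\Z/n\Z)$, such an extension class is sent to the connecting homomorphism of its long exact sequence of $\pi_i$'s (\S~\ref{sec.proalg-gree}(iii)), under the identification $\pi_1(\GGr(A^{\prime 0}))\cong\pi_1(\GGr(A'))$. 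But this connecting homomorphism is precisely what defines $u^\tau_{X_K}$ in \eqref{eq.utau}, since \eqref{eq.shaX} is obtained from \eqref{eq.ZEA} by pull-back along $\GGr(A^{\prime 0})\to\GGr(A')$ followed by push-out along $\bm{{\rm H}^1(K,\mu_n)}\to\pi_0(\bm{{\rm H}^1(K,\mu_n)})\cong\Z/n\Z$. Thus, already at level $n$, the image of $[X_K]$ is $u^\tau_{X_K}$ viewed with values in $\Z/n\Z$.

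It then remains to check that this finite-level computation is compatible with passage to $\Q/\Z$-coefficients, which is the only point requiring care and therefore the main obstacle. By parts (ii) and (iii) of the preceding theorem, the image of $[X_K]$ under $\varinjlim_n\Psi_n$ is the push-out of $\psi_n(\eta_n)$ along the inclusion $\Z/n\Z\hookrightarrow\Q/\Z$, and one must verify that this operation commutes with the coefficient change in the Ext-Hom identification. The required commutativity of the square relating the $\Z/n\Z$- and $\Q/\Z$-level isomorphisms follows from the naturality of the long exact sequence of $\pi_i$'s (again \S~\ref{sec.proalg-gree}(iii)) with respect to the map of coefficient groups, together with the functoriality of the isomorphism $\pi_1(\GGr(A^{\prime 0}))\cong\pi_1(\GGr(A'))$. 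Granting this diagram chase, the image of $[X_K]$ under \eqref{eq.sha} is $u^\tau_{X_K}$ composed with $\Z/n\Z\hookrightarrow\Q/\Z$, which is exactly the homomorphism recorded in \eqref{eq.utau}; once the naturality of the connecting homomorphism under both the push-out and the identification of the fundamental groups is established, the corollary follows formally.
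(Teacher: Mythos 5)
Your proposal is correct and takes essentially the same route as the paper, which presents the corollary as an immediate consequence of B\'egueri's theorem (``Hence we can deduce that''): the point is exactly the unwinding you carry out, namely that $\Psi_n([X_K])$ is the class of \eqref{eq.shaX}, that under the finite-coefficient form of \eqref{eq.ExtHom-pro} an extension class corresponds to the connecting homomorphism in the long exact sequence of the $\pi_i$'s, and that by naturality of this connecting map under pull-back along $\GGr(A^{\prime 0})\to\GGr(A^{\prime})$ and push-out along $\bm{{\rm H}^1(K,\mu_n)}\to\pi_0(\bm{{\rm H}^1(K,\mu_n)})$ this is precisely the definition of $u^\tau_{X_K}$ in \eqref{eq.utau}. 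The $\Q/\Z$-coefficient compatibility you flag as the main point of care is exactly what parts (ii) and (iii) of the theorem supply, so your diagram chase closes the argument as intended.
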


We give now an alternative construction of the map $\psi_n$ that will be useful for further applications. The kernel of $\tau_E$ in \eqref{eq.muvE} is a torus, which, for brevity, will be denoted by $T^\tau_K$.
Let $T^\tau$ be its N\'eron model over $S$. We have an exact sequence
\begin{equation*}\label{seq.torustau}
0\longrightarrow T_K^\tau\longrightarrow G_K^\tau:=\underline{\rm Ext}^1(E_K,\G_m)_{{}_nE_K}\stackrel{\tau_E}{\longrightarrow}
 A^\prime_K\longrightarrow 0
 \end{equation*}
which extends to a sequence of N\'eron models 
\begin{equation}\label{seq.torustauN}
0\longrightarrow T^\tau\longrightarrow G^\tau{\longrightarrow}
 A^\prime \longrightarrow 0
 \end{equation}
which is exact on $S$-sections because $T_K^\tau$ is cohomologically trivial. On applying the perfection of the Greenberg functor we get an exact sequence
\begin{equation}\label{eq.Grj}
0\longrightarrow\GGr( T^\tau)\longrightarrow \GGr(G^\tau)\stackrel{\bm\tau}{\longrightarrow} \GGr( A^\prime)\longrightarrow 0
\end{equation}
where the first two groups are not Serre pro-algebraic groups in general, because they are projective limits of perfect schemes not necessarily of finite type.
Let $j_*V_{{}_nE_K}^*$ denote the N\'eron model over $S$ of the torus $V_{{}_nE_K}^*$.
Since the map $V_{{}_nE_K}\to T_K^\tau$, deduced from \eqref{eq.muvE}, is an isogeny with kernel $\mu_n$, we have an exact sequence
\begin{equation*}\label{eq.GrH}
\GGr( j_*V_{{}_nE_K}^* )\longrightarrow \GGr(T^\tau)\stackrel{h^\tau}{\longrightarrow}\bm{{\rm H}^1(K,\mu_n)} \longrightarrow 0,
\end{equation*} as explained in the proof of Lemma~\ref{lem.comptori}.
Now take the push-out of \eqref{eq.Grj}
 along $h^\tau$; by construction, the resulting exact sequence
 is the one in \eqref{eq.ZEA}, \emph{i.e.}, the image of 	\eqref{eq.etan} via $\Gamma$.
Hence, if one considers the pull-back of \eqref{eq.Grj} along $\GGr(A^{\prime 0})\to \GGr(A^\prime)$,
\begin{equation}\label{eq.Grj0}
0\longrightarrow\GGr( T^\tau)\longrightarrow U \longrightarrow
\GGr( A^{ \prime 0})\longrightarrow 0,
\end{equation}
and then the push-out of \eqref{eq.Grj0} along the composition of maps
\[ \GGr(T^\tau)\stackrel{h^\tau}{\longrightarrow}\bm{{\rm H}^1(K,\mu_n)}\longrightarrow\pi_0( \bm{{\rm H}^1(K,\mu_n)} ) \cong \Z/n\Z , 
\]
one gets the extension $\psi_n(\eta_n)$ in \eqref{eq.shaX}, \emph{i.e.}, the image of $[X_K]$ via Shafarevich's duality.

Thanks to this new description of Shafarevich's map, we can characterize the map $u^\tau$ in \eqref{eq.utau} as follows:
 let $T^{\tau, \ft}$ be the maximal subgroup scheme of finite type in $T^\tau$ and consider the sequence, exact on $S$-sections,
\[
0\longrightarrow T^{\tau, \ft}\longrightarrow G'\longrightarrow A^{\prime 0}\longrightarrow 0,
\]
and obtained from \eqref{seq.torustauN}, as explained in the proof of Lemma~\ref{lem.torustor}. 
Then $\psi_n(\eta_n)$, \emph{i.e.}, the push-out of \eqref{eq.Grj0} along $h^\tau$ is isomorphic to the push-out of
\begin{equation}\label{eq.zetataup}
0\longrightarrow\GGr( T^{\tau, \ft})\longrightarrow \GGr(G')\longrightarrow \GGr(A^{\prime 0})\longrightarrow 0\end{equation}
along the composition of maps $h^{\tau, \ft}\colon \GGr( T^{\tau, \ft})\to \GGr( T^\tau)\stackrel{h^\tau}{\to} \bm{{\rm H}^1(K,\mu_n)}$.
Hence
\begin{equation}\label{eq.w}
u^\tau_{X_K} =\pi_0(h^{\tau, \ft}) \circ u^{\tau, \ft},
\end{equation}
where the homomorphism $u^{\tau, \ft} \colon \pi_1(\GGr(A^\prime))\to \pi_0(\GGr(T^{\tau, \ft}))$ is deduced from \eqref{eq.zetataup} via the long exact sequence of the $\pi_i$'s.

\subsection{An alternative construction using rigidificators}\label{sec.rig}
Let $X_K$ be a torsor under an abelian variety $A_K$.
We will see in this section how the homomorphism $u^\tau$ in \eqref{eq.utau} (and in \eqref{eq.w}) can be constructed using a rigidificator $x_K$ of the relative Picard functor $\Pic_{X_K/K}$ (\cite{Raynaud}, 2.1.1).
Observe that any closed point of $X_K$ provides a rigidificator of $\Pic_{X_K/K}$.

\begin{lemma}\label{lem.index-period} Let $X_K$ be a torsor under an abelian variety $A_K$, of order $d$.
Let $d'$ be the separable index of $X_K$, \emph{i.e.}, the greatest common divisor of the degrees of its finite separable 
splitting extensions. Then $d|d'$. If $A_K$ is an elliptic curve, then $d=d'$ and the index is indeed the degree of a minimal separable splitting extension.
\end{lemma}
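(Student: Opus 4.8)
The plan is to treat the divisibility $d\mid d'$ for an arbitrary abelian variety by a restriction--corestriction argument, and then to upgrade it to the equality $d=d'$ in the elliptic case by producing an \emph{étale} effective divisor of degree $d$ on $X_K$, at which point Lemma~\ref{d1=d2=d} does almost all the remaining work. For the first assertion, recall that since $A_K$ is smooth we may compute $\HH^1_\fl(K,A_K)=\HH^1(G_K,A_K(K^{\mathrm{sep}}))$ as Galois cohomology. Let $L/K$ be any finite separable extension with $X_K(L)\neq\emptyset$; then the restriction of $[X_K]$ to $\HH^1(L,A_L)$ vanishes, because $X_K\times_K L$ acquires an $L$-rational point and is hence trivial as a torsor. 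Applying the corestriction $\mathrm{cor}_{L/K}\colon \HH^1(L,A_L)\to\HH^1(K,A_K)$ together with the standard identity $\mathrm{cor}_{L/K}\circ\mathrm{res}_{L/K}=[L:K]$ gives $[L:K]\cdot[X_K]=0$, so $d=\mathrm{ord}([X_K])$ divides $[L:K]$. Since this holds for every separable splitting extension, $d$ divides their gcd $d'$. It is precisely the corestriction map, available only for separable extensions, that forces one to work with the \emph{separable} index here.

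For the elliptic case I would argue as follows. By Lemma~\ref{d1=d2=d} the index of $X_K$ equals $d$, so there is a $K$-rational line bundle $\mathcal L$ on $X_K$ of degree $d$. Since $X_K$ has genus $1$, for $d\geq 2$ the complete linear system $|\mathcal L|$ is base-point-free and defines a morphism $\phi\colon X_K\to\mathbb P^{d-1}_K$ which is a closed embedding for $d\geq 3$ and a separable double cover of $\mathbb P^1_K$ for $d=2$ (a purely inseparable degree-$2$ map is excluded, as it would force $X_K\cong\mathbb P^1_K$). Because $K$ is infinite and $X_K$, having positive genus, is not a strange curve, a Bertini-type argument produces a $K$-rational member $E\in|\mathcal L|$ that is \emph{étale} over $K$, i.e. a reduced effective divisor of degree $d$ whose closed points all have separable residue field; for $d=1$ one simply takes the $K$-rational point provided by the index.

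The punchline is then combinatorial. Write $E=\sum_j x_j$ with $[\kappa(x_j):K]=m_j$ separable and $\sum_j m_j=d$. Each $x_j$ yields a separable splitting extension $\kappa(x_j)/K$ of degree $m_j$, so the first part of the lemma gives $d\mid m_j$, whence $m_j\geq d$; since the $m_j$ sum to $d$, there can be only one closed point, and it has separable degree exactly $d$. Thus $d$ is realized as the degree of a single separable splitting extension, while every separable splitting degree is a multiple of $d$. Consequently $d'=d$ and this common value is the degree of a minimal separable splitting extension, as claimed.

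The one genuinely delicate point will be the production of the étale divisor $E$ in characteristic $p$: this is where I must invoke that a general hyperplane section of a non-strange smooth curve over an infinite field stays reduced, so as to rule out the inseparable degeneracies that can otherwise defeat Bertini in positive characteristic, and (for $d=2$) the verification that the associated double cover is separable. Once $E$ is étale, the step in which $d\mid m_j$ forces irreducibility is immediate, so the entire obstacle is concentrated in this separability/Bertini input.
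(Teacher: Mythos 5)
Your argument is correct, but it is genuinely different from the paper's treatment, because the paper offers no proof at all: it simply cites \cite{LT} (Proposition~5 and the surrounding discussion) for the divisibility $d\mid d'$, and \cite{GLL}, Theorem~9.2, for the elliptic case. Your restriction--corestriction argument is precisely the classical Lang--Tate mechanism, so the first half agrees in substance with the cited source. The real divergence is in the second half: the theorem of Gabber--Liu--Lorenzini asserts that index and separable index agree for genus-one curves over an \emph{arbitrary} field, and its proof is correspondingly involved, whereas your argument exploits the paper's standing local hypotheses -- Lemma~\ref{d1=d2=d} (which rests on $\mathrm{Br}(K)=0$, the algebraically closed residue field and the minimal regular model) gives period $=$ index $=d$ and hence an honest $K$-rational line bundle $\mathcal L$ of degree $d$, after which Bertini produces an \'etale member of $|\mathcal L|$ and your counting argument, fed back into the first part, concludes. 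What you lose is generality (your proof establishes the second assertion only over the paper's complete discretely valued $K$, not over arbitrary fields), but that is exactly the setting in which the lemma is stated and applied in \S4.3 and \S5, so nothing is missing for the paper's purposes; what you gain is a self-contained argument that avoids importing a substantially deeper external theorem. One reassurance on the point you flag as delicate: for $d\geq 3$ the Bertini input is characteristic-free and does not really require the strangeness discussion, since the hyperplanes meeting the embedded curve non-transversally all lie in the dual variety, which is the image of the $(d-2)$-dimensional conormal variety and hence a proper closed subvariety of the dual projective space; as $K$ is infinite, a $K$-rational transverse hyperplane therefore exists. (Your non-strangeness remark, and the genus argument excluding an inseparable double cover when $d=2$, are nevertheless correct.)
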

\proof For the first assertion see \cite{LT}, comments at pp. 663-664 and Proposition 5.
For the latter assertion on elliptic curves see \cite{GLL}, Thm. 9.2.
\qed

\begin{remark}\label{rem.tori} Let $x_K=\spec(K')$ with $K'/K$ a finite separable extension of degree $n$. Then the torus $V_{x_K}^*:=\Re_{K'/K}(\G_{m,K'})$ has component group isomorphic to $\Z$ and the closed immersion $\G_{m,K}\to V_{x_K}^*$ (\cite{BLR}, p.~197 last lines), that is the inclusion $K^*\subset K^{\prime *}$ on $K$-sections, induces the $n$-multiplication $n\colon \Z\to \Z$ on component groups of N\'eron models over $S$.
\end{remark}

The main idea here is to use in \eqref{eq.muvE} a rigidificator $x_K$ of $\Pic_{X_K/K}$ in place of ${}_nE_K$.
The advantage is that the new construction works even for $K$ of positive characteristic; in the latter case we choose $x_K$ \'etale so that $V_{x_K}^*:=\Re_{x_K/K}(\G_{m, x_K})$ is still a torus.

Observe that a rigidificator $x_K$ is a closed subscheme of $E_K$ and the homomorphisms
 \begin{equation*}\label{eq.muvclosed}
\underline{\rm Hom}(E_K,\G_m)\longrightarrow V_{x_K}^*
\end{equation*}
is still a closed immersion.
Indeed any homomorphism $f\colon E_K\to \G_m$ factors through $\rho\colon E_K\to \Z/n\Z$ and if $f_{|x_K}=0$ then $f_{|X_K}=0$ because $x_K$ is a
rigidificator. However $X_K$ is the fibre at $1$ of $\rho$ and hence also $f=0$. 
We then have an exact sequence
\begin{equation}\label{eq.muvEx}
0\longrightarrow \mu_n \longrightarrow V_{x_K}^*\longrightarrow \underline{\rm Ext}^1(E_K,\G_m)_{x_K}\longrightarrow
 A^\prime_K\longrightarrow 0 
\end{equation}
after recalling the isomorphisms $\mu_n\cong {\rm Hom}(\Z/n\Z,\G_m) \stackrel{\sim}{\to} {\rm Hom}(E_K,\G_m) $. 
More generally, for any finite \'etale subscheme $Z_K$ of $E_K$ which satisfies the following property
\begin{verse}
 $ (*)$\quad the canonical map
$ \underline{\rm Hom}(E_K,\G_m) \longrightarrow V_{Z_K}^*$ is a closed immersion,
\end{verse}
 we can construct an exact sequence as in \eqref{eq.muvEx}.

Let $T_K$ denote the torus $T_K^x:=V_{x_K}^*/\mu_n $.
The sequence \eqref{eq.muvEx} induces an exact sequence
\begin{equation}\label{eq.TExtA}
0\longrightarrow T_K\longrightarrow \underline{\rm Ext}^1(E_K,\G_m)_{x_K}\longrightarrow A^\prime_K\longrightarrow 0,
\end{equation}
and hence a sequence which is exact on $S$-sections (see proof of Lemma~\ref{lem.torustor})
\begin{equation}\label{eq.TExtAN}
0\longrightarrow T^\ft\longrightarrow G'' \stackrel{ }{\longrightarrow}
 A^{\prime 0} \longrightarrow 0,
\end{equation}
where $T^\ft$ is the maximal subgroup of finite type over $S$ of the N\'eron model $T$ of $T_K$.
Now consider the cokernel
 \begin{equation}\label{eq.hproalg}
\GGr(j_*V_{x_K}^*)\stackrel{g^x}{\longrightarrow}\GGr(T)\stackrel{h}{\longrightarrow} \bm{{\rm H}^1(K,\mu_n)}\longrightarrow 0
\end{equation}
of the homomorphism between the perfect Greenberg realizations of the N\'eron models of $V_{x_K}^*$ and $T_K$; by Lemma~\ref{lem.comptori} it
is a Serre pro-algebraic group whose group of $k$-points is $ {\rm H}^1_\fl(K,\mu_n)$.

In order to provide a more useful description of the map $u^\tau$ in \eqref{eq.utau}, consider the perfect Greenberg realization
of \eqref{eq.TExtAN}
\begin{equation}\label{eq.TExtANG}
0\longrightarrow \GGr(T^\ft)\longrightarrow \GGr(G'')\longrightarrow
\GGr( A^{\prime 0})\longrightarrow 0,
\end{equation}
and then its push-out along the composition of maps
\begin{equation}\label{eq.hprime}
h^\ft\colon \GGr(T^\ft)\longrightarrow \GGr(T)\stackrel{h}{\longrightarrow} \bm{{\rm H}^1(K,\mu_n)}.
\end{equation}
We obtain an exact sequence
\begin{equation*}\label{eq.zeta}
\zeta\colon \quad 0\longrightarrow \bm{{\rm H}^1(K,\mu_n)}\longrightarrow W' \longrightarrow \GGr( A^{\prime 0})\longrightarrow 0
\end{equation*}
and hence a homomorphism
\[u_{X_K}=u\colon \pi_1(\GGr(A^\prime))\longrightarrow \pi_0( \bm{{\rm H}^1(K,\mu_n)} ) \cong \Z/n\Z\subset \Q/\Z\]
such that
\begin{equation}\label{eq.u} u=\pi_0(h^\ft)\circ u^\ft,\end{equation}
where $ u^\ft\colon \pi_1(\GGr(A^\prime))\to \pi_0(\GGr(T^\ft)) \stackrel{\sim}{\to} \pi_0(T)_\tor$
is deduced from the long exact sequence of the $\pi_i$'s of \eqref{eq.TExtANG}.

\begin{proposition}\label{pro.sha0}
{The map $\Xi\colon {\rm H}^1(K, A_K)\to {\rm Hom}(\pi_1(\GGr(A^\prime)),\Q/\Z)$, with $[X_K]\mapsto u_{X_K}$, is a group} homomorphism.
If $char(K)=0$ the homomorphism $u_{X_K}$ in \eqref{eq.u} coincides with the homomorphism $u^\tau_{X_K}$ in \eqref{eq.utau}.
In particular, the homomorphism $\Xi$ is Shafarevich's duality in \eqref{eq.sha}.
\end{proposition}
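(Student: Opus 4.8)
The plan is to handle the two assertions separately, after observing that both $u_{X_K}$ and $u^\tau_{X_K}$ are produced by one and the same recipe applied to a finite subscheme $Z_K\subset E_K$ satisfying property $(*)$: starting from an extension $\eta$ as in \eqref{eq.eta} with fibre $X_K$, form the exact sequence \eqref{eq.muvEx}, extract the torus $T_K^{Z}=V_{Z_K}^*/\mu_n$ sitting in the extension \eqref{eq.TExtA} of $A'_K$, pass to N\'eron models and their finite-type parts as in \eqref{eq.TExtAN}, apply the perfect Greenberg functor to obtain \eqref{eq.TExtANG}, and finally push out along the connecting map $h^\ft$ of \eqref{eq.hprime} into $\bm{{\rm H}^1(K,\mu_n)}$ and then onto $\pi_0(\bm{{\rm H}^1(K,\mu_n)})\cong\Z/n\Z$. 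The rigidificator construction is this recipe with $Z_K=x_K$ (giving $u_{X_K}$ via \eqref{eq.u}); B\'egueri's construction is the same recipe with $Z_K={}_nE_K$, which is \'etale when $\mathrm{char}(K)=0$ (giving $u^\tau_{X_K}$ via \eqref{eq.w}). The two points to establish are therefore (a) that the recipe is additive in $[X_K]$, and (b) that its output is independent of the choice of $Z_K$.

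For the homomorphism property (a), I would fix an $n$ annihilating two classes and represent $[X_K]$ and $[X_K']$ by extensions $\eta,\eta'$ of $A_K$ by $\Z/n\Z$, so that $[X_K]+[X_K']$ is represented by the Baer sum $\eta+\eta'$. It then suffices to check that $\eta\mapsto u_{X_K}$ carries Baer sums to sums, and this I would verify one step at a time: the functor $\underline{\rm Ext}^1(-,\G_m)$ is additive and the formation of \eqref{eq.muvEx} is compatible with the Baer sum of the $E_K$'s, while passing to identity and finite-type components of N\'eron models, applying $\GGr$, and forming push-outs are all additive. Composing with the additive isomorphism ${\rm Ext}^1(\GGr(A'^{0}),\Z/n\Z)\stackrel{\sim}{\to}{\rm Hom}(\pi_1(\GGr(A')),\Z/n\Z)$ (the $\Z/n\Z$-analogue of \eqref{eq.ExtHom-pro}) then gives additivity of $\Xi$. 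The real bookkeeping here is the non-canonical choices, namely the lift $\eta$ of $[X_K]$ and the rigidificator $x_K$; these are absorbed precisely by the independence statement (b).

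For (b), and hence for $u_{X_K}=u^\tau_{X_K}$ in characteristic $0$, I would compare the recipe for two subschemes $Z_K,Z'_K$ through a common dominating finite subscheme $W_K\subseteq E_K$ (for instance $W_K=Z_K\sqcup Z'_K$, arranging $x_K$ to avoid the finite set ${}_nE_K\cap X_K$, so that $V_{W_K}^*\cong V_{Z_K}^*\times V_{Z'_K}^*$). Restriction of rigidifications gives projections $T_K^{W}\twoheadrightarrow T_K^{Z}$ and $T_K^{W}\twoheadrightarrow T_K^{Z'}$ compatible with the maps $\tau$ to $A'_K$; since in each case the map to $\bm{{\rm H}^1(K,\mu_n)}$ arises from an isogeny with kernel $\mu_n$, and these copies of $\mu_n$ are matched by the projections, the functoriality in Lemma~\ref{lem.comptori} shows the three maps $h$ are compatible, while Lemma~\ref{comp-mun} shows the identification $\pi_0(\bm{{\rm H}^1(K,\mu_n)})\cong\Z/n\Z$ is the same canonical one throughout. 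Passing to N\'eron models, finite-type parts and $\GGr$, the long exact sequences of the $\pi_i$'s assemble into a commutative diagram forcing the homomorphisms $\pi_1(\GGr(A'))\to\Z/n\Z$ attached to $Z_K$, $W_K$ and $Z'_K$ to coincide. Applied to $\{x_K,x'_K\}$ this yields independence of the rigidificator (so (a) is unambiguous), and applied to $\{x_K,{}_nE_K\}$ it yields $u_{X_K}=u^\tau_{X_K}$ by \eqref{eq.u} and \eqref{eq.w}.

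Finally, Corollary~\ref{cor.utau} identifies $u^\tau_{X_K}$ with the image of $[X_K]$ under Shafarevich's duality \eqref{eq.sha} when $\mathrm{char}(K)=0$; combined with (b) this gives that $\Xi$ coincides with \eqref{eq.sha}, as asserted. I expect the most delicate point of the whole argument to be verifying that the canonical component-group identification $\pi_0(\bm{{\rm H}^1(K,\mu_n)})\cong\Z/n\Z$ is preserved under the comparison maps of step (b) — equivalently, that the common-refinement diagram still commutes after applying $\pi_0$ — since this is exactly where the canonicity supplied by the Kummer sequence in Lemma~\ref{comp-mun} must be matched against the geometric maps between N\'eron models coming from restriction of rigidifications.
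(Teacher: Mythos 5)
Your overall strategy does mirror the paper's: the same recipe applied to a finite \'etale subscheme $Z_K\subseteq E_K$ satisfying $(*)$, independence of $Z_K$ proved by comparison inside a common nested/dominating subscheme, additivity of $\eta\mapsto u_\eta$ on ${\rm Ext}^1(\Z/n\Z,A_K)$ via functoriality of each step, and the characteristic-zero identification obtained by taking $x_K$ a point of ${}_nE_K$ above $1$, comparing with $y_K={}_nE_K$ to get $u=u^\tau$, and invoking Corollary~\ref{cor.utau}. Those parts of your argument are sound and essentially coincide with the paper's proof.

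However, there is a genuine gap: you never show that $u_\eta$ depends only on the class $[X_K]\in {\rm H}^1_\fl(K,A_K)$, i.e., that the additive map ${\rm Ext}^1(\Z/n\Z,A_K)\to {\rm Hom}(\pi_1(\GGr(A^\prime)),\Q/\Z)$, $\eta\mapsto u_\eta$, factors through ${}_n{\rm H}^1_\fl(K,A_K)$. You claim the non-canonical choice of the lift $\eta$ is ``absorbed precisely by the independence statement (b)'', but (b) only compares subschemes $Z_K,Z'_K$ inside one \emph{fixed} $E_K$; two lifts $\eta_1,\eta_2$ of the same class give genuinely different group schemes $E_K$, so no common dominating $W_K$ exists and (b) says nothing about them. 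Since the kernel of ${\rm Ext}^1(\Z/n\Z,A_K)\to {}_n{\rm H}^1_\fl(K,A_K)$ is $A_K(K)/nA_K(K)$ and consists exactly of the extensions whose fibre at $1$ is a \emph{trivial} torsor, your additivity statement reduces the problem to proving that $u_\eta=0$ whenever $X_K$ has a $K$-point --- and this vanishing requires a separate idea, which the paper supplies: choose the rigidificator $x_K$ to be a $K$-rational point of $X_K$; then $V_{x_K}^*=\G_{m,K}$ and $T_K\cong \G_{m,K}$, so $\pi_0(T)\cong\Z$ is torsion-free, $T^\ft=\G_{m,\OO_K}$, $\pi_0(\GGr(T^\ft))=0$, hence $u^\ft=0$ and $u=0$. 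A similar (easier) check you omit concerns the auxiliary integer: for $\Xi$ to be well defined and additive across classes of different orders one must verify that $u$ is unchanged when $\eta$ is pulled back along $\Z/\hat n\Z\to\Z/n\Z$ for $n\mid \hat n$, which the paper does via the push-out $\mu_n\to\mu_{\hat n}$ and the inclusion $\Z/n\Z\hookrightarrow \Z/\hat n\Z$ of component groups. Without the vanishing step, the map $\Xi$ is simply not well defined on ${\rm H}^1_\fl(K,A_K)$, so this is a missing ingredient rather than a bookkeeping detail.
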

\proof
We start by showing that, once $X_K$ has been fixed, the construction of $u\colon \pi_1(\GGr(A'))\to \Q/\Z$ in \eqref{eq.u}
does not depend on the choices of $x_K$, $n$ and $\eta\in {\rm Ext}^1(\Z/n\Z,A_K)$ above $X_K$.

First we see that $u$ does not depend on the \'etale finite closed subscheme $x_K$ of $E_K$ satisfying $(*)$.
Let $x_K\subseteq y_K$ be two \'etale subschemes of $E_K$ satisfying $(*)$.
Let $T_K^x, h^x, h^{\ft, x}, u^{\prime x}, u^x$ denote, respectively, the torus in \eqref{eq.TExtA}, the maps in \eqref{eq.hproalg},
\eqref{eq.hprime} and \eqref{eq.u} for $x_K$, and similarly for $y_K$.
The canonical morphism of tori $ T_K^y\to T_K^x$ induces a morphism $\beta\colon T^{y,\ft}\to T^{x,\ft }$ between the maximal subgroups of finite type of the N\'eron models over $S$. Denote by $\beta^\prime\colon \GGr(T^{y, \ft})\to \GGr(T^{x,\ft})$ the corresponding map
on perfect Greenberg realizations. One then has $\beta^\prime \circ h^{x,\ft}= h^{y,\ft}$ and
 $\pi_0(h^{y,\ft})=\pi_0(h^{x,\ft})\circ \pi_0(\beta')$.
Furthermore the sequence \eqref{eq.TExtANG} for $x_K$ is the push-out along $\beta^\prime$ of the sequence \eqref{eq.TExtANG} for $y_K$.
Hence $u^{x,\ft}=\pi_0(\beta^\prime )\circ u^{y,\ft}$. We conclude then that
\begin{equation}\label{eq.uxuy} u^x= \pi_0(h^{x,\ft}) \circ u^{x,\ft} = \pi_0(h^{x,\ft}) \circ \pi_0(\beta^\prime )\circ u^{y,\ft}=\pi_0(h^{y,\ft}) \circ u^{y,\ft}=u^y.
\end{equation}
Let now $n,\hat n$ be positive integers such that $n[X_K]=0$ and $n|\hat n$. We can consider the pull-back $\hat \eta$ of $\eta$
in \eqref{eq.eta} along the projection $\Z/\hat n\Z\to \Z/n\Z$. If we proceed with $\hat \eta$ as we have done for $\eta$,
we get a map $\hat u\colon \pi_1(\GGr(A^\prime))\to \Q/\Z$. Observe that the $2$-fold extension \eqref{eq.muvEx} for $\hat\eta$
is the push-out along $\mu_n\to \mu_{\hat n}$ of \eqref{eq.muvEx} and that the map
$ \pi_0 ( \bm{{\rm H}^1(K,\mu_n)} )\to \pi_0 ( \bm{{\rm H}^1(K,\mu_{\hat n})})$ is the inclusion $\Z/n\Z\to \Z/\hat n\Z$. It is now
immediate to check that the maps $\hat u$ and $u$ coincide.

We have thus obtained a map
\begin{equation}\label{eq.etau}
{\rm Ext}^1(\Z/n\Z, A_K)\longrightarrow{\rm Hom}(\pi_1(\GGr(A')),\Q/\Z),\qquad \eta\mapsto u.
\end{equation}
To check that this map is indeed a homomorphism, observe that it
 is functorial in $A_K$. Furthermore we could repeat the construction with any finite constant group $F_K$ in place of $\Z/n\Z$
obtaining in this way a map
 \[{\rm Ext}^1(F_K,A_K)\longrightarrow {\rm Hom}( \pi_1(\GGr(A^\prime)),\pi_0(\bm{{\rm H}^1(K,F_K^D)}) )
\]
with $F_K^D$ the Cartier dual of $F_K$. This construction is functorial in $F_K$.
The functoriality results are sufficient to conclude that the map in \eqref{eq.etau} is a homomorphism, because the Baer sum
of two extensions as in \eqref{eq.eta} is found by first taking the direct sum of the two extensions, then applying the push-out along
the multiplication of $A_K'$ and finally applying the pull-back along the diagonal map $\Z/n\Z\to \Z/n\Z\oplus \Z/n\Z$.

Suppose now that $n$ and $x_K$ are fixed.
We show that the map $u$ is trivial if $X_K$ is trivial, \emph{i.e.}, the map in \eqref{eq.etau} factors
through ${\rm H}^1_\fl(K,A_K)\cong {\rm Ext}^1(\Z,A_K)$.
Suppose that $X_K$ is trivial and choose a $K$-point $x_K$ of $X_K$. In particular, $V_{x_K}^*=\G_{m,K}$, $T_K \cong \G_{m,K}$ and $\pi_0(T) \cong \Z$. Hence $T^\ft=\G_{m,\OO_K}$, the homomorphism $u^\ft\colon \pi_1( \Gr(A'))\to \pi_0(\GGr(T^\ft))=0$ is the zero map and $u=0$.

Suppose now that $char(K)=0$. To see that the homomorphism $[X_K]\mapsto u_{X_K}$ is Shafarevich's duality, it is sufficient to check that the homomorphisms $u^\tau$ in \eqref{eq.utau} and $u$ in \eqref{eq.u} coincide (see Corollary~\ref{cor.utau}).
Consider then a finite separable extension $K'/K$ splitting \eqref{eq.etan} and a point $x_K=\spec(K')$ of ${}_n E_K$ above $1$. In particular $x_K$ is a rigidificator of $\Pic_{X_K/K}$.
Set $y_K={}_n E_K$. Then, $u^y$ coincides with the map $u^\tau$ in \eqref{eq.w} and one can repeat the arguments used in \eqref{eq.uxuy} to showing that $u^\tau=u^y=u^x$.
\qed

\begin{remark}
The original construction by B\'egueri works only for $K$ of characteristic zero because in the case of positive characteristic the scheme $ V_{{}_n E_K}^*$ (and hence $T^\tau_K$) need not be a torus; in particular it might not admit a N\'eron model over $S$.
The construction via rigidificators described in this section works in any characteristic. For $char(K)=p$ it is not clear that the homomorphism $\Xi$ in Proposition~\ref{pro.sha0} is Shafarevich's duality in \eqref{eq.sha} (see also \cite{Ber}, Theorem 3). We will see in Proposition~\ref{pro.shap} that this is the case on the prime-to-$p$ parts. 
\end{remark}

\subsection{A construction via the Picard functor}\label{sec.picardconstruction}
{Let $A_K$ be an abelian variety.}
In this section we present a third possible construction of a homomorphism ${\rm H}^1(K, A_K)\to {\rm Hom}(\pi_1(\GGr(A^\prime)),\Q/\Z)$, this one making use of the relative Picard functor.
We will see in Theorem~\ref{thm.main} that the new construction always coincides with the one in Proposition~\ref{pro.sha0} and hence with Shafarevich's duality in the characteristic $0$ case.

Let $X_K$ be a torsor under $A_K$ and $x_K=\spec(K')$ a closed point of $X_K$ with $K'/K$ a finite separable extension of degree $n$; it exists by the smoothness of $X_K$, and $n[X_K]=0$ by Lemma~\ref{lem.index-period}. No assumption on the characteristic of $K$ is made.

Consider the exact sequence (cf. \cite{Raynaud}, 2.4.1)
 \begin{equation}\label{eq.GVPic}
0\longrightarrow V_{X_K}^*\longrightarrow V_{x_K}^*\longrightarrow ({\rm Pic}_{X_K/K},x_K)^0\longrightarrow A^\prime_K\longrightarrow 0 
\end{equation}
where we use that $A_{K}'{\stackrel{\sim}{\to} \Pic^{0}}_{X_K/K}$ (Remark \ref{rem.J_K et A_K toute dim}).
Observe that $ V_{X_K}^*:=\Re_{X_K/K}(\G_{m,X_K})\cong \G_{m,K}$ (\cite{Raynaud}, 2.4.3), $ V_{x_K}^*$ is a torus and hence so too is $N_K:=V_{x_K}^*/\G_{m,K}$. Let $\mathcal N$ denote its N\'eron model. 
Observe that it follows from Remark~\ref{rem.tori} that the component group of $\mathcal N$ is cyclic of order $n$, hence its perfect Greenberg realization is a Serre pro-algebraic group.

We proceed as in the previous section, first by passing to N\'eron models and then applying the perfect Greenberg realization to the sequence
\begin{equation}\label{eq.N}
0\longrightarrow N_K\longrightarrow (\Pic_{X_K/K},x_K)^0 \stackrel{h_K}{\longrightarrow} A^\prime_K \longrightarrow 0,
\end{equation}
so that we obtain an exact sequence of Serre pro-algebraic groups
\begin{equation*}\label{eq.GrN}
0\longrightarrow \GGr(\mathcal N)\longrightarrow \GGr(j_* (\Pic_{X_K/K},x_K)^0 )\stackrel{h}{\longrightarrow} \GGr(A^\prime)\longrightarrow 0,
\end{equation*}
and hence a homomorphism
\begin{equation}\label{eq.v}
v=v_{X_K}\colon \pi_1(\GGr(A^\prime))\longrightarrow \pi_0(\GGr(\mathcal N)) \cong \Z/n\Z\subset \Q/\Z.
\end{equation}

In order to compare this construction with the (modified) B\'egueri construction of the previous section,
\emph{i.e.}, in order to compare the maps $u$ in \eqref{eq.u} and $v$ in \eqref{eq.v}, we consider the following diagram
\begin{equation}\label{eq.diagramTNA}
\xymatrix{
0\ar[r]& T_K\ar[r]\ar[d]^{t_K}& \underline{\rm Ext}^1(E_K,\G_m)_{x_K}\ar[d]^{f_K}
    \ar[r]& A^\prime_K \ar@{=}[d] \ar[r]& 0\\
0\ar[r]& N_K\ar[r]& (\Pic_{X_K/K},x_K)^0 \ar[r]^(0.6){h_K}& A^\prime_K \ar[r]& 0}
\end{equation}
where the upper sequence is \eqref{eq.TExtA}, the lower one is \eqref{eq.N}, $t_K\colon T_K{ :=} V_{x_K}^*/ \mu_n  \to   V_{x_K}^*/ V_{X_K}^* \stackrel{\sim}{\to} N_K$ is induced by the identity on $V_{x_K}^*$,
and $f_K$ associates to a $\G_m$-extension $L_K$ of $E_K$ endowed with a
$x_K$-section $\sigma$ its restriction (as torsor) to $X_K$ endowed with the trivialization along $x_K$ induced by $\sigma$. 
 The morphism $t_K$ is surjective and its kernel is $V_{X_K}^*/\mu_n\cong \G_{m,K}/\mu_n \cong \G_{m,K}$.

Consider now the induced diagram on N\'eron models (with exact rows when restricting to $S$-sections)
\begin{equation}\label{dia.TTN}
\xymatrix{
0\ar[r]& T^\ft\ar[r]\ar[d]^{}\ar@/_1pc/[dd]_{t^\ft}& G''\ar[d]^{}
    \ar[r]& A^{\prime 0} \ar[d] \ar[r]& 0\\
& T\ar@{^{(}->}[r]\ar[d]^{t}& j_* \underline{\rm Ext}^1(E_K,\G_m)_{x_K}\ar[d]^{f}
    \ar[r]& A^\prime \ar@{=}[d] \ar[r]& 0\\
0\ar[r]&{\mathcal N}\ar[r]& j_*(\Pic_{X_K/K},x_K)^0 \ar[r]& A^\prime \ar[r]& 0}
\end{equation}
where the first row is \eqref{eq.TExtAN}. Here $j_*H_K$ is just a notation for the N\'eron model of $ H_K$ when it exists. 
The homomorphism $u$ in \eqref{eq.u} is the composition of the homomorphism
$u^\ft\colon \pi_1(\GGr(A^{\prime 0})) \to \pi_0(\GGr(T^\ft))$ (deduced from the upper sequence) with the homomorphism
\[\pi_0(h^\ft)\colon \pi_0(T^\ft)= \pi_0(\GGr(T^\ft))\longrightarrow \pi_0(\bm{{\rm H}^1(K,\mu_n)} ),\]
where $h^\ft$ was introduced in \eqref{eq.hprime}.
It now follows form the above diagram that the map $v\colon \pi_1(\GGr(A^\prime))\to \pi_0(\GGr(\mathcal N))$ in \eqref{eq.v},
obtained from the lower exact sequence, satisfies
\begin{equation}\label{eq.vuft} v= \pi_0(t^\ft)\circ u^\ft.
 \end{equation}
We are going to check that $u$ and $v$ coincide up to sign, by showing that, up to canonical identifications we have $\pi_0(h^\ft)=-\pi_0(t^\ft)$.
To see this fact, consider the following diagram with exact rows and columns
\begin{equation*}
\xymatrix{
0\ar[r]& \mu_n\ar[r] \ar@{=}[d] &V_{X_K}^*\cong \G_{m,K} \ar[r]^n\ar[d]& \G_{m,K} \ar[r]\ar[d]& 0\\
0\ar[r]& \mu_n\ar[r] &V_{x_K}^* \ar[r]\ar[d]& T_K \ar[r]\ar[d]^{t_K}& 0\\
&&N_K \ar@{=}[r]&N_K }
\end{equation*}
where the middle horizontal sequence is deduced from \eqref{eq.muvEx} while the middle vertical sequence comes from \eqref{eq.GVPic}.
Consider the induced diagram of component groups of N\'eron models
\begin{equation*}
\xymatrix{
\Z \ar[r]^n\ar@{^{(}->}[d]& \Z\ar@{^{(}->}[d]  &\pi_0(T^\ft)=\pi_0(T)_\tor\ar@{_{(}.>}[dl]_\iota
\ar@{.>}[ddl]^{\pi_0(t^\ft)} \\
\pi_0(j_*V_{x_K}^*) \ar[r]\ar@{->>}[d]&\pi_0 (T )\ar@{->>}[d]&\\
\pi_0(\mathcal N) \ar@{=}[r]&\pi_0(\mathcal N) &}
\end{equation*}
where $\iota$ is the inclusion map and the vertical sequences are left exact because $\Z$ is torsion free (cf.~\cite{SGA7}, VIII 5.5).
We complete the diagram by inserting the cokernels of the horizontal maps
\begin{equation*}\label{eq.diagrambig}
\xymatrix{ 
0\ar[r]& \Z\ar@{^{(}->}[d]\ar[r]^n& \Z\ar[r]\ar[d]& \Z/n\Z\ar[r]\ar[d]^{{\wr}}&0\\
 0\ar[r]&\pi_0({j_*}V_{x_K}^* )\ar[r]\ar@{->>}[d]^{}& \pi_0(T)\ar[r] ^{{\pi_0(h)}}\ar@{->>}[d]_{{\pi_0(t)}}&\pi_0( \bm{{\rm H}^1(K,\mu_n)}) \ar[r]&0\\
& \pi_0(\mathcal N) \ar@{=}[r] & \pi_0(\mathcal N )& \pi_0(T^\ft)\ar@{.>}[u] _{\pi_0(h^\ft)}\ar@{.>}[l]^{\pi_0(t^\ft)}\ar@{_{(}.>}[ul]_\iota&
}
\end{equation*}
where $\pi_0(h)\circ \iota= \pi_0(h^\ft)$ (see \eqref{eq.hprime}) and $\pi_0(t)\circ \iota=\pi_0(t^\ft)$ (see \eqref{dia.TTN}).
By Remark~\ref{rem.tori}, $\pi_0(j_*V_{x_K}^* )$ is isomorphic to $\Z$, $\pi_0(\mathcal N)$ is isomorphic to $ \Z/n\Z$ and, under these identifications, the left vertical sequence coincides with the upper horizontal sequence. More precisely, we have 
\[ \pi_0(\mathcal N)\stackrel{\sim}{\longrightarrow} \Z/n\Z\stackrel{\sim}{\longrightarrow} \pi_0 ( \bm{{\rm H}^1(K,\mu_n)})\]
 where the first isomorphism maps the image of the class of a uniformizer $\pi'\in K^{\prime *}=V_{x_K}^* (K)$ to the class of $1$,
while the second isomorphism maps the class of $1$ to the image of the cohomology class corresponding to a uniformizer $\pi\in K^*=\G_{m,K}(K)$. Furthermore, the middle vertical sequence splits as does the middle horizontal sequence, and we may identify $\pi_0(T)$ with $\Z\oplus \Z/n\Z$ so that $\pi_0(h)(a\oplus \bar b)=\bar a -\bar b$ and $\pi_0(t)(a\oplus \bar b)=\bar b$ with $\bar b$ the class of $b\in \Z$ modulo $n\Z$.
Let $\sigma$ be the section of $\pi_0(t)$ mapping $\bar b$ to $0\oplus \bar b$. Then $\pi_0(h)\circ \sigma=-\id_{\Z/n\Z}$.
Furthermore $\sigma\circ\pi_0(t) \circ \iota=\iota$ because $\sigma\circ \pi_0(t)\circ \iota-\iota$ factors through $\Z$ and thus is trivial because $\pi_0(T^\ft)$ is torsion. Hence
\[ \pi_0(h^\ft)=\pi_0(h)\circ \iota= \pi_0(h)\circ \sigma \circ \pi_0(t)  \circ \iota= -t\circ \iota= -\pi_0(t^\ft), \]
and thanks to \eqref{eq.vuft} and \eqref{eq.u}, we get
$v=\pi_0(t^\ft)\circ u^\ft= -\pi_0(h^\ft)\circ u^\ft=-u$.
We can then state the main result which is an immediate consequence of what we have just proved and Proposition~\ref{pro.sha0}:

\begin{theorem}\label{thm.main}
Let $A_K$ be an abelian variety over $K$. The homomorphism
\[ \Xi'\colon {\rm H}^1(K,A_K)\longrightarrow{\rm Hom} (\pi_1(\GGr(A^\prime)), \Q/\Z)\]
mapping the class $[X_K]$ of the torsor $X_K$ to the homomorphism $-v_{X_K}\colon \pi_1(\GGr(A'))\to \Q/\Z$ (see \eqref{eq.v}) coincides with the homomorphism $\Xi$ in Proposition~\ref{pro.sha0} mapping $[X_K]$ to the homomorphism $u_{X_K}$ in \eqref{eq.u}. If furthermore the characteristic of $K$ is zero, then $-v_{X_K}=u_{X_K}$ coincides with B\'egueri's homomorphism $u_{X_K}^\tau$ in \eqref{eq.utau} and hence $\Xi'$ is Shafarevich's duality in \eqref{eq.sha}.
\end{theorem}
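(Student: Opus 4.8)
The plan is to reduce the first assertion, namely the identity $\Xi'=\Xi$, to the single sign relation $-v_{X_K}=u_{X_K}$, and then to obtain the characteristic-zero assertion by feeding this relation into Proposition~\ref{pro.sha0}. The point is that both homomorphisms are connecting maps in the long exact sequence of the functors $\pi_\bullet$, but extracted from two extensions of $\GGr(A^{\prime 0})$ that sit inside the single commutative diagram \eqref{dia.TTN}: the map $u_{X_K}$ is read off the rigidificator sequence \eqref{eq.TExtAN} (top row), while $v_{X_K}$ is read off the Picard-functor sequence \eqref{eq.N} (bottom row). Since these two rows are linked by the vertical morphism $t^{\ft}\colon T^{\ft}\to\mathcal N$, I would first argue that both maps factor through the \emph{same} connecting homomorphism $u^{\ft}\colon \pi_1(\GGr(A'))\to \pi_0(\GGr(T^{\ft}))\cong \pi_0(T)_{\tor}$, so that $u=\pi_0(h^{\ft})\circ u^{\ft}$ (this is \eqref{eq.u}) and $v=\pi_0(t^{\ft})\circ u^{\ft}$ (this is \eqref{eq.vuft}). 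Everything is thereby concentrated in a comparison of the two component-group maps $\pi_0(h^{\ft})$ and $\pi_0(t^{\ft})$.

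Next I would carry out that comparison as a purely component-group computation. Using Remark~\ref{rem.tori}, which computes $\pi_0$ of the N\'eron models of $V_{x_K}^{*}$ and of $N_K$ in terms of the degree-$n$ separable extension $K'/K$, together with Lemma~\ref{comp-mun}, one identifies $\pi_0(j_*V_{x_K}^{*})\cong\Z$, $\pi_0(\mathcal N)\cong\Z/n\Z$, and $\pi_0(\bm{{\rm H}^1(K,\mu_n)})\cong\Z/n\Z$. Splicing the Kummer sequence $0\to\mu_n\to V_{X_K}^{*}\cong\G_{m,K}\xrightarrow{n}\G_{m,K}\to 0$ with the defining sequence \eqref{eq.GVPic} into one diagram of component groups forces a splitting $\pi_0(T)\cong\Z\oplus\Z/n\Z$, relative to which one reads off $\pi_0(h)(a\oplus\bar b)=\bar a-\bar b$ and $\pi_0(t)(a\oplus\bar b)=\bar b$.

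From here the sign is forced. Taking the section $\sigma$ of $\pi_0(t)$ sending $\bar b\mapsto 0\oplus\bar b$ gives $\pi_0(h)\circ\sigma=-\id_{\Z/n\Z}$, while $\sigma\circ\pi_0(t)\circ\iota=\iota$ on the inclusion $\iota\colon\pi_0(T^{\ft})\hookrightarrow\pi_0(T)$, since the difference factors through the torsion-free group $\Z$ and hence vanishes on the torsion group $\pi_0(T^{\ft})$. This yields $\pi_0(h^{\ft})=\pi_0(h)\circ\iota=\pi_0(h)\circ\sigma\circ\pi_0(t)\circ\iota=-\pi_0(t^{\ft})$, whence $v=\pi_0(t^{\ft})\circ u^{\ft}=-\pi_0(h^{\ft})\circ u^{\ft}=-u$, i.e. $-v_{X_K}=u_{X_K}$ and $\Xi'=\Xi$. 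For the characteristic-zero part I would then simply invoke Proposition~\ref{pro.sha0}, where it is shown that in that case $u_{X_K}$ agrees with B\'egueri's map $u^{\tau}_{X_K}$ of \eqref{eq.utau} and that $\Xi$ is Shafarevich's duality \eqref{eq.sha}; combining this with $-v_{X_K}=u_{X_K}$ identifies $\Xi'$ with \eqref{eq.sha}.

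The main obstacle is the sign bookkeeping in the second and third steps. All the maps involved are canonical only after the chosen identifications of the various $\pi_0$'s with $\Z$ or $\Z/n\Z$, and the whole content of the theorem is that the Picard construction and the rigidificator construction see the extension $V_{X_K}^{*}=\G_{m,K}\hookrightarrow V_{x_K}^{*}$ from opposite sides—one quotients $V_{x_K}^{*}$ by $\mu_n$, the other by $\G_{m,K}$—which is precisely what produces the minus sign. Making these identifications mutually compatible, in particular matching the class of a uniformizer $\pi'\in K^{\prime *}=V_{x_K}^{*}(K)$ against that of $\pi\in K^{*}=\G_{m,K}(K)$ through the two sequences, is the delicate point; once the splitting $\pi_0(T)\cong\Z\oplus\Z/n\Z$ is fixed and the two projections $\pi_0(h),\pi_0(t)$ computed on it, the rest is formal.
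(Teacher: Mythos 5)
Your proposal is correct and follows essentially the same route as the paper: the reduction to comparing $\pi_0(h^{\ft})$ and $\pi_0(t^{\ft})$ via the factorizations $u=\pi_0(h^{\ft})\circ u^{\ft}$ and $v=\pi_0(t^{\ft})\circ u^{\ft}$ through diagram \eqref{dia.TTN}, the splitting $\pi_0(T)\cong\Z\oplus\Z/n\Z$ with $\pi_0(h)(a\oplus\bar b)=\bar a-\bar b$ and $\pi_0(t)(a\oplus\bar b)=\bar b$, the section argument giving $\pi_0(h^{\ft})=-\pi_0(t^{\ft})$ (using that the discrepancy factors through the torsion-free group $\Z$), and the final appeal to Proposition~\ref{pro.sha0} are precisely the paper's argument preceding the theorem statement.
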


\subsection{Comparison on the prime-to-$p$ parts}

We have seen in Theorem~\ref{thm.main} that the homomorphisms $\Xi$ and $\Xi'$ always coincide, and in the mixed characteristic case, that they coincide with Shafarevich's duality in \eqref{eq.sha}; in particular they are isomorphisms.
For $K$ of characteristic $p$, it is not clear in general either that they are isomorphisms or that they correspond to Shafarevich's duality in \eqref{eq.sha} (see also \cite{Ber}, Theorem 3).
However, we have a partial result on the prime-to-$p$ parts where Shafarevich's duality is quite easy to describe.

We recall here what Shafarevich's duality looks like on the prime-to-$p$ parts.
Let $n=l^r$ be a positive integer, prime to $p$, and large enough to kill the $l$-primary parts of the component groups of $A_K$ and $A_K'$.
Consider the perfect cup product pairing
\[
\langle\ ,\ \rangle\colon {\rm H}^1(K,{}_nA_K)\times {}_nA^\prime_K(K)\longrightarrow {\rm H}^1(K,\mu_n) \cong \Z/n\Z
\]
on the (\'etale or flat) cohomology groups of the $n$-torsion subgroups of $A_K$ and $A_K^\prime$.
Given an extension $\eta_n$ as in \eqref{eq.etan} (which corresponds to the torsor $X_K$) and a point $a\in {}_nA^\prime_K(K)$,
then $\langle \eta_n,a \rangle$ is the class of the pull-back along $a\colon \Z\to {}_nA^\prime_K$ of the Cartier dual of $\eta_n$,
\[ \eta_n^D\colon 0\longrightarrow \mu_n \longrightarrow {}_nE_K^D \longrightarrow {}_n A_K^\prime\longrightarrow 0, \]
and it corresponds to the image of $a$ along the boundary map $ \partial\colon {}_n A_K^\prime(K) \to {\rm H}^1(K,\mu_n)$.
Furthermore, if $ {}_nA^{\prime 0}$ denotes the quasi-finite subgroup of $n$-torsion sections of $A^{\prime 0}$, we have
\[
\pi_1(\GGr(A^\prime))/n\pi_1(\GGr(A^\prime))
\stackrel{\sim}{\longrightarrow}
{}_nA^{\prime 0}(\OO_K)\stackrel{\sim}{\longleftarrow}
{}_{n^2}A^{\prime}(\OO_K)/ {}_nA^{\prime}(\OO_K),
\]
\[{\rm H}^1(K,{}_{n^2}A_K)/{\rm H}^1(K,{}_nA_K)
\stackrel{\sim}{\longrightarrow}
{}_n{\rm H}^1(K,A_K) 
\]
and hence a perfect pairing 
\begin{equation}\label{eq.shad}{}_n{\rm H}^1(K,A_K) \times
\pi_1(\GGr(A^\prime))/n\pi_1(\GGr(A^\prime)) \longrightarrow {\rm H}^1(K,\mu_n) \cong\Z/n\Z. \end{equation}
Now, using the isomorphism \[ {\rm Hom}( \pi_1(\GGr(A^\prime) )/n\pi_1(\GGr(A^\prime) ),\Z/n\Z )\stackrel{\sim}{\longrightarrow} {\rm Hom}( \pi_1(\GGr(A^\prime)) ,\Z/n\Z ),\] the pairing in \eqref{eq.shad} provides an isomorphism 
\begin{equation}\label{eq.shad2}
{}_n{\rm H}^1(K,A_K)\stackrel{\sim}{\longrightarrow} 
{\rm Hom}( \pi_1(\GGr(A^\prime)) ,\Z/n\Z ),
 \end{equation}  
that is the restriction of \eqref{eq.sha} to the $n$-parts (cf. \cite{Ber} \S~1). 

Now, we will show that \eqref{eq.shad2} coincides with the restriction to the $n$-parts of the homomorphism $\Xi$ in Proposition~\ref{pro.sha0}.
The map $u^\tau_{X_K}\colon \pi_1(\GGr(A^\prime))\to \pi_0(\bm{{\rm H}^1(K,\mu_n)}) \cong{\rm H}^1(K,\mu_n)$ in \eqref{eq.utau} can also be viewed as the composition
\begin{equation}\label{eq.pi1cup}
\pi_1(\GGr(A^\prime))\stackrel{\delta}{\longrightarrow}
 {}_nA^\prime(\OO_K)\stackrel{\sim}{\longrightarrow}  {}_nA^\prime_K(K)\stackrel{\partial}{\longrightarrow}
{\rm H}^1(K,\mu_n) \cong\Z/n\Z \subset \Q/\Z
\end{equation}
where the first map is deduced from
the exact sequence \begin{equation*}
0\longrightarrow {}_nA^\prime_K \longrightarrow A^\prime_K\stackrel{n}{\longrightarrow} A^\prime_K\longrightarrow 0 \end{equation*}
on passing to N\'eron models. More precisely we have
\begin{equation*}
0\longrightarrow {}_nA^\prime\longrightarrow A^\prime\stackrel{n}{\longrightarrow} nA^\prime \longrightarrow 0
\end{equation*}
where $nA^\prime$ is a subgroup scheme of $A^\prime$ that contains $ A^{\prime 0}$.
In particular, on applying the perfect Greenberg realization functor, we get a homomorphism
\begin{equation} \label{eq.pi1A} \delta\colon 
\pi_1(\GGr(A^\prime) )\longrightarrow {}_nA^\prime (\OO_K)
\end{equation}
via the canonical isomorphisms $\pi_1(\GGr( nA^\prime ))  \stackrel{\sim}{\to}\pi_1(\GGr( A^\prime ))$, ${}_nA^\prime (\OO_K)\cong \GGr( {}_nA^\prime ) \stackrel{\sim}{\to} \pi_0({}_nA^\prime )$.


Let now $X_K$ be a torsor under $A_K$ of order $d$ with $d$ a power of a prime integer $l$, $l\neq p$.
Let $n=l^r$ be a multiple of $d$ large enough to kill the $l$-primary parts of the component groups of $A_K$ and $A_K'$.
Fix an extension corresponding to $X_K$ as in \eqref{eq.eta}, and let $x_K=\spec(K')$ be a rigidificator of $\Pic_{X_K/K}$
contained in ${}_nE_K$, \emph{i.e.}, a point of ${}_nE_K$ above $1\in \Z/n\Z$ in \eqref{eq.etan}.
We show that the composition of the maps in \eqref{eq.pi1cup} coincides with the map $u$ in \eqref{eq.u}. This is sufficient
to conclude that our construction via rigidificators (or equivalently via the relative Picard functor) is Shafarevich's
duality on the prime-to-$p$ parts.

With notation as in \eqref{eq.eta}, observe that the $n$-multiplication on $A_K$ factors through $E_K$ so that we have a
homomorphism $\gamma \colon E_K\to A_K$, with kernel ${}_n E_K$ such that $\gamma\circ \alpha=n$.
Consider the sequence in
\eqref{eq.muvEx}.
 We have a diagram with exact rows
\begin{equation*}
\xymatrix{
0\ar[r]& \mu_n \ar[r]\ar@{=}[d] & {}_nE_K^D \ar[r]\ar[d]& \underline{\rm Ext}^1(A_K,\G_m) \ar[r]^{~ ~ n} \ar[d]^{\gamma^*}&
 A^\prime_K\ar[r]\ar@{=}[d]& 0\\
0\ar[r]& \mu_n \cong\underline{\rm Hom}(E_K,\G_m) \ar[r]& V_{x_K}^*\ar[r]& \underline{\rm Ext}^1(E_K,\G_m)_{x_K}\ar[r]&
 A^\prime_K\ar[r]& 0 .
}
\end{equation*}
Indeed ${}_nE_K^D \cong\underline{\rm Hom}({}_nE_K,\G_m) $ maps canonically to $ V_{x_K}^* \cong\underline{\rm Mor}(x_K,\G_{m,K})$;
hence ${}_nA^\prime_K$ maps to the torus $T_K :=V_{x_K}^*/\mu_n$ in \eqref{eq.TExtA}. The push-out of the exact sequence
$0\to {}_nA^\prime_K\to A^\prime_K\to A^\prime_K\to 0$ along ${}_nA^\prime_K\to T_K$ provides the sequence \eqref{eq.TExtA}
and the homomorphism $\gamma^*$ sends a $\G_m$-extension of $A_K$ to its pull-back along $\gamma$ endowed with its
canonical trivialization along $x_K$, induced by the canonical trivialization along ${}_nE_K$.
Moreover, the boundary map $ \partial\colon {}_n A_K'(K) \to {\rm H}^1(K,\mu_n)$ (of finite groups) is the composition
of $ \nu\colon {}_n A_K^\prime(K)\to T_K(K)$ with the boundary map $h\colon T_K(K)\to {\rm H}^1(K,\mu_n)$, \emph{i.e.},
\begin{equation}\label{eq.partial}
 \partial=h\circ \nu.
 \end{equation}
Recall furthermore that the kernel of the $n$-multiplication on $A'$ is a quasi-finite group scheme over $\OO_K$
whose finite part is an \'etale finite group scheme over $\OO_K$ of order prime to $p$, hence constant, because $\OO_K$
is strictly henselian.
On the level of Serre pro-algebraic groups we then have a diagram with exact rows
\begin{equation*}
\xymatrix{
0\ar[r]& {}_n A^\prime(\OO_K)\ar[r]\ar[d]^{ \nu}& \GGr(A^\prime) \ar[r]\ar[d]^{\alpha^*}&
 \GGr(nA^\prime) \ar[d]\ar[r]& 0\\
0\ar[r] & \GGr(T) \ar[r]& \GGr(j_* \underline{\rm Ext}^1(E_K,\G_m)_{x_K}) \ar[r]&
\GGr( A^\prime) \ar[r]& 0
}
\end{equation*}
Since the vertical map on the left factors through a map $\nu^\ft\colon {}_n A^\prime(\OO_K)\to \GGr(T^\ft)$, the homomorphism
$u^\ft\colon \pi_1(\GGr (A^\prime))\to \pi_0 ( \GGr(T^\ft) )=\pi_0(\GGr(T))_\tor $ in \eqref{eq.u} factors through
the map $\delta\colon \pi_1(\GGr(A'))\to {}_n A^\prime(\OO_K)$ in \eqref{eq.pi1A} and hence
\[
u^\tau\stackrel{\eqref{eq.pi1cup}}= \partial\circ \delta\stackrel{\eqref{eq.partial}}{=}h\circ \nu \circ \delta=\pi_0(h^\ft)\circ \pi_0(\nu^\ft)\circ \delta=\pi_0(h^\ft)\circ u^\ft=u,
\]
 \emph{i.e.}, the homomorphism
$u\colon \pi_1(\GGr (A^\prime))\to \Z/n\Z$ in \eqref{eq.u} coincides with that in \eqref{eq.pi1cup}.
Thus we have
\begin{proposition}\label{pro.shap} Let $K$ be a complete discrete valued field with algebraically closed residue field and $A_K$ an abelian variety over $K$. Then the homomorphism $\Xi$ in Proposition~\ref{pro.sha0} coincides with Shafarevich's duality ${\rm H}^1_\fl(K,A_K)\stackrel{\sim}{\to} {\rm Hom}(\pi_1(\GGr(A^{\prime})), \Q/\Z)$ (see \eqref{eq.sha} and \cite{Ber}, Theorem~3), on the prime-to-$p$ parts.
\end{proposition}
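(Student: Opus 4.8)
The plan is to identify the image of a torsor class under the homomorphism $\Xi$ of Proposition~\ref{pro.sha0}, which by construction sends $[X_K]$ to the homomorphism $u_{X_K}$ of \eqref{eq.u}, with its image under the explicit cup-product description \eqref{eq.shad2} of Shafarevich's duality \eqref{eq.sha}, after restricting to the $l$-primary part for each prime $l\neq p$. As both maps are additive, I would fix a torsor $X_K$ of order $d=l^{e}$ with $l\neq p$ and choose $n=l^{r}$ a multiple of $d$ large enough to kill the $l$-primary parts of the component groups of $A_K$ and $A_K'$. Since $d\mid n$, the class $[X_K]$ is $n$-torsion and fits into an extension \eqref{eq.eta} with $n$-torsion sequence \eqref{eq.etan}; by smoothness of $X_K$ I may also pick a separable rigidificator $x_K=\spec(K')\subset {}_nE_K$ lying above $1\in\Z/n\Z$. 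It then suffices to compute the two images and compare them.

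First I would record the value of \eqref{eq.shad2} on $[X_K]$. Using that the cup-product pairing satisfies $\langle\eta_n,a\rangle=\partial(a)$ for the boundary map $\partial\colon{}_nA_K'(K)\to\HH^1(K,\mu_n)$, and that the isomorphism $\pi_1(\GGr(A^\prime))/n\,\pi_1(\GGr(A^\prime))\xrightarrow{\sim}{}_nA^{\prime 0}(\OO_K)$ is induced by the map $\delta$ of \eqref{eq.pi1A} coming from the $n$-torsion sequence of N\'eron models, the image of $[X_K]$ under \eqref{eq.shad2} is precisely the composition \eqref{eq.pi1cup}, namely $\partial\circ\delta$. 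The whole statement thus reduces to the single identity $u_{X_K}=\partial\circ\delta$.

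To prove this identity I would run the diagram chase outlined before the statement. The $n$-multiplication on $A_K$ factors through $E_K$, yielding $\gamma\colon E_K\to A_K$ with $\gamma\circ\alpha=n$; pulling a $\G_m$-extension of $A_K$ back along $\gamma$ and rigidifying along $x_K$ produces the map $\gamma^{*}$ that embeds the Cartier-dual extension $\eta_n^{D}$ into the rigidificator sequence \eqref{eq.TExtA}, and exhibits $\partial$ as $h\circ\nu$ (see \eqref{eq.partial}), where $\nu\colon{}_nA_K'(K)\to T_K(K)$ and $h$ is the boundary map attached to $T_K$. Passing to N\'eron models and then to perfect Greenberg realizations, the relevant left vertical map factors through $\nu^{\ft}\colon{}_nA^\prime(\OO_K)\to\GGr(T^{\ft})$, so that the homomorphism $u^{\ft}$ of \eqref{eq.u} equals $\pi_0(\nu^{\ft})\circ\delta$; combining this with $u_{X_K}=\pi_0(h^{\ft})\circ u^{\ft}$ and $\partial=h\circ\nu$ gives $u_{X_K}=\pi_0(h^{\ft})\circ\pi_0(\nu^{\ft})\circ\delta=\partial\circ\delta$, which is the desired identity.

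The main obstacle---and the reason the argument is confined to the prime-to-$p$ part---lies in transferring the equalities of boundary maps on $K$-points to equalities of the induced maps on the component groups $\pi_0$ of the perfect Greenberg realizations. Here one needs that the finite part of $\ker(n\colon A^\prime\to A^\prime)$ is an \'etale, hence constant, group scheme over the strictly henselian ring $\OO_K$, so that the component-group sequences stay exact and the cup-product boundary $\partial$ is matched faithfully by $\pi_0(h^{\ft})$ together with the factorization of $u^{\ft}$ through $\delta$. This \'etaleness is exactly what fails for the $p$-part, where the $n$-torsion group schemes need not be \'etale; consequently I expect the comparison to hold only on prime-to-$p$ parts, as asserted, the $p$-part remaining open.
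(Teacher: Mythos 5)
Your proposal is correct and follows essentially the same route as the paper's own proof: both reduce the statement to the identity $u_{X_K}=\partial\circ\delta$ between \eqref{eq.u} and the composition \eqref{eq.pi1cup}, and both establish it by the same diagram chase through $\gamma^{*}$, the factorization $\partial=h\circ\nu$ of \eqref{eq.partial}, and the factorization of $u^{\ft}$ through $\delta$, with the \'etaleness (hence constancy) of the prime-to-$p$ torsion of $A^{\prime}$ over the strictly henselian $\OO_K$ playing exactly the role you identify.
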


The comparison for the $p$-parts in the equal positive characteristic case is still open.

\section{Comparison between \eqref{eq.phid} and \eqref{eq.sha}}\label{sec.comparison}
In this last section, we return to the study of torsors under an elliptic curve $A_K$, and we examine the relation
between the fundamental short exact sequence (\ref{eq.ZPicJ}) of Serre pro-algebraic groups with Shafarevich's duality of abelian varieties.
Let $n\geq 1$ be an integer and $X_K$ a torsor under the elliptic curve $A_K$ of order $d$ dividing $n$. Let $X$ denote the $S$-proper minimal regular model of $X_K$. Thanks to Corollary~\ref{cor.finalresult} we are provided with a short exact sequence of Serre
pro-algebraic groups
\begin{eqnarray}\label{eq.d0}
0\longrightarrow \mathbb{Z}/d\mathbb{Z}\longrightarrow {\bm {\Pic^{0}(X)}}\stackrel{\bm q}{\longrightarrow} {\bm {J(S)}}\longrightarrow 0
\end{eqnarray}
by sending $\bar{1}\in \mathbb{Z}/d\mathbb{Z}$ to $\mathcal{O}_{X}(D)\in \Pic^{0}(X)$. If we push out this short exact sequence
by the canonical map
\begin{eqnarray}\label{transition}
\mathbb{Z}/d\mathbb{Z}\longrightarrow \mathbb{Z}/n\mathbb{Z}, ~~~~\bar{1} \mapsto \frac{n}{d}\cdot \bar{1},
\end{eqnarray}
we get an element, denoted by $\Phi_n(X_K)$, of the group $\mathrm{Ext}^{1}(\mathbf{Gr}(J),\mathbb{Z}/n\mathbb{Z})$
of extensions of the pro-algebraic group $\GGr(J)=\bm{J(S)}$ by the constant group $\mathbb{Z}/n\mathbb{Z}$. Recall that by Lemma~\ref{J_K et A_K}~(i), there is a canonical isomorphism of elliptic curves $\iota\colon A_{K}'\xrightarrow{\sim} J_K$, thus the group $\mathbf{Gr}(J)$ does not depend on the torsor $X_K$.
In this way we get the following canonical map of sets:
\begin{eqnarray}\label{Phi_d}
\Phi_n\colon {}_n\mathrm{H}^{1}_{\mathrm{fl}}(K,A_K)\longrightarrow \mathrm{Ext}^{1}(\mathbf{Gr}(J),\mathbb{Z}/n\mathbb{Z}).
\end{eqnarray}
Motivated by the isomorphism \eqref{thetastar}, one might ask if this morphism is always an isomorphism.
Our strategy in studying this question is to relate the above construction to Shafarevich's pairing in \eqref{eq.sha} by using our new construction in \S~\ref{sec.picardconstruction} as an intermediate bridge.

\subsection{Some set-theoretical considerations}
 We begin with the following lemma, which follows from Lemma \ref{lemme cohomologique}.

\begin{lemma}\label{exofrig}
The schematic closure $Y$ in $X$ of any closed point $x_K$ of $X_K$ provides a rigidificator of the Picard functor $\Pic_{X/S}$.
\end{lemma}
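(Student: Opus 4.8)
The plan is to verify the defining condition of a rigidificator from Definition~\ref{rigidificateur}: namely that $Y$ is a closed subscheme of $X$, finite and flat over $S$, and that for every $S$-scheme $S'$ the restriction map $\Gamma(X',\OO_{X'})\to \Gamma(Y',\OO_{Y'})$ is injective. Here $x_K=\spec(K')$ is a closed point of $X_K$ with $K'/K$ a finite extension, and $Y=\overline{\{x_K\}}$ is its schematic closure in $X$.

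First I would establish the geometric properties of $Y$. Since $X$ is regular (indeed the minimal regular model) and $x_K$ is a closed point of the generic fibre, its schematic closure $Y$ is an integral horizontal divisor on $X$; being the closure of a zero-dimensional subscheme of $X_K$ it is one-dimensional, dominates $S$, and is therefore flat over $S$ (a torsion-free module over a discrete valuation ring is flat). Finiteness over $S$ follows because $Y\to S$ is proper (as a closed subscheme of the proper $S$-scheme $X$) and quasi-finite, hence finite; quasi-finiteness holds because the fibre $Y_K=x_K$ is finite over $K$ and the special fibre $Y_s$ is a finite set of closed points of $X_s$. Thus $Y$ is finite and flat over $S$, as required.

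The essential point is the injectivity of $\Gamma(i')$ for all $S'$. The cleanest route is to reduce to a universal statement: one wants $\OO_{X'}\to i'_\ast\OO_{Y'}$ to be injective on global sections for every base change. I would argue that this follows once one knows that $Y$ meets every irreducible component of every geometric fibre $X_{\bar s'}$. This is exactly where Lemma~\ref{lemme cohomologique} (and the index/multiplicity computations around Lemma~\ref{d1=d2=d}) enter: an $S$-section $\Delta$ or a horizontal divisor of degree prime to the component structure can be arranged to meet the fibre transversally, but more to the point, the divisor $Y$ has generic degree $[K':K]\geq 1$ over $K$ and by the intersection-theoretic analysis (a positive horizontal divisor intersects $X_s=dD$ with $Y\cdot X_s=d(Y\cdot D)$) one sees $Y$ cannot be disjoint from any component. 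I would then use that if a horizontal divisor meets every component of the special fibre, then $\Gamma(X_n',\OO)\hookrightarrow \Gamma(Y_n',\OO)$ because a global function vanishing on $Y$ must vanish on each component it meets, and since $\HH^0$ of a connected fibre is ``small'' (governed by Lemma~\ref{lemme cohomologique}, which gives $\HH^0(X_1,\OO_{X_1})\cong k$), vanishing on the intersection forces vanishing everywhere.

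The main obstacle will be handling the base change $S'$ uniformly, since $\Gamma(X',\OO_{X'})$ need not equal $\OO_{S'}$ when $X/S$ fails to be cohomologically flat—precisely the phenomenon driving this paper. I would address this by noting that the injectivity is a fibrewise question after the standard reduction (it suffices to check on the universal situation, and by compatibility of formation of $\OO_{X'}$ with flat base change one reduces to geometric points of $S'$), and at a geometric fibre the statement becomes: a regular function on $X_{\bar s'}$ restricting to zero on $Y_{\bar s'}$ is zero. Because $Y$ hits every irreducible component, and because the indecomposable-of-canonical-type structure controlled by Lemma~\ref{lemme cohomologique} forces $\HH^0$ to consist only of functions constant on each connected piece, such a function must vanish identically. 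This is the content the authors presumably extract directly from Lemma~\ref{lemme cohomologique}, so I expect the proof to be short once the meeting-every-component property is in hand.
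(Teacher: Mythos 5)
Your reduction to the special fibre is the right first move (this is exactly Raynaud's criterion, \cite{Raynaud} 2.2.2, which the paper invokes), and the finite-flatness of $Y$ is fine, but the core of your argument for injectivity on the fibre fails on two counts. First, the claim that $Y$ meets every irreducible component of $X_s$ is false, and the intersection computation you cite does not give it: if $x_K$ has degree $d$ then $Y\cdot X_s=d$, hence $Y\cdot D=1$, so $Y$ meets \emph{exactly one} component of $X_s$ (transversally, in a single point); this is precisely what Corollary~\ref{cor.exofrig} establishes. For Kodaira types with many components, $Y_s$ is a single point on one of them. Second, and more seriously, the assertion that $\HH^{0}$ of the geometric special fibre ``consists only of functions constant on each connected piece'' is exactly what fails in the situation this paper studies: $X_s$ is connected but non-reduced, and $k\to \HH^{0}(X_s,\OO_{X_s})$ is \emph{not} an isomorphism in general (failure of cohomological flatness). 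Mumford's Lemma~\ref{lemme cohomologique} gives $\HH^{0}(X_1,\OO_{X_1})\cong k$ only for the curve $X_1$ cut out by $\mathcal{I}=\OO_X(-\tfrac1d X_s)$, not for $X_s=X_d$ itself, whose ring of global functions contains nilpotents. So ``vanishes at the point $Y_s$ $\Rightarrow$ vanishes identically'' is not available for $X_s$ by your argument.

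The missing idea is a d\'evissage through the thickenings $X_n$ ($1\le n\le d$): the paper proves by induction on $n$ that $\HH^{0}(X_n,\OO_{X_n})\to \HH^{0}(Y_n,\OO_{Y_n})$ is injective. The inductive step uses the exact sequence $0\to \mathcal{I}^{n}/\mathcal{I}^{n+1}\to \OO_X/\mathcal{I}^{n+1}\to \OO_X/\mathcal{I}^{n}\to 0$ and its analogue modulo the ideal $\mathcal J$ of $Y$, reducing everything to injectivity of
\[
\HH^{0}\bigl(X_1,\mathcal{I}^{n}|_{X_1}\bigr)\longrightarrow \HH^{0}\bigl(Y_1,\mathcal{I}^{n}|_{Y_1}\bigr),
\]
where one needs $(\mathcal{I}^n+\mathcal J)/(\mathcal{I}^{n+1}+\mathcal J)\cong \mathcal{I}^n\otimes\OO_X/(\mathcal{I}+\mathcal J)$ (using that $\mathcal I$ is invertible and $Y$ is integral, $Y\not\subseteq X_1$). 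Here Mumford's lemma is applied to the degree-zero invertible sheaf $\mathcal{I}^{n}|_{X_1}$: either its $\HH^{0}$ is zero (nothing to prove), or $\mathcal{I}^{n}|_{X_1}\cong\OO_{X_1}$ and any nonzero global section is nowhere vanishing, hence cannot die on the nonempty scheme $Y_1$. Note that this argument needs only $Y_1\neq\emptyset$ — meeting a single point of a single component suffices — which is why your ``meets every component'' premise is both false and unnecessary. Without this filtration argument handling the nilpotent directions of $\OO_{X_s}$, your proof does not go through.
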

\begin{proof}
We need only verify the injectivity of the map
$\HH^{0}(X_s, \OO_{X_s})\rightarrow \HH^{0}(Y_s,\OO_{Y_s})$ (Corollary~2.2.2 of \cite{Raynaud}).
More generally, we will prove by induction on $n$ that the canonical morphism
$\HH^{0}(X_n,\OO_{X_n})\rightarrow \HH^{0}(Y_n,\OO_{Y_n})$ is injective, where $Y_n:=Y\times_X X_n$.
Let $Y$ be defined by the ideal sheaf $\mathcal J$.
Then $Y_n$ is defined by the ideal sheaf $\mathcal{I}^{n}+\mathcal J$.
Let us begin with the case $n=1$: by Lemma~\ref{lemme cohomologique}, we know that $\HH^{0}(X_1,\OO_{X_1})=k$.
Let $\varepsilon\in \HH^{0}(X_1,\OO_{X_1})$; then $\varepsilon$ is a global function on $X_1$ and so is constant.
As a result, the image of $\varepsilon$ in $\HH^{0}(Y_1,\OO_{Y_1})$ is zero if and only if $\varepsilon=0$ that is, the morphism
$\HH^{0}(X_1,\OO_{X_1})\rightarrow \HH^{0}(Y_{1},\OO_{Y_1})$ is injective.
In order to complete the induction, consider the following diagram of sheaves over $X$
\[
\xymatrix{0\ar[r]& \frac{\mathcal{I}^{n}}{\mathcal{I}^{n+1}} \ar[r]\ar[d]&
 \frac{\OO_X}{\mathcal{I}^{n+1}} \ar[r]\ar[d]&
\frac{\OO_X}{\mathcal{I}^{n}} \ar[r]\ar[d]& 0\\
0\ar[r]& \frac{\mathcal{I}^{n}+\mathcal J}{\mathcal{I}^{n+1}+\mathcal J} \ar[r]&
 \frac{\OO_X}{\mathcal{I}^{n+1}+\mathcal J} \ar[r]&
\frac{\OO_X}{\mathcal{I}^{n}+\mathcal J} \ar[r]& 0
}
\]
where ${\rm H}^0(X, \frac{\OO_X}{\mathcal{I}^{m}})={\rm H}^0(X_m, \OO_{X_m})$ and ${\rm H}^0(X, \frac{\OO_X}{\mathcal{I}^{m}+\mathcal J})={\rm H}^0(Y_m, \OO_{Y_m})$.
Hence we need only establish the injectivity of the morphism
\begin{equation*}\label{H0}
\HH^{0}\left(X,\frac{\mathcal{I}^{n}}{\mathcal{I}^{n+1}}\right)\longrightarrow \HH^{0}\left(X,\frac{\mathcal{I}^{n}+\mathcal J}{\mathcal{I}^{n+1}+\mathcal J}\right).
\end{equation*}
Observe first that $\frac{ \mathcal{I}^{n} }{ \mathcal{I}^{n+1} }\cong \mathcal{I}^{n}\otimes_{\OO_X} \frac{\OO_X}{\mathcal{I}}$. Hence
\[
\HH^{0}\left( X,\frac{ \mathcal{I}^{n} }{ \mathcal{I}^{n+1} } \right)\cong \HH^{0}\left( X,\mathcal{I}^{n}\otimes_{\OO_X} \frac{\OO_X}{\mathcal I}
 \right)= \HH^{0}\left(X_1, \mathcal{I}^{n}|_{X_1}\right).
\]
Furthermore, consider the map
$ \frac{ \mathcal{I}^{n} }{ \mathcal{I}^{n+1} } \otimes_{\OO_X} \frac{\OO_X}{ \mathcal{I}+\mathcal J }\to
\frac{ \mathcal{I}^{n}+\mathcal J }{ \mathcal{I}^{n+1}+\mathcal J } $ that, on sections, maps
$\bar a\otimes\bar b$ to $\overline{ab}$. It is well defined and surjective.
Since $\mathcal I$ is invertible, $Y$ is integral and $Y\not\subseteq X_1$, so our map is also injective.
In particular, we find
\begin{equation*}\label{eq.IJ}
\frac{ \mathcal{I}^{n}+\mathcal J }{ \mathcal{I}^{n+1}+\mathcal J }
\cong
\frac{ \mathcal{I}^{n} }{ \mathcal{I}^{n+1} } \otimes_{\OO_X} \frac{\OO_X}{ \mathcal{I}+\mathcal J }
\cong
 \mathcal{I}^{n} \otimes_{\OO_X}
\frac{\OO_X}{ \mathcal{I} }
\otimes_{ \OO_X }
\frac{\OO_X}{ \mathcal{I}+\mathcal J }
\cong
 \mathcal{I}^{n} \otimes_{\OO_X} \frac{\OO_X}{\mathcal{I}+\mathcal J} .
\end{equation*}
Hence
\[ \HH^{0}\left( X,\frac{ \mathcal{I}^{n}+\mathcal J }{ \mathcal{I}^{n+1} +\mathcal J} \right)\cong \HH^{0}\left( X,\mathcal{I}^{n}\otimes_{\OO_X} \frac{\OO_X}{\mathcal I+\mathcal J} \right)= \HH^{0}\left(Y_1, \mathcal{I}^{n}|_{Y_1}\right).
\]
We are then reduced to proving that the restriction map
\[
\HH^{0}\left(X_1, \mathcal{I}^{n}|_{X_1}\right) \longrightarrow \HH^{0}\left(Y_1, \mathcal{I}^{n}|_{Y_1}\right)
\]
is injective. Since $\mathcal I$ is an invertible sheaf, according to Lemma~\ref{lemme cohomologique}, the first group is trivial or its consists of constant functions; hence the result follows.
\end{proof}

\begin{corollary}\label{cor.exofrig} Let $x_K=\spec(K')$ be a closed point of $X_K$ with $K'/K$ a finite \emph{separable} extension of degree $d$ (see Lemma~\ref{lem.index-period}), and let $Y:=\overline{\{x_K\}}\subset X$ be the schematic closure of $x_K$. Then the subscheme $Y\hookrightarrow X$ of $X$ is a rigidificator of the Picard functor $\mathrm{Pic}_{X/S}$. Moreover, the scheme $Y$ is regular. 
\end{corollary}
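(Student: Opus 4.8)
The plan is to establish the two assertions of Corollary~\ref{cor.exofrig} separately: that $Y:=\overline{\{x_K\}}$ is a rigidificator of $\Pic_{X/S}$, and that $Y$ is regular. For the first assertion, I would simply invoke Lemma~\ref{exofrig}, which was proved for the schematic closure in $X$ of \emph{any} closed point $x_K$ of $X_K$. The only point to check is that the present $x_K=\spec(K')$, with $K'/K$ finite separable of degree $d$, is indeed a closed point of $X_K$; but this is immediate since $\spec(K')\hookrightarrow X_K$ is a closed immersion whose image is a single closed point of the curve $X_K$. Thus Lemma~\ref{exofrig} applies verbatim and yields that $Y$ is a rigidificator. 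The existence of such a point of degree exactly $d$ is guaranteed by Lemma~\ref{lem.index-period}, which identifies the separable index of $X_K$ with its order $d$ when $A_K$ is an elliptic curve.

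For the regularity of $Y$, the key observation is that $Y$ is finite and flat over $S$ of degree $d$ (being the schematic closure of a degree-$d$ closed point, $Y\to S$ is finite flat with generic fibre $\spec(K')$), so $Y$ is the spectrum of a finite flat $\OO_K$-algebra $\OO_K\hookrightarrow B$ with $B\otimes_{\OO_K}K\cong K'$. Since $Y$ is a one-dimensional integral scheme, it suffices to show that $B$ is a discrete valuation ring, equivalently that $B$ is integrally closed in its fraction field $K'$. Here I would exploit regularity of the ambient surface $X$: the schematic closure $Y$ of a closed point of the smooth generic fibre is a curve on the regular two-dimensional scheme $X$, and one wants to conclude that $Y$ is regular at its closed point. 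The cleanest route is to use the explicit intersection-theoretic information from the proof of Lemma~\ref{d1=d2=d}: there one constructs, for the minimal degree $d$, a positive divisor $\Delta_K$ of degree $d$ whose schematic closure $\Delta$ meets $X_s$ with $\Delta\cdot D=1$, so that $\Delta\cap D=\{y\}$ is a single point at which $D$ is regular, and $\Delta$ passes through a component of multiplicity $d$. The point $y$ is a smooth point of $X_s^{\red}$ lying on a reduced component, and $\Delta$ meets the special fibre transversally there.

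The main obstacle will be relating the abstractly chosen $x_K$ to this transversality picture. Since any two closed points of degree $d$ give $K$-isomorphic (as torsors) configurations up to the $A_K$-action, one may arrange $x_K$ so that its closure $Y$ coincides with $\Delta$ above, or argue directly: because $Y\to S$ is finite flat of degree $d$ and $Y\cap X_s$ is supported at a single point lying on a component $C_i$ of multiplicity $d$ with $Y\cdot X_s = d\,(Y\cdot D)=d$, we get $Y\cdot D=1$, forcing $Y$ to meet $X_s$ transversally at a smooth point of the reduced structure. Transversal intersection with the reduced Cartier divisor at a regular point of $X$ then shows, by the local computation $\OO_{X,y}/(\text{equation of }Y)$, that $\OO_{Y,y}$ is regular, i.e.\ a discrete valuation ring. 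Hence $B$ is a DVR, $Y$ is regular, and the proof concludes. The delicate part is verifying that the degree-$d$ separable point can be taken with this transversal closure; I expect this to follow from the freedom in Lemma~\ref{lem.index-period} together with the fact that $\OO_K$ is strictly henselian, so that all components and intersection points are rational and one may select the splitting field to realize the transversal configuration.
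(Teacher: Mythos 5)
Your argument for the rigidificator assertion is the paper's own: cite Lemma~\ref{exofrig}. Your ``argue directly'' route for regularity is also, in substance, exactly the paper's proof: since $\OO_K$ is complete and $Y$ is finite and integral over $S$, the scheme $Y$ is local with a single closed point $y$; the equalities $Y\cdot X_s=\deg(Y/S)=d$ and $X_s=dD$ force $Y\cdot D=1$; and since the intersection $Y\cap D$ is concentrated at $y$, the local ring $\OO_{X,y}/(f,g)$ (where $f$, $g$ are local equations of $Y$ and $D$ at $y$) has length $1$, i.e.\ $(f,g)=\mathfrak{m}_y$, so that $f,g$ form a regular system of parameters of the two-dimensional regular local ring $\OO_{X,y}$ and both $Y$ and $D$ are regular at $y$.

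What you should delete is the surrounding hedging, because it rests on a misconception. There is no ``delicate part'' about arranging $x_K$ so that its closure is transversal: the numerical argument just recalled applies to \emph{every} closed point of degree $d$, and the transversality, the regularity (hence reducedness) of $D$ at $y$, and the fact that $y$ lies on a component of multiplicity $d$ are all \emph{consequences} of $Y\cdot D=1$, not hypotheses to be secured by a clever choice of $x_K$. This matters beyond aesthetics: the corollary quantifies over all degree-$d$ separable points, so a proof valid only for a well-chosen point would not establish the statement. Moreover, the fallback you suggest for making such a choice is unjustified: translation by an element of $A_K(K)$ is a $K$-automorphism of $X_K$ and hence preserves the residue fields of closed points, so the $A_K$-action is in general not transitive on closed points of degree $d$; and the divisor $\Delta_K$ appearing in the proof of Lemma~\ref{d1=d2=d} is only an effective divisor of degree $d$, not necessarily (the closure of) a separable closed point, so identifying $Y$ with $\Delta$ is not available either. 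Keep the direct computation, drop the appeal to Lemma~\ref{d1=d2=d} and the final paragraph, and the proof is complete.
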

\begin{proof} The first statement follows directly from Lemma~\ref{exofrig}. Now, $Y$ is a local scheme since $\OO_K$ is complete. Let $y$ be the closed point of $Y$. Since  $Y$ is of degree $d$ over $S$ we have $Y\cdot X_s=d$. Furthermore, $X_s=d D$ as divisor of $X$, hence $Y\cdot D=1$ and $Y$ cuts $D$ transversally at  $y$.
 Let $A$ be the local ring of $X$ at $y$. Let $f$ (respectively $g$) be a local equation around the point $y$  which defines $Y$ (respectively $D$). 
Then the local ring $A/(f,g)$ is of length $1$, hence it is isomorphic to $k$. As a result, $(f,g)$ is a system of parameters of the two dimensional regular local ring $A$. Therefore, $Y$ and $D$ both are regular at the point $y$. Thus the scheme $Y$ is regular.
\end{proof}
In the following, let $Y$ be the rigidificator given in Corollary~\ref{cor.exofrig}. We will use notation as in \S~\ref{Rappels Pic} and \S~\ref{filtrations}.
In particular, $G=(\Pic_{X/S},Y)^0$ is the identity component of the rigidified Picard scheme $(\Pic_{X/S},Y)$, and we have
the following canonical map
\[
r\colon G=(\mathrm{Pic}_{X/S},Y)^{0}\longrightarrow \mathrm{Pic}^{0}_{X/S}
\]
that forgets the rigidification. Let $N$ be the kernel of the morphism $r$. In general, this fppf-sheaf $N$ is not representable, but it has representable fibres. Following \S~\ref{Not}, let $H=\overline{N_K} \hookrightarrow (\mathrm{Pic}_{X/S},Y)^{0}=G$ denote the schematic closure of $N_K$ in $G$; it is representable by a flat $S$-group scheme of finite type. Then the fppf quotient $G/H$ gives us the identity component $J$ of
the $S$-N\'eron model of the Jacobian $J_{K}=\Pic^{0}_{X_K/K}$ of the curve $X_K/K$, and one has the following exact sequence
of $S$-group schemes:
\[
0\longrightarrow H\longrightarrow G\stackrel{\theta}{\longrightarrow} J\longrightarrow 0.
\]
which induces an exact sequence of abstract groups (\S~\ref{Rappels Pic}):
\begin{eqnarray}\label{eq.SPtsofJ}
0\longrightarrow H(S)\longrightarrow G(S)\longrightarrow J(S)\longrightarrow 0.
\end{eqnarray}
On the other hand, by definition, we have another exact sequence of sheaves, which is exact for the \'etale topology
(since \eqref{eq.picrig} in \S~\ref{Rappels Pic} is exact for the \'etale topology):
\[
0\longrightarrow N\longrightarrow G \stackrel{r}{\longrightarrow} \Pic_{X/S}^{0}\longrightarrow 0.
\]
Since $S$ is strictly henselian, the latter sequence induces the following short exact sequence of abstract groups:
\begin{eqnarray}\label{eq.SPtsofPic}
0\longrightarrow N(S)\longrightarrow G(S)\longrightarrow \mathrm{Pic}^{0}(X)\longrightarrow 0.
\end{eqnarray}
On combining \eqref{eq.SPtsofJ} and \eqref{eq.SPtsofPic}, we get the following commutative diagram of abstract groups with exact rows:
\begin{equation}\label{eq.diaHN}
\xymatrix{
0\ar[r]& N(S)\ar[r]\ar@{^{(}->}[d]& G(S)\ar[r]\ar@{=}[d]&
\mathrm{Pic}^{0}(X)\ar[r]\ar@{->>}[d]& 0\\
 0\ar[r]& H(S)\ar[r]\ar@{^{(}->}[d]& G(S)\ar[r]\ar@{^{(}->}[d]&
J(S)\ar[r]\ar@{^{(}->}[d]& 0 \\
0\ar[r] & N(K)\ar[r]& G(K)\ar[r]& \mathrm{Pic}^{0}_{X/S}(K)\ar[r]& 0 \\
}
\end{equation}
where the lower sequence is exact on the right because $N_K$ is a torus, the upper vertical map on the right is surjective (\cite{BLR}, 9.5/2)
and the remaining vertical maps are all injective.

\subsection{The pro-algebraic nature of diagram \eqref{eq.diaHN}}
For $n\in \mathbb{Z}_{\geq 1}$, as in \S \ref{sec.proalg-gree} we put $S_n=\mathrm{Spec}(\mathcal{O}_{K,n})=\mathrm{Spec}(\mathcal{O}_{K}/ \pi^n\OO_K)$ and let $\mathbb{R}_n$ denote the Greenberg algebra associated with $ \mathcal{O}_{K,n}$ (Appendix A of \cite{Lipman}). The aim of this subsection
is to show, with the help of Greenberg realization functors, that the diagram \eqref{eq.diaHN} is pro-algebraic in nature.

First, the sheaf $H$ is representable by an $S$-group scheme separated of finite type.
Hence its Greenberg realization $\mathrm{Gr}_n(H)$ is representable by a $k$-scheme of finite type (\S~\ref{sec.proalg-gree})
and we have the following short exact sequence:
\begin{equation*}\label{eq.GrHGJ}
0\longrightarrow \mathrm{Gr}_n(H)\longrightarrow \mathrm{Gr}_n(G)\longrightarrow \mathrm{Gr}_n(J)\longrightarrow 0.
\end{equation*}
For the right exactness, we need only prove that the map
$\mathrm{Gr}_n(G)\rightarrow \mathrm{Gr}_n(J)$
induces a surjective map on the groups of $k$-rational points, \emph{i.e.}, that the morphism of group $G(S_n)\rightarrow J(S_n)$ is surjective.
This last statement follows from the surjectivity of the maps $\theta(S)\colon G(S)\rightarrow J(S)$ (see \S~\ref{Rappels Pic} last paragraph)
and $J(S)\to J(S_n)$.
On passing to the projective limit of the associated perfect group schemes, one obtains an extension of Serre pro-algebraic groups
\begin{equation*}\label{eq.NPQ}
0\longrightarrow \GGr(H) \longrightarrow \GGr( G) \longrightarrow \GGr(J)\longrightarrow 0
\end{equation*}
which says that \eqref{eq.SPtsofJ} is pro-algebraic in nature.

Next, we consider the fppf sheaf $N$. Let us first remark that for any
$k$-algebra ${\mathsf A}$, by considering the $\mathcal{O}_{K,n}$-algebra $\mathbb{R}_{n}({\mathsf A})$, we have the following exact sequence of groups:
\[
0\longrightarrow N(\mathbb{R}_n({\mathsf A}))\longrightarrow G(\mathbb{R}_n({\mathsf A}))\longrightarrow \mathrm{Pic}_{X/S}^{0}(\mathbb{R}_n({\mathsf A})) .
\]
Let $\mathrm{Gr}_n(N)$ be the fppf sheaf associated with the pre-sheaf $\mathsf{A}\mapsto N(\mathbb{R}_n({\mathsf A}))$. By taking the
associated fppf sheaves, we get the following exact complex of algebraic $k$-groups (where the representability of $\mathrm{Gr}_n(N)$
follows from the representability of the last two functors by smooth $k$-group schemes):
\begin{equation}\label{eq.GrPic}
0\longrightarrow \mathrm{Gr}_{n}(N)\longrightarrow \mathrm{Gr}_n\left(G\right)\longrightarrow
\mathrm{Gr}_n(\mathrm{Pic}_{X/S}^{0}).
\end{equation}
By taking the $k$-rational points, we get the usual exact sequence
\[
0\longrightarrow N(S_n)\longrightarrow G(S_n)\longrightarrow \mathrm{Pic}_{X/S}^0(S_n)\longrightarrow 0
\]
which is exact on the right since $ \mathcal{O}_{K,n}$ is strictly henselian.
So the complex \eqref{eq.GrPic} is in fact a short exact sequence of algebraic $k$-groups.
 Now, by taking the projective limit with respect to $n$ in the sequence of perfect group schemes associated with \eqref{eq.GrPic} we get a
short exact sequence of Serre pro-algebraic groups:
\begin{equation*}\label{eq.proAlgPic}
0\longrightarrow \bm{N(S)}\longrightarrow \GGr(G)\longrightarrow
\bm{\Pic^{0}(X)}\longrightarrow 0.
\end{equation*}

Finally, the group scheme $N_K$ is a torus, $\mathrm{Pic}^{0}_{X_K/K}\cong A_K'$ is
an elliptic curve and $G_K=(\mathrm{Pic}_{X/S},Y)_{K}^{0}$ is a semi-abelian variety; hence they all admit N\'eron models over $S$, which will be denoted by $\mathcal{N}$, $ A'$, and $\mathcal{G}$ respectively; in particular they are smooth group schemes over $S$.
Moreover, according to Remark~\ref{rem.tori}, the $S$-group schemes $\mathcal{N}$, ${A}'$ are of finite type over $S$, and hence
the same holds for $\mathcal{G}$.

 By the N\'eron mapping property we have the following two canonical maps
\[
f_G\colon G\longrightarrow \mathcal{G}, \ \ \ \ \ \textrm{and} \ \ f_A\colon J\longrightarrow A'.
\]
As a consequence, the morphisms
\[
G(S)\longrightarrow G(K)=\mathcal{G}(S), \ \ \ \ \ \textrm{and} \ \ J(S)\longrightarrow \mathrm{Pic}^{0}_{X/S}(K) \cong{A}'(S)
\]
in diagram \eqref{eq.diaHN} come from the morphisms of Serre pro-algebraic groups
\[
\GGr(G)\longrightarrow\GGr({\mathcal{G}}), \quad \text{and} \quad \GGr(J)\longrightarrow \GGr({A}')
\]
induced by $f_G, f_A$.
This implies the existence of a morphism of pro-algebraic groups
\[
\alpha\colon
\bm{H(S)}\longrightarrow \bm{N(K)}=\GGr({\mathcal N})
\]
which realizes the lower left vertical inclusion in \eqref{eq.diaHN}.
Summarizing, we have that \eqref{eq.diaHN} comes from a commutative diagram (with exact rows) of Serre pro-algebraic groups 
\begin{equation}\label{eq.diaHNpro}
\xymatrix{
0\ar[r]& \bm{N(S)}\ar[r]\ar[d]& \GGr(G)\ar[r]\ar@{=}[d]& \bm{\Pic^0(X) } \ar[r]\ar@{->>}[d]& 0\\
0\ar[r]& \bm{H(S)}\ar[r]\ar[d]^{\alpha}& \GGr(G) \ar[r]\ar[d]& \GGr(J) \ar[r]\ar[d]& 0 \\
0\ar[r] & \bm{N(K)}\ar[r]& \GGr({\mathcal G}) \ar[r]&
\bm{ \Pic^0_{X/S}(K) } \cong \GGr({A}')\ar[r]& 0}
\end{equation}

\subsection{Comparison}
We deduce from \eqref{eq.diaHNpro} a commutative diagram of profinite groups:
\begin{equation}\label{dia.pi10}
\xymatrix{\pi_1 \left(\GGr(J)\right)\ar[r]^<<<<<{\sim }\ar[d]^{}& \pi_1 \left( \GGr(A') \right)\ar[d]^{} \\
\pi_0\left(\bm{H(S)}\right)\ar[r]^{\pi_0(\alpha)} & \pi_0\left(\bm{N(K)}\right)}
\end{equation}
The upper arrow is an isomorphism because by Lemma~\ref{J_K et A_K} we have a canonical isomorphism ${ A}^{\prime 0} \stackrel{\sim}{\to} J$, hence $ \GGr({ A}^\prime)^{0} \stackrel{\sim}{\to} \GGr(J) $.

In order to give an explicit description of the morphism $\pi_0(\alpha)$, recall first of all that the group of
connected components $\pi_0\left(\bm{N(K)}\right)\cong \pi_0(\mathcal N)$ is isomorphic to $\mathbb{Z}/d\mathbb{Z}$
(cf. Remark~\ref{rem.tori} and \eqref{eq.diagramTNA}), with identification given by
\[
\beta\colon \pi_0\left(\bm{N(K)}\right)\longrightarrow \mathbb{Z}/d\mathbb{Z}, \ \ \ \ \textrm{class of }\pi'\textrm{ in~~} N_K(K)=k(x_K)^{\ast}/K^{\ast}\mapsto \bar 1
\]
where $\pi'\in k(x_K)$ is a uniformizer. Furthermore, the class of $\pi'$ in $N_K(K)$,
viewed as element of $G(K)=(\Pic_{X/S},Y)^{0}(K)$ (see \S~\ref{Rappels Pic}), is the trivial line bundle on $X_K$ with the
rigidification on $Y_K$ given by the multiplication by $\pi'$.

Second, the component group of $\bm{H(S)}$ is also $\Z/d\Z$.
Indeed our group scheme $H$ coincides with the one denoted by $H_1$
in \cite{LLR}, pp.~18--21.
We then have the following exact sequence
\[
V_{Y}^{\ast}(S)\longrightarrow H(S)\longrightarrow \mathbb{Z}/d\mathbb{Z}\longrightarrow 0,
\]
(\emph{loc. cit.}, Theorem 3.5) where the first map is the natural factorization of
$V_Y^*\rightarrow N\hookrightarrow G:=(\mathrm{Pic}_{X/S},Y)^{0}$ through $H
\hookrightarrow G$ (since $V_Y$ is flat over $S$), and the second map is defined by
\[
\gamma\colon H(S)\longrightarrow \mathbb{Z}/d\mathbb{Z}, \ \ \ \
\left(\mathcal{O}_{X}\left(\frac{m}{d}X_s\right), a\right)\mapsto \bar{m}\in \mathbb{Z}/d\mathbb{Z}.
\]
(see \cite{LLR}, 3.5).
Applying the perfect Greenberg functor to the morphisms $V_X^*\to V_Y^*\to G$ one sees that the map $\gamma$
is of pro-algebraic nature and we write:
\[
\pi_0(\gamma)\colon \pi_0\left(\bm{H(S)}\right)\stackrel{\sim}{\longrightarrow} \mathbb{Z}/d\mathbb{Z}
\]

\begin{lemma}
The following diagram \begin{equation}\label{comparePi0}
\xymatrix{\pi_0\left(\bm{H(S)}\right)\ar[r]^{\pi_0(\alpha) }\ar[d]^{\pi_0(\gamma)}_\wr& \pi_0\left(\bm{N(K)}\right)\ar[d]^{\beta}_\wr
\\ \mathbb{Z}/d\mathbb{Z} \ar[r]^{\bar 1\mapsto -\bar 1}& \mathbb{Z}/d\mathbb{Z}}
\end{equation}
commutes. In particular, the morphism $\pi_0(\alpha)$ is an isomorphism.
\end{lemma}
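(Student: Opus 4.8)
The plan is to verify the commutativity of \eqref{comparePi0} on a single generator of the cyclic group $\pi_0(\bm{H(S)})\cong\Z/d\Z$, and then to deduce that $\pi_0(\alpha)$ is an isomorphism from the fact that the remaining three maps in the square are isomorphisms and that all four are group homomorphisms. Since $\pi_0(\gamma)$ is an isomorphism, a generator is produced by any element of $H(S)$ whose $\gamma$-image is $\bar 1$; I would take the rigidified line bundle $(\OO_X(D),a)\in H(S)$, where $D=\frac1d X_s$ and $a$ is any trivialization of $\OO_X(D)|_Y$. Its underlying sheaf $\OO_X(D)=\mathcal{I}^{-1}$ lies in $\ker(q)$, which it generates by \eqref{eq.d0}, so the pair indeed lies in $H(S)$, and $\gamma(\OO_X(D),a)=\bar 1$ since $m=1$. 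Recalling that $\alpha$ is induced by the restriction $G(S)\hookrightarrow G(K)$ to the generic fibre, I must compute the class of the generic fibre of $(\OO_X(D),a)$ inside $N_K(K)=k(x_K)^\ast/K^\ast=(K')^\ast/K^\ast$ and then apply $\beta$.

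For this I would use the local geometry at the closed point $y$ of $Y$ furnished by Corollary~\ref{cor.exofrig}: there a local equation $f$ of $Y$ and a local equation $g$ of $D$ form a regular system of parameters of $\OO_{X,y}$, and $Y$ meets $D$ transversally with $Y\cdot D=1$, so that $\bar g:=g\bmod f$ is a uniformizer $\pi'$ of $\OO_{K'}$. Writing $\OO_X(D)=g^{-1}\OO_X$ locally, with canonical section $s_D$ corresponding to $1=g\cdot g^{-1}$ and satisfying $\mathrm{div}(s_D)=D$, its restriction to $Y$ reads $\OO_X(D)|_Y=(\pi')^{-1}\OO_{K'}$ with $s_D|_Y=\pi'\cdot(\pi')^{-1}$; in other words $s_D$ vanishes to order $Y\cdot D=1$ along $Y$ at $y$, whereas the rigidification $a$ is a generator, of order $0$.

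Over the generic fibre $D_K=0$, hence $s_D|_{X_K}$ is nowhere vanishing and trivializes $\OO_X(D)|_{X_K}\cong\OO_{X_K}$; this is precisely the trivialization implicit in the identification of $\ker(r_K)$ with $(K')^\ast/K^\ast$. Measuring the rigidification $a_K$ against $s_D$ at $x_K$ then yields the comparison unit $a_K/(s_D|_{x_K})=(\pi')^{-1}$, so the class of the generic fibre of our element in $N_K(K)$ is $\overline{(\pi')^{-1}}$. Since $\beta$ is the valuation map $v_{K'}$ reduced modulo $d$ — well defined because $K'/K$ is totally ramified of degree $d$, so $v_{K'}(K^\ast)=d\Z$ — sending $[\pi']$ to $\bar1$, I obtain $\beta\bigl(\pi_0(\alpha)(\text{class})\bigr)=v_{K'}((\pi')^{-1})=-\bar1$, while $\pi_0(\gamma)(\text{class})=\bar1$. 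Thus \eqref{comparePi0} commutes with bottom arrow $\bar1\mapsto-\bar1$, and $\pi_0(\alpha)$ is an isomorphism.

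The hard part will be pinning down the sign unambiguously. It is produced entirely by the order-one vanishing of the canonical section $s_D$ along $Y$, against which the integral rigidification $a$ is compared, and care is needed to keep the conventions for $\OO_X(D)$, for the identification $N_K(K)=(K')^\ast/K^\ast$, and for the orientation of $\beta$ mutually consistent, so that the valuation of the comparison unit comes out as $-1$ rather than $+1$.
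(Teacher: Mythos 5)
Your proof is correct and takes essentially the same route as the paper's: the paper also verifies the square on a single generator of $\pi_0\left(\bm{H(S)}\right)\cong \mathbb{Z}/d\mathbb{Z}$, using the transversality of $Y$ and $D$ at the closed point (so that a local equation of $D$ restricts to the uniformizer $\pi'$ of $R'=\Gamma(Y,\OO_Y)$) to identify the induced rigidification with multiplication by a power of $\pi'$. The only difference is cosmetic: the paper works with $(\OO_X(-D),a)$, whose rigidification is $1\mapsto\pi'$, giving $\gamma$-value $-\bar 1$ and $\beta$-value $+\bar 1$, while you work with the inverse element $(\OO_X(D),a)$ and the comparison unit $(\pi')^{-1}$ against the canonical section, giving $\gamma$-value $+\bar 1$ and $\beta$-value $-\bar 1$ --- the same computation up to sign conventions.
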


\begin{proof} Recall that $D$ denotes the vertical divisor $\frac{1}{d}X_s$. Since the map $H(S)\to N(K)$ sends $(\mathcal{O}_{X}(-D), a)$ to its generic fibre, and the generic fibre of $\mathcal{O}_X(-D)$ is trivial,
we are reduced to verifying that a rigidification on $Y_K$ can be given by multiplication by the uniformizer $\pi'$, and that this rigidification extends to a rigidification of $\mathcal{O}_X(-D)$ on $Y$.

Now consider $\mathcal{O}_X(-D)$.
This gives us an ideal sheaf of $\mathcal{O}_{X}$.
Recall that $Y$ is regular (Corollary~\ref{cor.exofrig}), hence $Y=\mathrm{Spec}(R')$ with $R'$ a complete discrete valuation ring whose field of fractions is $K':=k(x_K)$.
Next, we claim that the intersection of $Y$ and $D$, viewed as a divisor of $Y$, is defined by the equation $\pi'=0$.
In fact, let $y\in Y$ be its closed point, and consider the local ring $\mathcal{O}_{X,y}$ which is regular of dimension $2$.
Let $r\in \mathcal{O}_{X,y}$ (respectively $t\in \mathcal{O}_{X,y}$) be a defining equation of $Y$ (respectively of $D$) around
$y\in X$. Since $X_s=dD$ as divisor of $X$, we have $(\pi)=(t^d)\subset \mathcal{O}_{X,y}$.
 By definition, $Y/S$ is of degree $d$; it
follows that the intersection number in $X$
\begin{equation*}\label{eq.intersection}
Y\cdot X_k=\ell(\mathcal{O}_{X,y}/(r,\pi))=\ell(\mathcal{O}_{X,y}/(r,t^d))=\ell(R'/(t')^d)
\end{equation*}
is equal to $d$, with $t'$ the image of $t$ in $R'=\mathcal{O}_{X,y}/(r)$.
This implies that $t'$ is an uniformizer of $R'$ (and the maximal ideal of $\mathcal{O}_{X,y}$ is generated by $r$ and $t$).
Hence $t'=u'\pi'$ with $u'\in R'$ a unit.
As a result, the intersection $Y\cap D$ is defined by the equation $t'=0$, or equivalently, by the equation $\pi'=0$ in $Y$.

So by the claim, we have $\mathcal{O}_{Y}(-D\cap Y)=(\pi')$. We get in this way a rigidification of $\mathcal{O}_{X}(-D)$ along $Y$
\begin{equation*}\label{eq.a}
a\colon \widetilde { R'}=\mathcal{O}_Y\longrightarrow \mathcal{O}_{X}(-D)|_{Y}=(\pi'), \ \ \ \ \ \ 1\mapsto \pi',
\end{equation*}
with $\widetilde{R'}$ the coherent module associated with $R'$.
Now, if we restrict to the generic point, we get
\begin{equation*}\label{eq.aK}
a_K \colon \widetilde { K'}=\mathcal{O}_{Y_K}\longrightarrow \mathcal{O}_{X}(-D)|_{Y_K}=(\pi')=\widetilde { K'}, \ \ \ \ \ \ 1\mapsto \pi'.
\end{equation*}
\end{proof}

Now, on forgetting the rigidifications, we have the following exact sequence of Serre pro-algebraic groups
\[
\xymatrix{0\ar[r]& \bm{H(S)}\ar[r]\ar[d]^{\gamma'}& \GGr(G) \ar[r]\ar[d]& \GGr(J)\ar[r]\ar@{=}[d]& 0
\\ 0\ar[r]& < \mathcal{O}_{X}(D)> \ar[r]& \bm{\mathrm{Pic}^{0}(X)}\ar[r]^{\bm q}& \GGr(J) \ar[r]& 0},
\]
where $\gamma'$ is given on $k$-rational sections by $\left(\mathcal{O}_{X}({\frac md}D),a\right)\mapsto \mathcal{O}_{X}({\frac md}D)$,
and the vertical map in the middle is given by $(\mathcal{L},a)\mapsto \mathcal{L}$.
Hence, if we identify the kernel $ <\mathcal{O}_{X}(D)>$ of $\bm q$ with $\mathbb{Z}/d\mathbb{Z}$ by sending $\mathcal{O}_{X}(D)$ to $\bar 1\in \mathbb{Z}/d\mathbb{Z}$, we get the extension of pro-algebraic groups:
\begin{equation}\label{eq.extPicQ}
0\longrightarrow \mathbb{Z}/d\mathbb{Z}\longrightarrow \bm{\mathrm{Pic}^{0}(X)}\longrightarrow \GGr(J)\longrightarrow 0,
\end{equation}
and this will give us an element of $
\mathrm{Ext}^{1}\left( \GGr(J),\mathbb{Z}/d\mathbb{Z}\right)$, which is canonically isomorphic to $\mathrm{Ext}^1\left(\GGr({A}^{\prime 0}),\mathbb{Z}/d\mathbb{Z}\right).$ By construction the map $\gamma'\colon \bm{H(S)}\to <\mathcal{O}_{X}(D)>\cong\Z/d\Z$ is the composition of $ \bm{H(S)} \to \pi_0( \bm{H(S)} )$ with $\pi_0(\gamma)\colon \pi_0( \bm{H(S)} )\to \Z/d\Z$.
Hence the commutativity of \eqref{comparePi0} and \eqref{dia.pi10} implies that the extension \eqref{eq.extPicQ} is the opposite of the extension obtained from lower exact sequence in \eqref{eq.diaHNpro} by taking first the push-out along $\bm{N(K)}\to \pi_0( \bm{N(K)})\cong \Z/d\Z$ and then the pull-back along the canonical map $\GGr(J)\rightarrow \GGr({A}')$. We can summarize these facts as follows:

\begin{proposition} The extension \eqref{eq.extPicQ} is the opposite of the extension obtained from the homomorphism \eqref{eq.v} via the canonical isomorphisms
\[\mathrm{Hom}\left(\pi_1\left( \GGr({A}^{\prime}\right), \mathbb{Z}/d\mathbb{Z} \right)\stackrel{\sim}{\longleftarrow}
\mathrm{Ext}^1\left(\GGr({A}^{\prime 0}),\mathbb{Z}/d\mathbb{Z}\right) \stackrel{\sim}{\longleftarrow}\mathrm{Ext}^{1}\left(\GGr(J),\mathbb{Z}/d\mathbb{Z}\right). \]
\end{proposition}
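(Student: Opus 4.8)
The plan is to trace the various identifications through the diagrams already established and to verify a single sign. Concretely, the final proposition asserts that the extension \eqref{eq.extPicQ} produced via the relative Picard functor of $X/S$ agrees, up to a sign, with the extension coming from the homomorphism $v=v_{X_K}$ in \eqref{eq.v}. Since $v$ was built from the lower exact sequence in \eqref{eq.diaHNpro} (via N\'eron models of the sequence \eqref{eq.N}), and the diagram \eqref{eq.diaHNpro} is precisely the pro-algebraic incarnation of \eqref{eq.diaHN}, both extensions arise from the \emph{same} three-row diagram of Serre pro-algebraic groups. The whole content is therefore a bookkeeping comparison of the two ways of extracting an element of $\mathrm{Ext}^1(\GGr(J),\Z/d\Z)$ from \eqref{eq.diaHNpro}.

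First I would fix the identifications on component groups. The kernel of $\bm q$ in \eqref{eq.extPicQ} is $\langle\OO_X(D)\rangle\cong\Z/d\Z$ via $\OO_X(D)\mapsto\bar 1$, and by construction the map $\gamma'\colon\bm{H(S)}\to\langle\OO_X(D)\rangle$ factors as $\bm{H(S)}\to\pi_0(\bm{H(S)})\xrightarrow{\pi_0(\gamma)}\Z/d\Z$. On the other side, the homomorphism $v$ is obtained from the lower row of \eqref{eq.diaHNpro} by pushing out along $\bm{N(K)}\to\pi_0(\bm{N(K)})\xrightarrow{\beta}\Z/d\Z$ and pulling back along $\GGr(J)\to\GGr(A')$; here I use Remark~\ref{rem.tori} and \eqref{eq.diagramTNA} for the identification $\pi_0(\bm{N(K)})\cong\pi_0(\mathcal N)\cong\Z/d\Z$, and the canonical isomorphism $A'^{\,0}\xrightarrow{\sim}J$ of Lemma~\ref{J_K et A_K}~(i) for the identification of $\GGr(J)$ with $\GGr(A')^0$. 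The key link between the two descriptions is the left vertical map $\alpha\colon\bm{H(S)}\to\bm{N(K)}$ in \eqref{eq.diaHNpro}: the extension \eqref{eq.extPicQ} is got from \eqref{eq.diaHNpro} by pushing out along $\gamma'$ (equivalently along $\pi_0(\gamma)$), while $v$ is got by pushing out along $\beta\circ\pi_0(\alpha)$. The already-proved commutativity of \eqref{comparePi0} gives $\beta\circ\pi_0(\alpha)=-\pi_0(\gamma)$, and the commutativity of \eqref{dia.pi10} ensures that the pull-back along $\GGr(J)\to\GGr(A')$ in the construction of $v$ matches the top isomorphism $\pi_1(\GGr(J))\xrightarrow{\sim}\pi_1(\GGr(A'))$ used to view the result in $\mathrm{Hom}(\pi_1(\GGr(A')),\Z/d\Z)$. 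Hence pushing out the common middle extension along two identifications differing by $-1$ yields two extensions that are negatives of each other.

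The steps, in order, would be: (1) record that both \eqref{eq.extPicQ} and the extension attached to $v$ are deduced from the three-row diagram \eqref{eq.diaHNpro}, so that comparing them is comparing two push-out/pull-back recipes applied to the same data; (2) identify the kernels, using $\pi_0(\gamma)$ on the $\bm{H(S)}$ side and $\beta$ on the $\bm{N(K)}$ side, both landing in $\Z/d\Z$; (3) invoke the commutativity of \eqref{comparePi0} to get $\beta\circ\pi_0(\alpha)=-\pi_0(\gamma)$; (4) invoke the commutativity of \eqref{dia.pi10} to match the relevant $\pi_1$-identifications so that the pull-back along $\GGr(J)\to\GGr(A')$ is the correct one; and (5) translate the resulting equality of extensions into the stated equality in $\mathrm{Ext}^1(\GGr(J),\Z/d\Z)$ via the canonical isomorphisms $\mathrm{Ext}^1(\GGr(J),\Z/d\Z)\xleftarrow{\sim}\mathrm{Ext}^1(\GGr(A'^{\,0}),\Z/d\Z)\xrightarrow{\sim}\mathrm{Hom}(\pi_1(\GGr(A')),\Z/d\Z)$ recorded in \eqref{eq.ExtHom-pro}.

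The main obstacle I expect is not any single hard computation but keeping the \emph{sign} and the direction of the canonical identifications consistent throughout. In particular I must make sure that the minus sign in \eqref{comparePi0} is the only discrepancy and that no second sign is hidden in the passage from an extension class to its associated homomorphism of fundamental groups under \eqref{eq.ExtHom-pro}, nor in the identification $A'^{\,0}\xrightarrow{\sim}J$ used to replace $\GGr(A')^0$ by $\GGr(J)$. Once the diagram \eqref{eq.diaHNpro} is seen to be the simultaneous source of both constructions and the two relevant component-group identifications are pinned down via \eqref{comparePi0} and \eqref{dia.pi10}, the proposition follows formally, and the word ``opposite'' in the statement is exactly the image of that single minus sign.
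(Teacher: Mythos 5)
Your proposal is correct and follows essentially the same route as the paper: the paper likewise observes that \eqref{eq.extPicQ} is the push-out of the middle row of \eqref{eq.diaHNpro} along $\gamma'=\pi_0(\gamma)\circ\bigl(\bm{H(S)}\to\pi_0(\bm{H(S)})\bigr)$, while the extension attached to $v$ is obtained from the lower row by pull-back along $\GGr(J)\to\GGr(A')$ and push-out along $\beta\circ\bigl(\bm{N(K)}\to\pi_0(\bm{N(K)})\bigr)$, and then concludes formally from the commutativity of \eqref{comparePi0} (the source of the single minus sign) together with \eqref{dia.pi10}. Your bookkeeping of the identifications and of where the sign enters matches the paper's argument exactly.
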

The minus sign depends on the choice of the isomorphism $<\OO_X(D)>\cong \Z/d\Z$. 

\begin{corollary}\label{cor.phin} Let $A_K$ be an elliptic curve and $J$ the identity component of the N\'eron model $A'$ of the dual elliptic curve $A_K'$ over $S$. Let $n\in \mathbb{Z}_{\geq 1}$. The map $\Phi_n\colon {}_n\mathrm{H}^{1}_{\mathrm{fl}}(K,A_K)\to \mathrm{Ext}^{1}(\mathbf{Gr}(J),\mathbb{Z}/n {\mathbb{Z}})$ in \eqref{Phi_d} is an injective morphism of groups,
which is an isomorphism if one of the following conditions is verified:
\begin{itemize}
\item The field $K$ has characteristic $0$.
\item The integer $n$ is prime to $p$.
\end{itemize}
If one of the above conditions is satisfied, then the composition of $\Phi_n$ with the isomorphism $\mathrm{Ext}^{1} \left( \GGr(J),\mathbb{Z}/n\mathbb{Z} \right) \stackrel{\sim}{\to} \mathrm{Hom}\left( \pi_1 \left( \GGr(A^{\prime}) \right),\mathbb{Z}/n\mathbb{Z}\right)$ coincides with the restriction of Shafarevich's duality \eqref{eq.sha} to the $n$-parts. 
\end{corollary}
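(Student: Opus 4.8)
The plan is to show that, under the canonical isomorphism $\mathrm{Ext}^1(\GGr(J),\Z/n\Z)\xrightarrow{\sim}\mathrm{Hom}(\pi_1(\GGr(A')),\Z/n\Z)$, the map $\Phi_n$ is nothing but the restriction of the Picard-functor homomorphism $\Xi'$ to the $n$-torsion, and then to deduce every assertion from the properties of $\Xi'$ and of Shafarevich's duality already established. First I would unwind the definition: for $[X_K]\in{}_n\mathrm H^1_\fl(K,A_K)$ of order $d\mid n$, the class $\Phi_n(X_K)$ is the push-out along the injection $\iota\colon\Z/d\Z\hookrightarrow\Z/n\Z$ in \eqref{transition} of the fundamental extension \eqref{eq.d0}, i.e. of \eqref{eq.extPicQ}. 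Under the Ext-to-Hom isomorphism an extension is sent to its connecting homomorphism, and push-out along $\iota$ corresponds to post-composition with $\iota$; hence $\Phi_n(X_K)$ corresponds to $\iota\circ\delta_X$, where $\delta_X\colon\pi_1(\GGr(J))\to\Z/d\Z$ is the connecting map of \eqref{eq.extPicQ}. By the Proposition immediately preceding this Corollary, together with Lemma~\ref{J_K et A_K}(i) (which gives $\GGr(J)\cong\GGr(A^{\prime 0})$), the extension \eqref{eq.extPicQ} corresponds to $-v_{X_K}$, which by Theorem~\ref{thm.main} equals $\Xi'([X_K])$. Thus $\Phi_n$ is identified with $[X_K]\mapsto\Xi'([X_K])$, viewed in $\mathrm{Hom}(\pi_1(\GGr(A')),\Z/n\Z)$; in particular $\Phi_n$ is a homomorphism of groups, since $\Xi'$ is one (Theorem~\ref{thm.main}).

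For injectivity I would argue that $\delta_X$ is surjective as soon as $X_K$ is nontrivial, so that no identification with Shafarevich's duality is needed. The crucial observation is that $\bm{\Pic^0(X)}=\varprojlim\GGr(\PP_{[m]})$ is connected: each $\Gr(\PP_{[m]})$ is smooth and connected (Lemma~\ref{lemme en greenberg}), hence so is its perfection $\GGr(\PP_{[m]})$, and the transition maps are surjective, whence $\pi_0(\bm{\Pic^0(X)})=0$. Applying the long exact sequence of the functors $\pi_i$ (\S~\ref{sec.proalg-gree}(iii)) to \eqref{eq.d0}, and using $\pi_0(\Z/d\Z)=\Z/d\Z$, the tail
\[
\pi_1(\GGr(J))\xrightarrow{\ \delta_X\ }\Z/d\Z\longrightarrow\pi_0(\bm{\Pic^0(X)})=0
\]
shows that $\delta_X$ is onto. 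Consequently, if $[X_K]$ has order $d>1$ then $\delta_X\neq0$, hence $\iota\circ\delta_X\neq0$ and $\Phi_n(X_K)\neq0$. As $\Phi_n$ is a homomorphism this proves $\ker\Phi_n=0$, i.e. $\Phi_n$ is injective, with no restriction on $n$ or on $\mathrm{char}(K)$.

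Under either hypothesis I would upgrade this to an isomorphism by comparison with Shafarevich's duality. If $\mathrm{char}(K)=0$, Theorem~\ref{thm.main} gives $\Xi'=\Xi=$ the duality \eqref{eq.sha}; if $n$ is prime to $p$, then ${}_n\mathrm H^1_\fl(K,A_K)$ is a prime-to-$p$ group and Proposition~\ref{pro.shap} gives the same equality on it. In both situations the restriction of \eqref{eq.sha} to the $n$-torsion is precisely the isomorphism \eqref{eq.shad2} onto $\mathrm{Hom}(\pi_1(\GGr(A')),\Z/n\Z)$. Composing $\Phi_n$ with $\mathrm{Ext}^1(\GGr(J),\Z/n\Z)\xrightarrow{\sim}\mathrm{Hom}(\pi_1(\GGr(A')),\Z/n\Z)$ therefore reproduces \eqref{eq.shad2}; since the latter is an isomorphism, so is $\Phi_n$, and the composite is exactly the restriction of Shafarevich's duality to the $n$-parts.

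The step I expect to require the most care is the identification in the first paragraph: one must match the connecting homomorphism of the push-out extension with $\Xi'([X_K])$ as an equality of group homomorphisms, tracking both the sign coming from the preceding Proposition (the extension is the opposite of the one attached to $v_{X_K}$) and the enlargement of the coefficient group. In particular I would verify that $\Xi'([X_K])$, a priori valued in $\Q/\Z$, factors through the order-$d$ subgroup $\Z/d\Z\subset\Q/\Z$, so that the passage from $\Phi_d$ to $\Phi_n$ by push-out matches post-composition with $\iota$ on the $\mathrm{Hom}$ side; everything downstream — the homomorphism property, injectivity, and the comparison with \eqref{eq.sha} — rests on this being a genuine equality rather than one up to an automorphism of the coefficients.
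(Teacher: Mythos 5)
Your proposal is correct and follows essentially the same route as the paper: the homomorphism property and the identification with Shafarevich's duality are deduced from the Proposition preceding the Corollary together with Theorem~\ref{thm.main} and Proposition~\ref{pro.shap}, while injectivity rests on the connectedness of $\bm{\Pic^0(X)}$ and of $\GGr(J)$. Your phrasing of the injectivity step (surjectivity of the connecting map $\delta_X$ in the long exact $\pi_i$-sequence, plus injectivity of $\Z/d\Z\hookrightarrow\Z/n\Z$) is just the $\mathrm{Hom}(\pi_1,\cdot)$-side reformulation of the paper's argument that the push-out map on $\mathrm{Ext}^1$ is injective and that \eqref{eq.d0} is non-split unless $d=1$.
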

\begin{proof}
 In view of the previous Proposition and results in \S~\ref{ShafaPairing} (Theorem~\ref{thm.main} and Proposition~\ref{pro.sha0})
only the injectivity requires verification. Since, there is no non-zero morphism from the connected pro-algebraic group $\mathbf{Gr}(J)$
to a constant finite group, the canonical map (\ref{transition}) induces an injective maps between the group of extensions
\[
\mathrm{Ext}^{1}(\mathbf{Gr}(J),\mathbb{Z}/n'\mathbb{Z})\longrightarrow \mathrm{Ext}^{1}(\mathbf{Gr}(J),\mathbb{Z}/n\mathbb{Z})
\]
when $n'|n$. Hence, we only need to show that, for $X_K$ a torsor under $A_{K}$ of order $d$, the extension (\ref{eq.d0})
is non-zero in $\mathrm{Ext}^{1}(\mathbf{Gr}(J),\mathbb{Z}/d\mathbb{Z})$ unless $d=1$.
Since the pro-algebraic group $\bm{\mathrm{Pic}^{0}(X)}$ is also connected, extension (\ref{eq.d0}) is split
if only if $d=1$, and this fact implies that the torsor $X_K$ is in fact trivial.
\end{proof}

\begin{remark}
The problem of extending the above Corollary to the $p$-parts in the equal characteristic case reduces to showing that $\Xi$ in Proposition~\ref{pro.sha0} is always an isomorphism, for example, by checking that it coincides with \eqref{eq.sha} on the $p$-parts too. Although we have partial results in this direction, for example in the case of abelian varieties with totally degenerate reduction, a full answer is not yet at hand.
\end{remark}

\thanks{\emph{Acknowledgements:}
We thank M. Raynaud for suggesting this subject to us and for useful discussions. We thank the referee for the very detailed review of our paper. The preprint \cite{To} was done when the second author was a post-doc in Essen, and he thanks H. Esnault for the hospitality. He also thanks D.~Lorenzini, W.~Zheng and C.~P\'epin for their remarks. He wants to thank especially M. Raynaud for allowing him to use his unpublished work \cite{Raynaud4}, and for his generous help during the preparation of \cite{To}. Both authors thank Progetto di Eccellenza Cariparo 2008-2009 ``Differential methods in Arithmetics, Geometry and Algebra'' for financial support.


\end{document}